\theoremstyle{plain}
\newtheorem{thm}{Theorem}[section]
\newtheorem{lem}[thm]{Lemma}
\newtheorem{prop}[thm]{Proposition}
\def\@rst #1 #2other{#1}
\newcommand\MR[1]{\relax\ifhmode\unskip\spacefactor3000 \space\fi
  \MRhref{\expandafter\@rst #1 other}{#1}}
\newcommand{\MRhref}[2]{\href{http://www.ams.org/mathscinet-getitem?mr=#1}{MR#2}}
\newcommand{\arxiv}[1]{\href{http://arxiv.org/abs/#1}{#1}}
\theoremstyle{definition}
\newtheorem{defn}[thm]{Definition}
\newtheorem{remark}[thm]{Remark}
\numberwithin{equation}{section}
\newcommand{\dsb}{\begin{adjustwidth}{2.5em}{0pt}
\begin{footnotesize}}
\newcommand{\dse}{\end{footnotesize}
\end{adjustwidth}}
\newcommand{\ssb}{\begin{adjustwidth}{2.5em}{0pt}}
\newcommand{\sse}{\end{adjustwidth}}
\newcommand{\aryb}{\begin{eqnarray*}}
\newcommand{\arye}{\end{eqnarray*}}
\def\alb#1\ale{\begin{align*}#1\end{align*}}
\def\allb#1\alle{\begin{align}#1\end{align}}
\newcommand{\eqb}{\begin{equation}}
\newcommand{\eqe}{\end{equation}}
\newcommand{\eqbn}{\begin{equation*}}
\newcommand{\eqen}{\end{equation*}}
\newcommand{\BB}{\mathbbm}
\newcommand{\ol}{\overline}
\newcommand{\ul}{\underline}
\newcommand{\op}{\operatorname}
\newcommand{\re}{\operatorname{Re}}
\newcommand{\frk}{\mathfrak}
\newcommand{\eqD}{\overset{d}{=}}
\newcommand{\ep}{\epsilon}
\newcommand{\rta}{\rightarrow}
\newcommand{\wt}{\widetilde}
\newcommand{\wh}{\widehat} 
\newcommand{\mcl}{\mathcal}
\newcommand{\bdy}{\partial}
\newcommand{\tr}{{\mathrm{tr}}}
\newcommand{\LFPP}{{\textnormal{\tiny{\textsc{LFPP}}}}}
\let\originalleft\left
\let\originalright\right
\renewcommand{\left}{\mathopen{}\mathclose\bgroup\originalleft}
\renewcommand{\right}{\aftergroup\egroup\originalright}
\title{The fractal dimension of Liouville quantum gravity: universality, monotonicity, and bounds}
\date{  }
\author{
\begin{tabular}{c} Jian Ding\footnote{\textit{dingjian@wharton.upenn.edu}} \\[-5pt]\small University of Pennsylvania   \end{tabular} 
\begin{tabular}{c} Ewain Gwynne\footnote{\textit{eg558@cam.ac.uk}} \\[-5pt]\small MIT   \end{tabular}
}
\begin{document}

\maketitle
 
\begin{abstract}  
We prove that for each $\gamma \in (0,2)$, there is an exponent $d_\gamma > 2$, the ``fractal dimension of $\gamma$-Liouville quantum gravity (LQG)", which describes the ball volume growth exponent for certain random planar maps in the $\gamma$-LQG universality class, the exponent for the Liouville heat kernel, and exponents for various continuum approximations of $\gamma$-LQG distances such as Liouville graph distance and Liouville first passage percolation. We also show that $d_\gamma$ is a continuous, strictly increasing function of $\gamma$ and prove upper and lower bounds for $d_\gamma$ which in some cases greatly improve on previously known bounds for the aforementioned exponents. For example, for $\gamma=\sqrt 2$ (which corresponds to spanning-tree weighted planar maps) our bounds give $3.4641 \leq d_{\sqrt 2} \leq 3.63299$ and in the limiting case we get $4.77485 \leq \lim_{\gamma\rightarrow 2^-} d_\gamma \leq 4.89898$. 
\end{abstract}

\tableofcontents

\section{Introduction}
\label{sec-intro}

\subsection{Overview}
\label{sec-overview}

Let $\mcl D\subset\BB C$ be a simply connected domain and let $h$ be some variant of the Gaussian free field (GFF) on $\mcl D$. 
For $\gamma \in (0,2)$, the \emph{$\gamma$-Liouville quantum gravity (LQG)} surface parametrized by $\mcl D$ is, heuristically speaking, the random two-dimensional Riemannian manifold parametrized by $\mcl D$ with Riemannian metric tensor $e^{\gamma h} \, (dx^2 + dy^2)$, where $dx^2 + dy^2$ denotes the Euclidean metric tensor. The parameter $\gamma$ controls the ``roughness" of the surface, in the sense that it should in some ways behave more a smooth Euclidean surface the closer $\gamma$ is to zero. 

LQG surfaces were first introduced in the physics literature by Polyakov~\cite{polyakov-qg1,polyakov-qg2} in the context of string theory. Such surfaces are expected to describe the scaling limits of \emph{random planar maps}---random graphs embedded in the plane in such a way that no two edges cross, viewed modulo orientation-preserving homeomorphisms. 
The case when $\gamma = \sqrt{8/3}$ (sometimes called ``pure gravity") corresponds to uniform random planar maps (including uniform triangulations, quadrangulations, etc.) and other values of $\gamma$ correspond to random planar maps sampled with probability proportional to a $\gamma$-dependent statistical mechanics model, e.g., the uniform spanning tree ($\gamma =\sqrt 2$), a bipolar orientation on the edges ($\gamma = \sqrt{4/3}$), or the Ising model ($\gamma=\sqrt 3$). 

The above definition of a LQG surface does not make literal sense since the GFF $h$ is a random generalized function (distribution), so does not have well-defined pointwise values and hence cannot be exponentiated. However, one can make rigorous sense of certain objects associated with $\gamma$-LQG surfaces via regularization procedures. 
The first such object to be constructed is the \emph{$\gamma$-LQG area measure} $\mu_h$ associated with $h$, which is the a.s.\ weak limit of certain regularized versions of $e^{\gamma h(z)} \,dz$, where $dz$ denotes Lebesgue measure. This measure has been constructed in various equivalent ways in works by Kahane~\cite{kahane}, Duplantier and Sheffield~\cite{shef-kpz}, Rhodes and Vargas~\cite{rhodes-vargas-review}, and others. The construction of $\mu_h$ is a special case of the theory of \emph{Gaussian multiplicative chaos}; see~\cite{rhodes-vargas-review,berestycki-gmt-elementary} for overviews of this theory. 
For certain particular choices of $h$,\footnote{See, e.g.,~\cite{wedges,dkrv-lqg-sphere,hrv-disk,drv-torus,grv-higher-genus,remy-annulus} for definitions of the particular choices of $h$ corresponding to the scaling limits of random planar maps with different topologies. 
The $\gamma$-quantum cone, studied in Section~\ref{sec-planar-map} of the present paper, arises as the scaling limit of random planar maps with the topology of the whole plane.
We note that in the terminology of~\cite{dkrv-lqg-sphere}, etc., the term ``Liouville quantum gravity" is only used in the case when $h$ is one of these special random distributions. Here we follow the convention of~\cite{shef-kpz} and use the term ``Liouville quantum gravity" in the case when $h$ is any GFF-type distribution. }
the measure $\mu_h$ is conjectured (and in some cases proven~\cite{gms-tutte}) to describe the scaling limit of counting measure on the vertices of random planar maps embedded into the plane (e.g., via circle packing or harmonic embedding). See~\cite{shef-kpz,shef-zipper,dkrv-lqg-sphere,curien-glimpse} for conjectures of this type. 

It is expected that a $\gamma$-LQG surface also gives rise to a random metric on the domain $\mcl D$, which describes the Gromov-Hausdorff limit of random planar maps equipped with the graph distance. So far, such a metric has only been constructed in the special case when $\gamma=\sqrt{8/3}$ in a series of works by Miller and Sheffield~\cite{lqg-tbm1,lqg-tbm2,lqg-tbm3}. In this case, the $\sqrt{8/3}$-LQG metric induces the same topology as the Euclidean metric but has Hausdorff dimension 4. A certain special $\sqrt{8/3}$-LQG surface called the \emph{quantum sphere} is isometric to the \emph{Brownian map}, a random metric space which arises as the scaling limit of uniform random planar maps~\cite{legall-uniqueness,miermont-brownian-map}. 

For $\gamma\not=\sqrt{8/3}$, the metric structure of $\gamma$-LQG remains rather mysterious. Indeed, understanding this metric structure is arguably the most important problem in the theory of LQG. For $\gamma\not=\sqrt{8/3}$, a metric on $\gamma$-LQG has not been constructed, and the basic properties which the conjectural metric should satisfy --- such as its Hausdorff dimension --- are not known, even at a heuristic level. 
Nevertheless, there are a number of natural approximate random metrics which are expected to be related to the conjectural $\gamma$-LQG metric in some sense, so one can build an understanding of ``distances in $\gamma$-LQG" without rigorously constructing a metric.
\begin{itemize} 
\item \textbf{Random planar maps,} such as planar maps weighted by statistical mechanics models, as discussed above, or \emph{mated-CRT maps} as studied in~\cite{ghs-dist-exponent,gms-tutte}. 
\item \textbf{Liouville graph distance.} For $z,w\in \mcl D$ and $\ep > 0$, define the distance $  D_h^{\gamma,\ep}(z,w)$ to be the smallest $N\in\BB N$ for which there exists a continuous path from $z$ to $w$ in $\ol{\mcl D}$ which can be covered by $N$ Euclidean balls of $\gamma$-LQG mass\footnote{In the case of balls not entirely contained in $\mcl D$, we set $\mu_h \equiv 0$ outside of $\mcl D$ and for the purposes of defining the circle average we assume that $h$ vanishes outside of $\mcl D$.} at most $\ep$ with respect to $h$.  
\item \textbf{Liouville first passage percolation (LFPP).} For $\xi > 0$, $z,w\in \mcl D$, and $\delta>0$ define the distance $D_{h,\LFPP}^{\xi,\delta}(z,w)$ with parameter $\xi$ to be the infimum over all piecewise continuously differentiable paths $P : [0,T] \rta \ol{\mcl D}$ of the quantity $\int_0^T e^{\xi h_\delta(P(t))} |P'(t)| \,dt$, where $h_\delta(z)$ denotes the circle average of $h$ over $\bdy B_\delta(z)$ (as defined in~\cite[Section 3.1]{shef-kpz}). 
\item Various constructions using the so-called \textbf{Liouville heat kernel}, as defined in~\cite{grv-heat-kernel}, which is the heat kernel for \emph{Liouville Brownian motion}~\cite{berestycki-lbm,grv-lbm}. 
\end{itemize}  
We will sometimes drop the superscript $\gamma$ or $\xi$ in the notation for Liouville graph distance and LFPP when it is clear from the context. 

The above objects are defined in very different ways and it is not priori clear that they have any direct connection to each other.  
The goal of this paper is to show that there is a single exponent $d_\gamma > 2$, which we expect to be equal to the Hausdorff dimension of the conjectural $\gamma$-LQG metric, and which describes distances in all four of the above settings. Using the relationships between the exponents for the different models, we will also prove that $\gamma\mapsto d_\gamma$ is a continuous, strictly increasing function of $\gamma$ and prove new upper and lower bounds for $d_\gamma$ which (except for small values of $\gamma$) greatly improve on previously known bounds in the above settings (see Theorem~\ref{thm-d} and Figures~\ref{fig-d-bound} and~\ref{fig-exponent-table}). 

One can interpret our results as saying that even though we do not yet have a way to endow a $\gamma$-LQG surface with a metric, the fractal dimension of $\gamma$-LQG is well-defined in the sense that in each of the above settings, one has a notion of ``fractal dimension" and these notions all agree with one another. See Section~\ref{sec-related} for some additional quantities which we expect can be described in terms of $d_\gamma$, but which we do not treat in this paper.

The starting point of our analysis is a result of Ding, Zeitouni, and Zhang~\cite[Theorem 1.1]{dzz-heat-kernel} which shows the existence of a $\gamma$-dependent exponent which describes certain quantities related to Liouville graph distance and to the Liouville heat kernel. This exponent is called $\chi$ in~\cite{dzz-heat-kernel}. We set $d_\gamma := 2/\chi$. We also emphasize that some estimates in this paper differ by a factor of 2 from estimates in~\cite{dzz-heat-kernel} since the latter paper defines Liouville graph distance in terms of balls of mass $\ep^2$ instead of balls of mass $\ep$. 

\begin{thm}[\cite{dzz-heat-kernel}] \label{thm-dzz}
For each $\gamma \in (0,2)$, there exists $d_\gamma > 2$ (the \emph{fractal dimension of $\gamma$-Liouville quantum gravity}) such that the following is true. Let $\BB S = [0,1]^2$ be the unit square and let $h^{\BB S}$ be a zero-boundary Gaussian free field on $\BB S$. For any two distinct points $z$ and $w$ in the interior of $\BB S$, almost surely the $\gamma$-Liouville graph distance satisfies
\eqb \label{eqn-dzz-lgd}
\lim_{\ep\rta 0} \frac{ \log  D_{h^{\BB S}}^\ep\left( z , w   \right)    }{ \log \ep^{-1}   }  = \frac{1}{d_\gamma} .
\eqe
Furthermore, for each $\zeta>0$ there a.s.\ exists a random $C = C(z,w,\zeta,\gamma) > 1$ such that the $\gamma$-Liouville heat kernel satisfies
\eqb \label{eqn-dzz-heat-kernel}
C^{-1} \exp\left( - t^{-\frac{1 }{ d_\gamma-1} -\zeta } \right) \leq  \mathsf{p}_t^\gamma(z,w)   \leq   C  \exp\left( - t^{-\frac{1 }{ d_\gamma-1} + \zeta} \right) ,\quad \forall t > 0 .
\eqe
\end{thm}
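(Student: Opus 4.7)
The plan is to isolate a single scaling exponent $\chi$ governing the expectation $\BB E[\log D_h^\ep(z,w)]$, upgrade it to almost-sure convergence, and then transfer it to the heat kernel, setting $d_\gamma := 2/\chi$ at the end. For the first step I would set up an approximate submultiplicative recursion for $a(\ep) := \BB E[\log D_h^\ep(z,w)]$. The essential input is the near-scale-invariance of the GFF: decomposing $h$ via its restriction to a grid of sub-squares of side $\delta$ together with the associated harmonic extension, each restriction is (modulo boundary conditioning) a rescaled zero-boundary GFF on $\BB S$ plus an additive Gaussian constant of variance $O(\log\delta^{-1})$. Concatenating near-optimal paths within sub-squares traversed by an approximate geodesic at scale $\delta$ yields an inequality relating $a(\ep)$ at two different scales, and a Fekete-type argument applied to $a(2^{-n})/n$ produces the limit $\chi$. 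The strict inequality $d_\gamma > 2$ follows from standard LQG estimates showing that LGD balls are genuinely smaller than Euclidean $\sqrt{\ep}$-balls because of the logarithmic divergence of circle averages.

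Next I would pass from expectation to an almost-sure statement via Gaussian concentration. The functional $h \mapsto \log D_h^\ep(z,w)$ is Lipschitz in the Cameron--Martin directions corresponding to smooth bounded perturbations of the field, with Lipschitz constant uniform in $\ep$. The Borell--TIS inequality then gives sub-Gaussian concentration of $\log D_h^\ep$ around its mean with deviations of order $o(\log \ep^{-1})$; applying Borel--Cantelli along a dyadic sequence $\ep_n = 2^{-n}$ and interpolating using the monotonicity of $D_h^\ep$ in $\ep$ yields~\eqref{eqn-dzz-lgd}.

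For the heat kernel bound~\eqref{eqn-dzz-heat-kernel} I would exploit the representation of Liouville Brownian motion as a time-change of planar Brownian motion by the (regularized) Liouville clock $F_h(s) = \int_0^s e^{\gamma h_\delta(B_u) - \gamma^2 \BB E[h_\delta(B_u)^2]/2}\,du$. The lower bound is obtained by chaining along a near-geodesic for $D_h^\ep(z,w)$ consisting of $N \asymp \ep^{-1/d_\gamma}$ balls of $\mu_h$-mass $\ep$: the LBM crosses each ball in clock-time of order $\ep$ with probability bounded below by a Gaussian factor, producing a total cost $\exp(-c N^2 \ep / t)$, and optimizing over $\ep$ yields the rate $\exp(-t^{-1/(d_\gamma-1)})$. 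The matching upper bound comes from a union bound over all $\ep$-ball covering sequences and the Markov property of LBM at successive crossing times.

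The main technical obstacle is the submultiplicative estimate in the first step: the GFFs in distinct sub-squares are only weakly dependent, but the harmonic correction is itself a log-correlated Gaussian field, and the error introduced by this correction must be controlled uniformly enough to remain $o(\log\ep^{-1})$ after $O(\log\ep^{-1})$ iterations. This is the characteristic multi-scale analysis familiar from the theory of extremes of log-correlated fields, and is what makes the existence of the exponent substantially more subtle than a naive application of Fekete's lemma.
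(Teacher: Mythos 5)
You should first note that this theorem is not proved in the present paper at all: it is quoted from \cite{dzz-heat-kernel} (with $d_\gamma := 2/\chi$ in the notation there) and used as a black box, so there is no in-paper argument to compare your sketch against, only the external one of Ding--Zeitouni--Zhang.

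Measured against that proof, your outline has the right global shape (a scale-by-scale argument producing an exponent, concentration to upgrade to almost-sure convergence, and a transfer to the heat kernel through Liouville Brownian motion), but two steps contain genuine gaps. First, the concentration step as stated does not work: $\log D_h^\ep(z,w)$ is not Lipschitz, uniformly in $\ep$, in Cameron--Martin directions. A Cameron--Martin perturbation $f$ changes $\mu_h$ by the factor $e^{\gamma f}$, so the natural Lipschitz bound is in terms of $\sup|f|$, and in two dimensions the Dirichlet norm does not control the sup norm; worse, for Liouville graph distance the masses of the covering balls involve the field at arbitrarily fine scales, so one cannot replace $f$ by a mollification at a fixed scale without an a priori lower bound on the radii of mass-$\ep$ balls. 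This is precisely why the actual argument works with white-noise decompositions of the field (exact scale invariance and scale-by-scale independence), percolation/Efron--Stein-type estimates, and separate treatment of the coarse field, rather than a one-line Borell--TIS application. Second, the Fekete step is where essentially all of the work of \cite{dzz-heat-kernel} lies, and your sketch defers it: there is no exact submultiplicative structure for $\BB E[\log D_h^\ep(z,w)]$ at fixed points (the behavior of $h$ near $z$ and $w$ and the harmonic correction between scales enter multiplicatively in $\ep$), and the cited proof instead compares crossing-type distances across scales, with multiplicative errors that must be shown to be $\ep^{o_\ep(1)}$ after iteration. Finally, your heat-kernel paragraph is the standard heuristic (clock time $N\ep \asymp t$ with $N\asymp \ep^{-1/d_\gamma}$ gives $\exp(-t^{-1/(d_\gamma-1)})$), but the upper bound in \eqref{eqn-dzz-heat-kernel} cannot be obtained from a union bound over covering sequences alone; it requires a multi-scale cascade over exit times together with quantitative control of the Liouville measure. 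So the proposal is a reasonable heuristic reconstruction of the cited proof, not a substitute for it, and in this paper the statement is in any case imported rather than reproved.
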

 
We will not directly use the Liouville heat kernel, so we do not say anything further about it here and instead refer the interested reader to~\cite{grv-heat-kernel,mrvz-heat-kernel,andres-heat-kernel,dzz-heat-kernel} for additional background. 

The main contributions of the present paper are to prove monotonicity and bounds for the exponent $d_\gamma$ of Theorem~\ref{thm-dzz} and to prove that this exponent also describes distances with respect to LFPP and in certain random planar maps. 
\bigskip

\noindent\textbf{Acknowledgments.} We thank an anonymous referee for helpful comments on an earlier version of this paper. We thank Subhajit Goswami, Nina Holden, Josh Pfeffer, and Xin Sun for helpful discussions. J. Ding was supported in part by the NSF Grant DMS-1757479 and an Alfred Sloan fellowship.

\subsection{Main results}
\label{sec-results}

Let $d_\gamma$ be as in Theorem~\ref{thm-dzz}. We first record the properties which we prove are satisfied by $d_\gamma$. 

\begin{thm}[Monotonicity and bounds for $d_\gamma$] \label{thm-d}  
The fractal dimension $d_\gamma$ is a strictly increasing, locally Lipschitz continuous function of $\gamma \in (0,2)$ and satisfies 
\eqb \label{eqn-d-bound}
\ul d_\gamma \leq d_\gamma \leq \ol d_\gamma 
\eqe
for
\eqb \label{eqn-d-lower}
\ul d_\gamma := 
\begin{dcases}
\max\left\{ \sqrt 6 \gamma ,  \frac{2\gamma^2}{4+\gamma^2-\sqrt{16 +\gamma^4}}         \right\} ,\quad &\gamma \leq \sqrt{8/3}  \\
\frac13 \left( 4 + \gamma^2 +\sqrt{16 + 2 \gamma^2 + \gamma^4} \right) ,\quad &\gamma \geq \sqrt{8/3} 
\end{dcases}
\eqe 
and
\eqb  \label{eqn-d-upper}
\ol d_\gamma := 
\begin{dcases}
\min\left\{    \frac13 \left( 4 + \gamma^2 +\sqrt{16 + 2 \gamma^2 + \gamma^4} \right)  ,  2 + \frac{\gamma^2}{2} + \sqrt 2 \gamma \right\} ,\quad &\gamma \leq \sqrt{8/3}  \\
\sqrt 6 \gamma ,\quad &\gamma \geq \sqrt{8/3} 
\end{dcases} .
\eqe 
\end{thm}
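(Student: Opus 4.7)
The plan is to use the equivalences between $d_\gamma$ and the scaling exponents for LFPP distances and graph distances in random planar maps in the $\gamma$-LQG universality class, established elsewhere in the paper, and then to attack each claim in whichever model is most convenient. Continuity and monotonicity will be handled via the LFPP formulation, in which $\xi$ is a continuous parameter and the underlying Gaussian field admits clean coupling arguments, while the bounds that are sharp at $\gamma=\sqrt{8/3}$ will be obtained from the planar map / Liouville graph distance side, where one can anchor to the known value $d_{\sqrt{8/3}}=4$ coming from the Brownian map.

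For local Lipschitz continuity, I would parameterize LFPP by $\xi$ and show that a small perturbation $\xi\mapsto \xi+\Delta\xi$ changes $D_{h,\LFPP}^{\xi,\delta}$ by at most a multiplicative factor of the form $\delta^{-C|\Delta\xi|}$ via a pointwise comparison along each path combined with Gaussian tail bounds on $\sup_z h_\delta(z)$ on a compact set. Extracting the scaling exponent then yields a Lipschitz bound for $1/d_\gamma$ as a function of $\gamma$ on any compact subinterval of $(0,2)$. For strict monotonicity I would use a Cameron--Martin shift: passing from $\gamma$ to $\gamma+\epsilon$ corresponds to tilting the law of the field in a controlled way, and a computation of how this tilt affects the LFPP exponent should give a genuinely positive first-order effect, ruling out a flat segment.

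The specific bounds will then come from targeted arguments. The linear bound $\sqrt 6\,\gamma$, which is an upper bound for $\gamma\geq\sqrt{8/3}$ and a lower bound for $\gamma\leq\sqrt{8/3}$, should follow by anchoring at $d_{\sqrt{8/3}}=4$ and applying a one-parameter concavity/subadditivity property for the exponent together with the monotonicity just established. The algebraic bound $\tfrac13\bigl(4+\gamma^2+\sqrt{16+2\gamma^2+\gamma^4}\bigr)$, which crosses from upper to lower bound at $\sqrt{8/3}$, should arise from a self-consistency inequality for the volume growth exponent in mated-CRT maps, combining a random walk / heat kernel estimate with a reverse LFPP estimate and then solving the resulting quadratic. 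The small-$\gamma$ upper bound $2+\gamma^2/2+\sqrt 2\,\gamma$ should follow from a direct Euclidean dyadic covering for Liouville graph distance, using the multifractal spectrum of $\mu_h$ to count how many $\delta$-balls along a straight-line path have $\gamma$-LQG mass at most $\epsilon$. The remaining lower bound $2\gamma^2/(4+\gamma^2-\sqrt{16+\gamma^4})$ should be obtained by refining the same subadditivity inequality with an improved one-sided LGD estimate and again solving for the exponent.

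The main obstacle I anticipate is obtaining strict monotonicity with uniform local Lipschitz constants on all of $(0,2)$: weak monotonicity is cheap via almost any coupling, but upgrading to strict monotonicity together with a quantitative Lipschitz constant requires a careful Cameron--Martin computation whose estimates must not blow up on compact subintervals, even as $\gamma$ approaches $2^-$ where Gaussian multiplicative chaos becomes critical. A secondary subtlety is that the equivalence between the LFPP exponent and the LGD exponent of Theorem~\ref{thm-dzz} passes through a Gaussian concentration step, and one must ensure that this step transports continuity and monotonicity without loss, so that the resulting statement applies verbatim to the $d_\gamma$ defined via Liouville graph distance in Theorem~\ref{thm-dzz}.
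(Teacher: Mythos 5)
You have the right scaffolding (universality of $d_\gamma$ across LFPP/planar maps, anchoring at $d_{\sqrt{8/3}}=4$ via the UIPT), but you are missing the single mechanism on which the paper's entire proof rests: the elementary coupling of Lemma~\ref{lem-lfpp-mono}, in which one writes $\wt h=\wt\xi^{-1}\bigl(\xi h+\sqrt{\wt\xi^2-\xi^2}\,h'\bigr)$ with $h'$ an independent GFF and shows that the \emph{rescaled} LFPP distance $\delta^{\xi^2/2}D_{h,\LFPP}^{\xi,\delta}$ is essentially monotone in $\xi$. Combined with Theorem~\ref{thm-lfpp-compare} (exponent $1-\tfrac{2}{d_\gamma}-\tfrac{\gamma^2}{2d_\gamma}$ at $\xi=\gamma/d_\gamma$), this yields the implication~\eqref{eqn-exponent-mono}, from which Proposition~\ref{prop-increase} (monotonicity of $\gamma/d_\gamma$ and of $1-\tfrac{2}{d_\gamma}-\tfrac{\gamma^2}{2d_\gamma}+\tfrac{\gamma^2}{2d_\gamma^2}$) and the strict monotonicity of $d_\gamma$ (Proposition~\ref{prop-d-mono}) follow by purely algebraic/calculus arguments that never differentiate $d_\gamma$. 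Your substitutes do not fill this hole. Changing $\gamma$ to $\gamma+\epsilon$ is not a Cameron--Martin shift of the field (it multiplies the formal density $e^{\gamma h}$ by $e^{\epsilon h}$, which is not an $H^1$ tilt), and no ``positive first-order effect'' computation for $d_\gamma$ is known or attempted in the paper, so your strict-monotonicity step has no viable route as stated. Likewise, your perturbative bound in $\xi$ controls the LFPP exponent as a function of $\xi$, but this does not transfer to continuity of $d_\gamma$ in $\gamma$, because the correspondence $\xi=\gamma/d_\gamma$ is precisely what is unknown at that stage; in the paper, local Lipschitz continuity falls out for free once $\gamma\mapsto d_\gamma$ and $\gamma\mapsto\gamma/d_\gamma$ are both known to be increasing, via $d_{\gamma_1}\le d_{\gamma_2}\le(\gamma_2/\gamma_1)d_{\gamma_1}$.

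The bounds have the same problem. What converts the anchor $d_{\sqrt{8/3}}=4$ (Theorem~\ref{thm-ball-size} plus Angel's volume-growth result for the UIPT) into the two $\gamma$-dependent bounds is exactly Proposition~\ref{prop-increase}: the bound $\sqrt6\,\gamma$ comes from $\gamma/d_\gamma\geq 1/\sqrt6$ for $\gamma\geq\sqrt{8/3}$ (and the reverse inequality for $\gamma\leq\sqrt{8/3}$), while $\tfrac13\bigl(4+\gamma^2+\sqrt{16+2\gamma^2+\gamma^4}\bigr)$ is obtained by solving the quadratic inequality $1-\tfrac{2}{d_\gamma}-\tfrac{\gamma^2}{2d_\gamma}+\tfrac{\gamma^2}{2d_\gamma^2}\geq\tfrac14$ (resp.\ $\leq\tfrac14$) in $d_\gamma$. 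Neither a ``concavity/subadditivity property'' of the exponent nor a ``self-consistency inequality for volume growth combined with heat kernel estimates'' is proved or even precisely formulated in your proposal, and it is not clear either could be. Finally, the remaining small-$\gamma$ bounds $\tfrac{2\gamma^2}{4+\gamma^2-\sqrt{16+\gamma^4}}\le d_\gamma\le 2+\tfrac{\gamma^2}{2}+\sqrt2\,\gamma$ are simply imported from the mated-CRT map estimates of~\cite{ghs-dist-exponent}; re-deriving them via a dyadic covering and the multifractal spectrum would be a separate substantial project rather than a step in this theorem's proof.
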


Figure~\ref{fig-d-bound} shows graphs of our upper and lower bounds for $d_\gamma$. Figure~\ref{fig-exponent-table} shows a table of the upper and lower bounds for several special values of $\gamma$. 

Our upper and lower bounds match only for $\gamma = 0$ and $\gamma =\sqrt{8/3}$, in which case $d_{\sqrt{8/3}} = 4$. The fact that $d_{\sqrt{8/3}}=4$ is a new result in the setting of Theorem~\ref{thm-dzz}. In particular, we now know that the Liouville heat kernel exponent for $\gamma=\sqrt{8/3}$ is $1/3$. 

The bounds~\eqref{eqn-d-bound} are the best currently known for $d_\gamma$ except in the case of the lower bound when $\gamma$ is very small (see also Section~\ref{sec-related}).\footnote{Since this paper was posted to the arXiv, new bounds for $d_\gamma$ have been obtained in~\cite{gp-lfpp-bounds} which improve on our bounds in some regimes. As in the case of our bounds, the new bounds in~\cite{gp-lfpp-bounds} are based on Theorem~\ref{thm-lfpp-compare}, the fact that $d_{\sqrt{8/3}}=4$, and a certain monotonicity statement for LFPP.}
 In this latter regime, one gets from~\cite[Theorem 1.2]{ding-goswami-watabiki} that there is a universal constant $c>0$ such that for small enough $\gamma > 0$, 
\eqb \label{eqn-d-asymp}
d_\gamma \geq 2 +c \frac{\gamma^{4/3} }{\log\gamma^{-1}}  . 
\eqe
This is not implied by~\eqref{eqn-d-bound} since $\ul d_\gamma$ behaves like $ 2 + O_\gamma(\gamma^2)$ as $\gamma\rta 0^+$.
We will discuss the source of our bounds for $d_\gamma$ and their implications further in Section~\ref{sec-bounds}.

\begin{figure}[t!]
 \begin{center}
\includegraphics[scale=.6]{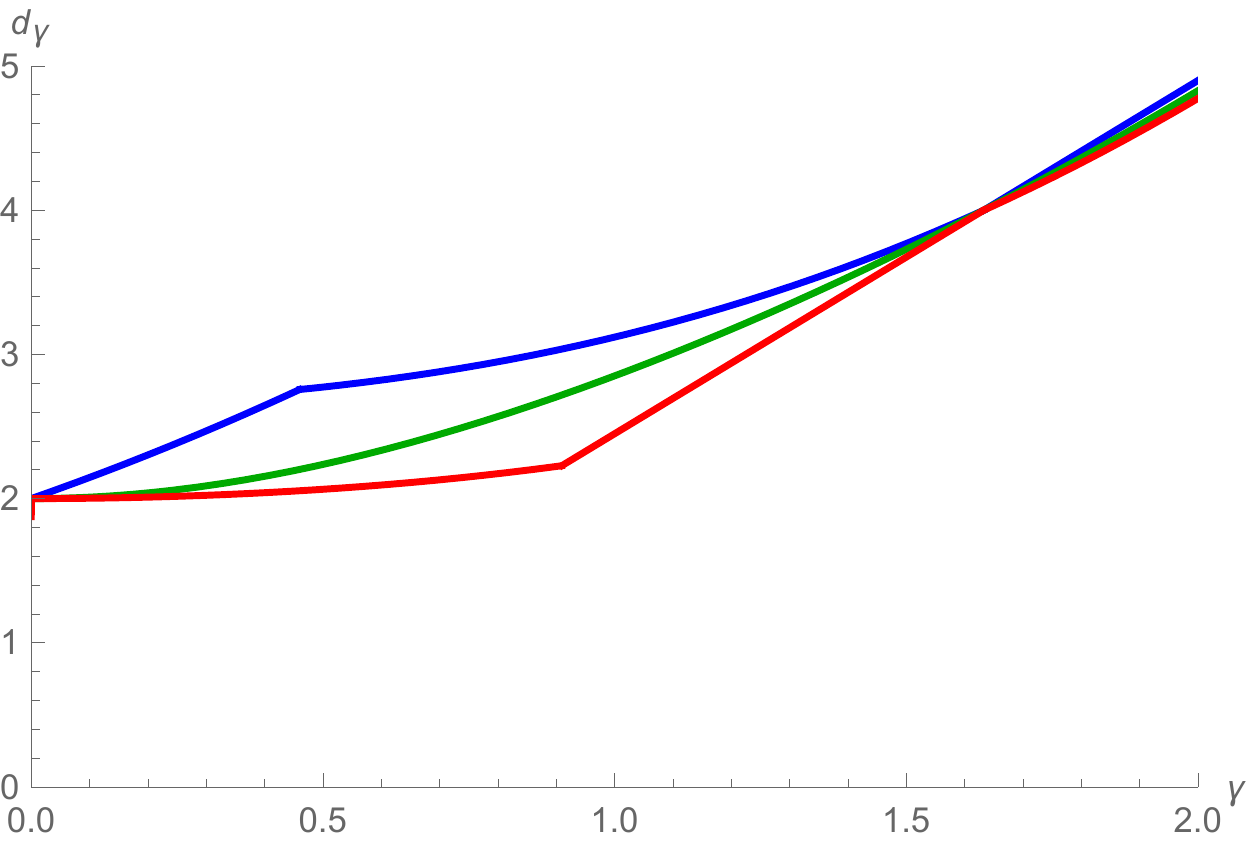} \hspace{15pt} \includegraphics[scale=.6]{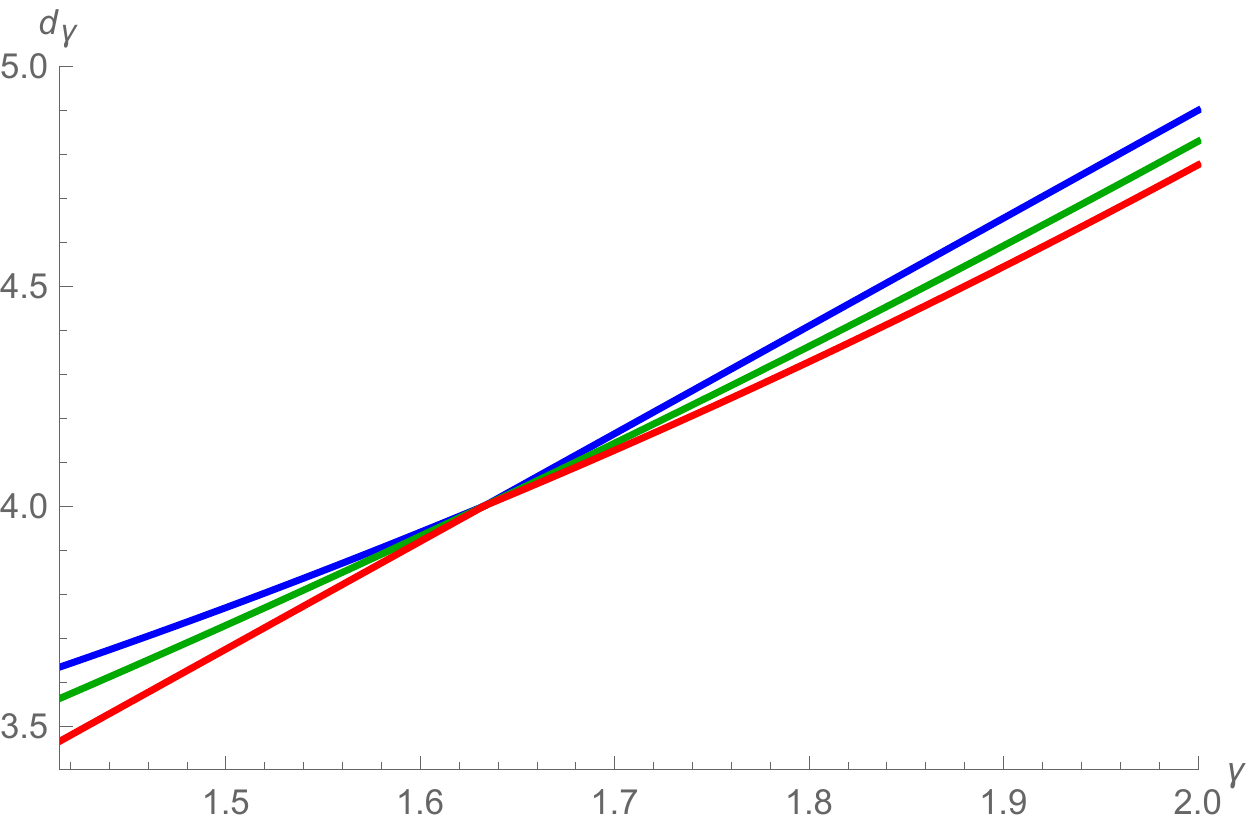}
\vspace{-0.01\textheight}
\caption{ \textbf{Left.} Graph of the lower bound $\ul d_\gamma$ (red) and the upper bound $\ol d_\gamma$ (blue) from Theorem~\ref{thm-d} together with the Watabiki prediction $d_\gamma^{\op{Wat}}$ from~\eqref{eqn-watabiki} (green). Note that the bounds $\ul d_\gamma \leq d_\gamma \leq \ol d_\gamma$ are consistent with the Watabiki prediction but the bound~\eqref{eqn-d-asymp} for the asymptotics as $\gamma\rta 0$ is not. The red and blue curves meet at $(\sqrt{8/3},4)$. The ``kink" in the red curve occurs at approximately  $( 0.909576, 2.228)$ and the ``kink" in the blue curve occurs at approximately $( 0.460149, 2.75662)$. 
\textbf{Right.} Graph of the same functions but restricted to the interval $[\sqrt 2 ,2]$. Graphs were produced using Mathematica.
}\label{fig-d-bound}
\end{center}
\vspace{-1em}
\end{figure}

\begin{figure}[t!]
 \begin{center}
\includegraphics[scale=1]{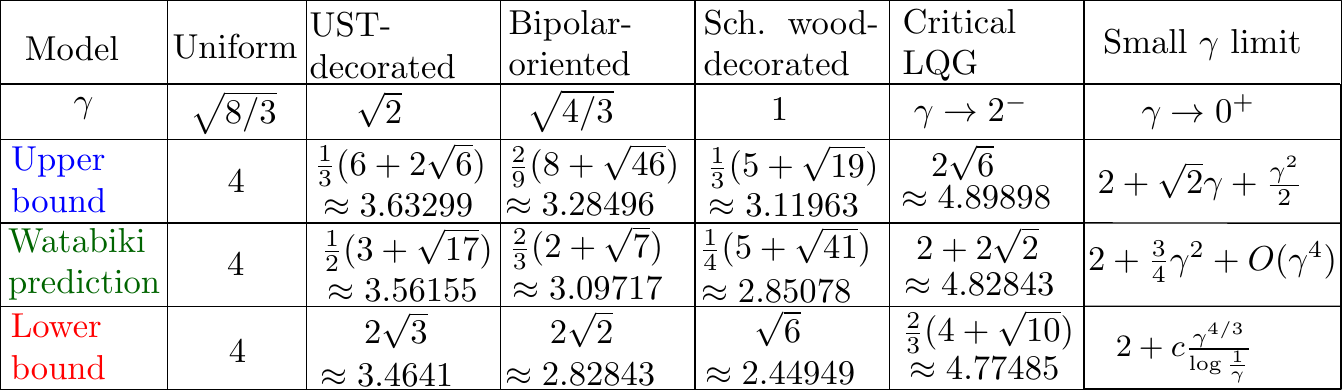}  
\vspace{-0.01\textheight}
\caption{ Table of known upper and lower bounds for $d_\gamma$ and the Watabiki prediction for several special values of $\gamma$. We emphasize that we do not treat the critical case $\gamma=2$ in this paper: the bounds shown in the table for critical LQG are bounds for $\lim_{\gamma\rta 2^-} d_\gamma$. The lower bound for the asymptotics as $\gamma\rta 0^+$ is the only place where known bounds are inconsistent with the Watabiki prediction. 
}\label{fig-exponent-table}
\end{center}
\vspace{-1em}
\end{figure}

We now express several other quantities in terms of $d_\gamma$. 
We start with a result to the effect that the exponent $d_\gamma$ describes not only point-to-point distances but also diameters and distances between sets. We can also require that the paths used in the definition of Liouville graph distance stay in a fixed open set. 

\begin{defn}[Restricted Liouville graph distance and LFPP] \label{def-restricted-lgd}
For a GFF-type distribution $h$ on $\mcl D\subset\BB C$, a domain $U\subset \mcl D$, $z,w\in U$, and $\ep > 0$, we define the \emph{restricted Liouville graph distance} $D_h^\ep(z,w;U)$ to be the smallest $N\in\BB N$ for which there is a collection of $N$ Euclidean balls \emph{contained in $\ol U$} which have $\gamma$-LQG mass at most $\ep$ with respect to $h$ and whose union contains a continuous path from $z$ to $w$. We similarly define the \emph{restricted LFPP distance} $D_{h,\LFPP}^\delta(z,w; U)$ for $\delta>0$ to be the infimum over all piecewise continuously differentiable paths $P : [0,T] \rta \ol U$ of the quantity $\int_0^T e^{\xi h_\delta(P(t))} |P'(t)| \,dt$. 
For $A,B\subset U$, we also define 
\eqb \label{eqn-lqg-set}
D_h^\ep(A,B ; U) := \min_{z\in A , w\in B} D_{h}^\ep(z,w ; U)  
\quad \op{and} \quad
D_{h,\LFPP}^\delta(A,B ; U) := \min_{z\in A , w\in B} D_{h,\LFPP}(z,w ; U) .
\eqe  
\end{defn}
 
To avoid unnecessary technicalities related to the boundary, in what follows (and throughout most of our proofs) we will consider the case when $D = \BB C$ and $h$ is a whole-plane GFF on $\BB C$ normalized so that its circle average over $\bdy\BB D$ is 0 (here and throughout the paper $\BB D$ denotes the open Euclidean unit disk).
It is easy to compare other variants of the GFF to $h$ away from the boundary of their respective domains using local absolute continuity; see Lemma~\ref{lem-whole-plane-compare}. 

\begin{thm}[Bounds for Liouville graph distance] \label{thm-diam}
Let $h$ be a whole-plane GFF normalized so that its circle average over $\bdy\BB D$ is zero. 
For each $z, w \in \BB C$, almost surely
\eqb \label{eqn-d-def}
\lim_{\ep\rta 0} \frac{\log D_h^\ep(z,w)}{\log\ep^{-1}} = \frac{1}{d_\gamma} .
\eqe
Furthermore, for each open set $U\subset\BB C$ and each compact connected set $K\subset U$, almost surely
\eqb \label{eqn-d-diam}
\lim_{\ep\rta 0} \frac{ \log \max_{z,w\in K} D_h^\ep\left( z , w ; U \right)    }{ \log \ep^{-1}   }
 = \lim_{\ep\rta 0} \frac{ \log   D_h^\ep\left( K , \bdy U \right)    }{ \log \ep^{-1}   }
 = \frac{1}{d_\gamma} . 
\eqe 
\end{thm}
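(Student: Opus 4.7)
The overall strategy is to reduce Theorem~\ref{thm-diam} to Theorem~\ref{thm-dzz} using three tools: local absolute continuity of GFF-type distributions (Lemma~\ref{lem-whole-plane-compare}), which transfers Theorem~\ref{thm-dzz} from the zero-boundary GFF on $\BB S$ to the whole-plane GFF $h$; the translation and scale invariance of the whole-plane GFF modulo additive constant; and a regularity estimate on $\mu_h$ that controls the Euclidean diameter of balls of small $\mu_h$-mass.

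The first step is to establish the restricted point-to-point version: for any distinct $z, w \in \BB C$ and any bounded open $V \ni z, w$,
\[
\lim_{\ep\rta 0}\frac{\log D_h^\ep(z, w; V)}{\log\ep^{-1}} = \frac{1}{d_\gamma} \quad \text{a.s.}
\]
For this, Lemma~\ref{lem-whole-plane-compare} couples $h|_V$ with a zero-boundary GFF $h^U|_V$ on some $U \supset\supset V$ so that their difference on $V$ is a random harmonic function a.s.\ bounded on $V$. Adding a bounded function to the field multiplies $\mu_h$ by a bounded positive factor, which only rescales $\ep$ by a bounded random constant in the definition of Liouville graph distance and leaves the logarithmic exponent invariant. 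Theorem~\ref{thm-dzz} applied to $h^U$ (equivalent to $h^{\BB S}$ up to conformal rescaling) then gives the displayed limit for $z,w$ inside a fixed small square. Translation and scale invariance of $h$ (namely $h(r\cdot + z_0) - h_r(z_0) \eqD h$), combined with the Weyl scaling of $\mu_h$, yield an equality in law $D_h^\ep(r z + z_0, r w + z_0; r V + z_0) \eqD D_h^{c\ep}(z, w; V)$ for an $\ep$-independent a.s.\ finite random constant $c > 0$; this transfers the exponent to all pairs of distinct points and all bounded open containing sets.

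For the unrestricted point-to-point equality \eqref{eqn-d-def}, the upper bound is immediate from $D_h^\ep(z, w) \leq D_h^\ep(z, w; V)$. The lower bound requires ruling out coverings by balls that wander far from any fixed bounded set, for which I would use the following regularity estimate (obtainable from standard GMC moment bounds for the reciprocal of $\mu_h(B_r(z))$): for any compact $K_0 \subset \BB C$, a.s.\ there exists $\alpha = \alpha(\gamma) > 0$ such that for all sufficiently small $\ep$, every Euclidean ball intersecting $K_0$ with $\mu_h$-mass at most $\ep$ has Euclidean diameter at most $\ep^{\alpha}$. Given this, I would bound $D_h^\ep(z, w) \geq D_h^\ep(z, \bdy B_R(z))$ for $R = |z-w|/2$ (any path from $z$ to $w$ crosses $\bdy B_R(z)$), then observe that any path from $z$ to its first hit of $\bdy B_R(z)$ stays in $\ol{B_R(z)}$, so its covering balls all lie in $B_{R+\ep^\alpha}(z) \subset B_{R'}(z)$ for any fixed $R' > R$; this gives $D_h^\ep(z, \bdy B_R(z)) \geq D_h^\ep(z, \bdy B_R(z); B_{R'}(z))$, and the restricted set distance on the right has exponent $1/d_\gamma$ by the restricted analogue of \eqref{eqn-d-diam}, proven as below.

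The remaining parts of \eqref{eqn-d-diam} follow by parallel reasoning. For the restricted diameter $\max_{z,w \in K} D_h^\ep(z, w; U)$ the upper bound comes from covering $K$ by finitely many open sub-domains $V_1, \ldots, V_m$ with $V_i \subset\subset U$, arranged so that any two points of $K$ are joined by a concatenation of $O(1)$ sub-paths lying in $V_1 \cup \cdots \cup V_m \subset U$, yielding a total of $O(\ep^{-1/d_\gamma + o(1)})$ balls via the restricted point-to-point exponent in each $V_i$; the lower bound follows by specializing to any pair of distinct points of $K$. The unrestricted set distance $D_h^\ep(K, \bdy U)$ is upper-bounded by the restricted diameter of any compact $K' \supset K \cup \{w\}$ for $w \in \bdy U$ inside $U$, and lower-bounded by applying the regularity estimate to an annular neighborhood of $K$ inside $U$ and reducing to the restricted LGD across that annulus. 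The main obstacle I expect is making the regularity estimate sharp enough and uniform enough: one needs to control the minimum in the definition of the set distance, and to rule out long paths passing through regions of atypically negative field, which requires a careful union bound over both spatial locations and dyadic scales.
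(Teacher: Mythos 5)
Your reduction of the point-to-point statement~\eqref{eqn-d-def} to Theorem~\ref{thm-dzz} via Lemma~\ref{lem-whole-plane-compare}, scaling, and the confinement argument using the maximal radius of mass-$\ep$ balls is essentially the paper's Lemma~\ref{lem-whole-plane-d} (the paper confines paths using the distance-to-boundary estimate from~\cite{dzz-heat-kernel} rather than a ball-radius bound, but this part is fine). The genuine gap is in your treatment of the diameter bound $\max_{z,w\in K} D_h^\ep(z,w;U)$, which is exactly what the paper identifies as the main difficulty of Theorem~\ref{thm-diam}. Covering $K$ by finitely many subdomains $V_1,\dots,V_m$ and invoking ``the restricted point-to-point exponent in each $V_i$'' does not work: the point-to-point estimate holds for each \emph{fixed} pair of points (a.s., or with polynomially high probability of unspecified exponent), while the maximum over $z,w\in K$ is a supremum over uncountably many pairs, and the endpoints of your concatenated sub-paths cannot be fixed in advance. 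The natural repair --- a union bound over an $\ep^{\ol\beta}$-net of $K$ together with the regularity estimate of Lemma~\ref{lem-max-ball-radius} --- also fails as stated: the net has order $\ep^{-2\ol\beta}$ points with $\ol\beta > 2/(2-\gamma)^2$ bounded away from $0$, whereas the probability exponent available from~\cite{dzz-heat-kernel} is some unquantified small constant, so the entropy is not beaten. Your closing remark locates the delicate union bound in the lower bound for the set distance, but that direction is comparatively soft; the uniformity problem sits in the upper bound.

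What the paper does to close this gap is a concentration-boosting step that your proposal lacks: using the truncated white-noise field $\wh h^\tr$, whose restrictions to sets at distance $\geq 1/5$ are exactly independent, it runs a subcritical-percolation argument (Lemma~\ref{lem-rectangle-perc}) showing that the Liouville graph distance across an $n\times 2n$ rectangle is at most $n^2 e^{n^{1/2}}\ep^{-1/(d_\gamma-\zeta)}$ with probability $1-a_0e^{-a_1 n}$; each unit square only needs the fixed-probability input from~\cite{dzz-heat-kernel}, and independence upgrades this to exponential concentration. Taking $n\approx(\log\ep^{-1})^{3/2}$ makes the failure probability superpolynomial in $\ep$, which is what makes union bounds over all dyadic scales and all $2^{-m+1}\times 2^{-m}$ rectangles affordable (Lemmas~\ref{lem-rectangle-dist}--\ref{lem-rectangle-dist-multi}, using the exact scaling of $\wh h$ and the coarse field $\wh h_\delta$). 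The diameter bound then follows by chaining crossings of dyadic rectangles (the sets $X_S$ in Proposition~\ref{prop-diam}) down to the scale at which single mass-$\ep$ balls take over. Without some mechanism of this kind --- either exact spatial independence plus percolation, or an a priori superpolynomial concentration estimate for point-to-point distances --- your outline cannot produce the uniform upper bound in~\eqref{eqn-d-diam}.
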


The main difficulty in the proof of Theorem~\ref{thm-diam} is relating diameters and point-to-point distances. This is carried out in Section~\ref{sec-diam}. 
The convergence~\eqref{eqn-d-def} follows easily from the definition of $d_\gamma$ in Theorem~\ref{thm-dzz} and the relationship between the whole-plane and zero-boundary GFFs. The second convergence in~\eqref{eqn-d-diam} is also a relatively straightforward consequence of results from~\cite{dzz-heat-kernel}. 
 
Our next result says that distances with respect to the Liouville first passage percolation metric $D_{h,\LFPP}^\delta$ for $\delta>0$ can also be described in terms of $\gamma$ and $d_\gamma$. 
 
\begin{thm}[Bounds for Liouville first passage percolation] \label{thm-lfpp-compare}
Let $\gamma \in (0,2)$ and let $D_{h,\LFPP}^\delta$ for $\delta>0$ denote the LFPP distance with parameter $\xi = \gamma/d_\gamma$, for $h$ a whole-plane GFF normalized as above. 
For each pair of distinct points $z,w\in \BB C$, it holds with probability tending to 1 as $\delta \rta 0$ that 
\eqb  \label{eqn-lfpp-compare}
 D_{h,\LFPP}^\delta(z,w) =  \delta^{1 - \frac{2}{d_\gamma} - \frac{\gamma^2}{2 d_\gamma}  + o_\delta(1) } .
\eqe
Furthermore, for each open set $U\subset \BB C$ and each compact set $K\subset U$, it holds with probability tending to 1 as $\delta \rta 0$ that
\eqb \label{eqn-lfpp-diam}
 \max_{z,w\in K} D_{h,\LFPP}^\delta\left( z , w ; U \right) = \delta^{1 - \frac{2}{d_\gamma} - \frac{\gamma^2}{2 d_\gamma}  + o_\delta(1) } 
 \quad\op{and} \quad   D_{h,\LFPP}^\delta\left( K , \bdy U \right) = \delta^{1 - \frac{2}{d_\gamma} - \frac{\gamma^2}{2 d_\gamma}  + o_\delta(1) }  .
\eqe 
\end{thm}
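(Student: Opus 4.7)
The strategy is to compare the LFPP metric at Euclidean scale $\delta$ directly to the Liouville graph distance (LGD) at $\gamma$-LQG mass scale $\ep := \delta^{\gamma Q}$, where $Q := 2/\gamma + \gamma/2$, and then to invoke Theorem~\ref{thm-diam}. The scale matching $\ep = \delta^{\gamma Q}$ is suggested by the standard approximation $\mu_h(B_\delta(z)) \asymp \delta^{\gamma Q} e^{\gamma h_\delta(z)}$, so that at a ``typical'' point with $h_\delta(z) = O(1)$ the ball $B_\delta(z)$ has $\gamma$-LQG mass of order $\ep$. Under this matching, the target exponent $1 - 2/d_\gamma - \gamma^2/(2 d_\gamma) = 1 - \gamma Q/d_\gamma$ is exactly what one obtains by combining a factor of $\delta$ per ball with the LGD asymptotic $D_h^\ep \asymp \ep^{-1/d_\gamma}$ from Theorem~\ref{thm-diam}. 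Concretely, I plan to establish, on a high-probability event, the two-sided comparison
\begin{equation*}
D_{h,\LFPP}^\delta(z,w) = \delta^{1 + o_\delta(1)} D_h^\ep(z,w),
\end{equation*}
together with the analogous statements for diameters and distances to $\bdy U$.

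For the upper bound on $D_{h,\LFPP}^\delta$, I would take a near-optimal $\ep$-LGD chain of Euclidean balls $B_{r_i}(z_i)$, $i = 1,\ldots,N$, whose union contains a continuous path from $z$ to $w$, with $N \leq D_h^\ep(z,w) + 1$, and build the natural piecewise-linear path $P$ through the centers. The LFPP length of $P$ decomposes into contributions at most $(r_i + r_{i-1}) \sup_{B_{r_i + r_{i-1}}(z_i)} e^{\xi h_\delta}$. The mass constraint $r_i^{\gamma Q} e^{\gamma h_{r_i}(z_i) + O(1)} \leq \ep = \delta^{\gamma Q}$ controls $r_i$ in terms of the circle average $h_{r_i}(z_i)$, and a careful combination with circle-average comparison between scales $r_i$ and $\delta$, together with the specific choice $\xi = \gamma/d_\gamma$, bounds the per-ball contribution by $\delta^{1 + o_\delta(1)}$ uniformly on a high-probability event. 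Summing over $i$ yields the upper bound.

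For the lower bound on $D_{h,\LFPP}^\delta$, I would reverse this argument: starting from a near-optimal piecewise-$C^1$ path $P$ of LFPP cost $L := D_{h,\LFPP}^\delta(z,w)$, I would subdivide $P$ into consecutive arcs each of LFPP length at most $\delta^{1 + o_\delta(1)}$, so that the number of arcs is at most $L \cdot \delta^{-1 - o_\delta(1)}$. Each such arc has small Euclidean diameter, controlled via a lower bound on $e^{\xi h_\delta}$ along the arc, and can therefore be enclosed in a single Euclidean ball whose $\gamma$-LQG mass is at most $\ep$. This produces an $\ep$-LGD chain from $z$ to $w$ of length at most $L \cdot \delta^{-1 - o_\delta(1)}$, and Theorem~\ref{thm-diam} then forces $L \geq \delta^{1 - \gamma Q/d_\gamma + o_\delta(1)}$.

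The main technical obstacle is controlling the pointwise fluctuations of the circle average $h_\delta$, which are of order $(\log \delta^{-1})^{1/2}$ over a compact set and translate into potential mismatches between Euclidean radii, $\gamma$-LQG masses, and $e^{\xi h_\delta}$. The specific choice $\xi = \gamma/d_\gamma$, combined with the matching $\ep = \delta^{\gamma Q}$, is exactly what absorbs these mismatches into $\delta^{o_\delta(1)}$ factors rather than genuine polynomial corrections; the uniform control is provided by standard Gaussian tail and modulus-of-continuity estimates for the GFF circle average. Finally, the diameter and boundary-distance conclusions in~\eqref{eqn-lfpp-diam} follow by combining the point-to-point comparison above with the corresponding statements of Theorem~\ref{thm-diam}, using the set-monotonicity of both $D_{h,\LFPP}^\delta(\cdot,\cdot;U)$ and $D_h^\ep(\cdot,\cdot;U)$ together with a union bound over a $\delta^{o_\delta(1)}$-net in $K$.
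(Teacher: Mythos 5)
There is a genuine gap, and it lies exactly where you locate the ``main technical obstacle.'' The maximum of $|h_\delta|$ over a fixed compact set is of order $2\log\delta^{-1}$, not $(\log\delta^{-1})^{1/2}$ (the pointwise standard deviation is $\sqrt{\log\delta^{-1}}$, but there are $\approx \delta^{-2}$ essentially independent points), so factors of the form $e^{\xi h_\delta}$ or $e^{\gamma h_\delta}$ at thick/thin points are \emph{polynomial} in $\delta$, not $\delta^{o_\delta(1)}$. Consequently the per-ball and per-arc comparisons with the matching $\ep=\delta^{\gamma Q}$ do not close. Quantitatively: at a point of thickness $\alpha$ (where $h_s \approx \alpha\log s^{-1}$), a mass-$\ep$ ball has radius $\approx \delta^{Q/(Q-\alpha)}$, and traversing it in your upper-bound construction costs about $\delta^{Q/(Q-\alpha)-\xi\alpha}=\delta^{1+\alpha(\xi-1/Q)+O(\alpha^2)}$; this is $\delta^{1+o_\delta(1)}$ for all relevant $\alpha$ only if $\xi=1/Q$, i.e.\ $d_\gamma=2+\gamma^2/2$, which is false (e.g.\ $d_{\sqrt{8/3}}=4>10/3$). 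Worse, slightly thin points are precisely where Liouville graph distance balls are largest (by the threshold in Lemma~\ref{lem-max-ball-radius}, some mass-$\ep$ balls have radius $\ep^{2/(2+\gamma)^2+o(1)}\gg\delta$ at your scale matching), so a near-optimal LGD chain actively seeks them out, and there the per-ball LFPP cost exceeds $\delta$ by a polynomial factor. Symmetrically, in your lower bound an arc of LFPP length $\delta^{1+o(1)}$ passing through a region where $h_\delta$ is atypically negative can have Euclidean diameter polynomially larger than $\delta$, and there is no reason the enclosing Euclidean ball has $\mu_h$-mass at most $\ep$, since the circle average at the larger scale need not be correspondingly negative. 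So the asserted pathwise identity $D_{h,\LFPP}^\delta=\delta^{1+o_\delta(1)}D_h^{\delta^{\gamma Q}}$ is not obtained by these uniform estimates, and the argument as proposed does not produce the theorem.

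The paper's proof is structured to avoid exactly this issue: it compares $\delta$-LFPP with $\ep$-Liouville graph distance at a \emph{mesoscopic} scale $\delta=\ep^\beta$ with $\beta<2/(2+\gamma)^2$ strictly below the thin-point threshold, so every mass-$\ep$ ball is much smaller than the LFPP coarse-graining scale and each $\delta$-square is crossed by many balls. The key inputs are crossing estimates for $\delta$-rectangles and $\delta$-annuli (Lemmas~\ref{lem-rectangle-dist} and~\ref{lem-square-dist}) that \emph{retain} the coarse-field factor $\exp\bigl(\frac{\gamma}{d_\gamma}\wh h_\delta\bigr)$ rather than bounding it; this factor then cancels term-by-term against the LFPP weight $e^{\xi h_\delta}$ with $\xi=\gamma/d_\gamma$ when one sums along a path (Propositions~\ref{prop-lfpp-lower} and~\ref{prop-lfpp-upper}), so no uniform control of the field is needed. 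The concentration behind those crossing estimates comes from percolation-style arguments for the spatially independent truncated field $\wh h^\tr$, combined with the exact scale invariance of $\wh h$ and conditioning on $\wh h_\delta$ — ingredients absent from your proposal and, as far as I can see, not replaceable by circle-average tail and modulus-of-continuity bounds alone. If you want to salvage your outline, the fix is essentially to re-introduce this mesoscopic layer: work at $\delta=\ep^\beta$ with $\beta$ below the threshold, and prove field-dependent crossing bounds so that the comparison between LFPP and LGD is made per $\delta$-square with the factor $e^{(\gamma/d_\gamma)h_\delta}$ kept explicit on both sides.
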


See Section~\ref{sec-lfpp-exponent} a one-page heuristic explanation (using scaling properties of $\gamma$-LQG) of the choice $\xi = \gamma/d_\gamma$ and the exponent appearing in~\eqref{eqn-lfpp-compare}.
It was pointed out to us by R{\'e}mi Rhodes and Vincent Vargas that the relation $\xi = \gamma/d_\gamma$ is consistent with the physics literature, see, e.g.~\cite{watabiki-lqg}. 

We will prove slightly more quantitative variants of Theorems~\ref{thm-diam} and~\ref{thm-lfpp-compare} below, which give polynomial bounds on the rate of convergence of probabilities.

We also show that $d_\gamma$ describes distances in certain random planar maps. 
Consider the following infinite-volume random rooted planar maps $ (M, \BB v )  $, each equipped with its standard root vertex. In each case, the corresponding $\gamma$-LQG universality class is indicated in parentheses. 
\begin{enumerate}
\item The \emph{uniform infinite planar triangulation} (UIPT) of type II, which is the local limit of uniform triangulations with no self-loops, but multiple edges allowed~\cite{angel-schramm-uipt} ($\gamma=\sqrt{8/3}$). 
\item The \emph{uniform infinite spanning-tree decorated planar map}, which is the local limit of random spanning-tree weighted planar maps~\cite{shef-burger,chen-fk} ($\gamma = \sqrt 2$).
\item The \emph{uniform infinite bipolar oriented planar map}, as constructed in~\cite{kmsw-bipolar}\footnote{See~\cite[Section~3.3]{ghs-map-dist} for a careful proof that the infinite-volume bipolar-oriented planar maps considered in this paper exist as Benjamini-Schramm~\cite{benjamini-schramm-topology} limits of finite bipolar-oriented maps.} ($\gamma = \sqrt{4/3}$). 
\item More generally, one of the other distributions on infinite bipolar-oriented maps considered in~\cite[Section~2.3]{kmsw-bipolar} for which the face degree distribution has an exponential tail and the correlation between the coordinates of the encoding walk is $-\cos(\pi\gamma^2/4)$ (e.g., an infinite bipolar-oriented $k$-angulation for $k\geq 3$ --- in which case $\gamma=\sqrt{4/3}$ --- or one of the bipolar-oriented maps with biased face degree distributions considered in~\cite[Remark~1]{kmsw-bipolar} (see also~\cite[Section 3.3.4]{ghs-map-dist}), for which $\gamma \in (0,\sqrt 2)$).
\item The \emph{uniform infinite Schnyder-wood decorated triangulation}, as constructed in~\cite{lsw-schnyder-wood} ($\gamma = 1$).
\item The $\gamma$-mated-CRT map for $\gamma \in (0,2)$, as defined in Section~\ref{sec-planar-map-discussion}. 
\end{enumerate}

\begin{thm}[Ball volume exponent for random planar maps] \label{thm-ball-size}
Let $(\mcl M , \BB v)$ be any one of the above six rooted random planar maps and let $\gamma$ be the corresponding LQG parameter. For $r\in\BB N$, let $\mcl B_r^{\mcl M}(\BB v)$ be the graph distance ball of radius $r$ centered at $\BB v$ (i.e., the set of vertices lying at graph distance at most $r$ from $\BB v$) and write $\#\mcl B_r^{\mcl M}(\BB v)$ for its cardinality. Almost surely,
\eqb \label{eqn-ball-size}
\lim_{r \rta \infty} \frac{ \log \#\mcl B_r^{\mcl M}(\BB v)}{\log r} = d_\gamma .
\eqe  
\end{thm}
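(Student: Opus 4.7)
The plan is to first establish~\eqref{eqn-ball-size} for the $\gamma$-mated-CRT map $\mcl G^\gamma$ using its canonical embedding into the plane, and then transfer the result to the other five random planar maps by invoking the strong graph-distance couplings of~\cite{ghs-map-dist} (together with their analogues for the UIPT and the Schnyder-wood decorated triangulation). These couplings compare graph distances in each map to graph distances in the corresponding mated-CRT map up to polylogarithmic factors, so since $r \mapsto r^{d_\gamma}$ is polynomial they automatically transfer the exponent~\eqref{eqn-ball-size} without loss.

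For the mated-CRT map itself, recall that $\mcl G^\gamma$ has a canonical embedding in $\mathbb{C}$ whose vertex set consists of the unit-$\mu_h$-mass cells $H_x = \eta([x-1,x])$, $x \in \mathbb{Z}$, for $(h,\eta)$ a $\gamma$-quantum cone paired with an independent space-filling $\mathrm{SLE}_{16/\gamma^2}$ parameterized by $\mu_h$-area; two cells are adjacent iff they share a nondegenerate boundary arc. Because each cell has $\mu_h$-mass equal to one, the cardinality $\#\mcl B_r^{\mcl G^\gamma}(\BB v)$ equals, up to additive boundary corrections negligible on the exponential scale, the $\mu_h$-mass of the region $U_r$ swept out by the cells within graph distance $r$ of the root cell. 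The task thus reduces to showing $\mu_h(U_r) = r^{d_\gamma + o(1)}$ almost surely.

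The heart of the argument is a two-sided comparison between mated-CRT graph distance and the restricted Liouville graph distance $D_h^1$ from Definition~\ref{def-restricted-lgd}. In one direction, a chain of $r$ adjacent cells provides a covering of a path by $r$ Euclidean balls of $\mu_h$-mass $O(1)$, so mated-CRT graph distance dominates $D_h^{O(1)}$ up to a constant. In the reverse direction, an almost-optimal covering of a path realizing $D_h^1(z,w)$ can be converted into a chain of adjacent cells of comparable length, using standard a.s.\ control on the Euclidean diameters of cells in a fixed window. Inserting this two-sided comparison into Theorem~\ref{thm-diam}---applied after passing to a fixed-size window via Lemma~\ref{lem-whole-plane-compare} to swap the quantum cone for a whole-plane GFF---shows that $\mu_h(U_r) = r^{d_\gamma + o(1)}$. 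Equivalently, $d_\gamma$ is the unique exponent compatible with the cell-area normalization $\mu_h(H_x)=1$ and the scaling $D_h^\epsilon \asymp \epsilon^{-1/d_\gamma}$ at unit Euclidean distance.

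The main obstacle I foresee is upgrading the in-probability statements of Theorems~\ref{thm-diam} and~\ref{thm-dzz} to the almost sure asymptotics required by~\eqref{eqn-ball-size}. I would handle this via a Borel-Cantelli argument along the dyadic sequence $r=2^k$, which needs polynomial-in-$r$ tail bounds on the deviation events---exactly the purpose of the quantitative refinements of Theorems~\ref{thm-diam} and~\ref{thm-lfpp-compare} announced immediately before the statement of Theorem~\ref{thm-ball-size}. The monotonicity $\mcl B_r \subset \mcl B_{r+1}$ then interpolates between dyadic radii, and a parallel dyadic Borel-Cantelli step ensures the polylog errors in the mated-CRT couplings hold simultaneously along the same scale.
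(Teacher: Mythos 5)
Your overall route is the same as the paper's (express the mated-CRT map as the adjacency graph of space-filling SLE cells on a $\gamma$-quantum cone, compare cell-graph distance to Liouville graph distance, feed in Theorem~\ref{thm-diam}, upgrade to a.s.\ statements by a dyadic union bound, and transfer to the other maps via the couplings of~\cite{ghs-map-dist}), but there is a genuine gap in how you handle the $\gamma$-quantum cone. You propose to ``swap the quantum cone for a whole-plane GFF'' via Lemma~\ref{lem-whole-plane-compare}, but that lemma compares a whole-plane GFF to a zero-boundary GFF; it cannot remove the cone. The circle-average embedding of the cone agrees in law with a whole-plane GFF plus $-\gamma\log|\cdot|$ only on $\BB D$, and the function $\log|\cdot|$ has infinite Dirichlet energy near the origin, so no absolute-continuity or bounded-additive-function argument lets you replace the cone by a GFF in a neighborhood of the root --- which is exactly where your ball $\mcl B_r^{\mcl M}(\BB v)$ is centered. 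Moreover, your normalization (unit-mass cells, $r\rta\infty$) forces the region $U_r$ to grow beyond $\BB D$, where even the ``GFF plus log'' description fails. The paper avoids both problems by exploiting $\mcl G^\ep \eqD \mcl G$ and taking $\ep = r^{-d_\gamma\pm\zeta}\rta 0$ inside a fixed ball $B_\rho(0)\subset\BB D$ (Proposition~\ref{prop-crt-ball}), and the transfer from GFF to cone (Proposition~\ref{prop-cone-diam}) is real work: a dyadic-annulus decomposition with the rescaling~\eqref{eqn-scaled-field} to control the effect of the log singularity scale by scale, a separate crossing estimate showing $B_{\ep^q}(0)$ is covered by $\ep^{-o(1)}$ cells, and a passage through the monotone variant $\wt D_{h,\eta}^\ep$ and Lemma~\ref{lem-sle-metric} because $D_{h,\eta}^\ep$ is not monotone in $\ep$. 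None of this is addressed in your sketch, and it is the most delicate part of the proof.

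A second, smaller issue is the two-sided comparison itself. The direction ``a chain of $r$ adjacent cells gives a covering by $r$ Euclidean balls of mass $O(1)$'' does not work as stated: a cell is not a ball, and the ball circumscribing a mass-$\ep$ cell can have much larger $\mu_h$-mass. The paper gets around this by comparing $D_{h,\eta}^\ep$ not with $D_h^\ep$ directly but with the variants $\ol D_h^\ep$ and $\ul D_h^\ep$ of~\eqref{eqn-lgd-upper}--\eqref{eqn-lgd-lower} (Proposition~\ref{prop-sle-metric-compare}), using the space-filling SLE ``roundness'' estimates of~\cite{ghm-kpz} (each cell crossing an annulus contains a ball of radius comparable to its diameter up to $\ep^{o(1)}$) together with Lemma~\ref{lem-circle-avg-ball-mass}, and then proves separately that these variants have the same exponent (Proposition~\ref{prop-lgd-variant}). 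Your appeal to ``standard a.s.\ control on cell diameters'' covers the easier containment direction but not this; to make the argument rigorous you would need to introduce the analogous variant distances (or an equivalent device) rather than compare to $D_h^\ep$ directly.
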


Theorem~\ref{thm-ball-size} is proven using the SLE/LQG representation of the mated-CRT map~\cite{wedges} together with the strong coupling between the mated-CRT map and other random planar maps~\cite{ghs-map-dist}. See Section~\ref{sec-planar-map-discussion} for more details. 

Building on Theorem~\ref{thm-ball-size} and the lower bound for the displacement of the random walk on $\mcl M$ from~\cite{gm-spec-dim}, it is shown in~\cite{gh-displacement} that the graph distance traveled by a simple random walk on $\mcl M$ run for $n$ steps is typically of order $n^{1/d_\gamma + o_n(1)}$. Since we know that $d_\gamma > 2$, this implies in particular that the simple random walk on each of the above maps is subdiffusive and that the subdiffusivity exponent is the reciprocal of the ball volume exponent.

We note that subdiffusivity in the case of the UIPT/UIPQ, with a non-optimal exponent, was previously established by Benjamini and Curien~\cite{benjamini-curien-uipq-walk}. Also, Theorem~\ref{thm-ball-size} combined with a recent result of Lee~\cite[Theorem 1.9]{lee-conformal-growth} implies subdiffusivity with the non-optimal exponent $1/(d_\gamma-1)$ in the case when $d_\gamma > 3$ (by Theorem~\ref{thm-d} this is the case for $\gamma > \sqrt{3/2}$).

\subsection{Discussion of bounds for $d_\gamma$} 
\label{sec-bounds}

As we will see in Section~\ref{sec-mono}, our bounds~\eqref{eqn-d-bound} for $d_\gamma$ turn out to be almost immediate consequences of the relationships between exponents from our other results. Indeed, our result for Liouville first passage percolation (Theorem~\ref{thm-lfpp-compare}) allows us to deduce that certain functions of $\gamma$ and $d_\gamma$ are increasing in $\gamma$. In particular, we have the following, which will be proven (via a two-page argument) in Section~\ref{sec-mono}. 
 
\begin{prop} \label{prop-increase}
The function 
\eqb \label{eqn-increase-exponent}
\gamma \mapsto \frac{\gamma}{d_\gamma} 
\eqe 
is strictly increasing on $(0,2)$ and the function 
\eqb \label{eqn-increase-dist}
\gamma \mapsto 1-\frac{2}{d_\gamma} - \frac{\gamma^2}{2d_\gamma} + \frac{\gamma^2}{2d_\gamma^2}
\eqe
is non-decreasing on $(0,2)$. 
\end{prop}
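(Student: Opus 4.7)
The plan is to combine Theorem \ref{thm-lfpp-compare} with a Cameron--Martin-type comparison of LFPP distances at different values of the parameter $\xi$, and then exploit the explicit algebraic form of the resulting exponent.

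\textbf{Step 1 (Moment comparison for LFPP).} Let $h, \tilde h$ be two independent whole-plane GFFs, normalized as in Theorem \ref{thm-lfpp-compare}. For any $c > 0$, the field $h + c\tilde h$ has covariance $(1+c^2)$ times that of $h$, hence $h + c\tilde h \eqD \sqrt{1+c^2}\,h'$ for a GFF $h'$, and therefore
$$
D_{h+c\tilde h,\LFPP}^{\xi,\delta}(z,w) \;\eqD\; D_{h',\LFPP}^{\xi\sqrt{1+c^2},\,\delta}(z,w).
$$
Substituting a near-optimizer $P_h^*$ of $D_{h,\LFPP}^{\xi,\delta}$ (independent of $\tilde h$) into the LFPP functional for $h + c\tilde h$ and integrating over $\tilde h$, using $\mathbb{E}[e^{\xi c\tilde h_\delta(z)}] = \delta^{-\xi^2 c^2/2 + o(1)}$, yields the first-moment bound
$$
\mathbb{E}\bigl[D_{h',\LFPP}^{\xi',\delta}(z,w)\bigr] \;\leq\; \delta^{-(\xi'^2-\xi^2)/2 + o_\delta(1)}\,\mathbb{E}\bigl[D_{h,\LFPP}^{\xi,\delta}(z,w)\bigr], \qquad \xi' := \xi\sqrt{1+c^2}.
$$
Combined with Theorem \ref{thm-lfpp-compare} (and the quantitative polynomial-rate refinement announced just afterwards, to ensure first moments reflect typical behavior), this translates into the exponent inequality: whenever $\xi(\gamma_1) \leq \xi(\gamma_2)$, with $\xi(\gamma) := \gamma/d_\gamma$ and $Q(\gamma) := 1 - 2/d_\gamma - \gamma^2/(2d_\gamma)$,
$$
Q(\gamma_1) + \tfrac{1}{2}\xi(\gamma_1)^2 \;\leq\; Q(\gamma_2) + \tfrac{1}{2}\xi(\gamma_2)^2.
$$
The two sides are exactly the function $F(\gamma) := 1 - 2/d_\gamma - \gamma^2/(2d_\gamma) + \gamma^2/(2d_\gamma^2)$ of part (b). Exchanging the roles of $h$ and $\tilde h$ yields the converse implication $\xi(\gamma_1) \geq \xi(\gamma_2) \Rightarrow F(\gamma_1) \geq F(\gamma_2)$.

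\textbf{Step 2 (Algebraic shape of $F$).} Viewing $F$ as a function of two independent variables $\gamma$ and $\xi$,
$$
F(\gamma, \xi) = 1 - \tfrac{2\xi}{\gamma} - \tfrac{\gamma \xi}{2} + \tfrac{\xi^2}{2},
$$
a direct computation gives $\partial_\gamma F = \xi(2/\gamma^2 - 1/2) > 0$ for $\gamma \in (0,2)$, $\xi > 0$, and $\partial_\xi F = \xi - 2/\gamma - \gamma/2 < 0$ in the regime of interest: since $d_\gamma > 2$ (Theorem \ref{thm-dzz}), $\xi = \gamma/d_\gamma < \gamma/2 < 1$, while AM--GM gives $2/\gamma + \gamma/2 \geq 2$. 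Hence $F$ is strictly increasing in $\gamma$ and strictly decreasing in $\xi$ on the set of $(\gamma,\xi)$ that actually arises.

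\textbf{Step 3 (Assembly).} Suppose, for contradiction, that $\gamma_1 < \gamma_2$ in $(0,2)$ satisfy $\xi(\gamma_1) > \xi(\gamma_2)$. Step 1 then gives $F(\gamma_1) \geq F(\gamma_2)$, whereas Step 2 --- applied along the path $(\gamma_1,\xi_1) \to (\gamma_2,\xi_2)$, which strictly increases $\gamma$ and strictly decreases $\xi$, both of which strictly increase $F$ --- forces $F(\gamma_2) > F(\gamma_1)$, a contradiction. Thus $\xi(\gamma)$ is non-decreasing on $(0,2)$. Moreover, if $\xi(\gamma_1) = \xi(\gamma_2)$ for some $\gamma_1 \neq \gamma_2$, then Step 1 applied in both directions forces $Q(\gamma_1) = Q(\gamma_2)$; substituting $d_{\gamma_2} = d_{\gamma_1}\gamma_2/\gamma_1$ into the equation $(2+\gamma_1^2/2)/d_{\gamma_1} = (2+\gamma_2^2/2)/d_{\gamma_2}$, one line of algebra reduces to $\gamma_1\gamma_2 = 4$, impossible for $\gamma_i \in (0,2)$. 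Hence $\xi(\gamma)$ is strictly increasing, proving part (a); part (b) is then immediate from the forward direction of Step 1.

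\textbf{Main obstacle.} The principal technical hurdle is Step 1: upgrading the first-moment comparison into a statement about the typical exponent $Q$ appearing in Theorem \ref{thm-lfpp-compare}. LFPP is multifractal, so first moments can in principle be dominated by atypically large realizations of $e^{\xi h_\delta}$ rather than by typical paths; one therefore needs quantitative concentration of $D_{h,\LFPP}^{\xi,\delta}$ around its typical value, which is exactly what the promised polynomial rate of convergence supplies. A secondary subtlety is that the supremum of the circle-average field $\tilde h_\delta$ along any compact region grows logarithmically in $1/\delta$, and this must be absorbed into the $o_\delta(1)$ error via a standard Gaussian chaining bound. Once those inputs are in hand, Steps 2 and 3 form a compact algebraic endgame.
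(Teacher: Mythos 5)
Your Step 1 contains the right mechanism---writing $\xi' h' \eqD \xi h + \xi c \tilde h$ and feeding a near-optimal path for $h$ (measurable with respect to $h$, independent of $\tilde h$) into the LFPP functional of the combined field is exactly the idea behind the paper's Lemma~\ref{lem-lfpp-mono}---but the way you convert it into the exponent inequality has a genuine hole. After integrating over $\tilde h$ you take a further expectation over $h$ and work with $\BB E\bigl[D_{h,\LFPP}^{\xi,\delta}(z,w)\bigr]$ on both sides; to deduce $F(\gamma_1)\le F(\gamma_2)$ you then need $\BB E\bigl[D_{h,\LFPP}^{\xi_1,\delta}\bigr] \le \delta^{Q(\gamma_1)+o_\delta(1)}$, i.e.\ that the first moment is governed by the typical exponent. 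This is not supplied by Theorem~\ref{thm-lfpp-compare}, nor by its polynomially-high-probability refinement: a bound $\BB P[D > \delta^{Q-\zeta}] \le \delta^{p}$ says nothing about the contribution of the bad event to the mean, and the upper tail of LFPP is heavy---already the straight-line path gives $\BB E\bigl[D_{h,\LFPP}^{\xi,\delta}(z,w)\bigr] \preceq \delta^{-\xi^2/2+O(1)}$ via $\BB E[e^{\xi h_\delta(u)}]=\delta^{-\xi^2/2+O(1)}$, and nothing proved in the paper rules out the mean exceeding the typical value $\delta^{Q+o_\delta(1)}$ by a power of $\delta$. So the claim that the ``quantitative polynomial-rate refinement'' ensures first moments reflect typical behavior is not justified; as written, this step would fail (or at least requires a nontrivial moment/concentration estimate that is nowhere established).

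The repair is to stop one step earlier, which is what the paper's Lemma~\ref{lem-lfpp-mono} does: keep the estimate conditional on $h$, obtaining $\BB E\bigl[\delta^{\xi'^2/2} D_{h+c\tilde h,\LFPP}^{\xi',\delta}(z,w;U)\,\big|\, h\bigr] \preceq \delta^{\xi^2/2} D_{h,\LFPP}^{\xi,\delta}(z,w;U)$ with the \emph{realized} distance (not its expectation) on the right, and then apply Markov's inequality conditionally to get, under the coupling, $\delta^{\xi'^2/2} D^{\xi',\delta} \le C\,\delta^{\xi^2/2} D^{\xi,\delta}$ with probability $1-O_C(1/C)$. Combining this pathwise comparison with the in-probability statement of Theorem~\ref{thm-lfpp-compare} for each marginal gives the key implication \eqref{eqn-exponent-mono} with no moment control at all; you have already set up everything needed for this, since your near-optimizer depends only on $h$. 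Granting that input, your Steps 2--3 are correct, and they actually take a slightly different and arguably cleaner route than the paper: your two-variable computation $\partial_\gamma F>0$, $\partial_\xi F<0$ (valid along the rectangular path you use, since $\xi<\gamma/2<1<2/\gamma+\gamma/2$) rules out $\gamma_1<\gamma_2$ with $\xi(\gamma_1)>\xi(\gamma_2)$ directly, and your injectivity algebra (reducing to $\gamma_1\gamma_2=4$) matches the paper's; this bypasses the paper's detour through Proposition~\ref{prop-d-mono} and the continuity Lemma~\ref{lem-cont-increase}.
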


Theorem~\ref{thm-ball-size} together with known results for uniform triangulations~\cite{angel-peeling} shows that $d_{\sqrt{8/3}} = 4$. Combining this with Proposition~\ref{prop-increase} will yield the bounds~\eqref{eqn-d-bound} except in the case of small values of $\gamma$, in which case the bounds for the mated-CRT map obtained in~\cite[Theorem 1.10]{ghs-dist-exponent} are sharper than those obtained via monotonicity. This is the reason for the max and the min in the formulas for $\ul d_\gamma$ and $\ol d_\gamma$ in Theorem~\ref{thm-d}. We note that the lower bound for $d_\gamma$ in the small-$\gamma$ regime comes from the KPZ formula~\cite{shef-kpz} and coincides with the lower bound for $d_\gamma$ from~\cite{dzz-heat-kernel}. 
The monotonicity of $d_\gamma$ follows easily from the monotonicity of~\eqref{eqn-increase-dist} (Proposition~\ref{prop-d-mono}). 

We emphasize that the proof of our bounds for $d_\gamma$ relies crucially on the relationships between exponents. The monotonicity statements of Proposition~\ref{prop-increase} are not at all clear from the perspective of random planar maps, Liouville graph distance, and/or the Liouville heat kernel. Likewise, we do not have a direct proof that $d_{\sqrt{8/3}} = 4$ without using the theory of uniform random planar maps (the $\sqrt{8/3}$-LQG metric in~\cite{lqg-tbm1,lqg-tbm2,lqg-tbm3} is constructed in a rather indirect way which does not use Liouville graph distance or LFPP). 

If one could compute $d_{\gamma_0}$ for some $\gamma_0 \in (0,2) \setminus \{\sqrt{8/3}\}$, e.g., if one could find the volume growth exponent for metric balls in a spanning-tree weighted map (which we know is equal to $d_{\sqrt 2}$), then one could plug this into Proposition~\ref{prop-increase} to get improved bounds for $d_\gamma$ in some non-trivial interval of $\gamma$-values. 

Our results are contrary to certain predictions for the fractal dimension of $\gamma$-LQG from the physics literature. Let us first note that some physics articles have argued that the fractal dimension of $\gamma$-LQG satisfies $d_\gamma = 4$ for all $\gamma \in [\sqrt 2, 2)$ (which corresponds to central charge between $-2$ and 1); see, e.g.,~\cite{ajw-lqg-fractal,duplantier-d=4}.  
This paper is the first rigorous work to contradict this prediction: the bounds~\eqref{eqn-d-bound} show that $d_\gamma < 4$ for $\gamma \in (0,\sqrt{8/3})$ and $d_\gamma > 4$ for $\gamma \in (\sqrt{8/3}, 2)$.

The best-known prediction for the fractal dimension of $\gamma$-LQG is due to Watabiki~\cite{watabiki-lqg}, who predicted that this dimension is given by
\eqb \label{eqn-watabiki}
d_\gamma^{\op{Wat}} = 1 + \frac{\gamma^2}{4} + \frac14 \sqrt{(4+\gamma^2)^2 + 16\gamma^2} .
\eqe 
The bounds~\eqref{eqn-d-bound} are consistent with~\eqref{eqn-watabiki}, but the asymptotics~\eqref{eqn-d-asymp} as $\gamma\rta 0$ obtained in~\cite{ding-goswami-watabiki} are not. Indeed,~\eqref{eqn-watabiki} gives $d_\gamma^{\op{Wat}} = 2 + O (\gamma^2)$ as $\gamma\rta 0^+$. Theorem~\ref{thm-ball-size} shows that one has this same contradiction to Watabiki's prediction for small values of $\gamma$ for the ball volume exponent for certain random planar map models, and the results of~\cite{dzz-heat-kernel} (Theorem~\ref{thm-dzz}) shows that one also has the analogous contradiction for the Liouville heat kernel exponent. Taken together, this appears to be rather conclusive evidence that the Watabiki prediction is not correct for small values of $\gamma$. 

However, Watabiki's prediction appears to match up closely with numerical simulations (see, e.g.,~\cite{ambjorn-budd-lqg-dist}) and lies between our upper and lower bounds for $d_\gamma$ in~\eqref{eqn-d-bound}. This suggests that the true value of $d_\gamma$ should be numerically  close to $d_\gamma^{\op{Wat}}$. Since the known contradictions to Watabiki's prediction only hold for small values of $\gamma$, one possibility is that there is a $\gamma_* \in (0,2)$ such that $d_\gamma = d_\gamma^{\op{Wat}}$ for $\gamma \in [\gamma_* , 2)$ but not for $\gamma \in (0,\gamma_*)$. This would mean that $d_\gamma$ is not an analytic function of $\gamma$. Another possibility is that $d_\gamma$ is given by some other formula which is numerically close to $d_\gamma^{\op{Wat}}$. For example, all of our presently known results are consistent with $d_\gamma = d_\gamma^{\op{Quad}}$ for 
\eqb \label{eqn-quadratic}
d_\gamma^{\op{Quad}} = 2 + \frac{\gamma^2}{2} + \frac{\gamma}{\sqrt 6} ,
\eqe
although we have no theoretical reason to believe that this is actually the case. (The formula~\eqref{eqn-quadratic} was obtained by choosing a quadratic function of $\gamma$ which satisfies $d_0^{\op{Quad}} = 2$, $d_{\sqrt{8/3}}^{\op{Quad}} = 4$, and which has the simplest possible coefficients). 

\subsection{Discussion of random planar map connection}
\label{sec-planar-map-discussion}
 
The connection between Liouville graph distance and random planar maps (and thereby Theorem~\ref{thm-ball-size} and the fact that $d_{\sqrt{8/3}}=4$) comes by way of a one-parameter family of random planar maps called \emph{mated-CRT maps}. To define these maps, let $\gamma\in(0,2)$ and let $(L,R)$ be a pair of correlated, two-sided Brownian motions with correlation $-\cos(\pi\gamma^2/4)$ (the reason for the strange correlation parameter is that this makes it so that $\gamma$ is the LQG parameter). For $\ep > 0$, the \emph{$\gamma$-mated CRT map} associated with $(L,R)$ with increment size $\ep$ is the planar map whose vertex set is $\ep\BB Z$, with two such vertices $x_1 , x_2\in\ep\BB Z$ with $x_1<x_2$ connected by an edge if and only if
\eqb \label{eqn-inf-adjacency}
\left( \inf_{t\in [x_1 - \ep , x_1]} L_t \right) \vee \left( \inf_{t\in [x_2 - \ep , x_2]} L_t \right) 
\leq  \inf_{t\in [x_1 , x_2-\ep]} L_t ,
\eqe
or the same is true with $R$ in place of $L$. The vertices are connected by two edges if~\eqref{eqn-inf-adjacency} holds for both $L$ and $R$ but $|x_2-x_1| > \ep$. See Figure~\ref{fig-mated-crt-map}, left, for a more geometric definition of the mated-CRT map and an explanation of its planar map structure. We note that Brownian scaling shows that the law of $\mcl G^\ep$ as a planar map does not depend on $\ep$, but it will be convenient for our purposes to consider different values of $\ep$ for reasons which will become apparent just below.

\begin{figure}[t!]
 \begin{center}
\includegraphics[scale=.65]{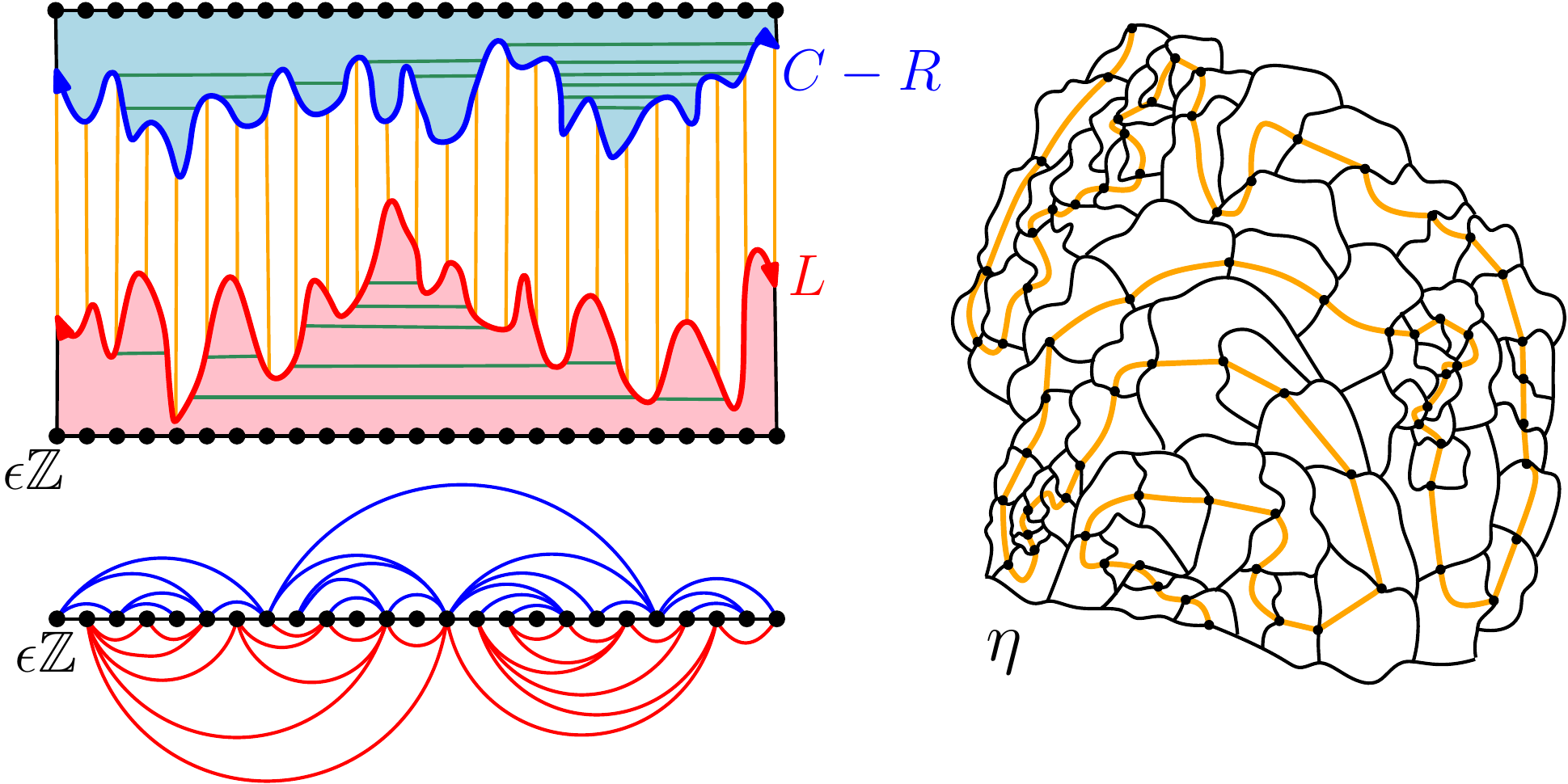}
\vspace{-0.01\textheight}
\caption{\textbf{Top Left.} To construct the mated-CRT map $\mcl G^\ep$ geometrically, draw the graph of $L$ (red) and the graph of $C-R$ (blue) for some large constant $C > 0$ chosen so that the parts of the graphs over some time interval of interest do not intersect. Then divide the region between the graphs into vertical strips (boundaries shown in orange) and identify each strip with the horizontal coordinate $x\in \ep \BB Z$ of its rightmost point. Vertices $x_1,x_2\in \ep \BB Z$ are connected by an edge if and only if the corresponding strips are connected by a horizontal line segment which lies under the graph of $L$ or above the graph of $C-R$. One such segment is shown in green in the figure for each pair of adjacent vertices.
\textbf{Bottom Left.} One can draw the graph $\mcl G^\ep $ in the plane by connecting two vertices $x_1,x_2 \in  \ep\BB Z$ by an arc above (resp.\ below) the real line if the corresponding strips are connected by a horizontal segment above (resp.\ below) the graph of $L$ (resp.\ $C-R$), and connecting each pair of consecutive vertices of $ \ep\BB Z$ by an edge. This gives $\mcl G^\ep$ a planar map structure under which it is a triangulation. 
\textbf{Right.} The mated-CRT map can be realized as the adjacency graph of \emph{cells} $\eta([x-\ep,x])$ for $x\in \ep\BB Z$, where $\eta$ is a space-filling SLE$_\kappa$ for $\kappa=16/\gamma^2$ parametrized by $\gamma$-LQG mass with respect to an independent $\gamma$-quantum cone. Here, the cells are outlined in black and the order in which they are hit by the curve is shown in orange. The three pictures do not correspond to the same mated-CRT map realization. 
Similar figures have appeared in~\cite{ghs-map-dist,gm-spec-dim,gh-displacement}. 
}\label{fig-mated-crt-map}
\end{center}
\vspace{-1em}
\end{figure}

There is a deep connection between mated-CRT maps and Liouville quantum gravity decorated by Schramm-Loewner evolution~\cite{schramm0} curves due to Duplantier, Miller, and Sheffield~\cite{wedges}, which is illustrated in Figure~\ref{fig-mated-crt-map}, right. 
We briefly review this connection here and refer to Section~\ref{sec-lqg-prelim} for a more detailed overview and a review of the definitions of the objects involved. 
Let $h$ be the variant of the whole-plane Gaussian free field corresponding to a so-called \emph{$\gamma$-quantum cone}, which can (roughly speaking) be thought of as describing the local behavior of a GFF-type distribution near a typical point sampled from its $\gamma$-LQG measure.
Independently from $h$, sample a whole-plane space-filling SLE$_\kappa$ curve $\eta$ from $\infty$ to $\infty$ with parameter $\kappa =16/\gamma^2 > 4$ --- this is just ordinary SLE$_\kappa$ for $\kappa\geq 8$ and for $\kappa \in (4,8)$ is obtained from ordinary SLE$_\kappa$ by iteratively filling in the ``bubbles" formed by the curve to get a space-filling curve. We then parametrize $\eta$ by $\gamma$-LQG mass with respect to $h$, so that $\eta(0) =0$ and $\mu_h(\eta([t_1,t_2])) = t_2-t_1$ for each $t_1<t_2$. 

It follows from~\cite[Theorem 1.9]{wedges} that for $\ep>0$, the adjacency graph of $\mu_h$-mass $\ep$ \emph{cells} $\eta([x-\ep,x])$ for $x\in\ep\BB Z$, with two cells considered to be adjacent if they share a non-trivial connected boundary arc, has exactly the same law as the $\gamma$-mated CRT map $\mcl G^\ep$. In other words, the distance from $x$ to $y$ in $\mcl G^\ep$ differs from the smallest $N\in\BB N$ for which there exists a Euclidean path from $\eta(x)$ to $\eta(y)$ which can be covered by $N$ of the cells $\eta([x-\ep,x])$ by at most a deterministic constant factor (depending on the maximal number of cells which can intersect at a single point). 

This gives us a representation of distances in the mated-CRT map which looks quite similar to the definition of Liouville graph distance.
Using basic estimates for space-filling SLE~\cite{ghm-kpz}, one can show that with very high probability each of the above space-filling SLE cells which intersects $\BB D$ is ``roughly spherical" in the sense that the ratio of its diameter to the largest Euclidean ball it contains is bounded above by $\ep^{o_\ep(1)}$. 
This allows us to compare Liouville graph distances to mated-CRT map distances (Proposition~\ref{prop-sle-metric-compare}) and thereby prove Theorem~\ref{thm-ball-size} in the case of the mated-CRT map.

The mated-CRT map is also related to various combinatorial random planar maps, including the other planar maps listed just above Theorem~\ref{thm-ball-size}. 
The reason for this is that each of these other planar maps can be bijectively encoded by a random walk on $\BB Z^2$ with a certain step size distribution depending on the model via an exact discrete analogue of the construction of the mated-CRT map from Brownian motion. For example, the infinite spanning-tree weighted map corresponds to a standard nearest-neighbor random walk~\cite{mullin-maps,bernardi-maps,shef-burger} and the UIPT corresponds to a walk whose increments are i.i.d.\ uniform samples from $\{(0,1), (1,0), (-1,-1)\}$~\cite{bernardi-dfs-bijection,bhs-site-perc}. 

Using these bijections and a strong coupling result for random walk and Brownian motion~\cite{kmt,zaitsev-kmt}, it was shown in~\cite{ghs-map-dist} that one can couple each of the above random planar maps with the $\gamma$-mated-CRT map (where $\gamma$ is determined by the correlation of the coordinates of the encoding walk) in such a way that with high probability, certain large subgraphs are roughly isometric, with a polylogarithmic distortion factor for distances. This allows us to transfer Theorem~\ref{thm-ball-size} from the case of the mated-CRT map to the case of these other maps. We do not need to use the bijections mentioned above directly: rather, we will just cite results from~\cite{ghs-map-dist}. 

\subsection{Related works}
\label{sec-related}

Several other works have proven bounds for the exponents which we now know can be described in terms of $d_\gamma$. Indeed, estimates for the Liouville heat kernel are proven in~\cite{andres-heat-kernel,mrvz-heat-kernel,dzz-heat-kernel}, estimates for the volume of graph distance balls in random planar maps are procen in~\cite{ghs-dist-exponent,ghs-map-dist}, and estimates for the Liouville graph distance are proven in~\cite{ding-goswami-watabiki,dzz-heat-kernel}. The estimates which come from Theorem~\ref{thm-d} are at least as sharp as all of these estimates except in the case of the lower bound as $\gamma\rta 0$, in which case~\cite{ding-goswami-watabiki} gives a stronger bound; see also~\eqref{eqn-d-asymp}. For $\gamma > 0.909576\dots$ (resp.\ $\gamma > 0.460149\dots$), our lower (resp.\ upper) bound for $d_\gamma$ is strictly sharper than any previously known bounds. 

Although this paper proves universality across different approximations of Liouville quantum gravity, 
it is known that the exponents associated with Liouville graph distance and the Liouville heat kernel are \emph{not} universal among all log-correlated Gaussian free fields: see~\cite{ding-zhang-fpp-gff,dzz-nonuniversality}. 

There is a different notion of the dimension of $\gamma$-LQG, besides the fractal (Hausdorff) dimension, called the \emph{spectral dimension}, which is expected to be equal to 2 for all values of $\gamma$. The spectral dimension can be defined in terms of the Liouville heat kernel, in which case it was proven to be equal to 2 in~\cite{rhodes-vargas-spec-dim,andres-heat-kernel}. Alternatively, it can be defined in terms of the return probability for random walk on random planar maps, in which case it was proven to be equal to 2 for all of the planar maps considered in the present paper in~\cite{gm-spec-dim}. 

Another interesting dimension associated with Liouville quantum gravity is the Euclidean Hausdorff dimension of the geodesics.  It was shown in~\cite{ding-zhang-geodesic-dim} that the geodesic length exponent associated with discrete LFPP (which should coincide with the Euclidean dimension of continuum LQG geodesics) is strictly larger than 1 when $\gamma$ is small. We expect this should be the case for all $\gamma \in (0,2)$, but we have no predictions for what the precise dimension should be, even for $\gamma = \sqrt{8/3}$ (see~\cite[Problem 9.2]{lqg-tbm2} for some discussion in this case). 
The recent paper~\cite{gp-lfpp-bounds} proves a non-trivial upper bound for the LFPP geodesic length exponent for all $\gamma \in(0,2)$.

In addition to the quantities considered in the present paper, there are several other quantities which we expect can be described in terms of our exponent $d_\gamma$, for example the following. 
\begin{itemize}
\item \textbf{Discrete Liouville first passage percolation.} Following, e.g.,~\cite{ding-dunlap-lqg-fpp,ding-goswami-watabiki,ding-zhang-geodesic-dim}, let $h$ be a discrete GFF on $\BB Z^2$ and for $\xi > 0$ and $x,y\in\BB Z^2$ define $D_{h,\LFPP}(x,y)$ to be the minimum of $\sum_{j=0}^m e^{\xi h(x_j)}$ over all paths $x = x_0 ,\dots,x_m = y$ in $\BB Z^2$ from $x$ to $y$. We expect that if $\xi = \gamma/d_\gamma$ and $|x-y|  =n$, then with high probability\footnote{To see why this should be the case, one can take as an ansatz that discrete LFPP distances are well-approximated by continuum LFPP distances with $\ep=1$. One can then re-scale by $1/n$, so that $|x-y|/n$ is of constant order, which shows that the discrete LFPP distance from $x$ to $y$ should be similar to $n$ times the continuum LFPP distance with $\delta = 1/n$ between points at constant-order Euclidean distance, as described in Theorem~\ref{thm-lfpp-compare}.}
\eqb \label{eqn-discrete-lfpp}
D_{h,\LFPP}(x,y)  = n^{\frac{2}{d_\gamma} + \frac{\gamma^2}{2d_\gamma} + o_n(1)} .
\eqe 
\item \textbf{Dimension of subsequential limiting metrics.} It is shown in~\cite{ding-dunlap-lqg-fpp} that for small enough values of $\xi > 0$, discrete LFPP admits non-trivial subsequential limiting metrics. We expect that for $\xi  =\gamma/d_\gamma$, the Hausdorff dimension of each such subsequential limiting metric is a.s.\ equal to $d_\gamma$. 
\item \textbf{Finite random planar maps.} Let $M_n$ be a finite-volume analogue of one of the planar maps considered in Theorem~\ref{thm-ball-size} with $n$ total edges. Then we expect that the graph-distance diameter of $M_n$ is typically of order $n^{1/d_\gamma + o_n(1)}$. We also expect that the same is true if $M_n$ is allowed to have a boundary of length at most $n^{1/2}$. 
\item \textbf{Mated-CRT map distance exponent.} It is shown in~\cite[Theorem 1.12]{ghs-dist-exponent} that if $\mcl G^\ep|_{(0,1]}$ denotes the sub-graph of the mated-CRT map induced by $(0,1]\cap (\ep\BB Z)$, then the limit $\chi := \lim_{\ep\rta 0} \log \BB E[\op{diam}(\mcl G^\ep|_{(0,1]})]/\log\ep^{-1}$ exists. As in~\cite[Conjecture 1.13]{ghs-dist-exponent}, we expect that there is a $\gamma_* \in (\sqrt 2 , \sqrt{8/3}]$ for which $\chi = 1/d_\gamma $ for $\gamma \in (0,\gamma_*]$.
\end{itemize}
It is likely possible to prove each of the above statements by building on the techniques of the present paper, but we do not carry this out here.
In the special case when $\gamma = \sqrt 2$, the last two statements discussed above are resolved in~\cite{gp-dla}.

\subsection{Outline} 
\label{sec-outline}

\begin{figure}[t!]
 \begin{center}
\includegraphics[scale=.80]{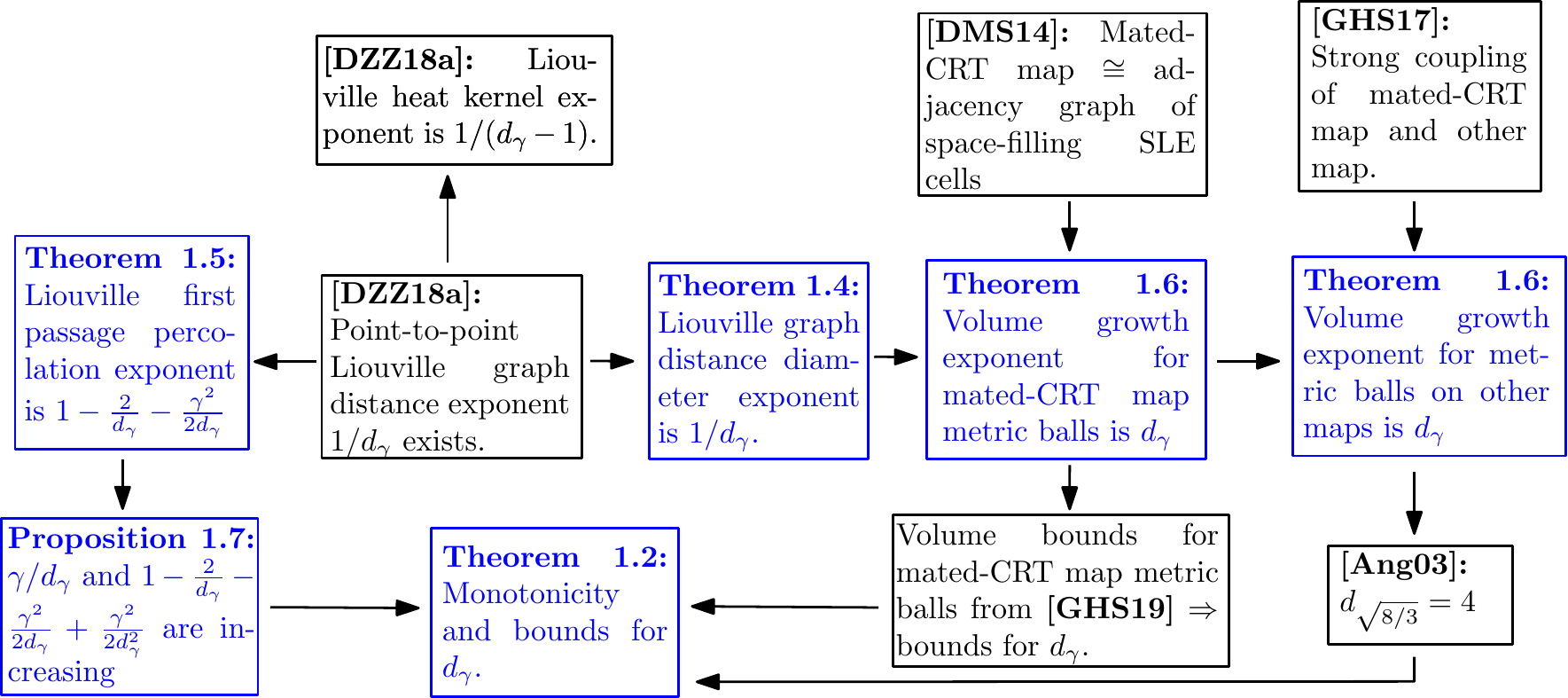}
\vspace{-0.01\textheight}
\caption{Schematic diagram of the logical relations between the results involved in this paper. Results proven in the present paper are in blue. 
}\label{fig-schematic}
\end{center}
\vspace{-1em}
\end{figure}

See Figure~\ref{fig-schematic} for a schematic diagram of how the results involved in this paper fit together. The remainder of the paper is structured as follows. 
\medskip

\noindent In \textbf{Section~\ref{sec-monotonicity}}, we first introduce some standard notation (Section~\ref{sec-notation}) and record some basic facts about the Gaussian free field which allow us to compare Liouville graph distances and LFPP defined with respect to GFF's on different domains (Section~\ref{sec-gff}). We then provide a short heuristic argument for why one should expect the relationship between Liouville graph distance and Liouville first passage percolation exponents asserted in Theorem~\ref{thm-lfpp-compare} (Section~\ref{sec-lfpp-exponent}). Finally, in Section~\ref{sec-mono} we explain why the relationships between exponents given in Theorems~\ref{thm-lfpp-compare} and~\ref{thm-ball-size} imply the properties of $d_\gamma$ asserted in Theorem~\ref{thm-d}, using the ideas discussed at the beginning of Section~\ref{sec-bounds}. 
\medskip

\noindent In \textbf{Section~\ref{sec-lfpp-compare}} we prove our theorems concerning relationships between Liouville graph distance and LFPP exponents, Theorems~\ref{thm-diam} and~\ref{thm-lfpp-compare}. We first introduce in Section~\ref{sec-wn} various approximations to the GFF defined in terms of the white noise decomposition which are in some ways easier to work with than the GFF itself. We then prove several lemmas which allow us to estimate these approximations and to compare Liouville graph distance and LFPP distances defined in terms of these approximations to distances defined in terms of the GFF. 
In Section~\ref{sec-diam}, we prove that the Liouville graph distance diameter of a fixed compact subsets of $\BB C$ is with high probability at most $\ep^{-1/d_\gamma + o_\ep(1)}$, which together with results from~\cite{dzz-heat-kernel} allows us to prove Theorem~\ref{thm-diam}. In Sections~\ref{sec-lfpp-lower} and~\ref{sec-lfpp-upper}, respectively, we prove the lower and upper bounds for LFPP distances asserted in Theorem~\ref{thm-lfpp-compare} by comparing LFPP and Liouville graph distance. See the beginnings of these subsections for outlines of the arguments involved.
\medskip

\noindent In \textbf{Section~\ref{sec-planar-map}}, we relate Liouville graph distance to distances in random planar maps and thereby prove Theorem~\ref{thm-ball-size}, using the ideas discussed in Section~\ref{sec-planar-map-discussion}. We first provide some relevant background on SLE, LQG, and their connection to the mated-CRT map (Section~\ref{sec-lqg-prelim}). We then prove a result relating several variants of Liouville graph distance, including one defined in terms of LQG-mass $\ep$ SLE cells, which we know is equivalent to the mated-CRT map (Section~\ref{sec-sle-compare}). In Section~\ref{sec-ball-size}, we use this to prove Theorem~\ref{thm-ball-size}. We first show that the diameter (in the adjacency graph) of the set of $\ep$-mass cells in the SLE/LQG representation of the mated-CRT map which intersect the Euclidean unit ball is of order $\ep^{-1/d_\gamma + o_\ep(1)}$ with high probability (Proposition~\ref{prop-crt-ball}), using the comparison results of the preceding subsection and the bounds for Liouville graph distance from Theorem~\ref{thm-diam}. We then use this to show that the volume of the graph distance ball of radius $r$ in the mated-CRT map is of order $r^{d_\gamma + o_r(1)}$ (essentially by taking $\ep = 1/r^{d_\gamma}$), and finally transfer to other planar maps using the coupling results of~\cite{ghs-map-dist}.

We emphasize that Section~\ref{sec-planar-map} is the only section of the paper which uses SLE theory. The reader does not need any knowledge of this theory to understand Section~\ref{sec-planar-map} beyond the background we provide, so long as he or she is willing to take certain results as black boxes.

\section{Preliminaries}
\label{sec-monotonicity}

\subsection{Basic notation}
\label{sec-notation}

\noindent
We write $\BB N = \{1,2,3,\dots\}$ and $\BB N_0 = \BB N \cup \{0\}$. 
\medskip

\noindent
For $a < b$, we define the discrete interval $[a,b]_{\BB Z}:= [a,b]\cap\BB Z$. 
\medskip

\noindent
If $f  :(0,\infty) \rta \BB R$ and $g : (0,\infty) \rta (0,\infty)$, we say that $f(\ep) = O_\ep(g(\ep))$ (resp.\ $f(\ep) = o_\ep(g(\ep))$) as $\ep\rta 0$ if $f(\ep)/g(\ep)$ remains bounded (resp.\ tends to zero) as $\ep\rta 0$. We similarly define $O(\cdot)$ and $o(\cdot)$ errors as a parameter goes to infinity. 
\medskip

\noindent
If $f,g : (0,\infty) \rta [0,\infty)$, we say that $f(\ep) \preceq g(\ep)$ if there is a constant $C>0$ (independent from $\ep$ and possibly from other parameters of interest) such that $f(\ep) \leq  C g(\ep)$. We write $f(\ep) \asymp g(\ep)$ if $f(\ep) \preceq g(\ep)$ and $g(\ep) \preceq f(\ep)$. 
\medskip

\noindent
Let $\{E^\ep\}_{\ep>0}$ be a one-parameter family of events. We say that $E^\ep$ occurs with
\begin{itemize}
\item \emph{polynomially high probability} as $\ep\rta 0$ if there is a $p > 0$ (independent from $\ep$ and possibly from other parameters of interest) such that  $\BB P[E^\ep] \geq 1 - O_\ep(\ep^p)$. 
\item \emph{superpolynomially high probability} as $\ep\rta 0$ if $\BB P[E^\ep] \geq 1 - O_\ep(\ep^p)$ for every $p>0$. 
\item \emph{exponentially high probability} as $\ep\rta 0$ if there exists $\lambda >0$ (independent from $\ep$ and possibly from other parameters of interest) $\BB P[E^\ep] \geq 1 - O_\ep(e^{-\lambda/\ep})$. 
\end{itemize}
We similarly define events which occur with polynomially, superpolynomially, and exponentially high probability as a parameter tends to $\infty$. 
\medskip

\noindent
We will often specify any requirements on the dependencies on rates of convergence in $O(\cdot)$ and $o(\cdot)$ errors, implicit constants in $\preceq$, etc., in the statements of lemmas/propositions/theorems, in which case we implicitly require that errors, implicit constants, etc., appearing in the proof satisfy the same dependencies.

\subsection{Gaussian free field}
\label{sec-gff} 

Here we give a brief review of the definition of the zero-boundary and whole-plane Gaussian free fields. We refer the reader to~\cite{shef-gff} and the introductory sections of~\cite{ig1,ig4} for more detailed expositions. 

For a proper open domain $U\subset \BB C$, let $\mcl H(U)$ be the Hilbert space completion of the set of smooth, compactly supported functions on $U$ with respect to the \emph{Dirichlet inner product},
\eqb \label{eqn-dirichlet}
(\phi,\psi)_\nabla = \frac{1}{2\pi} \int_U \nabla \phi(z) \cdot \nabla \psi(z) \,dz .
\eqe
In the case when $U= \BB C$, constant functions $c$ satisfy $(c,c)_\nabla = 0$, so to get a positive definite norm in this case we instead take $\mcl H(\BB C)$ to be the Hilbert space completion of the set of smooth, compactly supported functions $\phi$ on $\BB C$ with $\int_{\BB C} \phi(z) \,dz = 0$, with respect to the same inner product~\eqref{eqn-dirichlet}.  
 
The \emph{(zero-boundary) Gaussian free field} on $U$ is defined by the formal sum
\eqb \label{eqn-gff-sum}
h^U = \sum_{j=1}^\infty X_j \phi_j 
\eqe
where the $X_j$'s are i.i.d.\ standard Gaussian random variables and the $\phi_j$'s are an orthonormal basis for $\mcl H(U)$. The sum~\eqref{eqn-gff-sum} does not converge pointwise, but it is easy to see that for each fixed $\phi \in \mcl H(U)$, the formal inner product $(h^U ,\phi)_\nabla$ is a Gaussian random variable and these random variables have covariances $\BB E [(h^U,\phi)_\nabla (h^U,\psi)_\nabla] = (\phi,\psi)_\nabla$. In the case when $U \not=\BB C$ and $U$ has harmonically non-trivial boundary (i.e., a Brownian motion started from a point of $U$ a.s.\ hits $\bdy U$), one can use integration by parts (Green's identities) to define the ordinary $L^2$ inner products $(h^U,\phi) := -2\pi (h^U,\Delta^{-1}\phi)_\nabla$, where $\Delta^{-1}$ is the inverse Laplacian with zero boundary conditions, whenever $\Delta^{-1} \phi \in \mcl H(U)$. 

In the case $U=\BB C$ we typically write $h = h^{\BB C}$. In this case one can similarly define $(h ,\phi) := -2\pi(h ,\Delta^{-1}\phi)_\nabla$ where $\phi$ is the inverse Laplacian normalized so that $\int_{\BB C} \Delta^{-1} \phi(z) \, dz = 0$ (in the case $U= \BB C$). With this definition, one has $(h+c , \phi) = (h ,\phi) + (c,\phi) = (h,\phi)$ for each $\phi \in \mcl H(\BB C)$, so the whole-plane GFF is only defined modulo a global additive constant. We will typically fix this additive constant by requiring that the circle average $h_1(0)$ over $\bdy\BB D$ is zero. We refer to~\cite[Section 3.1]{shef-kpz} for more on the circle average. 

An important property of the GFF is the Markov property, which we state in the whole-plane case. If $U\subset \BB C$, then we can write $h|_U = h^U + \frk h$ where $h^U$ is a zero-boundary GFF on $U$ and $\frk h$ is an independent random harmonic function on $U$. We call $h^U$ and $\frk h$ the \emph{zero-boundary part} and \emph{harmonic part} of $h|_U$, respectively. 

The following lemma allows us to compare the approximate LQG distances associated with whole-plane GFF and the zero-boundary GFF. For the statement, we recall from Definition~\ref{def-restricted-lgd} that $D_h^\ep(z,w ; V)$ denotes the Liouville graph distance defined with respect to paths which stay in $V$, and similarly for LFPP. 

\begin{lem} \label{lem-whole-plane-compare}
Let $U \subset \BB C$ be a proper simply connected domain and let $V$ be a bounded connected domain with $\ol V\subset U$. 
Let $h$ be a whole-plane GFF normalized so that its circle average over $\bdy\BB D$ is zero.
Write $h = h^U + \frk h$ where $h^U$ is a zero-boundary GFF on $U$ and $\frk h$ is an independent random harmonic function on $U$. There are constants $a_0,a_1 > 0$ depending only on $U$ and $V$ such that for each $A>1 $,
\eqb \label{eqn-whole-plane-compare}
\BB P \left[ \max_{z\in \ol V} |\frk h(z)| \leq A \right] \geq 1-a_0 e^{-a_1 A^2} .
\eqe 
In particular, for each $\gamma \in (0,2)$, each $\ep\in (0,1)$, and each $C > 3$ it holds with probability at least $1-a_0 e^{-a_1 (\log C)^2/\gamma^2}$ that the $\gamma$-Liouville graph distance metrics satisfy
\eqb \label{eqn-whole-plane-compare-lgd}
D^{\ep/C}_h\left( z , w ; V \right) \leq   D^{ \ep}_{h^U}\left( z , w ; V \right)  \leq  D^{C\ep}_h\left( z , w ; V \right)   ,\quad \forall z,w \in V  
\eqe 
and for each $\xi  > 0$ and $\delta \in (0,1)$, it holds with probability at least $1-a_0 e^{-a_1 (\log C)^2 / \xi^2}$ that the $\xi$-LFPP metrics satisfy
\eqb \label{eqn-whole-plane-compare-lfpp}
C^{-1} D_{h,\LFPP}^\delta\left( z,w ; V \right) \leq    D_{h^U,\LFPP}^\delta\left( z , w ; V \right)  \leq C D_{h,\LFPP}^\delta\left( z , w ; V \right)   ,\quad \forall z,w \in V   .
\eqe
\end{lem}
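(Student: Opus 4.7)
The plan is to first establish the Gaussian tail bound~\eqref{eqn-whole-plane-compare} on $\max_{z\in\ol V}|\frk h(z)|$, then to derive the two metric comparisons as deterministic consequences on the high-probability event therein.

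For~\eqref{eqn-whole-plane-compare}, I would apply the Borell--TIS inequality to the centered Gaussian random field $z\mapsto\frk h(z)$ on the compact set $\ol V$. Three hypotheses must be verified: (i)~a.s.\ sample continuity on $\ol V$, which is immediate since $\frk h$ is a.s.\ harmonic on $U\supset\ol V$; (ii)~a uniform variance bound $\sigma^2 := \sup_{z\in\ol V}\op{Var}(\frk h(z)) < \infty$, which follows by fixing any $r<\op{dist}(\ol V,\bdy U)$, using the harmonic mean value property to write $\frk h(z) = h_r(z)-h^U_r(z)$, and subtracting the two standard circle-average variance formulas (each of the form $-\log r + O_{U,V}(1)$); (iii)~finiteness of $M := \BB E[\sup_{z\in\ol V}\frk h(z)]$, which follows from (i)--(ii) via Fernique's theorem. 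Borell--TIS then gives $\BB P[\sup_{z\in\ol V}\frk h(z) > M + t] \leq e^{-t^2/(2\sigma^2)}$ and the symmetric bound for $-\frk h$; absorbing $M$ and $\sigma^2$ into the constants $a_0=a_0(U,V)$, $a_1=a_1(U,V)$ (and making the bound trivial for small $A$) yields~\eqref{eqn-whole-plane-compare}.

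For~\eqref{eqn-whole-plane-compare-lgd}, the Markov decomposition together with the harmonic mean value property gives $\frk h_r(z) = \frk h(z)$ for any $z$ with $\ol{B_r(z)}\subset U$, so the standard Gaussian multiplicative chaos regularization yields the clean identity $\mu_h|_V = e^{\gamma\frk h}\mu_{h^U}|_V$. Setting $A := \gamma^{-1}\log C$ and conditioning on the event $E_A := \{\max_{z\in\ol V}|\frk h(z)| \leq A\}$---which has probability at least $1-a_0 e^{-a_1(\log C)^2/\gamma^2}$ by~\eqref{eqn-whole-plane-compare}---we obtain $C^{-1}\mu_{h^U}(B) \leq \mu_h(B) \leq C\mu_{h^U}(B)$ for every Euclidean ball $B\subset\ol V$. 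Hence any family of balls in $\ol V$ admissible for $D_{h^U}^{\ep}(\cdot,\cdot;V)$ is also admissible for $D_h^{C\ep}(\cdot,\cdot;V)$, yielding $D_h^{C\ep} \leq D_{h^U}^{\ep}$; the analogous argument with the roles of $h$ and $h^U$ swapped gives $D_{h^U}^{C\ep} \leq D_h^{\ep}$, and these two inequalities rearrange to~\eqref{eqn-whole-plane-compare-lgd}.

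For~\eqref{eqn-whole-plane-compare-lfpp}, the same harmonic mean value property gives $h_\delta(z) - h^U_\delta(z) = \frk h(z)$ for every $z\in\ol V$ provided $\ol{B_\delta(z)}\subset U$, which holds for all $z\in\ol V$ as long as $\delta < \op{dist}(\ol V,\bdy U)$. Setting $A := \xi^{-1}\log C$ and conditioning on $E_A$ with this new $A$, the pointwise bound $C^{-1}e^{\xi h^U_\delta(z)} \leq e^{\xi h_\delta(z)} \leq C e^{\xi h^U_\delta(z)}$ for $z\in\ol V$ follows; integrating along any piecewise-$C^1$ path in $\ol V$ and taking the infimum gives~\eqref{eqn-whole-plane-compare-lfpp}. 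For the bounded range $\delta\in[\op{dist}(\ol V,\bdy U),1)$ parts of $\bdy B_\delta(z)$ may exit $U$ and the identity above fails, but the discrepancy $z\mapsto h_\delta(z)-h^U_\delta(z)-\frk h(z)$ is still a continuous centered Gaussian random function on $\ol V$ with uniformly bounded variance (by a variance computation analogous to step~(ii)), so a second application of Borell--TIS absorbs it into the constants. The main obstacle is really just the clean verification of hypotheses (i)--(iii) for Borell--TIS plus handling this boundary discrepancy; once these are in place, both metric comparisons are deterministic consequences of the high-probability bound on $\frk h$.
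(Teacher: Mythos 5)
Your proposal is correct and follows essentially the same route as the paper: the Borell--TIS inequality applied to the centered, continuous Gaussian field $\frk h$ on $\ol V$ (the paper obtains the uniform variance bound by citing the conformal-radius estimate of \cite[Lemma 6.4]{ig1} rather than your circle-average computation, and it does not spell out the case $\delta \ge \op{dist}(\ol V,\bdy U)$, which you handle with a second Borell--TIS application), followed by the identity $d\mu_h = e^{\gamma \frk h}\,d\mu_{h^U}$, resp.\ the circle-average comparison, with $A = \gamma^{-1}\log C$, resp.\ $A = \xi^{-1}\log C$. One small remark: the two inequalities you actually derive, $D^{C\ep}_h \le D^\ep_{h^U} \le D^{\ep/C}_h$, are the correctly oriented form of~\eqref{eqn-whole-plane-compare-lgd} (by monotonicity of $D_h^\ep$ in $\ep$ the chain as displayed in the lemma has the superscripts transposed), so your concluding ``rearrangement'' gives precisely what is intended.
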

\begin{proof} 
By, e.g.,~\cite[Lemma 6.4]{ig1}, the harmonic function $\frk h$ is a centered Gaussian random function with $\op{Var}(\frk h(z)) \leq \log \op{CR}(z;U)^{-1} + O(1)$ for each $z\in V$, where $\op{CR}(z;U)$ denotes the conformal radius and the $O(1)$ depends only on $U$. In particular, $\max_{z\in \ol V} |\frk h(z)|$ is a.s.\ finite (since $\frk h$ is harmonic, hence continuous) and $\max_{z\in \ol V} \op{Var}(\frk h(z))$ is bounded above by a constant depending only on $U$ and $V$. By the Borell-TIS inequality~\cite{borell-tis1,borell-tis2} (see, e.g.,~\cite[Theorem 2.1.1]{adler-taylor-fields}), we obtain~\eqref{eqn-whole-plane-compare} for appropriate constants $a_0,a_1 > 0$ as in the statement of the lemma (note that we absorbed $\BB E[\max_{z\in \ol V} |\frk h(z)|]$, which is finite by the Borell-TIS inequality, into the constants $a_0,a_1$). 
Since $d\mu_h = e^{\gamma \frk h} \, d\mu_{h^U}$, we obtain~\eqref{eqn-whole-plane-compare-lgd} by applying~\eqref{eqn-whole-plane-compare} with $A = \frac{1}{\gamma} \log C$. We similarly obtain~\eqref{eqn-whole-plane-compare-lfpp} by applying~\eqref{eqn-whole-plane-compare} with $A = \frac{1}{\xi} \log C$.  
\end{proof}

As an immediate consequence of Lemma~\ref{lem-whole-plane-compare}, we get that the exponent $d_\gamma$ from Theorem~\ref{thm-dzz} can equivalently be defined in the whole-plane case.

\begin{lem} \label{lem-whole-plane-d}
Let $d_\gamma$ be as in Theorem~\ref{thm-dzz}. For each connected open set $U\subset\BB C$, each distinct $z,w\in U$, and each $\zeta \in (0,1)$, it holds with polynomially high probability as $\ep\rta 0$ (at a rate which is allowed to depend on $U,z,w,\zeta$, and $\gamma$) that
\eqb \label{eqn-whole-plane-d}
\ep^{ - \frac{1}{d_\gamma + \zeta}} \leq  D_h^\ep\left( z , w  ; U  \right)  \leq \ep^{ - \frac{1}{d_\gamma-\zeta}} .
\eqe 
\end{lem}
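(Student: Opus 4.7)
The plan is to deduce Lemma~\ref{lem-whole-plane-d} from Theorem~\ref{thm-dzz} and the local absolute continuity statement of Lemma~\ref{lem-whole-plane-compare}, as suggested in the paragraph preceding the lemma. First, since $U$ is open and connected, I would fix a bounded simply connected subdomain $\widetilde U$ with $\{z,w\} \subset \widetilde U$ and $\overline{\widetilde U} \subset U$, together with an intermediate bounded open set $V$ satisfying $\{z,w\} \subset V$ and $\overline V \subset \widetilde U$. By scaling and translation (using the LQG coordinate change $h \mapsto h(\lambda \cdot) + Q\log\lambda$, which only perturbs $\epsilon$ by a multiplicative constant), I may assume $\widetilde U$ is the unit square $\BB S$, so that Theorem~\ref{thm-dzz} applies directly to the zero-boundary GFF $h^{\widetilde U}$.

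Next, applying Lemma~\ref{lem-whole-plane-compare} with the above $\widetilde U$ and $V$ and choosing $C = \epsilon^{-\zeta/(100 d_\gamma)}$, one has $(\log C)^2/\gamma^2 \asymp (\log \epsilon^{-1})^2$, so the probability of the good event is superpolynomially high as $\epsilon \to 0$. On this event,
\begin{equation*}
D_{h^{\widetilde U}}^{\epsilon/C}(z,w;V) \;\leq\; D_h^\epsilon(z,w;V) \;\leq\; D_{h^{\widetilde U}}^{C\epsilon}(z,w;V) .
\end{equation*}

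I would then invoke Theorem~\ref{thm-dzz} applied to $h^{\widetilde U}$, together with the polynomial rate of convergence implicit in the moment-bound arguments of~\cite{dzz-heat-kernel}, to obtain that with polynomially high probability,
\begin{equation*}
\epsilon^{-1/(d_\gamma + \zeta/2)} \;\leq\; D_{h^{\widetilde U}}^\epsilon(z,w) \;\leq\; \epsilon^{-1/(d_\gamma - \zeta/2)} .
\end{equation*}
Combining with the previous display and absorbing the $C = \epsilon^{-\zeta/(100 d_\gamma)}$ factor into a small perturbation of the exponent yields polynomial-probability bounds on $D_h^\epsilon(z,w;V)$. The upper bound in the lemma then follows from $D_h^\epsilon(z,w;U) \leq D_h^\epsilon(z,w;V)$ (since $V \subset U$ makes every admissible $V$-configuration admissible for $U$), and the lower bound follows from $D_h^\epsilon(z,w;U) \geq D_h^\epsilon(z,w;\BB C)$ combined with an analogous comparison of the unrestricted whole-plane distance to the zero-boundary distance.

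The main technical obstacle I anticipate is bridging the restricted distance $D_{h^{\widetilde U}}^\epsilon(z,w;V)$ produced by Lemma~\ref{lem-whole-plane-compare} and the unrestricted distance $D_{h^{\widetilde U}}^\epsilon(z,w)$ appearing in Theorem~\ref{thm-dzz}: a priori these could differ because the restricted variant forces both balls and path to lie inside $V$. The equivalence up to a subpolynomial factor should follow either by applying the moment bounds of~\cite{dzz-heat-kernel} directly in the restricted setting, or by using that $h^{\widetilde U}$ becomes highly negative near $\partial \widetilde U$, which makes near-geodesics avoid the boundary with overwhelming probability so that they automatically lie in $V$.
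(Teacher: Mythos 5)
Your overall strategy (reduce to the zero-boundary square via Lemma~\ref{lem-whole-plane-compare} and quote the quantitative square estimates from~\cite{dzz-heat-kernel}) is the right general direction, but there are two genuine gaps. First, the reduction to the unit square does not work as stated: a general bounded simply connected $\widetilde U\Subset U$ containing $z$ and $w$ cannot be turned into $\BB S$ by a scaling and translation, and if you instead insist that $\widetilde U$ be an actual square, no such square containing both $z$ and $w$ need exist inside a general connected $U$ (e.g.\ a thin bent tube), while the inputs from~\cite{dzz-heat-kernel} are tied to the square geometry and cannot be transported by conformal maps, since Euclidean balls are not preserved. This is exactly why the paper's proof of the upper bound for general $U$ needs an extra chaining step: it fixes the square $S_{z,w}$ of side $2|z-w|$ centered at $(z+w)/2$, and when $S_{z,w}\not\subset U$ it chooses intermediate points $z=z_0,\dots,z_k=w$ in $U$ with each $S_{z_{j-1},z_j}\subset U$ and applies the triangle inequality. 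That step is missing from your argument.

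Second, the lower bound is not just ``an analogous comparison'': the unrestricted distance $D_h^\ep(z,w)$ allows paths and covering balls far from $z,w$, outside any region where the absolute-continuity comparison of Lemma~\ref{lem-whole-plane-compare} applies, so the point-to-point estimate alone does not suffice. The paper rules out such excursions by also importing a point-to-boundary estimate, $D_{h^{S(1)}}^\ep(\{z,w\},\bdy S)\geq \ep^{-1/(d_\gamma+\zeta)}$ (from~\cite{dzz-heat-kernel}), so that any path exiting $S$ already requires at least that many balls; you need an ingredient of this type. Relatedly, your fallback heuristic for the restricted-versus-unrestricted issue is backwards: where $h^{\widetilde U}$ is very negative near $\bdy\widetilde U$, the LQG mass is tiny, so Euclidean balls of $\mu$-mass $\ep$ are \emph{large} there and the near-boundary region is cheap to cross; graph-distance geodesics are attracted to, not repelled from, such regions, so that argument would not close the gap. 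The correct fix, and what the paper does, is to quote the estimates of~\cite{dzz-heat-kernel} that are already proven for distances restricted to a square (their Proposition~3.17 and Lemmas~5.3--5.4), which also supply the polynomial probability rates that Theorem~\ref{thm-dzz} itself (an a.s.\ limit statement) does not give.
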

\begin{proof}
Let $S = S_{z,w}$ be the square centered at $(z+w)/2$, with side length $2|z-w|$ and sides parallel to the segment from $z$ to $w$. Also let $S(1)$ be the square with the same center as $S$ and three times the side length and let $h^{S(1)}$ be a zero-boundary GFF on $S(1)$. If we re-scale and rotate space so that $S(1)$ is mapped to the unit square and apply~\cite[Propositions 3.17 and Lemmas 5.3 and 5.4]{dzz-heat-kernel} (see also~\cite[Remark 5.2]{dzz-heat-kernel}), we obtain that with polynomially high probability as $\ep\rta 0$, 
\eqb \label{eqn-zero-bdy-dist}
\ep^{ - \frac{1}{d_\gamma +\zeta}} \leq  D_{h^{S(1)}}^\ep\left( z , w  ; S  \right)  \leq \ep^{ - \frac{1}{d_\gamma-\zeta}} 
\quad\op{and} \quad
  D_{h^{S(1)}}^\ep\left( \{z,w\}  ,\bdy S  \right)  \geq \ep^{ - \frac{1}{d_\gamma+\zeta}} .
\eqe   
Combining~\eqref{eqn-zero-bdy-dist} with Lemma~\ref{lem-whole-plane-compare} gives the lower bound in~\eqref{eqn-whole-plane-d} for any choice of $U$ and the upper bound in~\eqref{eqn-whole-plane-d} in the case when $S\subset U$. To get the upper bound for a general choice of $U$ and $z,w\in U$, we can choose points $z = z_0,\dots,z_k = w$ in $U$ such that the square $S_{z_{j-1},z_{j }}$ is contained in $U$ for each $j=1,\dots,k$, then apply the triangle inequality. 
\end{proof}

\subsection{Heuristic derivation of the LFPP exponent}
\label{sec-lfpp-exponent}

In this subsection we provide a short heuristic explanation of why one should expect the relationship between LFPP exponents and $d_\gamma$ described in Theorem~\ref{thm-lfpp-compare}, using scaling properties which we expect to be true for the $\gamma$-LQG metric. The argument here is very different from the rigorous proof of Theorem~\ref{thm-lfpp-compare}, but the main source of the relation (the behavior of LQG distances and measures under scaling) is the same. 
Our explanation is based on the following elementary observation about possible scaling limits of LFPP distances (which we do not yet know exist).  

\begin{prop} \label{prop-lfpp-lim} 
Assume that for some $\xi   > 0$, LFPP with exponent $\xi$ converges pointwise to a metric in the scaling limit, i.e., there exists $\lambda = \lambda(\xi)\in \BB R$ such that for each random distribution $h$ on $\BB C$ whose law is locally absolutely continuous with respect to the GFF, the limit
\eqb \label{eqn-lqg-metric}
\frk d_h(z,w) = \lim_{\delta \rta 0} \delta^{-\lambda} D_{h,\LFPP}^{ \xi , \delta}(z,w) = \lim_{\delta\rta0} \delta^{-\lambda} \inf_{P\in\mcl P_{z,w}} \int_0^1 e^{\xi h_\delta(P(t))} |P'(t)| \,dt ,\quad\forall z,w\in\BB C   
\eqe
exists and defines a metric on $\BB C$, where $\mcl P_{z,w}$ is the set of all piecewise continuously differentiable paths $P$ from $z$ to $w$. 
Then for $C>0$, the limiting metric satisfies the following scaling relations:
\eqb \label{eqn-metric-add}
\frk d_{h+ \log C }(z,w) = C^\xi \frk d_h(z,w) ,\quad\forall z,w\in \BB C   
\eqe
and
\eqb \label{eqn-metric-Q}
\frk d_{h(\cdot/C) + Q \log (1/C)}(Cz,Cw) = \frk d_h(z,w)  ,\quad\forall z,w\in\BB C \quad \text{for} \quad Q = Q(\xi) = (1-\lambda)/\xi . 
\eqe
\end{prop}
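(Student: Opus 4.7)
The proposition is essentially a compatibility check: given that the limiting metric $\frk d_h$ exists, both scaling relations should fall out from direct manipulations of the LFPP integral combined with the scaling properties of the circle average. My plan is to prove each relation separately by showing the corresponding identity already holds at the level of $D_{h,\LFPP}^{\xi,\delta}$ (up to a prefactor that is constant in $\delta$, or that shifts $\delta$ to $\delta/C$), and then passing to the limit. The one place where I must be careful is that the limiting metric must actually be defined on the shifted/rescaled field, which requires checking that those fields are themselves locally absolutely continuous with respect to the GFF; the shift $h+\log C$ is trivially so, and $h(\cdot/C)+Q\log(1/C)$ is so by the translation and scale invariance of the whole-plane GFF modulo additive constant.

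For \eqref{eqn-metric-add} I would argue as follows. The circle average operator is linear and sends the constant function $\log C$ to itself, so $(h+\log C)_\delta = h_\delta + \log C$. Substituting into the LFPP functional, $e^{\xi(h+\log C)_\delta(P(t))} = C^\xi e^{\xi h_\delta(P(t))}$, which pulls out of the integral and out of the infimum over $\mcl P_{z,w}$. Hence
\[
D_{h+\log C,\LFPP}^{\xi,\delta}(z,w) = C^\xi \, D_{h,\LFPP}^{\xi,\delta}(z,w)
\]
identically in $\delta$. Multiplying by $\delta^{-\lambda}$ and taking $\delta\rta 0$ using \eqref{eqn-lqg-metric} gives \eqref{eqn-metric-add} at once.

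For \eqref{eqn-metric-Q}, set $\tilde h(z) := h(z/C) + Q\log(1/C)$. The circle average scales as $\tilde h_\delta(z) = h_{\delta/C}(z/C) + Q\log(1/C)$, since averaging $h(\cdot/C)$ over $\partial B_\delta(z)$ equals averaging $h$ over $\partial B_{\delta/C}(z/C)$. Parametrize a path from $Cz$ to $Cw$ as $\tilde P(t) = C P(t)$ with $P \in \mcl P_{z,w}$, so $|\tilde P'(t)| = C|P'(t)|$. Then
\[
\int_0^1 e^{\xi \tilde h_\delta(\tilde P(t))}|\tilde P'(t)|\,dt \;=\; C^{1-\xi Q}\int_0^1 e^{\xi h_{\delta/C}(P(t))}|P'(t)|\,dt,
\]
and taking the infimum over paths gives $D_{\tilde h,\LFPP}^{\xi,\delta}(Cz,Cw) = C^{1-\xi Q}\, D_{h,\LFPP}^{\xi,\delta/C}(z,w)$. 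Multiplying by $\delta^{-\lambda}$ and writing $\delta^{-\lambda} = C^{-\lambda}(\delta/C)^{-\lambda}$ yields
\[
\delta^{-\lambda} D_{\tilde h,\LFPP}^{\xi,\delta}(Cz,Cw) = C^{1-\xi Q-\lambda}\,(\delta/C)^{-\lambda} D_{h,\LFPP}^{\xi,\delta/C}(z,w).
\]
Letting $\delta \rta 0$, so that also $\delta/C \rta 0$, the right-hand side tends to $C^{1-\xi Q - \lambda}\,\frk d_h(z,w)$ by \eqref{eqn-lqg-metric}. Choosing $Q$ so that $1 - \xi Q - \lambda = 0$, i.e., $Q = (1-\lambda)/\xi$, produces \eqref{eqn-metric-Q}.

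There is no real obstacle to this argument, but the step worth double-checking is the claim that $\tilde h$ falls within the class of random distributions for which the limit \eqref{eqn-lqg-metric} is asserted to exist: this is where I would explicitly invoke the hypothesis that the limit holds for every distribution locally absolutely continuous with respect to the GFF, together with the fact that the whole-plane GFF is scale-invariant modulo additive constant. Everything else is a one-line manipulation of the circle average and a change of variables in the LFPP integral.
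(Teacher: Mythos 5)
Your proof is correct and follows essentially the same route as the paper: the additive relation is immediate from $(h+\log C)_\delta = h_\delta+\log C$, and the scaling relation comes from the same circle-average identity $h_{\delta/C}(P(t))=h_\delta^C(CP(t))$, the path rescaling $\mcl P_{Cz,Cw}=C\mcl P_{z,w}$, and the substitution $\delta'=\delta/C$. The only (cosmetic) difference is that you fold the additive constant $Q\log(1/C)$ directly into the exact identity for $D^{\xi,\delta}_{\LFPP}$, whereas the paper first derives $\frk d_{h(C^{-1}\cdot)}(Cz,Cw)=C^{1-\lambda}\frk d_h(z,w)$ and then rearranges using the additive relation.
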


We emphasize that we are very far from actually proving that~\eqref{eqn-lqg-metric} holds (although subsequential limits for a closely related metric are shown to exist when $\xi$ is small in~\cite{ding-dunlap-lqg-fpp}). 

\begin{proof}[Proof of Proposition~\ref{prop-lfpp-lim}]
The relation~\eqref{eqn-metric-add} is immediate from~\eqref{eqn-lqg-metric}.  
To derive~\eqref{eqn-metric-Q}, fix $C>0$ and write $h^C: = h(C^{-1} \cdot)$. Then
\eqb \label{eqn-circle-avg}
h_{\delta/C}(P(t)) = h_{ \delta}^C(C P(t)) .
\eqe
Moreover, $\mcl P_{Cz,Cw} = C \mcl P_{z,w}$. Therefore, applying~\eqref{eqn-lqg-metric} to $h^C$ gives
\alb
\frk d_{h^C}(C z, C w) 
&= \lim_{\delta\rta 0} \delta^{-\lambda} \inf_{P\in\mcl P_{z,w}} \int_0^1 e^{\xi h_{ \delta}^C( C P(t))} C |P'(t)| \, dt \\
&= C  \lim_{\delta\rta 0} \delta^{-\lambda} \inf_{P\in\mcl P_{z,w}} \int_0^1 e^{\xi h_{ \delta / C} (  P(t))}  |P'(t)| \, dt \quad \text{(by~\eqref{eqn-circle-avg})} \\
&= C^{1-\lambda}  \lim_{\delta' \rta 0} (\delta')^{-\lambda} \inf_{P\in\mcl P_{z,w}} \int_0^1 e^{\xi h_{ \delta'} (  P(t))}  |P'(t)| \, dt \quad \text{(by setting $\delta' = \delta/C$)} \\
&= C^{1-\lambda} \frk d_h(z,w) .
\ale
Re-arranging this gives~\eqref{eqn-metric-Q}. 
\end{proof}

We now explain why Proposition~\ref{prop-lfpp-lim} suggests the relations between exponents given in Theorem~\ref{thm-lfpp-compare}.
Indeed, suppose that for some $\xi = \xi(\gamma) >0$, the metric~\eqref{eqn-lqg-metric} is the ``correct" metric on $\gamma$-LQG (which can be described, e.g., as the one which is the scaling limit of graph distances on random planar maps).  
We will argue that 
\eqb \label{eqn-xi-lambda}
\xi = \frac{\gamma}{d_\gamma} \quad \op{and} \quad \lambda = 1 - \frac{2}{d_\gamma} - \frac{\gamma^2}{2d_\gamma}  . 
\eqe 
Indeed, if (as expected) $d_\gamma$ is the Hausdorff dimension of $\gamma$-LQG, then scaling LQG areas by $A > 0$ should correspond to scaling LQG distances by $A^{1/d_\gamma}$. The former is the same as adding $\frac{1}{\gamma} \log A$ to $h$, so by~\eqref{eqn-metric-add} we get $\frk d_{h + \gamma^{-1} \log A}(z,w) = A^{\xi/\gamma} \frk d_h(z,w)$. Hence we should have $\xi = \gamma/d_\gamma$. 

To see why the formula for $\lambda$ in~\eqref{eqn-xi-lambda} should hold, we recall the scaling relation for the $\gamma$-LQG measure $\mu_h$~\cite[Proposition 2.1]{shef-kpz}, which says that 
\eqbn
\mu_{h(\cdot/C) + Q\log(1/C)}(C X) =  \mu_h(X) \quad \text{for} \quad Q = \frac{2}{\gamma} + \frac{\gamma}{2} .
\eqen
We expect that the $\gamma$-LQG metric satisfies an analogous scaling relation, with the same value of $Q$. From~\eqref{eqn-metric-Q}, we therefore have $2/\gamma + \gamma/2 = (1-\lambda)/\xi$. Setting $\xi = \gamma/d_\gamma$ and re-arranging gives the formula for $\lambda$ in~\eqref{eqn-xi-lambda}.

\begin{remark}[$\xi > 2/d_2$ and $c >1$]
Proposition~\ref{prop-lfpp-lim} is true for any $\xi  > 0$, not just for the values $\xi\in (0,2/d_2)$ which are related to $\gamma$-LQG for $\gamma\in (0,2)$. 
It is proven in~\cite[Lemma 4.1]{gp-lfpp-bounds} that in the notation of~\eqref{eqn-metric-Q} one has $Q(\xi) = (1-\lambda)/\xi \in [0,2)$ whenever $\xi > 2/d_2$ (we know $Q(\gamma /d_\gamma) = 2/\gamma +\gamma/2 >2$ for $\gamma \in (0,2)$ by Theorem~\ref{thm-lfpp-compare}). 
The parameter $Q$ is expected to be related to the so-called \emph{central charge} $c$ by $c  = 25 -6Q^2$~\cite{polyakov-qg3,kpz-scaling,david-conformal-gauge,dk-qg,shef-kpz}. 
Therefore, Proposition~\ref{prop-lfpp-lim} suggests that LFPP for $\xi > 2/d_2$ might provide an approximation to a metric on LQG with central charge $c \in (1,25)$. LQG with $c \in (1,25)$ is much less well-understood than the case when $c \leq 1$ (which corresponds to $\gamma \in (0,2]$).
See~\cite{ghpr-central-charge} for more on LQG with $c \in (1,25)$.
We believe that the case when $\xi>2/d_2$ is of substantial interest, but it is outside the scope of the current paper. Some results for LFPP with $\xi >2/d_2$ are proven in~\cite{gp-lfpp-bounds}.
\end{remark}

\subsection{Proof of monotonicity, continuity, and bounds, assuming universality}
\label{sec-mono}

In this subsection we will explain why the monotonicity and continuity of of $\gamma \mapsto d_\gamma$ and the bounds~\eqref{eqn-d-bound} follow from our universality results, in particular Theorems~\ref{thm-lfpp-compare} and~\ref{thm-ball-size}. Throughout, we let $h$ be a whole-plane GFF normalized so that $h_1(0) = 0$ and we assume that the limit~\eqref{eqn-d-def} exists and that the conclusions of Theorems~\ref{thm-lfpp-compare} and~\ref{thm-ball-size} are satisfied. Aside from these results, the key input in our proofs is the following elementary monotonicity observation for LFPP distances re-scaled by a quantity proportional to $1/\BB E[e^{\xi h_\delta(z)}]$.

\begin{lem}[Monotonicity of re-scaled LFPP distances] \label{lem-lfpp-mono}
For $0 < \xi < \wt\xi$, there is a coupling of two whole-plane GFFs $h \eqD \wt h$ such that for each bounded connected open set $U\subset \BB C$, each $z,w\in U$, each $\delta>0$, the LFPP distances with exponents $\xi$ and $\wt\xi$ satisfy
\eqb \label{eqn-lfpp-mono}
\BB P\left[   \delta^{ \wt \xi^2/2} D_{h,\LFPP}^{\wt\xi ,\delta}(z,w ; U)  \leq C \delta^{ \xi^2/2} D_{ h  ,\LFPP}^{ \xi,\delta}(z,w ; U)  \right] \geq 1 - O_C(1/C) 
\eqe
as $C\rta\infty$, at a rate which is uniform in $\delta$. 
\end{lem}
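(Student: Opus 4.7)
The plan is to take the identity coupling $\wt h = h$ (the same whole-plane GFF for both) and reduce the bound to an application of Markov's inequality. The key algebraic input is the pointwise factorization
\[
\delta^{\wt\xi^2/2}\, e^{\wt\xi h_\delta(z)} \;=\; \Phi(z) \cdot \delta^{\xi^2/2}\, e^{\xi h_\delta(z)}, \qquad \Phi(z) \;:=\; \delta^{\alpha\beta}\, e^{\alpha h_\delta(z)},
\]
with $\alpha := \wt\xi - \xi > 0$ and $\beta := (\xi+\wt\xi)/2$, which writes the $\wt\xi$-normalized LFPP integrand as the $\xi$-normalized integrand times the factor $\Phi$. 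Since $\op{Var}(h_\delta(z)) = \log(1/\delta) + O(1)$, the Gaussian moment computation yields $\BB E[\Phi(z)] = \delta^{\alpha\beta - \alpha^2/2}\cdot O(1) = \delta^{\alpha\xi}\cdot O(1)$, which is bounded uniformly in $\delta$ (indeed, it vanishes as $\delta\rta 0$ because $\alpha\xi > 0$). This is the elementary ``monotonicity in expectation'' of the normalized integrand.

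Write $X := \delta^{\wt\xi^2/2} D^{\wt\xi,\delta}_h(z,w;U)$ and $Y := \delta^{\xi^2/2} D^{\xi,\delta}_h(z,w;U)$. Let $P^*$ be a $\xi$-near-optimal path for $Y$ (i.e., one with $\xi$-LFPP cost at most $(1+o(1))Y/\delta^{\xi^2/2}$), and set $d\mu^*(t) := \delta^{\xi^2/2} e^{\xi h_\delta(P^*(t))}\,|(P^*)'(t)|\,dt$, so that $\mu^*([0,T]) = (1+o(1))Y$. Because $P^*$ is also admissible for $X$, the factorization gives
\[
X \;\leq\; \int_0^T \Phi(P^*(t))\, d\mu^*(t), \qquad \text{and hence} \qquad \frac{X}{Y} \;\leq\; \frac{1}{\mu^*([0,T])}\int_0^T \Phi(P^*(t))\, d\mu^*(t),
\]
i.e., $X/Y$ is at most the $\mu^*$-weighted average of $\Phi(P^*)$. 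This is the key representation, and its $\BB E[\Phi]$-smallness estimate is what drives the bound.

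The rest is a Markov argument. First, using the straight Euclidean segment $P_0$ from $z$ to $w$ (of length $L=|z-w|$) as a candidate path in the infimum defining $X$ gives $X \leq \delta^{\wt\xi^2/2} \int_0^L e^{\wt\xi h_\delta(P_0(t))}\,dt$, and Fubini together with $\BB E[\delta^{\wt\xi^2/2} e^{\wt\xi h_\delta(z)}] = O(1)$ uniformly in $\delta$ yields $\BB E[X] \leq K L$ for a constant $K = K(\wt\xi)$ independent of $\delta$. Second, one shows that $Y$ is bounded below by a $\delta$-independent constant $y_0$ with only small exceptional probability, via a second moment computation for the $\xi$-LFPP integral along a Euclidean path and a Paley-Zygmund-type argument. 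Combining these through the split
\[
\BB P[X > CY] \;\leq\; \BB P[X > Cy_0] + \BB P[Y < y_0] \;\leq\; \frac{\BB E[X]}{Cy_0} + \BB P[Y < y_0]
\]
with a suitable $\delta$-independent choice of $y_0$ produces the claimed $O_C(1/C)$ bound uniformly in $\delta$.

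The main obstacle will be the uniform-in-$\delta$ lower-tail control on $Y$: since $Y$ is an infimum over all paths, large negative excursions of $h_\delta$ along some clever path could in principle make $Y$ very small, and ruling this out uniformly in $\delta$ requires a careful second-moment/concentration estimate. A cleaner alternative, which I would pursue if the direct tail approach gets technical, is to bound $\BB E[X/Y]$ directly using the representation $X/Y \leq \delta^{\alpha\beta}\langle e^{\alpha h_\delta(P^*)}\rangle_{\mu^*/Y}$, combined with a Cameron-Martin shift of $h$ that trades the $\mu^*$-weighted average for an absolute expectation of $\Phi$ against the shifted field, where the bound $\BB E[\Phi(z)] = O(\delta^{\alpha\xi})$ can be applied directly.
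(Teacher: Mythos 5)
There is a genuine gap, and it is precisely at the point where the paper's proof does something different from your identity coupling. The paper does \emph{not} take $\wt h = h$: it sets $\wt h := \wt\xi^{-1}\bigl(\xi h + \sqrt{\wt\xi^2-\xi^2}\, h'\bigr)$ with $h'$ an independent copy of $h$, so that $\wt\xi \wt h_\delta = \xi h_\delta + \sqrt{\wt\xi^2-\xi^2}\, h'_\delta$. Then along a near-optimal path $P$ for the $\xi$-distance (which is measurable with respect to $h$), the extra factor $e^{\sqrt{\wt\xi^2-\xi^2}\, h'_\delta(P(t))}$ is \emph{conditionally independent of the path}, and its conditional expectation given $h$ is a deterministic $O\bigl(\delta^{-(\wt\xi^2-\xi^2)/2}\bigr)$, which exactly cancels the normalization; conditional Markov applied to $\BB E\bigl[\delta^{\wt\xi^2/2}D_{\wt h,\LFPP}^{\wt\xi,\delta}\,\big|\,h\bigr] \preceq \delta^{\xi^2/2}D_{h,\LFPP}^{\xi,\delta}$ then gives the $O_C(1/C)$ bound uniformly in $\delta$. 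In your identity coupling the analogous factor $\Phi(P^*(t)) = \delta^{\alpha\beta}e^{\alpha h_\delta(P^*(t))}$ is correlated with the optimizing path $P^*$, so your (correct) computation $\BB E[\Phi(z)] = O(\delta^{\alpha\xi})$ at a fixed point cannot be applied along $P^*$; the ``Cameron--Martin shift'' you invoke to trade the $\mu^*$-weighted average along the random optimizer for an unconditional expectation is not an argument, and removing exactly this correlation is what the mixture coupling is for.

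Your fallback Markov split also does not deliver the lemma. First, the term $\BB P[Y < y_0]$ does not decay in $C$, so even granted a lower-tail estimate you get at best $o_C(1)$ after letting $y_0 = y_0(C) \to 0$, not $O_C(1/C)$. Second, and more seriously, there is no $\delta$-independent constant $y_0$ with $\BB P[Y<y_0]$ small: for the values of $\xi$ the lemma is actually used with ($\xi = \gamma/d_\gamma$), Theorem~\ref{thm-lfpp-compare} gives $Y = \delta^{\xi^2/2}D_{h,\LFPP}^{\xi,\delta}(z,w;U) = \delta^{1 - 2/d_\gamma - \gamma^2/(2d_\gamma) + \gamma^2/(2d_\gamma^2) + o_\delta(1)}$, and this exponent is strictly positive (it equals $1/4$ at $\gamma=\sqrt{8/3}$), so $Y \to 0$ polynomially as $\delta\to 0$ and the bound $\BB E[X]/(C y_0) + \BB P[Y<y_0]$ cannot be made uniform in $\delta$. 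Moreover, a second-moment/Paley--Zygmund argument along a single Euclidean path controls that path's LFPP length, not the infimum over all paths; a uniform-in-$\delta$ lower bound on LFPP distances is essentially the hard content of Proposition~\ref{prop-lfpp-lower}, which cannot be assumed here (the lemma must hold for all $0<\xi<\wt\xi$ and is an input to the exponent analysis, not a consequence of it). The pieces that are correct --- the factorization with $\alpha\beta - \alpha^2/2 = \alpha\xi$ and the bound $\BB E[X] \preceq |z-w|$ via the straight segment --- do not combine to give \eqref{eqn-lfpp-mono}; the missing idea is the independent-field coupling.
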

\begin{proof}
Let $h'$ be an independent GFF with the same law as $h $. Then the field
\eqbn
\wt h := \wt\xi^{-1} \left( \xi h + \sqrt{\wt\xi^2 - \xi^2} h' \right)
\eqen
has the same law as $h$, as can be seen by computing $\BB E[(\wt h , f) (\wt h , g)]$ for smooth compactly supported functions $f,g$. We now need to compare LFPP distances with respect to $h $ and $\wt h$.

By the definition of LFPP, for $\delta > 0$ and $z,w\in U$, we can find a piecewise continuously differentiable path $P : [0,T] \rta \ol{U}$ from $z$ to $w$ which is a measurable function of $h $ and which satisfies
\eqbn
\delta^{\xi^2/2} \int_0^T e^{\xi h_\delta(P(t) )} |P'(t)| \, dt \leq 2 \delta^{ \xi^2/2} D_{h,\LFPP}^{\xi,\delta}(z,w ; U) .
\eqen
We have
\allb
\BB E\left[ \delta^{ \wt\xi^2 / 2} D_{\wt h,\LFPP}^{\wt\xi,\delta}(z,w ; U)  \,\big|\, h \right] 
&\leq  \BB E\left[  \delta^{ \wt\xi^2 / 2} \int_0^T   e^{\wt\xi \wt h_\delta(P(t) ) } |P'(t)| \, dt  \,\big|\, h \right] \notag \\
&= \delta^{ \wt\xi^2 / 2} \int_0^T   e^{\xi   h_\delta(P(t) ) }  |P'(t)|  \BB E\left[  e^{ \sqrt{\wt\xi^2 - \xi^2} h'_\delta(P(t))}  \,\big|\, h   \right]   \, dt   .
\alle
By the calculations in~\cite[Section 3.1]{shef-kpz}, the circle average $h'_\delta(u)$ is independent from $h $ and is centered Gaussian with variance at most $\log \delta^{-1} + O_\delta(1)$ (with the $O_\delta(1)$ uniform over all $u\in U$), so $\BB E\left[  e^{ \sqrt{\wt\xi^2 - \xi^2} h'_\delta(P(t))}  \,\big|\, h   \right]$ above is bounded above by a deterministic constant (depending only on $\xi$ and $\wt\xi$) times $\delta^{(\wt\xi^2 - \xi^2)/2}$. Therefore,
\alb
\BB E\left[ \delta^{\wt\xi^2/2} D_{\wt h,\LFPP}^{\wt\xi , \delta}(z,w ; U)  \,\big|\, h  \right] 
\preceq \delta^{\xi^2/2} \int_0^T   e^{\xi  h_\delta(P(t) ) }  |P'(t)|    \, dt 
\preceq \delta^{\xi^2/2} D_{h ,\LFPP}^{\xi,\delta}(z,w  ; U) ,
\ale
with a deterministic implicit constant. We now conclude by means of Markov's inequality.
\end{proof}

By Theorem~\ref{thm-lfpp-compare} and Lemma~\ref{lem-lfpp-mono}, for $\gamma_1,\gamma_2 \in (0,2)$, 
\eqb \label{eqn-exponent-mono}
\frac{\gamma_1}{d_{\gamma_1}} \leq \frac{\gamma_2}{d_{\gamma_2}} 
\quad  \Rightarrow \quad 
 1-\frac{2}{d_{\gamma_1} } - \frac{\gamma_1^2}{2d_{\gamma_1}}  +  \frac{\gamma_1^2}{2d_{\gamma_1}^2} 
   \leq 1-\frac{2}{d_{\gamma_2} } - \frac{\gamma_2^2}{2d_{\gamma_2}} + \frac{\gamma_2^2}{2d_{\gamma_2}^2} .
\eqe
We will now use the relation~\eqref{eqn-exponent-mono} to prove the properties of $d_\gamma$ stated in Theorem~\ref{thm-d}.

\begin{prop} \label{prop-d-mono}
The function $\gamma\mapsto d_\gamma$ is strictly increasing on $(0,2)$.
\end{prop}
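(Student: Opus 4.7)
The plan is to argue by contradiction using the implication~\eqref{eqn-exponent-mono} together with elementary monotonicity of the auxiliary function
\eqbn
f(\gamma,d) := 1 - \frac{2}{d} - \frac{\gamma^2}{2d} + \frac{\gamma^2}{2d^2} ,
\eqen
which is the quantity appearing on the right of~\eqref{eqn-exponent-mono}. Suppose for contradiction that there are $\gamma_1,\gamma_2 \in (0,2)$ with $\gamma_1 < \gamma_2$ but $d_{\gamma_1} \geq d_{\gamma_2}$. Since $\gamma \mapsto \gamma$ is strictly increasing and $d_{\gamma_1} \geq d_{\gamma_2}$, we obtain $\gamma_1/d_{\gamma_1} < \gamma_2/d_{\gamma_2}$. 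The implication~\eqref{eqn-exponent-mono} (which in turn comes from Theorem~\ref{thm-lfpp-compare} and the coupling of Lemma~\ref{lem-lfpp-mono}) then yields the weak inequality $f(\gamma_1,d_{\gamma_1}) \leq f(\gamma_2,d_{\gamma_2})$.

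The next step is to show that this is incompatible with $\gamma_1<\gamma_2$ and $d_{\gamma_1}\geq d_{\gamma_2}$ by checking that $f$ is strictly decreasing in $\gamma$ and strictly increasing in $d$ on the relevant domain. Direct differentiation gives
\eqbn
\partial_\gamma f(\gamma,d) = -\frac{\gamma(d-1)}{d^2} ,
\qquad
\partial_d f(\gamma,d) = \frac{1}{d^3}\left( 2d + \frac{\gamma^2}{2}(d-2) \right) ,
\eqen
both of which have the stated signs for all $\gamma>0$ and $d>2$. Since $d_\gamma > 2$ by Theorem~\ref{thm-dzz}, these signs apply at both points $(\gamma_i,d_{\gamma_i})$.

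Finally, combining the two monotonicities in the chain
\eqbn
f(\gamma_1,d_{\gamma_1}) \;\geq\; f(\gamma_1,d_{\gamma_2}) \;>\; f(\gamma_2,d_{\gamma_2})
\eqen
(where the first step uses $d_{\gamma_1}\geq d_{\gamma_2}$ together with $\partial_d f > 0$, and the second uses $\gamma_1<\gamma_2$ together with $\partial_\gamma f < 0$) produces a strict inequality that contradicts the weak inequality coming from~\eqref{eqn-exponent-mono}. This forces $d_{\gamma_1} < d_{\gamma_2}$, establishing strict monotonicity. The only real content beyond bookkeeping is the input~\eqref{eqn-exponent-mono}; once that is available, the argument is a short comparison and there is no substantial obstacle.
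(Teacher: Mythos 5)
Your proof is correct and follows essentially the same route as the paper: both derive a contradiction with~\eqref{eqn-exponent-mono} from the observation that $1-\frac{2}{d}-\frac{\gamma^2}{2d}+\frac{\gamma^2}{2d^2}$ is strictly decreasing in $\gamma$ and increasing in $d$ when $d\geq 2$ (which holds since $d_\gamma>2$). The only cosmetic difference is that the paper packages the two monotonicities into a single chain-rule computation along an interpolating path from $(\gamma_1,d_{\gamma_1})$ to $(\gamma_2,d_{\gamma_2})$, whereas you split them into two partial-derivative steps; the sign computations are identical.
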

\begin{proof}  
Suppose by way of contradiction that there exists $0 < \gamma_1 < \gamma_2 < 2$ such that $d_{\gamma_1} \geq d_{\gamma_2}$. 
Then $\gamma_1/d_{\gamma_1}  < \gamma_2/d_{\gamma_2}$. 
We will argue that 
\eqb \label{eqn-d-mono-show}
 1-\frac{2}{d_{\gamma_1} } - \frac{\gamma_1^2}{2d_{\gamma_1}}   +  \frac{\gamma_1^2}{2d_{\gamma_1}^2}    >  1-\frac{2}{d_{\gamma_2} } - \frac{\gamma_2^2}{2d_{\gamma_2}} +  \frac{\gamma_2^2}{2d_{\gamma_2}^2}
\eqe  
which will contradict~\eqref{eqn-exponent-mono}. 
To this end, choose a non-increasing continuously differentiable function $f : [\gamma_1,\gamma_2] \rta [d_{\gamma_2} , d_{\gamma_1}]$ with $f(\gamma_1) = d_{\gamma_1}$ and $f(\gamma_2) = d_{\gamma_2}$ and set
\eqbn
g(\gamma) :=  1-\frac{2}{f(\gamma) } - \frac{\gamma^2}{2 f(\gamma)}  + \frac{\gamma^2}{2 f(\gamma)^2}
\eqen
so that~\eqref{eqn-d-mono-show} is the same as $g(\gamma_1)  > g(\gamma_2) $. Implicit differentiation gives
\eqbn
g'(\gamma) = \left( \frac{ \gamma}{f (\gamma)^2} - \frac{\gamma}{f(\gamma)}  \right)  + f'(\gamma) \left( \frac{\gamma^2 }{2f(\gamma)^2}  + \frac{2 }{f(\gamma)^2}  -  \frac{\gamma^2   }{f(\gamma)^3} \right)  .
\eqen
Since $d_\gamma \geq 2$, we have $f(\gamma) \geq 2$, 
so
\eqbn
\frac{ \gamma}{f (\gamma)^2} - \frac{\gamma}{f(\gamma)} < 0 
\quad \op{and} \quad
\frac{\gamma^2 }{2 f(\gamma)^2}  + \frac{2 }{f(\gamma)^2}  -  \frac{\gamma^2   }{f(\gamma)^3} \geq \frac{2}{f(\gamma)^2}  > 0 .
\eqen
Since $f'(\gamma) \leq 0$, it follows that $g'(\gamma)   < 0$, and in particular $g(\gamma_1) > g(\gamma_2)$, which is the desired contradiction.
\end{proof}

\begin{proof}[Proof of Proposition~\ref{prop-increase}]
Since $\gamma\mapsto d_\gamma$ is increasing (Proposition~\ref{prop-d-mono}), the function $\gamma\mapsto \gamma/d_\gamma$ is continuous except possibly for countably many downward jumps. (It is also not hard to check directly that $d_\gamma$, and hence also $\gamma/d_\gamma$, is continuous, but this is not necessary for our argument here. We will check that $d_\gamma$ is continuous in the proof of Theorem~\ref{thm-d} below.)
Clearly, $\gamma/d_\gamma \rta 0$ as $\gamma\rta 0$. By Lemma~\ref{lem-cont-increase} just below, to show that $\gamma\mapsto \gamma/d_\gamma$ is strictly increasing it therefore suffices to show that this function is injective. 
To this end, suppose $0<\gamma_1 \leq \gamma_2 < 2$ such that $\gamma_1/d_{\gamma_1} = \gamma_2/d_{\gamma_2}$. We will show that $\gamma_1=\gamma_2$. By Theorem~\ref{thm-lfpp-compare}, 
\eqbn
 1-\frac{2}{d_{\gamma_1} } - \frac{\gamma_1^2}{2d_{\gamma_1}}  = 1-\frac{2}{d_{\gamma_2} } - \frac{\gamma_2^2}{2d_{\gamma_2}}  .
\eqen
Writing $\xi = \gamma_1/d_{\gamma_1} = \gamma_2/d_{\gamma_2}$, subtracting $1$ from both sides, then dividing by $-\xi$ gives 
\eqbn
 \frac{2  }{\gamma_1 } + \frac{\gamma_1 }{2}  =  \frac{2   }{ \gamma_2  } + \frac{  \gamma_2 }{2}  .
\eqen
Since $0<\gamma_1\leq \gamma_2<2$, this implies that $\gamma_1 = \gamma_2$. Hence $\gamma\mapsto \gamma/d_\gamma$ is strictly increasing. Combining this with~\eqref{eqn-exponent-mono} shows that $\gamma\mapsto 1-\frac{2}{d_{\gamma } } - \frac{\gamma^2}{2d_{\gamma}} + \frac{\gamma^2}{2d_\gamma^2}$ is non-decreasing. 
\end{proof}

We now prove the following elementary lemma which was used in the proof of Proposition~\ref{prop-increase}. 
 
\begin{lem} \label{lem-cont-increase}
Let $f : [0,1] \rta [0,\infty)$ be an injective function such that $f(0) = 0$ and $f$ has no upward jumps, i.e., $\liminf_{y\rta x^-} f(y) \geq f(x)$ and $\limsup_{y\rta x^+} f(y) \leq f(x)$ for each $x \in [0,1]$. Then $f$ is continuous and strictly increasing. 
\end{lem}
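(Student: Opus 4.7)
The plan is to show $f$ is continuous; once that is known, a continuous injection on $[0,1]$ is strictly monotone, and the constraints $f(0) = 0$ and $f \geq 0$ then force strict increase. The key tool is a one-sided intermediate value property that I extract from the hypotheses.

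\textbf{Step 1 (one-sided IVT).} I first show that whenever $a < b$ in $[0,1]$ and $f(a) < v < f(b)$, there exists $\xi \in (a,b)$ with $f(\xi) = v$. Let $B := \{x \in [a,b] : f(x) < v\}$, so $a \in B$ and $b \notin B$. The hypothesis $\limsup_{y\to x^+} f(y) \leq f(x) < v$ for $x \in B$ makes $B$ ``right-open'', i.e.\ $x \in B$ implies $(x, x+\delta) \cap [a,b] \subset B$ for some $\delta > 0$. Hence $z := \sup B \notin B$, which gives $f(z) \geq v$; picking a sequence $x_n \uparrow z$ in $B$ and combining $f(x_n) < v$ with the other hypothesis $\liminf_{y \to z^-} f(y) \geq f(z)$ pins $\liminf_n f(x_n) = v$ and therefore $f(z) = v$. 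The endpoint alternatives $z = a$ or $z = b$ are ruled out by $f(a) < v < f(b)$.

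\textbf{Step 2 (continuity on $(0,1)$).} A discontinuity at an interior $x_0$ must, by the asymmetric hypotheses, take one of two forms: (I) $\limsup_{y\to x_0^-} f(y) > f(x_0)$, or (II) $\liminf_{y\to x_0^+} f(y) < f(x_0)$. In Case (I), I choose $v$ strictly between $f(x_0)$ and this lim sup (necessarily $v > 0$) and a sequence $y_n \uparrow x_0$ with $f(y_n) > v$. Step 1 applied to $[0, y_n]$ (using $f(0) = 0$) yields $\xi \in (0, x_0)$ with $f(\xi) = v$. The hypothesis $\limsup_{y\to x_0^+} f(y) \leq f(x_0) < v$ then produces $z > x_0$ with $f(z) < v$; since $z > 0$, injectivity forbids $f(z) = 0$, so $f(z) \in (0, v)$, and applying Step 1 again to $[0, \xi]$ yields $\eta \in (0, \xi)$ with $f(\eta) = f(z)$, contradicting injectivity since $\eta < x_0 < z$. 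Case (II) is handled symmetrically: $f(x_0) > 0$ because $\liminf \geq 0$; pick $v \in (\liminf, f(x_0))$ and a sequence $y_n \downarrow x_0$ with $f(y_n) < v$, apply Step 1 to $[0, x_0]$ to get $\xi \in (0,x_0)$ with $f(\xi) = v$, and then apply Step 1 to $[0,\xi]$ to produce a duplicate preimage of $f(y_n)$ in $(0,\xi)$, again contradicting injectivity.

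\textbf{Step 3 (endpoints and conclusion).} At $x=0$, the hypothesis $\limsup_{y\to 0^+} f(y) \leq 0$ together with $f \geq 0$ gives right-continuity. At $x=1$ the interior argument no longer applies (no right side of $x_0$), but by Step 2 the restriction $f|_{[0,1)}$ is continuous and injective, hence strictly monotone, and strictly \emph{increasing} because $f(0)=0$ and $f \geq 0$. Therefore $f([0,1)) = [0, M)$ where $M := \lim_{y\to 1^-} f(y)$, and injectivity forces $f(1) \notin [0,M)$, i.e., $f(1) \geq M$; the hypothesis gives $M \geq f(1)$, whence $M = f(1)$ and $f$ is left-continuous at $1$. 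Continuity and injectivity on all of $[0,1]$ then yield strict monotonicity, and $f(0) = 0 < f(1)$ (by injectivity and $f \geq 0$) forces strict increase.

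The main obstacle is Step 2: because of the asymmetry of the hypotheses only a one-sided IVT is available from Step 1, so in both Case (I) and Case (II) one has to arrange the two preimages to lie on the same side of $x_0$ (anchoring the lower end at $0$ via $f(0) = 0$) in order to force a collision. The right endpoint $x_0 = 1$ then needs the separate range-counting argument of Step 3.
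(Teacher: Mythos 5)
Your proof is correct. Your Step 1 is, in substance, the same key mechanism as the paper's central claim: the paper shows that the range of $f$ (and of each restriction $f|_{[0,x]}$) is an interval by taking $x_* = \sup\{x : f(x)\leq b\}$ and using the left hypothesis to force $f(x_*)\leq b$ and the right hypothesis to force $f(x_*)\geq b$ --- exactly your sup-of-sublevel-set argument for the one-sided IVT. Where you diverge is in the order of deductions: the paper goes monotonicity first, then continuity (if $x<y$ with $f(x)\geq f(y)$, then $f(y)\in[0,f(x)]\subset f([0,x])$ contradicts injectivity, so $f$ is strictly increasing; a strictly increasing function with no upward jumps is automatically continuous), which finishes in two lines once the interval-range claim is in hand. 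You instead prove continuity first, by using the IVT twice to manufacture two preimages of a common value on opposite sides of a putative discontinuity, and only then invoke ``continuous injection on an interval is strictly monotone.'' This works, but it is longer and forces the separate range-counting argument at the endpoint $x_0=1$, which the paper's ordering avoids entirely. Two spots are glossed but harmless: in Case (II) you need $f(y_n)>0$ before applying Step 1 on $[0,\xi]$ (if $f(y_n)=0$ injectivity is violated outright against $f(0)=0$), and in Step 3 the possibility $M=\infty$ is excluded only implicitly by the statement $f(1)\notin[0,M)$ with $f(1)$ finite; both are one-line fixes.
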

\begin{proof}
We claim that the range of $f$ is an interval. Indeed, suppose $b\in (0,\max_{x\in[0,1]} f(x) )$ and let $x_* := \sup\{x\in [0,1] : f(x) \leq b\}$. By left upper semicontinuity, $b \geq \liminf_{y\rta x_*^-} f(y) \geq f(x_*)$ and by right lower semicontinuity, $b \leq \limsup_{y\rta x_*^+} f(y) \leq f(x_*)$, so $f(x_*) = b$. 
The same applies to the restriction of $f$ to  $[0,x]$ for any $x\in [0,1]$. Consequently, if $0 \leq x < y \leq 1$, then $f(x) < f(y)$ since otherwise we would have $f(y) \in [0,f(x)] \subset f([0,x])$ which would contradict the injectivity of $f$. This shows that $f$ is strictly increasing, so since $f$ has no upward jumps $f$ must be continuous.
\end{proof}

\begin{proof}[Proof of Theorem~\ref{thm-d}]
The monotonicity of $d_\gamma$ was proven in Proposition~\ref{prop-d-mono}. 
The lower bound~\eqref{eqn-d-asymp} for the asymptotics as $\gamma\rta 0^+$ follows from~\cite[Theorem 1.1]{ding-goswami-watabiki}.  Since $\gamma \mapsto d_\gamma$ and $\gamma \mapsto \gamma/d_\gamma$ are increasing, for $0 < \gamma_1 < \gamma_2 < 2$ we have
\eqbn
d_{\gamma_1} \leq d_{\gamma_2} \leq \frac{\gamma_2}{\gamma_1} d_{\gamma_1} ,
\eqen
which gives the desired local Lipschitz continuity of $d_\gamma$. 

To prove the bounds~\eqref{eqn-d-bound}, we argue as follows. 
By Theorem~\ref{thm-ball-size} (applied in the case of the UIPT) and~\cite[Theorem 1.2]{angel-peeling}, we get $d_{\sqrt{8/3}} = 4$. Hence the monotonicity of~\eqref{eqn-increase-exponent} of Proposition~\ref{prop-increase} shows that 
\eqb \label{eqn-increase-upper}
\frac{\gamma}{d_\gamma} \geq \frac{1}{\sqrt 6} , \quad \forall \gamma \in (\sqrt{8/3} , 2) \quad \text{and} \quad
\frac{\gamma}{d_\gamma} \leq \frac{1}{\sqrt 6} ,\quad \forall \gamma \in (0,\sqrt{8/3}) .
\eqe 
Similarly, using the monotonicity of~\eqref{eqn-increase-dist} we get
\eqb \label{eqn-increase-lower}
1-\frac{2}{d_\gamma} - \frac{\gamma^2}{2d_\gamma} + \frac{\gamma^2}{2d_\gamma^2} \geq  \frac14 , \quad \forall \gamma \in (\sqrt{8/3} , 2) \quad \text{and} \quad
1-\frac{2}{d_\gamma} - \frac{\gamma^2}{2d_\gamma} + \frac{\gamma^2}{2d_\gamma^2} \leq \frac14 ,\quad \forall \gamma \in (0,\sqrt{8/3}) .
\eqe 
Finally, from the bounds for the volume of a metric ball in the mated-CRT map from~\cite[Theorem 1.10]{ghs-dist-exponent}, we get
\eqb \label{eqn-basic-bounds}
\frac{2\gamma^2}{4+\gamma^2-\sqrt{16+\gamma^4}} \leq d_\gamma \leq 2 + \frac{\gamma^2}{2} + \sqrt 2 \gamma ,\quad\forall \gamma \in (0,2) .
\eqe
Combining~\eqref{eqn-increase-upper},~\eqref{eqn-increase-lower}, and~\eqref{eqn-basic-bounds} gives~\eqref{eqn-d-bound}. 
\end{proof}

\section{Estimates for Liouville graph distance and LFPP}
\label{sec-lfpp-compare}

The goal of this section is to prove Theorems~\ref{thm-diam} and~\ref{thm-lfpp-compare}. 
For most of our arguments, instead of working with the GFF we will work with two approximations of the GFF defined by integrating the transition density of Brownian motion against a white noise which we introduce in Section~\ref{sec-wn}. The process $\wh h$, defined in~\eqref{eqn-wn-decomp}, possesses several exact scale and translation invariance properties which make it especially suitable for multi-scale analysis. The process $\wh h^\tr$, defined in~\eqref{eqn-wn-truncate}, is a truncated version of $\wh h$ which is no longer scale invariant in law but satisfies a local independence property which will be useful in various ``percolation"-style arguments below. We will prove in Lemmas~\ref{lem-tr-compare-square} and~\ref{lem-circle-avg-approx}, respectively, that Liouville graph distance and LFPP with respect to either $\wh h$ or $\wh h^\tr$ can be compared to the analogous distances with respect to a GFF. 

In Section~\ref{sec-diam}, we prove Theorem~\ref{thm-diam} by first establishing an upper concentration estimate for the Liouville graph distance between the two sides of a rectangle (using a percolation argument). We then apply this estimate at several scales and take a union bound to get an upper bound on the distance between the two sides of many different rectangles simultaneously. This then leads to an upper bound for the Liouville graph distance diameter of the unit square by concatenating paths within these rectangles in an appropriate manner. 

In Sections~\ref{sec-lfpp-lower} and~\ref{sec-lfpp-upper}, respectively, we prove the upper and lower bounds for LFPP from Theorem~\ref{thm-lfpp-compare}. The basic idea of the proofs in both cases is to fix a small parameter $\beta \in (0,1)$ (it turns out that any $0 < \beta < 2/(2+\gamma)^2$ will suffice) and compare LFPP with circle-average radius $\delta = \ep^\beta$ to Liouville graph distance defined using balls of LQG mass at most $\ep$. We know the latter distance can be described in terms of $d_\gamma$ by Theorems~\ref{thm-dzz} and~\ref{thm-diam}. To carry out the comparison, we will first condition on the field at scale $\ep^\beta$ (in the sense of the white-noise approximation process $\wh h$). We will then estimate the Liouville graph distance within each sub-square of the unit square of side length approximately $\ep^\beta$. This will be done using our known estimates for Liouville graph distance and the scaling properties of this distance when one re-scales space and adds a constant to the field (this ``constant" will depend on the values of the field at scale $\ep^\beta$). 

For the proofs in this section, it will often be convenient to consider decompositions into dyadic squares and rectangles, so here we introduce some notation to describe rectangles. All of the rectangles we consider will be closed.
\begin{itemize}
\item We write $\BB S = [0,1]^2$ for the unit square. 
\item For a square $S\subset \BB C$, we write $|S|$ for its side length and $v_S$ for its center. 
\item For a rectangle $R \subset \BB C$ and $r > 0$, we write $R(r)$ for the closed $r$-neighborhood of $R$ with respect to the $L^\infty$ metric, i.e., the rectangle with the same center as $R$ whose sides are parallel to $R$ and have length $1 + 2r$ times the side lengths of $R$. 
\end{itemize}

\subsection{White noise approximation}
\label{sec-wn}

In this subsection we will introduce various white-noise approximations of the Gaussian free field which are often more convenient to work with than the GFF itself and discuss several properties of these processes, many of which were proven in~\cite{ding-goswami-watabiki,dzz-heat-kernel}. Let $W$ be a space-time white noise on $\BB C\times [0,\infty)$, i.e., $\{(W,f) : f\in L^2(\BB C\times [0,\infty))\}$ is a centered Gaussian process with covariances $\BB E[(W,f) (W,g) ]  = \int_\BB C\int_0^\infty f(z,s) g(z,s) \,ds \, dz$. For $f\in L^2(\BB C\times [0,\infty))$ and Borel measurable sets $A\subset\BB C$ and $I\subset [0,\infty)$, 
we slightly abuse notation by writing 
\eqbn
\int_A\int_I f(z,s) \, W(dz,ds) := (W , f \BB 1_{A\times I} ) .
\eqen

For an open set $U \subset \BB C$, we write $p_U(s ; z,w)$ for the transition density of Brownian motion killed upon exiting $U$, so that for $s\geq 0$, $z\in \BB C$, and $A\subset \ol U$, the integral $\int_A p_U(s;z,w) \,dw$ gives the probability that a standard planar Brownian motion $\mcl B$ started from $z$ satisfies $\mcl B([0,s]) \subset U$ and $\mcl B_s \in A$. We also write 
\eqbn
p(s;z,w) := p_{\BB C}(s;z,w) =  \frac{1}{2\pi s} \exp\left( - \frac{|z-w|^2}{2s} \right) .
\eqen  

Following~\cite[Section 3]{ding-goswami-watabiki}, we define the centered Gaussian process
\eqb \label{eqn-wn-decomp}
\wh h_t (z) := \sqrt\pi \int_{\BB C} \int_{t^2}^1 p (s/2 ;z,w) \, W(dw,ds)  ,\quad \forall t \in [0,1] , \quad \forall z\in \BB C .
\eqe 
We write $\wh h  := \wh h_0$.
Note that $\wh h_t$ is called $\eta_t^1$ in~\cite{ding-goswami-watabiki}.
By~\cite[Lemma 3.1]{ding-goswami-watabiki} and Kolmogorov's criterion, each $\wh h_t$ for $t \in (0,1]$ admits a continuous modification. Henceforth whenever we work with $\wh h_t$ we will assume that it has been replaced with such a modification. 
The process $\wh h$ does not admit a continuous modification, but makes sense as a distribution: indeed, it is easily checked that its integral against any smooth compactly supported test function is Gaussian with finite variance.
This distribution is not itself a Gaussian free field, but it does approximate a Gaussian free field in several useful respects (see in particular Lemmas~\ref{lem-gff-compare} and~\ref{lem-circle-avg-approx}).  
We record the formula
\eqb \label{eqn-wn-var}
\op{Var}\left( \wh h_{\wt t}(z) - \wh h_t(z) \right) = \log (\wt t/t),\quad\forall z \in \BB C, \quad\forall 0 < t <\wt t < 1 ,
\eqe

The process $\wh h$ is in some ways more convenient to work with than the GFF thanks to the following symmetries, which are immediate from the definition. 
\begin{itemize}
\item \textit{Rotation/translation/reflection invariance.} The law of $\{\wh h_t : t\in [0,1]  \}$ is invariant with respect to rotation, translation, and reflection of the plane.
\item \textit{Scale invariance.} For $\delta \in (0,1]$, one has $\{(\wh h_{\delta t } - \wh h_\delta)(\delta \cdot)  : t \in [0,1]  \} \eqD \{\wh h_t : t\in [0,1]\}$. 
\item \textit{Independent increments.} If $0 \leq t_1\leq t_2 \leq t_3 \leq t_4 \leq 1$, then $\wh h_{t_2} - \wh h_{t_1}$ and $\wh h_{t_4} - \wh h_{t_3}$ are independent. 
\end{itemize}

One property which $\wh h$ does not possess is spatial independence. To get around this, we will sometimes work with a truncated variant of $\wh h $ where we only integrate over a ball of finite radius. For $t\in [0,1]$, we define 
\eqb \label{eqn-wn-truncate}
\wh h_t^\tr(z) := \sqrt\pi \int_{t^2}^1 \int_{\BB C} p_{B_{1/10}(z)}(s/2; z,w)   \, W(dw,dt) .
\eqe 
We also set $\wh h^\tr := \wh h^\tr_0$. 
As in the case of $\wh h$, it is easily seen from the Kolmogorov continuity criterion that each $\wh h^\tr_t$ for $t\in (0,1]$ a.s.\ admits a continuous modification (see \cite[Lemmas 2.3 and 2.5]{dzz-heat-kernel} for a proof of a very similar statement).
The process $\wh h^\tr$ does not admit a continuous modification and is instead viewed as a random distribution. 

The key property enjoyed by $\wh h^\tr$ is spatial independence: if $A,B\subset \BB C$ with $\op{dist}(A,B) \geq 1/5$, then $\{\wh h^\tr_t|_A : t\in [0,1]\}$ and $\{\wh h^\tr_t|_B : t\in [0,1]\}$ are independent. Indeed, this is because $\{\wh h^\tr_t|_A : t\in [0,1]\}$ and $\{\wh h^\tr_t|_B : t\in [0,1]\}$ are determined by the restrictions of the white noise $W$ to the disjoint sets $B_{1/10}(A) \times \BB R_+$ and $B_{1/10}(B)\times \BB R_+$, respectively.  
Unlike $\wh h$, the distribution $\wh h^\tr$ does not possess any sort of scale invariance but its law is still invariant with respect to rotations, translations, and reflections of $\BB C$. 
We note that our definition of $\wh h^\tr$ is simpler than the definition of the truncated white-noise decomposition used in~\cite{dzz-heat-kernel} since we do not need to have the spatial independence property at all scales.

The following lemma will allow us to use $\wh h^\tr$ or $\wh h$ in place of the GFF in many of our arguments. 

\begin{lem} \label{lem-gff-compare}
Suppose $U\subset \BB C$ is a bounded Jordan domain and let $K$ be the set of points in $U$ which lie at Euclidean distance at least $1/10$ from $\bdy U$. 
There is a coupling $(h , h^U,\wh h , \wh h^\tr)$ of a whole-plane GFF normalized so that $h_1(0) = 0$, a zero-boundary GFF on $U$, and the fields from~\eqref{eqn-wn-decomp} and~\eqref{eqn-wn-truncate} such that the following is true. For any $h^1,h^2 \in \{h , h^U,\wh h , \wh h^\tr\}$, the distribution $(h^1-h^2)|_K$ a.s.\ admits a continuous modification and there are constants $c_0,c_1 > 0$ depending only on $U$ such that for $A>1$, 
\eqb \label{eqn-gff-compare}
\BB P\left[\max_{z\in K} |(h^1-h^2)(z)| \leq A \right] \geq 1 - c_0 e^{-c_1 A^2} .
\eqe
In fact, in this coupling one can arrange so that $\wh h$ and $\wh h^\tr$ are defined using the same white noise and $h - h^U$ is harmonic on $U$. 
\end{lem}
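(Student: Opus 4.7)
The plan is to build the coupling in three stages and then reduce the lemma to Borell-TIS applied to each of the six pairwise differences. First, construct $\wh h$ and $\wh h^\tr$ from a single space-time white noise $W$ on $\BB C\times(0,1]$, as in~\eqref{eqn-wn-decomp} and~\eqref{eqn-wn-truncate}. Next, sample an independent space-time white noise $W^+$ on $\BB C\times(1,\infty)$ and let $\wt W$ denote the joint noise (equal to $W$ on $(0,1]$ and to $W^+$ on $(1,\infty)$); use $\wt W$ to construct a whole-plane GFF $h$ with $h_1(0)=0$ via the standard white-noise construction (see, e.g., \cite[Section 3]{ding-goswami-watabiki}). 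Finally, set $h^U := h|_U - \frk h$, where $\frk h$ is the harmonic extension to $U$ of the restriction of $h$ to $\bdy U$; by the Markov property, $h^U$ is a zero-boundary GFF on $U$, $\frk h$ is independent of $h^U$, and $h - h^U = \frk h$ is harmonic on $U$.

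For the pair $(\wh h,\wh h^\tr)$, since both are built from $W$, their difference is the explicit stochastic integral
\[
(\wh h-\wh h^\tr)(z)=\sqrt\pi\int_0^1\int_{\BB C}\bigl[p(s/2;z,w)-p_{B_{1/10}(z)}(s/2;z,w)\bigr]\,W(dw,ds),
\]
whose integrand equals the density at $w$ of Brownian paths from $z$ that have exited $B_{1/10}(z)$ by time $s/2\leq 1/2$. Its $L^2$ norm in $w$ is uniformly bounded in $z\in\BB C$ for $s\in(0,1]$, giving uniformly bounded pointwise variance on $K$, and a Kolmogorov-type estimate on second differences in $z_1,z_2\in K$ yields H\"older continuity of a modification.

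For the pairs involving $h$ and one of $\wh h,\wh h^\tr$: a direct covariance computation shows that in this coupling, $\BB E[h(z)h(w)]$, $\BB E[\wh h(z)\wh h(w)]$, and $\BB E[h(z)\wh h(w)]$ all equal $\log|z-w|^{-1}$ plus a function of $z,w$ that is bounded and continuous on $K\times K$ (the cross-covariance gets the same log singularity because $h$ and $\wh h$ share $W$ on small scales, where the singularity is generated). Consequently $(h-\wh h)|_K$ is a centered Gaussian field with bounded, continuous covariance kernel on $K\times K$, so it admits a continuous modification with uniformly bounded pointwise variance; combined with the previous paragraph, the same conclusion holds for $(h-\wh h^\tr)|_K$.

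For the pairs involving $h^U$: since $\frk h=h-h^U$ is centered Gaussian and harmonic on $U$ with uniformly bounded pointwise variance on $K$ (exactly as in the proof of Lemma~\ref{lem-whole-plane-compare}), it is a.s.\ continuous on $K$, and the remaining differences $h^U-\wh h=(h-\wh h)-\frk h$ and $h^U-\wh h^\tr=(h-\wh h^\tr)-\frk h$ are continuous Gaussian fields on $K$ with uniformly bounded pointwise variance. Thus each of the six differences $(h^1-h^2)|_K$ is a continuous centered Gaussian field with pointwise variance bounded by a constant depending only on $U$, and applying Borell-TIS as in the proof of Lemma~\ref{lem-whole-plane-compare} (absorbing $\BB E[\max_{z\in K}|(h^1-h^2)(z)|]$ into the constants $c_0,c_1$) yields~\eqref{eqn-gff-compare}. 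The main obstacle is the cancellation of the logarithmic singularities in the covariance of $h-\wh h$: since both $h$ and $\wh h$ have infinite pointwise variance as distributions, the crude bound $\op{Var}((h-\wh h)(z))\leq\op{Var}(h(z))+\op{Var}(\wh h(z))$ is useless, and one must exploit the common small-scale white noise $W$ so that the singular part of $\BB E[h(z)\wh h(w)]$ matches that of the individual variances and cancels in the difference.
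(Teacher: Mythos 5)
Your coupling is legitimate and genuinely different from the paper's: the paper realizes the zero-boundary field $h^U$ itself as a white-noise integral of the killed heat kernel $p_U$ against the same noise (over all times), compares $h^U$ and $\wh h$ to $\wh h^\tr$ through explicit kernel differences, and only attaches the whole-plane GFF at the end via the Markov property; you instead attach $h$ directly to an enlarged white noise and derive $h^U$ from $h$. Both routes satisfy the two ``in fact'' requirements, and reducing all six pairs to the three differences $h-h^U$, $h-\wh h$, $\wh h-\wh h^\tr$ by the triangle inequality is fine. The gap is in how you establish continuity of the difference fields. For the pairs involving $h$ you argue that the logarithmic singularities cancel, so $(h-\wh h)|_K$ is a centered Gaussian field with a bounded, continuous covariance kernel, ``so it admits a continuous modification.'' That implication is false in general: continuity of the covariance gives only $L^2$-continuity, and there exist centered Gaussian fields with continuous covariance whose sample paths are a.s.\ unbounded. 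Since Borell--TIS also needs $\max_{z\in K}|(h^1-h^2)(z)|<\infty$ a.s.\ (and a finite expected maximum), the gap propagates directly into~\eqref{eqn-gff-compare}. What is required, and what the paper's Appendix~\ref{sec-gff-compare} actually supplies, is a quantitative increment bound of the form $\op{Var}\big((h^1-h^2)(z_1)-(h^1-h^2)(z_2)\big)\preceq |z_1-z_2|$ on $K$, followed by the Kolmogorov continuity criterion.

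The same issue appears in milder form for the pair $(\wh h,\wh h^\tr)$: you assert that ``a Kolmogorov-type estimate on second differences yields H\"older continuity,'' but this estimate is precisely the technical heart of the paper's proof (a Brownian-bridge computation for the kernel difference $p(s/2;z,\cdot)-p_{B_{1/10}(z)}(s/2;z,\cdot)$; note the truncation ball moves with the base point $z$, which is what makes the increment estimate nontrivial), and you do not carry it out. Your plan is rescuable: if the whole-plane field is realized as the white-noise integral of $p(s/2;z,\cdot)$ minus its average over $\bdy\BB D$ (the naive integral over $s\in(1,\infty)$ has infinite pointwise variance, so this normalization must be built into the construction --- and I do not believe the cited \cite[Section 3]{ding-goswami-watabiki} contains such a whole-plane construction), then in your coupling $h-\wh h$ is an explicit integral over $s\ge 1$ plus a single random constant, for which a Kolmogorov or Fernique bound is easy; and for $\wh h-\wh h^\tr$ one must actually perform the increment-variance computation as in the paper. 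As written, however, neither the continuity nor (consequently) the Gaussian concentration of the difference fields is established.
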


Lemma~\ref{lem-gff-compare} is proven in Appendix~\ref{sec-gff-compare} via elementary calculations for the transition density $p_U(t;z,w)$ which allow us to check the Kolmogorov continuity criterion for $h^1-h^2$. Once we establish the continuity of $h^1-h^2$, the bound~\eqref{eqn-gff-compare} comes from the Borell-TIS inequality.  

\subsubsection{LQG measures and Liouville graph distances for $\wh h$ and $\wh h^\tr$}

Lemma~\ref{lem-gff-compare} allows us to define for each $\gamma \in (0,2)$ the $\gamma$-LQG measures $\mu_{\wh h}$ and $\mu_{\wh h^\tr}$ associated with the fields $\wh h$ and $\wh h^\tr$. Indeed, one way to do this is as follows. If $h$ is a GFF and $f$ is a (possibly random) continuous function, then for any $z\in\BB C$ and any $\ep > 0$ we can define the average $(h+f)_\ep(z) = h_\ep(z) + f_\ep(z)$ of $h+f$ over the circle $\bdy B_\ep(z)$. We can then define $\mu_{h+f}$ as the a.s.\ weak limit $\lim_{\ep\rta 0} \ep^{\gamma^2/2} e^{\gamma (h+f)_\ep(z)} \,dz$, following~\cite[Proposition 1.1]{shef-kpz}. With this definition, one has $d\mu_{h+f} = e^{\gamma f} \, d\mu_h$ a.s. 
Applying this with $f = \wh h - h$ or $\wh h^\tr - h$, when the fields are coupled as in Lemma~\ref{lem-gff-compare}, allows us to define $\mu_{\wh h}$ and $\mu_{\wh h^\tr}$.  
 
The measures $\mu_{\wh h}$ and $\mu_{\wh h^\tr}$ are a.s.\ non-atomic and assign positive mass to every open set. 
Furthermore, for any open set $U\subset \BB C$, we have that $\mu_{\wh h}$ and $\mu_{\wh h^\tr}$ are determined by the restrictions of $\wh h$ and $\wh h^\tr$, respectively, to $U$.  

As in the case of a GFF, for $z,w\in \BB C$ and $\ep>0$, we define the \emph{Liouville graph distance} $D_{\wh h}^\ep(z,w)$ with respect to $\wh h$ to be the smallest $N\in\BB N$ for which there is a continuous path from $z$ to $w$ which can be covered by at most $N$ Euclidean balls of $\mu_{\wh h}$-mass at most $\ep$. 
We extend the definitions of the localized Liouville graph distance and the Liouville graph distance between sets from Definition~\ref{def-restricted-lgd} to $D_{\wh h}^\ep$ in the obvious manner.
We similarly define $D_{\wh h^\tr}^\ep$. 

As a consequence of Lemma~\ref{lem-gff-compare}, we have the following lemma, which will be a key tool in our proofs.

\begin{lem} \label{lem-tr-compare-square}
Suppose $U \subset \BB C$ and $K\subset U$ are as in Lemma~\ref{lem-gff-compare} and that $(h , h^U,\wh h , \wh h^\tr)$ are coupled as in Lemma~\ref{lem-gff-compare}.
For each $\gamma \in (0,2)$, there are constants $a_0,a_1 >0$, depending only on $\gamma$, such that for each $\ep \in (0,1)$, each pair of fields $h^1,h^2 \in \{h,h^{U} , \wh h, \wh h^\tr\}$, and each $C>1$, 
\eqb \label{eqn-tr-compare-square}
\BB P\left[ D_{h^1}^{C\ep}\left( z , w ; K \right) \leq   D_{h^2}^\ep\left( z , w ; K \right) \leq   D_{h^1}^{\ep/C}\left( z , w ; K \right)  ,\: \forall z,w\in K \right] \geq 1 - a_0 e^{-a_1(\log C)^2} . 
\eqe 
\end{lem}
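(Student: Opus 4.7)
The plan is to reduce the statement directly to the Gaussian concentration bound of Lemma~\ref{lem-gff-compare} via the standard transformation rule for the $\gamma$-LQG measure under the addition of a bounded continuous function. Concretely, I would apply Lemma~\ref{lem-gff-compare} at level $A := \gamma^{-1} \log C$ and convert the resulting uniform estimate $|h^1-h^2| \leq A$ on $K$ into a multiplicative comparison of $\mu_{h^1}$ and $\mu_{h^2}$ on $\ol K$, which translates immediately into the two-sided comparison of the restricted Liouville graph distances.

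First I would let $E_C$ be the event that $\max_{z\in K} |(h^1-h^2)(z)| \leq A = \gamma^{-1}\log C$. By Lemma~\ref{lem-gff-compare} applied to the pair $(h^1,h^2)$, this event satisfies $\BB P[E_C] \geq 1 - c_0 \exp(-c_1(\log C)^2/\gamma^2)$, so setting $a_0 := c_0$ and $a_1 := c_1/\gamma^2$ yields the claimed probability bound. On $E_C$, the function $f := h^1 - h^2$, which is continuous on $K$ by Lemma~\ref{lem-gff-compare}, extends continuously to $\ol K$ with $|f| \leq A$ there. The measures $\mu_{\wh h}$ and $\mu_{\wh h^\tr}$ were defined in the paragraph following Lemma~\ref{lem-gff-compare} precisely so that $d\mu_{h^j} = e^{\gamma(h^j - h)} d\mu_h$ for any $h^j \in \{h, h^U, \wh h, \wh h^\tr\}$. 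Consequently $d\mu_{h^2} = e^{\gamma f} d\mu_{h^1}$ as Borel measures on $\ol K$, and on $E_C$ this Radon--Nikodym derivative lies in $[C^{-1}, C]$. Hence for every Borel set $B \subset \ol K$,
\eqbn
C^{-1} \mu_{h^1}(B) \leq \mu_{h^2}(B) \leq C\, \mu_{h^1}(B) .
\eqen

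Finally I would translate this measure comparison into a distance comparison. On $E_C$, if $B_1,\dots,B_N$ is a collection of Euclidean balls contained in $\ol K$ whose union covers a continuous path from $z$ to $w$ and for which $\mu_{h^2}(B_j) \leq \ep$ for every $j$, then $\mu_{h^1}(B_j) \leq C\ep$, so the same collection witnesses $D_{h^1}^{C\ep}(z,w;K) \leq N$. Minimizing over $N$ gives $D_{h^1}^{C\ep}(z,w;K) \leq D_{h^2}^\ep(z,w;K)$ simultaneously for all $z,w\in K$. The reverse inequality $D_{h^2}^\ep(z,w;K) \leq D_{h^1}^{\ep/C}(z,w;K)$ is symmetric, using that $\mu_{h^1}(B) \leq \ep/C$ implies $\mu_{h^2}(B) \leq \ep$ on $E_C$.

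There is essentially no substantive obstacle beyond tracking constants; the real work is in Lemma~\ref{lem-gff-compare} itself, which supplies both the almost sure continuity of the pairwise differences on $K$ and the Gaussian tail bound needed for the probability estimate. The one point worth keeping in mind is that although each of $h, h^U, \wh h, \wh h^\tr$ is only a random distribution on $\BB C$ (so that in general $d\mu_{h^j}/d\mu_{h^i}$ is not pointwise meaningful), on $K$ the difference $h^1-h^2$ is genuinely a continuous function, which is exactly what is required to invoke the standard change-of-measure rule for Gaussian multiplicative chaos without any regularization step.
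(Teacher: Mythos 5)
Your proposal is correct and follows essentially the same route as the paper: apply Lemma~\ref{lem-gff-compare} with $A = \gamma^{-1}\log C$ and use the relation $d\mu_{h^1} = e^{\gamma(h^1-h^2)}\,d\mu_{h^2}$ to convert the uniform bound on $h^1-h^2$ into a factor-$C$ comparison of the measures, hence of the restricted Liouville graph distances. The paper states this in one line; your write-up just spells out the covering argument and the constant bookkeeping.
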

\begin{proof}
This follows from Lemma~\ref{lem-gff-compare} applied with $A = \frac{1}{\gamma} \log C$ and the fact that for $h^1,h^2 \in \{h,h^U , \wh h, \wh h^\tr\}$, we have $d\mu_{h^1} = e^{\gamma(h^1-h^2)} \, d\mu_{h^2}$. 
\end{proof}

Due to the scale invariance and independent increments properties of $\wh h$, it is convenient to understand how Liouville graph distances with respect to $\wh h$ transform under scaling.  
The basic properties of $\wh h$ listed above show that for $\gamma \in (0,2)$, $\delta \in (0,1)$, and $b \in\BB C$, the $\gamma$-LQG measures and Liouville graph distances associated with the fields $\wh h$ and $\wh h(\delta\cdot + b) - \wh h_\delta(\delta \cdot)$ satisfy
\eqb \label{eqn-scaled-measures}
\mu_{ ( \wh h  -\wh h_\delta )(\delta \cdot + b) } \eqD \mu_{\wh h}  \quad \op{and} \quad
D_{( \wh h  -\wh h_\delta )(\delta \cdot + b)  }^\ep \eqD D_{\wh h}^\ep ,\quad\forall \ep> 0.
\eqe
Furthermore, these measures and distances are related in the following deterministic manner. 

\begin{lem}  \label{lem-measure-scale}
For each $b\in\BB C$ and each $\delta \in (0,1)$, a.s.\ 
\eqb \label{eqn-measure-scale}
\mu_{\wh h}(X)
=  \delta^{ 2 + \gamma^2/2}  \int_{\delta^{-1}(X-b)} e^{\gamma \wh h_\delta(\delta \cdot + b)} \, d \mu_{( \wh h  -\wh h_\delta )(\delta \cdot + b)   }(z)   , \quad \forall \: \text{Borel set $X\subset \BB C$} .
\eqe
Furthermore, if $U\subset \BB C$ is a bounded, open, connected set and we set\footnote{Note here that $\ol T \leq \ul T$, which might be slightly unintuitive. The reason for the notation is that $\ol T$ corresponds to a larger distance function. A similar notational convention is used for variants of Liouville graph distance in Section~\ref{sec-sle-compare}.}
\eqbn
\ul T := \delta^{-2 -\gamma^2/2} \exp\left(- \min_{z\in U}  \wh h_{\delta }(z)   \right) \quad \op{and} \quad \ol T := \delta^{ -2-\gamma^2/2} \exp\left(- \max_{z\in U}  \wh h_{\delta }(z)   \right)
\eqen
then a.s.\  the restricted Liouville graph distances satisfy
\eqb \label{eqn-dist-scaling}
 D_{\wh h}^\ep(z,w; U )        
 \leq  D_{( \wh h  -\wh h_\delta )(\delta \cdot + b) }^{\ol T \ep} \left( \delta^{-1}(z-b) , \delta^{-1}(w-b) ; \delta^{-1}(U - b)  \right)     , \quad  \forall \ep > 0,\quad \forall z,w\in U ,   
\eqe
and the reverse inequality holds with $\ul T$ in place of $\ol T$. 
\end{lem}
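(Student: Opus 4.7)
The plan is to derive the first identity~\eqref{eqn-measure-scale} from the standard scaling relation for $\gamma$-Gaussian multiplicative chaos and then read off the distance bounds as a direct consequence. The key observation is that the decomposition $\wh h(\delta z' + b) = \wh h_\delta(\delta z' + b) + (\wh h - \wh h_\delta)(\delta z' + b)$ writes the shifted and rescaled field as a.s.\ continuous plus ``rough'' part, so that $\gamma$-LQG measures transform by a deterministic Radon-Nikodym factor and a deterministic power of $\delta$.

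For~\eqref{eqn-measure-scale}, set $F(z') := \delta z' + b$. First I would note the identity
\eqb \label{eqn-plan-jacobian}
\mu_{\wh h}(X) = \delta^{2 + \gamma^2/2} \, \mu_{\wh h \circ F}\!\left( F^{-1}(X) \right) ,
\eqe
which is the analogue for $\wh h$ of Shef-KPZ's scaling rule~\cite[Proposition 2.1]{shef-kpz}. To prove~\eqref{eqn-plan-jacobian}, one argues exactly as in the GFF case: the circle averages satisfy $(\wh h\circ F)_\ep(z') = \wh h_{\delta\ep}(F(z'))$, so changing variables $z = F(z')$ in the approximating measures gives
\eqbn
\ep^{\gamma^2/2} e^{\gamma (\wh h\circ F)_\ep(z')}\,dz'
= \delta^{-2-\gamma^2/2} \, (\delta\ep)^{\gamma^2/2} e^{\gamma \wh h_{\delta\ep}(z)} \,dz ,
\eqen
and passing to the weak limit (using the coupling $\wh h = h + (\wh h - h)$ of Lemma~\ref{lem-gff-compare} to reduce to the known GMC convergence for the GFF) yields~\eqref{eqn-plan-jacobian}. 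Having done this, the relation $\wh h \circ F = \wh h_\delta\circ F + (\wh h - \wh h_\delta)\circ F$ together with the standard absolute continuity identity $d\mu_{\phi + f} = e^{\gamma f}\,d\mu_\phi$ for continuous $f$ (applied with the a.s.\ continuous function $f = \wh h_\delta \circ F$) gives $d\mu_{\wh h\circ F}(z') = e^{\gamma \wh h_\delta(F(z'))}\,d\mu_{(\wh h - \wh h_\delta)\circ F}(z')$, and plugging this into~\eqref{eqn-plan-jacobian} is exactly~\eqref{eqn-measure-scale}.

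For the second part~\eqref{eqn-dist-scaling}, I would bootstrap~\eqref{eqn-measure-scale} to a comparison of balls. Namely, for any Euclidean ball $B\subset U$, writing $B' = F^{-1}(B) \subset \delta^{-1}(U - b)$ and bounding $\wh h_\delta(F(z')) \in [\min_{z\in U} \wh h_\delta(z), \max_{z\in U} \wh h_\delta(z)]$ on $B'$ inside~\eqref{eqn-measure-scale}, one obtains the two-sided estimate
\eqbn
\ol T^{-1}\,\mu_{(\wh h - \wh h_\delta)\circ F}(B') \;\leq\; \mu_{\wh h}(B) \;\leq\; \ul T^{-1}\,\mu_{(\wh h - \wh h_\delta)\circ F}(B') .
\eqen
Then if $B_1',\dots,B_N'\subset \delta^{-1}(U - b)$ are balls of $\mu_{(\wh h - \wh h_\delta)\circ F}$-mass at most $\ol T \ep$ covering a continuous path from $F^{-1}(z)$ to $F^{-1}(w)$, their images $F(B_i') \subset U$ are balls of $\mu_{\wh h}$-mass at most $\ep$ covering a continuous path from $z$ to $w$, proving~\eqref{eqn-dist-scaling}. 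The reverse inequality with $\ul T$ in place of $\ol T$ follows by the symmetric argument, mapping $\mu_{\wh h}$-$\ep$ balls through $F^{-1}$.

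The only nontrivial step is the justification of~\eqref{eqn-plan-jacobian} for the white-noise field $\wh h$ rather than for a genuine GFF; the rest is just bookkeeping with change of variables and the bounds $\min \leq \wh h_\delta \leq \max$ on $U$. This is handled via the coupling of Lemma~\ref{lem-gff-compare}, which lets us define $\mu_{\wh h}$ via the usual circle-average recipe after absorbing the continuous difference $\wh h - h$ into the $e^{\gamma f}$ factor, at which point the scaling computation is the same as for the GFF.
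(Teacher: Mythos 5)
Your proposal follows essentially the same route as the paper: the paper deduces \eqref{eqn-measure-scale} from the LQG coordinate change formula (remarking, as you do, that it can be checked directly from the circle-average/white-noise approximations of the measures) together with the relation $d\mu_{\wh h + f} = e^{\gamma f}\,d\mu_{\wh h}$, and then obtains \eqref{eqn-dist-scaling} by applying \eqref{eqn-measure-scale} to Euclidean balls contained in $U$, exactly as in your final step, so your write-up simply supplies more of the bookkeeping. One slip worth fixing: your two-sided estimate has $\ul T^{-1}$ and $\ol T^{-1}$ interchanged---since $\ol T \leq \ul T$, bounding $e^{\gamma \wh h_\delta(F(z'))}$ between $e^{\gamma \min_{z\in U}\wh h_\delta(z)}$ and $e^{\gamma \max_{z\in U}\wh h_\delta(z)}$ inside \eqref{eqn-measure-scale} yields $\ul T^{-1}\,\mu_{(\wh h-\wh h_\delta)\circ F}(B') \leq \mu_{\wh h}(B) \leq \ol T^{-1}\,\mu_{(\wh h-\wh h_\delta)\circ F}(B')$ (with the factor $\gamma$ in the exponentials, matching how the lemma is later applied via $T_R$ and $T_S$)---but your subsequent ball-mapping argument uses precisely these correct directions, so nothing else in the proof needs to change.
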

\begin{proof}
By the $\gamma$-LQG coordinate change formula~\cite[Proposition 2.1]{shef-kpz}, a.s.\ $\mu_{\wh h(\delta\cdot+b) + Q \log \delta}(\delta^{-1}(X-b)) = \mu_{\wh h}(X)$ for all Borel sets $X\subset\BB C$, where $Q = 2/\gamma + \gamma/2$ (this is also easy to see directly from the circle average or white-noise approximations of the measures).
This together with the relation $d\mu_{\wh h + f} = e^{\gamma f} \, d\mu_{\wh h}$ yields~\eqref{eqn-measure-scale}.
 The relation~\eqref{eqn-dist-scaling} follows from~\eqref{eqn-measure-scale} applied to Euclidean balls contained in $U$.  
\end{proof} 

In the remainder of this section we record some basic estimates for the above processes $\wh h$ and $\wh h^\tr$, building on estimates from~\cite{ding-goswami-watabiki,dzz-heat-kernel}. The reader may wish to skip these estimates on a first read and refer back to them as they are used.

\subsubsection{Estimates for $\wh h_\delta$}

We start with estimates for the modulus of continuity and maximum value of the process $\wh h_\delta$ from~\eqref{eqn-wn-decomp}. 

\begin{lem} \label{lem-use-btis} 
For each $\zeta \in (0,1)$ and each bounded domain $U \subset \BB C$, it holds with superpolynomially high probability as $\delta \rta 0$ that
\eqb \label{eqn-use-btis}
  \max_{z,w \in U : |z-w| \leq \delta} |\wh h_\delta(z) -\wh h_\delta(w)| \leq \zeta \log \delta^{-1} .
\eqe 
\end{lem}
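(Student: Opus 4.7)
The plan is to exploit that $\wh h_\delta$ is a continuous centered Gaussian field whose increment variance at scales below $\delta$ is explicitly controlled by the white-noise representation~\eqref{eqn-wn-decomp}, and then feed this into a standard chaining plus Borell--TIS argument. The resulting Gaussian tail beats any polynomial once we take $t = \zeta\log\delta^{-1}$, so a union bound over $O(\delta^{-2})$ small balls covering $U$ yields the lemma.

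First I would compute the increment variance directly. Using the semigroup identity $\int_\BB C p(s/2;z,u)\,p(s/2;w,u)\,du = p(s;z,w)$, the representation~\eqref{eqn-wn-decomp} gives
\[
\op{Var}\bigl( \wh h_\delta(z) - \wh h_\delta(w) \bigr) = \int_{\delta^2}^1 \frac{1}{s}\bigl( 1 - e^{-|z-w|^2/(2s)} \bigr)\,ds .
\]
The inequality $1-e^{-x}\leq x$ then yields $\op{Var}(\wh h_\delta(z)-\wh h_\delta(w))\leq |z-w|^2/(2\delta^2)$ whenever $|z-w|\leq\delta$, and more generally the same integral is bounded by an absolute constant when $|z-w|\leq 2\delta$. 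Thus the canonical pseudometric $d(z,w) := \sqrt{\op{Var}(\wh h_\delta(z)-\wh h_\delta(w))}$ of the Gaussian field $z\mapsto\wh h_\delta(z)$ is Lipschitz in Euclidean distance with constant $O(1/\delta)$ at scales $\leq\delta$.

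Next, cover $U$ by $N = O(\delta^{-2})$ Euclidean balls of radius $\delta/2$, chosen so that every pair $z,w\in U$ with $|z-w|\leq\delta$ lies in a common enlarged ball $B^{\ast}$ of radius $\delta$. On each $B^{\ast}$ the pseudometric diameter is bounded by an absolute constant, and a standard Dudley chaining estimate gives $\BB E[\max_{z,w\in B^{\ast}} |\wh h_\delta(z)-\wh h_\delta(w)|] = O(1)$ uniformly in $\delta$ (the uniformity follows after a linear rescaling by $\delta^{-1}$ which removes the $\delta$-dependence of the pseudometric structure on $B^{\ast}$). Applying Borell--TIS to this supremum and taking $t = \zeta\log\delta^{-1}$ yields
\[
\BB P\left[ \max_{z,w\in B^{\ast}} |\wh h_\delta(z) - \wh h_\delta(w)| > \zeta\log\delta^{-1} \right] \leq C_0 \exp\bigl(-c_0 \zeta^2 (\log\delta^{-1})^2\bigr) ,
\]
which is superpolynomially small in $\delta$. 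A union bound over the $O(\delta^{-2})$ balls costs only a polynomial factor and so preserves superpolynomial decay, giving~\eqref{eqn-use-btis}. I do not expect any real obstacle here: the variance computation is short and explicit, and the remaining step is a textbook Gaussian concentration argument combined with the fact that we only need superpolynomial (not sharp exponential) decay, which gives plenty of slack in the union bound.
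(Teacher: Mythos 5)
Your proposal is correct and follows essentially the same route as the paper: a bound of order $|z-w|^2/\delta^2$ on the increment variance (which the paper cites from~\cite[Lemma 3.1]{ding-goswami-watabiki} rather than deriving via the semigroup identity), a uniform $O(1)$ bound on the expected oscillation over $\delta$-scale blocks via chaining (Dudley in your write-up, Fernique's criterion in the paper), then Borell--TIS with $t=\zeta\log\delta^{-1}$ and a union bound over $O(\delta^{-2})$ blocks. The only nitpick is a constant: with radius-$\delta/2$ balls a pair at distance $\delta$ need only lie in a common ball of radius $3\delta/2$, not $\delta$, but this changes nothing in the argument.
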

\begin{proof}
It is easily seen (see~\cite[Lemma 3.1]{ding-goswami-watabiki}) that for $\delta > 0$, $\op{Var}(\wh h_\delta(z) - \wh h_\delta(w)) \leq |z-w|^2/\delta^2$, which is of course smaller than $|z-w| /\delta$ whenever $|z-w| \leq \delta$.  
By Fernique's criterion~\cite{fernique-criterion} (see~\cite[Theorem 4.1]{adler-gaussian} or~\cite[Lemma 2.3]{dzz-heat-kernel} for the version we use here), we find that for each square $S\subset\BB C$ with side length $\delta/2$, 
\eqbn
\BB E\left[ \max_{z,w\in S} |\wh h_\delta(z) - \wh h_\delta(w)| \right] \leq C ,
\eqen
for a universal constant $C>0$.
Combining this with the Borell-TIS inequality~\cite{borell-tis1,borell-tis2} (see, e.g.,~\cite[Theorem 2.1.1]{adler-taylor-fields}), we get that for each such square $S$, 
\eqbn
\BB P\left[ \max_{z,w\in S} |\wh h_\delta(z) - \wh h_\delta(w)| \leq \zeta \log \delta^{-1} \right]  \geq 1 - e^{- \frac{\zeta^2}{2} (\log\delta^{-1})^2} .
\eqen
A union bound over $O_\delta(\delta^{-2})$ such squares whose union contains $U$ concludes the proof. 
\end{proof}

\begin{lem} \label{lem-one-scale-max}
For $\zeta \in (0,1)$ and each bounded domain $U\subset \BB C$, it holds with polynomially high probability as $\delta\rta 0$ that 
\eqb \label{eqn-one-scale-max}
  \max_{z\in U} |\wh h_{\delta }(z)  | \leq  (2+\zeta) \log \delta^{-1}   .
\eqe  
\end{lem}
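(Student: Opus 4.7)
The plan is to combine a straightforward one-point Gaussian tail bound with Lemma~\ref{lem-use-btis} to pass from a $\delta$-net to all of $U$. The key quantitative input is that $\wh h_\delta(z)$ is a centered Gaussian with variance exactly $\log\delta^{-1}$ (this is~\eqref{eqn-wn-var} applied with $\wt t = 1$ and $t = \delta$, since $\wh h_1 \equiv 0$).

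First, I would fix a $\delta$-net $\mcl N \subset U$ of cardinality $\#\mcl N \leq C_U \delta^{-2}$ for some constant depending only on $U$. For each $z\in\mcl N$, the standard Gaussian tail bound gives
\eqbn
\BB P\left[|\wh h_\delta(z)| > (2+\zeta/2)\log\delta^{-1}\right] \leq 2\exp\left(-\frac{(2+\zeta/2)^2 (\log\delta^{-1})^2}{2\log\delta^{-1}}\right) = 2\delta^{2 + \zeta + \zeta^2/8}.
\eqen
A union bound over $\mcl N$ then shows that
\eqbn
\BB P\left[\max_{z\in\mcl N}|\wh h_\delta(z)| \leq (2+\zeta/2)\log\delta^{-1}\right] \geq 1 - 2C_U\delta^{\zeta + \zeta^2/8},
\eqen
which is polynomially high probability as $\delta\to 0$.

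Second, I would apply Lemma~\ref{lem-use-btis} with $\zeta/2$ in place of $\zeta$ (and the domain $U$) to get that, with superpolynomially high probability, $|\wh h_\delta(z) - \wh h_\delta(w)| \leq (\zeta/2)\log\delta^{-1}$ whenever $z,w\in U$ satisfy $|z-w|\leq \delta$. Since every $z\in U$ lies within Euclidean distance $\delta$ of some point of $\mcl N$, the triangle inequality yields $\max_{z\in U}|\wh h_\delta(z)| \leq (2+\zeta)\log\delta^{-1}$ on the intersection of these two events, which still occurs with polynomially high probability.

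There is no real obstacle here: the only arithmetic to check is that the exponent $(2+\zeta/2)^2/2 = 2 + \zeta + \zeta^2/8$ produced by the Gaussian tail at threshold $(2+\zeta/2)\log\delta^{-1}$ strictly exceeds $2$, so that the union bound absorbs the $\delta^{-2}$ factor from the cardinality of the net and leaves polynomial decay. This is where the assumption $\zeta > 0$ is used; the constants depend only on $\zeta$ and $U$, not on $\delta$.
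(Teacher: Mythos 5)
Your proof is correct and follows essentially the same route as the paper: a union bound over an order-$\delta^{-2}$ net (the paper uses the lattice $\left(\tfrac{\delta}{2}\BB Z^2\right)\cap U$) using that $\wh h_\delta(z)$ is centered Gaussian with variance $\log\delta^{-1}$, followed by Lemma~\ref{lem-use-btis} and the triangle inequality to pass from the net to all of $U$. The arithmetic check that $(2+\zeta/2)^2/2 - 2 = \zeta + \zeta^2/8 > 0$ is exactly what makes the union bound work in the paper as well.
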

\begin{proof}
Since each $\wh h_\delta(z)$ is centered Gaussian of variance $\log \delta^{-1}  $, a union bound shows that
\eqbn
\BB P\left[ \max_{z\in \left(\frac{\delta}{2} \BB Z^2 \right) \cap U} |\wh h_\delta(z)| \leq \left(2 + \frac{\zeta}{2} \right) \log \delta^{-1} \right] \geq 1 -  \delta^{\frac{(2+\zeta/2)^2}{2} - 2 + o_\delta(1)} .
\eqen
Combining this with Lemma~\ref{lem-use-btis} and the triangle inequality concludes the proof. 
\end{proof}

\begin{lem} \label{lem-mid-scale-max}
For each bounded domain $U\subset \BB C$, each $\zeta \in (0,1)$, each $\delta\in (0,1)$,  each $A \in \left(1, e^{( \log\delta^{-1})^{1-\zeta} }\right)$, and each $C\geq 1$, 
\eqb \label{eqn-mid-scale-max}
\BB P\left[ \max_{z,w\in U : |z-w| \leq  C \delta} |\wh h_{ \delta / A}(z) - \wh h_\delta(w)| \leq \zeta \log \delta^{-1} \right]  \geq 1 - O_\delta(\delta^p) ,\: \forall p > 0  ,
\eqe
with the rate of the $O_\delta(\delta^p)$ depending on $U$, $\zeta$, $C$, and $\gamma$ but uniform over all of the possible choices of $A$. 
\end{lem}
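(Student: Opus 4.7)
The plan is to write $\wh h_{\delta/A}(z) - \wh h_\delta(w) = [\wh h_{\delta/A}(z) - \wh h_\delta(z)] + [\wh h_\delta(z) - \wh h_\delta(w)]$ and control each bracketed term by $(\zeta/2)\log\delta^{-1}$ separately, uniformly in $z,w$ with $|z-w| \leq C\delta$.

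For the second bracket, I would essentially repeat the argument of Lemma~\ref{lem-use-btis}, with the only modification being that the small squares are enlarged by a factor depending on $C$. Since $\op{Var}(\wh h_\delta(z) - \wh h_\delta(w)) \leq |z-w|^2/\delta^2 \leq C^2$ for $|z-w| \leq C\delta$, applying Fernique's criterion to a partition of $U$ into squares of side $\delta$ gives $\BB E[\max_{z,w\in S, |z-w|\leq C\delta}|\wh h_\delta(z)-\wh h_\delta(w)|] \leq C'$ for a constant $C' = C'(C)$, and then the Borell--TIS inequality yields a $\exp(-c(\log\delta^{-1})^2)$ failure probability on each such local neighborhood. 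A union bound over the $O(\delta^{-2})$ squares needed to cover $U$ finishes this part with superpolynomially high probability.

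For the first bracket, the strategy is a grid union bound. Let $\mcl P$ be a $(\delta/A)$-spaced grid in $U$, so $|\mcl P| = O((A/\delta)^2)$. At each $p \in \mcl P$, the increment $\wh h_{\delta/A}(p) - \wh h_\delta(p)$ is centered Gaussian with variance $\log A$ by~\eqref{eqn-wn-var}. The Gaussian tail bound gives
\[
\BB P\left[|\wh h_{\delta/A}(p) - \wh h_\delta(p)| > (\zeta/4)\log\delta^{-1}\right] \leq 2\exp\left(-\frac{(\zeta/4)^2(\log\delta^{-1})^2}{2\log A}\right) \leq 2\exp\left(-c_\zeta (\log\delta^{-1})^{1+\zeta}\right),
\]
where the last step uses the hypothesis $\log A \leq (\log\delta^{-1})^{1-\zeta}$. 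Since $|\mcl P| \leq \delta^{-2}\exp(2(\log\delta^{-1})^{1-\zeta})$ and the Gaussian tail is superpolynomial in $\delta$, the union bound controls the maximum of $|\wh h_{\delta/A}(p) - \wh h_\delta(p)|$ over $\mcl P$ by $(\zeta/4)\log\delta^{-1}$ off an event of probability $O_\delta(\delta^p)$ for every $p$. To promote to arbitrary $z\in U$, write any such $z$ as $p + r$ with $p\in\mcl P$ and $|r| \leq \delta/A$, and bound $|\wh h_{\delta/A}(z) - \wh h_{\delta/A}(p)| + |\wh h_\delta(z) - \wh h_\delta(p)|$. The second modulus is covered by the argument of the previous paragraph (applied at displacement $\leq \delta/A \leq \delta$). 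For the first, apply Fernique and Borell--TIS on each cell of side $\delta/A$ to $\wh h_{\delta/A}$, noting that $\op{Var}(\wh h_{\delta/A}(z)-\wh h_{\delta/A}(p)) \leq |z-p|^2/(\delta/A)^2 \leq 1$, so the fluctuation is $O(1)$ on each cell with superpolynomially high probability; a union bound over $O((A/\delta)^2)$ cells is absorbed the same way as before. Since $O(1) \ll (\zeta/4)\log\delta^{-1}$ for small $\delta$, this contributes at most $(\zeta/4)\log\delta^{-1}$ to the first bracket.

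The main obstacle is ensuring the bounds are uniform in $A$ across the whole allowed range $A \in (1, e^{(\log\delta^{-1})^{1-\zeta}})$. The gain comes precisely from comparing the superpolynomial Gaussian tail exponent $(\log\delta^{-1})^{1+\zeta}$ against the union bound cost $2\log\delta^{-1} + 2(\log\delta^{-1})^{1-\zeta}$: the former dominates for every admissible $A$, which is exactly why the hypothesis $\log A \leq (\log\delta^{-1})^{1-\zeta}$ is imposed. All implicit constants appearing in the Fernique/Borell--TIS applications depend only on $U$, $\zeta$, $C$, and $\gamma$, as required.
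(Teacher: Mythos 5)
Your proof is correct and is essentially the paper's own argument: the same decomposition into the ``vertical'' increment $\wh h_{\delta/A}(z)-\wh h_\delta(z)$, controlled by the Gaussian tail bound with variance $\log A\leq(\log\delta^{-1})^{1-\zeta}$ plus a union bound over a grid, and the ``horizontal'' continuity of $\wh h_\delta$ and $\wh h_{\delta/A}$ handled by the Fernique/Borell--TIS argument of Lemma~\ref{lem-use-btis}, combined via the triangle inequality. The only differences are cosmetic (you take a $\delta/A$-spaced grid where the paper uses $\frac{\delta}{2}\BB Z^2$ and cites Lemma~\ref{lem-use-btis} at scale $\delta/A$, and your per-cell ``fluctuation is $O(1)$'' should be read as ``at most a small multiple of $\log\delta^{-1}$'', which is how you in fact budget it), and the uniformity in $A$ is justified exactly as in the paper.
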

\begin{proof} 
The random variables $\wh h_{ \delta /A}(z) - \wh h_\delta(z)$ for $z\in U$ are jointly centered Gaussian with variances $\log A  \leq  (\log \delta^{-1})^{1-\zeta}  $. By the Gaussian tail bound and a union bound, 
\eqb \label{eqn-mid-scale-union}
\BB P\left[ \max_{z \in \left(\frac{\delta}{2} \BB Z^2 \right)\cap U} |\wh h_{ \delta /A}(z) - \wh h_\delta(z)| \leq (\log \delta^{-1})^{1-\zeta/3} \right]  \geq 1 - O_\delta(\delta^p) ,\: \forall p > 0  .
\eqe 
The estimate~\eqref{eqn-mid-scale-max} follows by combining Lemma~\ref{lem-use-btis}, applied for $\wh h_\delta$ and with $ \delta/A$ in place of $\delta$, with $\zeta/(2C)$ in place of $\zeta$, with~\eqref{eqn-mid-scale-union} and the triangle inequality. 
\end{proof}
 
Finally, we record a lemma which serves an analogous purpose to Lemma~\ref{lem-tr-compare-square} but for LFPP instead of Liouville graph distance.

\begin{lem} \label{lem-circle-avg-approx}
Let $h^{\BB S(1)}$ be a zero-boundary GFF on the square $\BB S(1)  $. 
There is a coupling of $\wh h$ and $h^{\BB S(1)}$ such that for each $C > 0$ and each $\zeta\in (0,1)$, it holds with superpolynomilally high probability as $\delta \rta 0$ that
\eqb \label{eqn-circle-avg-approx}
\max_{z,w\in \BB S: |z-w| \leq C \delta} |h_\delta^{\BB S(1)}(z) -\wh h_\delta(w)| \leq \zeta \log\delta^{-1} .
\eqe 
\end{lem}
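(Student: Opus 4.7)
The plan is to exploit the coupling from Lemma~\ref{lem-gff-compare} and reduce the estimate to three separately tractable pieces. I apply Lemma~\ref{lem-gff-compare} with $U = \BB S(1)$ to obtain a coupling in which $\wh h$ and $h^{\BB S(1)}$ are built from the same white noise and the difference $\psi := (h^{\BB S(1)} - \wh h)|_K$ admits a continuous modification, where $K$ is the set of points in $\BB S(1)$ at Euclidean distance $\geq 1/10$ from $\bdy\BB S(1)$ (so that $\BB S \subset K$). Taking $A = (\log\delta^{-1})^{2/3}$ in~\eqref{eqn-gff-compare} yields $\max_K |\psi| = o(\log\delta^{-1})$ with superpolynomially high probability. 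Define the circle average of $\wh h$ at radius $\delta$ directly by the white-noise integral
\[
\wh h'_\delta(z) := \sqrt\pi \int_\BB C\!\int_0^1 \bar p_\delta(s/2;z,w)\, W(dw,ds), \qquad \bar p_\delta(s/2;z,w) := \frac{1}{2\pi}\int_0^{2\pi} p(s/2;z+\delta e^{i\theta},w)\,d\theta ;
\]
a covariance check shows this agrees with the usual distributional circle average of $\wh h$, so by linearity $h^{\BB S(1)}_\delta(z) = \psi_\delta(z) + \wh h'_\delta(z)$, and hence
\[
h^{\BB S(1)}_\delta(z) - \wh h_\delta(w) = \psi_\delta(z) + \bigl[\wh h'_\delta(z) - \wh h_\delta(z)\bigr] + \bigl[\wh h_\delta(z) - \wh h_\delta(w)\bigr].
\]

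The outer two summands are routine. The circle average satisfies $|\psi_\delta(z)| \leq \max_K|\psi| = o(\log\delta^{-1})$ by the first paragraph. For $z,w \in \BB S$ with $|z-w| \leq C\delta$, I would cover the segment from $z$ to $w$ by $O_C(1)$ points separated by distance at most $\delta$ and iterate Lemma~\ref{lem-use-btis} (applied with $\zeta/O_C(1)$ in place of $\zeta$) together with the triangle inequality to get $|\wh h_\delta(z) - \wh h_\delta(w)| \leq \zeta(\log\delta^{-1})/3$ uniformly over such pairs, with superpolynomially high probability. The remaining and principal task is to show
\[
\max_{z\in \BB S}\,\bigl|\wh h'_\delta(z) - \wh h_\delta(z)\bigr| = o(\log\delta^{-1}) \quad\text{with superpolynomially high probability;}
\]
this is the main obstacle, since each term separately has variance of order $\log\delta^{-1}$ and genuine cancellation must be exploited.

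To handle this, I would compute directly that $\wh h'_\delta(z) - \wh h_\delta(z)$ is centered Gaussian with variance
\[
\pi \int_0^1\!\!\int_\BB C \bigl[\bar p_\delta(s/2;z,w) - \BB 1_{s\geq \delta^2}\, p(s/2;z,w)\bigr]^2\,dw\,ds
\]
and bound this uniformly in $\delta \in (0,1)$ and $z \in \BB S$ by splitting at $s = \delta^2$. For $s \leq \delta^2$, the density $\bar p_\delta(s/2;z,\cdot)$ concentrates on an annulus of area $\asymp \delta\sqrt s$ (the thickening of $\bdy B_\delta(z)$ by a Gaussian of scale $\sqrt s$) with amplitude $\asymp (\delta\sqrt s)^{-1}$, giving $\int \bar p_\delta^2\,dw \asymp (\delta\sqrt s)^{-1}$ whose $s$-integral over $[0,\delta^2]$ is $O(1)$. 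For $s > \delta^2$, angular symmetry annihilates the first-order Taylor term in $\delta$, yielding $|\bar p_\delta - p| \lesssim \delta^2 |\nabla^2 p|$ with $\int|\nabla^2 p(s/2;z,\cdot)|^2\,dw \lesssim s^{-3}$, so the contribution is $\lesssim \delta^4 \int_{\delta^2}^1 s^{-3}\,ds = O(1)$. An analogous covariance computation for increments shows that $z \mapsto \wh h'_\delta(z) - \wh h_\delta(z)$ has increment variance decaying as a positive power of $|z-z'|/\delta$, so Fernique's criterion and the Borell--TIS inequality on a fine grid in $\BB S$ (together with modulus-of-continuity on each grid cell) yield the desired superpolynomial bound on the maximum, completing the proof.
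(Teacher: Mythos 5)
Your argument is correct, but it follows a genuinely different route from the paper. The paper's proof is essentially a citation: it invokes the uniform comparison between the circle average $h^{\BB S(1)}_\delta(z)$ and the white-noise field $\wh h_\delta(z)$ established in \cite[Proposition 3.2]{ding-goswami-watabiki}, and then handles the lateral displacement $|z-w|\leq C\delta$ with Lemma~\ref{lem-use-btis} applied with $\zeta/(2C)$ in place of $\zeta$. You instead make the comparison self-contained: you couple via Lemma~\ref{lem-gff-compare} (taking $A=(\log\delta^{-1})^{2/3}$ to upgrade the Gaussian tail to a superpolynomial bound), split $h^{\BB S(1)}_\delta(z)-\wh h_\delta(w)$ into the circle average of the continuous difference field, the discrepancy between the genuine circle average of $\wh h$ and the truncated field $\wh h_\delta$, and the spatial increment of $\wh h_\delta$, and then prove the key middle estimate by a direct white-noise variance computation (splitting at $s=\delta^2$, using the annulus concentration of $\bar p_\delta$ below that scale and the vanishing of the first-order Taylor term by rotational symmetry above it) followed by Fernique, Borell--TIS, and a union bound over a $\delta$-grid. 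In effect you re-derive the content of the cited Ding--Goswami proposition; what your version buys is independence from that external input and an explicit $O(1)$ bound on $\op{Var}(\wh h'_\delta(z)-\wh h_\delta(z))$, at the cost of length. The only points to spell out in a full write-up are routine: the identification of the distributional circle average of $\wh h$ with the white-noise integral $\wh h'_\delta$ (e.g.\ by mollifying the circle measure and passing to the $L^2$ limit), the fact that the sup over $|u-z|\le\delta$ in the second-order Taylor bound is harmless for $s\ge \delta^2$, and the increment-variance bound needed for Fernique, which follows from the corresponding bounds for $\wh h_\delta$ and for the circle average separately.
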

\begin{proof}
This follows from the uniform comparison between $h_\delta^{\BB S(1)}(z)$ and $\wh h_\delta(z)$ established in~\cite[Proposition 3.2]{ding-goswami-watabiki} together with the continuity estimate for $\wh h_\delta$ from Lemma~\ref{lem-use-btis} (applied with $\zeta/(2C)$ in place of $\zeta$). 
\end{proof}

\subsubsection{Maximal and minimal radii of balls of LQG mass $\ep$}
 
We next record a basic estimate for the maximal and minimal radii of Euclidean balls with $\mu_h$-mass $\ep$ when $h$ is any of the fields considered in Lemma~\ref{lem-tr-compare-square}. The significance of this lemma is that if $z$ and $w$ lie in the same ball of mass $\ep$, then $D_h^\ep(z,w) \leq 1$. 

\begin{lem} \label{lem-max-ball-radius}
Suppose that $h$ is either a whole-plane GFF normalized so that $h_1(0) = 0$, a zero-boundary GFF on $\BB S(1)$, or one of the white noise fields $\wh h$ or $\wh h^\tr$ defined above. 
For each $\ul\beta \in \left(0,\frac{2}{(2+\gamma)^2} \right)$ and each $\ol\beta > \frac{2}{(2-\gamma)^2}$, it holds with polynomially high probability as $\ep\rta 0$ that 
\eqb \label{eqn-max-ball-radius} 
\inf_{z\in \BB S} \mu_{ h}(B_{\ep^{\ul\beta}}(z)) \geq \ep  \quad \op{and} \quad \sup_{z\in \BB S} \mu_{  h}(B_{\ep^{\ol\beta} }(z)) \leq \ep .
\eqe 
\end{lem}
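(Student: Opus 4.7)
The plan is to derive the bounds from classical moment estimates for the $\gamma$-LQG measure, combined with Markov's inequality and a union bound over a fine grid. By Lemma~\ref{lem-gff-compare} and the relation $d\mu_{h^1} = e^{\gamma(h^1 - h^2)}\, d\mu_{h^2}$ on any set where $h^1 - h^2$ is a bounded continuous function, it suffices to prove the statement for a single choice of the four fields; I will take $h = \wh h$. The key input is the standard Gaussian multiplicative chaos moment bound
\begin{equation*}
\BB E\left[\mu_{\wh h}(B_r(z))^p\right] \leq C_p\, r^{\xi(p)}, \qquad \xi(p) := \left(2 + \frac{\gamma^2}{2}\right) p - \frac{\gamma^2}{2} p^2,
\end{equation*}
valid uniformly for $z \in \BB S$, $r \in (0,1]$, and $p \in (-\infty, 4/\gamma^2)$; this follows from the classical moment bounds for GMC (see, e.g., \cite{rhodes-vargas-review}) combined with Lemma~\ref{lem-gff-compare}.

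For the upper bound on $\sup_z \mu_{\wh h}(B_r(z))$ with $r = \ep^{\ol\beta}$, I would choose $q = 2/\gamma$, which lies in $(0, 4/\gamma^2)$ since $\gamma \in (0,2)$. Markov's inequality gives $\BB P[\mu_{\wh h}(B_r(z)) > \ep] \leq C_q \ep^{-q} r^{\xi(q)}$. Covering $\BB S$ by $O(r^{-2})$ balls of radius $r$ centered on the lattice $(r/2)\BB Z^2 \cap \BB S$ and using the trivial inclusion $B_r(z) \subset B_{2r}(z')$ for the nearest lattice point $z'$, a union bound yields
\begin{equation*}
\BB P\left[\sup_{z \in \BB S} \mu_{\wh h}(B_r(z)) > \ep\right] \leq C'\, \ep^{\ol\beta(\xi(q) - 2) - q}.
\end{equation*}
A short calculation gives $\xi(2/\gamma) - 2 = (2-\gamma)^2/\gamma$, so this exponent equals $\gamma^{-1}[\ol\beta(2-\gamma)^2 - 2]$, which is strictly positive precisely when $\ol\beta > 2/(2-\gamma)^2$. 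The analogous argument with the negative moment $q = 2/\gamma$ bounds $\BB P[\mu_{\wh h}(B_r(z)) < \ep]$ by $C_q \ep^q r^{\xi(-q)}$; after the corresponding grid union bound with $r = \ep^{\ul\beta}$, the resulting exponent on $\ep$ is $\gamma^{-1}[2 - \ul\beta(2+\gamma)^2]$, positive exactly when $\ul\beta < 2/(2+\gamma)^2$. That $q = 2/\gamma$ is the optimal moment in both cases (giving the critical values $2/(2\mp\gamma)^2$) can be seen by elementary calculus applied to $q \mapsto q/(\xi(\pm q) \mp 2)$.

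To transfer from $\wh h$ to any of the other three fields, I would apply Lemma~\ref{lem-gff-compare} with $U = \BB S(1)$ (which contains $\BB S$ with ample margin so that $\BB S \subset K$) and $A = (\log \ep^{-1})^{1/2}$, producing a coupling in which $\max_{z \in K} |h^1(z) - h^2(z)| \leq A$ off an event of probability at most $c_0 \ep^{c_1}$. On the complement, the $\gamma$-LQG measures of balls contained in $\BB S$ differ by at most a factor of $e^{\gamma A} = \ep^{o_\ep(1)}$, and this sub-polynomial multiplicative error is absorbed by the strict inequalities $\ol\beta > 2/(2-\gamma)^2$ and $\ul\beta < 2/(2+\gamma)^2$: one simply applies the $\wh h$-statement with slightly perturbed exponents $\ol\beta' \in (2/(2-\gamma)^2, \ol\beta)$ and $\ul\beta' \in (\ul\beta, 2/(2+\gamma)^2)$. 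The main obstacle is really nothing more than bookkeeping the scaling exponents in these two optimizations; once $q = 2/\gamma$ is identified as the relevant moment, everything reduces to a standard GMC multifractal estimate.
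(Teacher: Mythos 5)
Your argument is correct, and for the upper bound it is essentially the paper's own proof: the paper likewise reduces to a single field via Lemma~\ref{lem-gff-compare}, applies the positive-moment GMC bound $\BB E[\mu_h(B_\delta(z))^p]\leq \delta^{f(p)+o_\delta(1)}$ with $f(p)=(2+\gamma^2/2)p-(\gamma^2/2)p^2$, and then uses Markov plus a union bound over an $O(\delta^{-2})$-point grid; the only cosmetic difference is that the paper optimizes $p=(4+\gamma^2-2\ol\beta^{-1})/(2\gamma^2)$ as a function of $\ol\beta$, whereas you fix $q=2/\gamma$ (which, as you correctly verify, is the minimizer of $q/(f(q)-2)$ and yields the same critical threshold $2/(2-\gamma)^2$, and satisfies $2/\gamma<4/\gamma^2$ for $\gamma<2$). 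Where you genuinely diverge is the lower bound: the paper simply invokes an external input (\cite[Lemma 2.5]{gms-harmonic}) for $\inf_{z\in\BB S}\mu_h(B_\delta(z))$, while you run the same Markov-plus-grid scheme with the negative moment $-2/\gamma$, using the standard negative-moment multifractal scaling $\BB E[\mu_h(B_r(z))^{-q}]\preceq r^{f(-q)}$. This makes your proof more self-contained at the cost of needing that negative-moment estimate (finiteness of all negative moments plus the scaling relation), which is indeed classical and is also immediate for $\wh h$ from its exact scale invariance (Lemma~\ref{lem-measure-scale}); your final transfer step, with $A=(\log\ep^{-1})^{1/2}$ and slightly perturbed exponents applied at the rescaled parameter $\delta=\ep^{\ol\beta/\ol\beta'}$ (resp.\ $\ep^{\ul\beta/\ul\beta'}$), is terse but sound.

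One small caution: your choice of $\wh h$ as the base field makes the opening moment bound slightly less immediate than you suggest, since transferring $\BB E[\mu_h(B_r(z))^p]$ to $\BB E[\mu_{\wh h}(B_r(z))^p]$ through Lemma~\ref{lem-gff-compare} requires a H\"older (or Kahane convexity) step to handle the factor $e^{p\gamma\max_K|h-\wh h|}$, using that $p=2/\gamma$ is strictly below $4/\gamma^2$ so there is room in the exponent. The simpler arrangement — exactly what the paper does — is to take the whole-plane GFF as the base field, where the moment bounds of \cite{rhodes-vargas-review} (or \cite[Lemma 5.2]{ghm-kpz}) apply directly, and then transfer the final high-probability statement to $\wh h$, $\wh h^\tr$, and the zero-boundary GFF by the event-based comparison you already describe. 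With that reshuffling your proof is complete.
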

\begin{proof}
By Lemma~\ref{lem-gff-compare}, it suffices to prove the lemma in the case when $h$ is a whole-plane GFF. This, in turn, follows from standard estimates for the $\gamma$-LQG measure. In particular, the first estimate in~\eqref{eqn-max-ball-radius} holds with polynomially high probability by, e.g.,~\cite[Lemma 2.5]{gms-harmonic} applied with $\delta =\ep^{\ul\beta}$. To prove the second estimate, we first use a standard moment estimate for the $\gamma$-LQG measure (see~\cite[Theorem 2.14]{rhodes-vargas-review} or~\cite[Lemma 5.2]{ghm-kpz}) to get that for $z\in \BB S$, $p \in [0,4/\gamma^2)$, and $\delta \in (0,1)$,  
\eqbn
\BB E\left[ \mu_h(B_\delta(z))^p \right] \leq \delta^{f(p) + o_\delta(1) } \quad \text{where} \quad f(p):= \left(2 + \frac{\gamma^2}{2} \right) p - \frac{\gamma^2}{2} p^2 
\eqen
with the rate of the $o_\delta(1)$ uniform over all $z\in \BB S$. 
By Markov's inequality, if $\ol\beta$ is as in the statement of the lemma then for $p\in [0,4/\gamma^2)$, 
\eqbn
\BB P\left[ \mu_h(B_\delta(z)) > \delta^{\ol\beta^{-1}} \right] \leq \delta^{ f(p)  - \ol\beta^{-1} p    +o_\delta(1) } .
\eqen
The exponent on the right is maximized over all values of $p \in [0,4/\gamma^2)$ when $p =   (4+\gamma^2 - 2\ol\beta^{-1} )/(2\gamma^2) $. Choosing this value of $p$ gives 
\eqb \label{eqn-min-ball0}
\BB P\left[ \mu_h(B_\delta(z)) > \delta^{\ol\beta^{-1}} \right] \leq \delta^{\tfrac{(4 + \gamma^2 - 2 \ol\beta^{-1} )^2}{8\gamma^2} + o_\delta(1) } .
\eqe  
We obtain the second estimate in~\eqref{eqn-max-ball-radius} with polynomially high probability by applying~\eqref{eqn-min-ball0} with $\delta =\ep^{\ol\beta}$ then taking a union bound over all $z\in \left(\frac12 \ep^{\ol\beta} \BB Z^2 \right)\cap \BB S$. 
\end{proof}

\subsection{Comparison of diameter and point-to-point distance}
\label{sec-diam}

In this subsection we will prove Theorem~\ref{thm-diam}. 
The main step in the proof is Proposition~\ref{prop-diam} just below. In the course of the proof, we will also establish some estimates which are needed for the proof of the lower bound for LFPP distances in Theorem~\ref{thm-lfpp-compare}.

\begin{prop} \label{prop-diam}
Let $\wh h$ be as in~\eqref{eqn-wn-decomp}. 
For each $\zeta \in (0,1)$, it holds with polynomially high probability as $\ep\rta 0$ that
\eqb \label{eqn-diam}
\max_{z,w\in \BB S} D_{\wh h}^\ep\left(z,w ; \BB S(1/2) \right) \leq \ep^{-\frac{1}{d_\gamma-\zeta}} ,
\eqe
where here we recall that $\BB S(1/2) $ is the expanded square $ [-1/2,3/2]^2$. 
\end{prop}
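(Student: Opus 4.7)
The plan follows the outline sketched at the start of Section~\ref{sec-diam}: (i) a crossing bound for a single rectangle with superpolynomially high probability, (ii) a multi-scale union bound using scale invariance of $\wh h$, and (iii) a concatenation of crossings to produce a short covering path between any two points of $\BB S$.

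\textbf{Step 1 (crossing a single rectangle).} I would first show that for a fixed rectangle $R$ of bounded aspect ratio and any $\zeta' > 0$, the Liouville graph distance between the two short sides of $R$, restricted to paths in a mild enlargement $R(c_0)$, is at most $\ep^{-1/(d_\gamma-\zeta')}$ with superpolynomially high probability as $\ep\rta 0$. Lemma~\ref{lem-whole-plane-d} gives this bound for any fixed pair of interior points with polynomially high probability but with an unspecified exponent. To upgrade to a side-to-side bound, I would cover each short side by $O(1)$ Euclidean balls of $\mu_{\wh h}$-mass $\leq \ep$ using Lemma~\ref{lem-max-ball-radius}. To then boost the probability to superpolynomial, I would run a percolation argument: decompose $R$ into $M$ parallel sub-rectangles at pairwise distance $\geq 1/5$ (after rescaling so $R$ is large), invoke Lemma~\ref{lem-tr-compare-square} to replace $\wh h$ by the spatially localized field $\wh h^\tr$ at the cost of a polynomially small multiplicative error, and use the exact spatial independence of $\wh h^\tr$ to make the per-sub-rectangle crossing events independent. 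Each sub-rectangle succeeds with probability at least a universal $p_0 > 0$ coming from Lemma~\ref{lem-whole-plane-d}, so the probability that none succeed is at most $(1-p_0)^M$; choosing $M$ polylogarithmic in $\ep^{-1}$ gives the desired superpolynomial bound.

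\textbf{Step 2 (multi-scale union bound).} Set $\delta_k := 2^{-k}$ for $k=0,\ldots,K:=\lceil \ol\beta \log_2 \ep^{-1}\rceil$ with $\ol\beta > 2/(2-\gamma)^2$. For each $k$ and each dyadic rectangle $R_v$ of size $\delta_k\times 2\delta_k$ contained in $\BB S(1/2)$, Lemma~\ref{lem-measure-scale} relates the crossing distance of $R_v$ in $\wh h$ (with mass parameter $\ep$) to the crossing distance of a unit rectangle in the equal-in-law field $(\wh h-\wh h_{\delta_k})(\delta_k\cdot+v)$ with effective mass parameter bounded below by $\ep\cdot \delta_k^{-2-\gamma^2/2}e^{-\gamma \max_{R_v} \wh h_{\delta_k}}$. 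Lemma~\ref{lem-one-scale-max} yields, with polynomially high probability, the bound $|\wh h_{\delta_k}|\leq (2+\zeta')\log\delta_k^{-1}$ uniformly in $\BB S(1)$ and in $k\leq K$. Plugging in the Step 1 estimate, each such rectangle has crossing distance at most $\ep^{-1/(d_\gamma-\zeta')}\delta_k^\alpha$ with
\[
\alpha \,:=\, \frac{(\gamma-2)^2/2 \,-\, \gamma\zeta'}{d_\gamma-\zeta'} \,>\, 0
\]
for $\zeta'>0$ small enough (using $\gamma<2$). Because Step 1 gives superpolynomial probability, a union bound over the $O(\delta_k^{-2})$ rectangles at each scale and over the $O(\log\ep^{-1})$ scales makes all these crossing bounds hold simultaneously, still with polynomially high probability.

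\textbf{Step 3 (concatenation).} For any $z,w\in \BB S$, I would construct a covering path from $z$ to $w$ by a dyadic hierarchy: choose nested dyadic rectangles $R^{(0)}(z)\supset R^{(1)}(z)\supset \cdots \supset R^{(K)}(z)$ with $z\in R^{(K)}(z)$ of side length $\delta_K\asymp \ep^{\ol\beta}$, and similarly for $w$. At the finest scale, Lemma~\ref{lem-max-ball-radius} guarantees that the scale-$\delta_K$ square around $z$ (resp.\ $w$) is covered by a single ball of $\mu_{\wh h}$-mass $\leq \ep$. Ascending the hierarchy on the $z$-side, traversing $\BB S$ at scale $1$, and descending symmetrically on the $w$-side, the total path uses at most $O(1)$ rectangles at each scale, so the total number of balls of $\mu_{\wh h}$-mass $\leq \ep$ covering the path is bounded by the geometric sum
\[
\sum_{k=0}^{K} O(1)\cdot \ep^{-1/(d_\gamma-\zeta')}\delta_k^{\alpha} \,\leq\, C_\alpha\,\ep^{-1/(d_\gamma-\zeta')} \,\leq\, \ep^{-1/(d_\gamma-\zeta)}
\]
for $\zeta'<\zeta$ and $\ep$ sufficiently small, as required.

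\textbf{Main obstacle.} The core technical difficulty is Step 1, the percolation argument that boosts a polynomially high probability point-to-point bound with an uncontrolled exponent into a superpolynomially high probability side-to-side bound. The spatial independence of $\wh h^\tr$ holds only at Euclidean scale $\geq 1/5$, so to fit polylogarithmically many independent sub-rectangles one must either run the argument on a rectangle of diameter growing with $\log\ep^{-1}$ and then rescale, or exploit the near-independence of $\wh h_\delta-\wh h_{\delta'}$ across points separated by more than the white-noise scale. Balancing the number $M$ of sub-rectangles against the geometry of $R$ and the resulting success probability is the most delicate point.
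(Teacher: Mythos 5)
Your plan is, at its core, the same as the paper's proof of Proposition~\ref{prop-diam}: the unit-square crossing input ultimately coming from~\cite{dzz-heat-kernel} (Lemma~\ref{lem-local-dist}, accessed via Lemma~\ref{lem-tr-compare-square}), a boosting step using the exact spatial independence of $\wh h^\tr$ on a rectangle whose size grows polylogarithmically in $\ep^{-1}$ (Lemma~\ref{lem-rectangle-perc}), the exact scaling relation of Lemma~\ref{lem-measure-scale} combined with the coarse-field bound of Lemma~\ref{lem-one-scale-max} to transfer the crossing bound to dyadic rectangles (Lemmas~\ref{lem-rectangle-dist} and~\ref{lem-rectangle-dist-multi}), and a dyadic concatenation closed off at the bottom scale by Lemma~\ref{lem-max-ball-radius}. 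Two points where you deviate are worth comparing. First, your boosting uses $M$ parallel, well-separated strips and asks that at least one be fully good; since a strip is only good if every unit box along it is good, this needs the per-box failure probability to be $o(1/(nM))$, which holds only once the mass parameter is small. The paper instead runs a duality (subcritical percolation) argument in which the per-box failure probability only needs to be below a universal constant; this is precisely what makes~\eqref{eqn-rectangle-perc} valid uniformly in $\ep$ (the $\max\{A,\cdot\}$ term), a uniformity that is then essential when the estimate is applied at the random effective mass parameter $T_R\ep$ in Lemma~\ref{lem-rectangle-dist}. If you keep the strip version you must reinstate this uniformity by hand, via monotonicity of $D^\ep$ in $\ep$ and a cap by a constant, exactly as at the end of the proof of Lemma~\ref{lem-rectangle-perc}. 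Second, by taking the blow-up of size comparable to $\log\ep^{-1}$ you get superpolynomially high probability per rectangle at \emph{every} dyadic scale, so you can union over all scales $k\le K$ and concatenate with a clean geometric sum from scale $1$ down to $\ep^{\ol\beta}$; the paper only obtains failure probability $e^{-\lambda m}$ at scale $2^{-m}$, hence restricts the simultaneous crossing bounds to scales at most $\ep^\beta$ and pays an extra factor $\ep^{-\beta}$ by chaining $O(\ep^{-\beta})$ small squares, absorbed by taking $\beta$ small. Your bookkeeping avoids that last step at the cost of a heavier Step 1; both routes work.

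One correction is needed in Step 2 as written: the claimed per-rectangle bound $\ep^{-1/(d_\gamma-\zeta')}\delta_k^{\alpha}$ cannot hold verbatim at the finest scales. With $\ol\beta$ strictly above $2/(2-\gamma)^2$ the exponent $-1/(d_\gamma-\zeta')+\ol\beta\alpha$ becomes positive, so the claimed bound drops below $1$, while any crossing distance between the two short sides is at least $1$; relatedly, after inserting the worst-case field bound the effective mass parameter after rescaling need not be small there, so the asymptotic crossing estimate does not apply directly. The fix is exactly the paper's: state the bound as a maximum with a constant or polylogarithmic term, as in~\eqref{eqn-rectangle-dist} and~\eqref{eqn-rectangle-dist-multi}; these extra terms contribute only a polylogarithmic amount to your geometric sum and do not affect the conclusion. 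With that cap, and with the uniformity-in-mass-parameter issue above handled, your argument goes through.
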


We now give an overview of the proof of Proposition~\ref{prop-diam}. We will first establish a concentration estimate (Lemma~\ref{lem-rectangle-perc}) which says that the $\wh h$-Liouville graph distance between two sides of a large rectangle is superpolynomially unlikely to be larger than the area of the rectangle times $\ep^{-\frac{1}{d_\gamma-\zeta}}$. To prove this estimate, we use a percolation argument to construct a ``path" of squares from one side of the rectangle to the other with the property that the distance between the midpoints of the sides of the squares in the path is bounded above by $\ep^{-\frac{1}{d_\gamma-\zeta}}$ (several similar percolation arguments are used in~\cite{ding-dunlap-lqg-fpp,ding-goswami-watabiki,dzz-heat-kernel}). For the proof, we will need to work with the truncated field $\wh h^\tr$ of~\eqref{eqn-wn-truncate} since we will need exact local independence in order to carry out the percolation argument (one can do this due to Lemma~\ref{lem-tr-compare-square}). 

By the scale invariance properties of $\wh h$ (see Lemma~\ref{lem-measure-scale}), if $\delta \in (0,1)$ is at least some $\gamma$-dependent positive power of $\ep$ and $R\subset \BB S$ is a $2\delta\times\delta$ or $\delta\times 2\delta$ rectangle, then the conditional law given $\wh h_\delta$ of the $D_{\wh h}^\ep$-distance between the two shorter sides of $R$ is stochastically dominated by the law of the $D_{\wh h}^{T_R\ep}$-distance between the left and right sides of $[0,2]\times [0,1]$ for $T_R = \delta^{2+\gamma^2/2} \exp\left( - \max_{z\in R} \wh h_\delta(z) \right)$ (actually, for technical reasons instead of $[0,2]\times [0,1]$ we will consider a rectangle whose side lengths are of order $(\log\delta^{-1})^{3/2}$). By the aforementioned concentration estimate, a union bound, and our continuity estimate for $\wh h_\delta$ (Lemma~\ref{lem-use-btis}), this allows us to show that with polynomially high probability as $\ep\rta 0$, one has a \emph{simultaneous} upper bound for the distance between the sides of a large number of different $2\delta\times \delta$ or $\delta \times 2\delta$ rectangles $R\subset \BB S$ in terms of $\ep$, $\delta$, and the value of the exponential of $\gamma/d_\gamma$ times the white-noise field $\wh h_\delta$ at any point of the rectangle (Lemma~\ref{lem-rectangle-dist}). More precisely, the distance between the two shorter sides of $R$ is bounded above by
\eqbn
\ep^{-\frac{1}{d_\gamma}} \delta^{\frac{1}{d_\gamma} \left( 2+\frac{\gamma^2}{2} \right) } \exp\left( \frac{\gamma}{d_\gamma} \min_{z\in R} \wh h_\delta(z) \right) , 
\eqen
up to $o(1)$ errors in the exponents (this estimate will also be important for our lower bound for LFPP distances). 

Using Lemma~\ref{lem-one-scale-max}, one can eliminate the dependence on the coarse field $\wh h_\delta$ in the above estimate by replacing $\wh h_\delta(z)$ by the maximum value of $\wh h_\delta(z)$ on $\BB S$ (Lemma~\ref{lem-rectangle-dist-multi}). 
One can then concatenate a logarithmic number of paths between the sides of $2\delta\times \delta$ or $\delta\times 2\delta$ rectangles for dyadic values of $\delta$ to construct a path between any two points in $\BB S$ which can be covered by at most $\ep^{-\frac{1}{d_\gamma-\zeta} + o_\ep(1)}$ disks of $\mu_{\wh h}$-mass at most $\ep$ (see Figure~\ref{fig-diam}, right). This gives Proposition~\ref{prop-diam}. 

In this subsection and the next, we will use the following notation for rectangles.

\begin{defn} \label{def-rectangle}
For a rectangle $R = [a,b] \times [c,d] \subset \BB C$ with sides parallel to the coordinate axes, we write $\bdy_{\op{L}} R$, $\bdy_{\op{R}} R$, $\bdy_{\op{T}} R$, and $\bdy_{\op{B}} R$, respectively, for its left, right, top, and bottom boundaries.  
We also define the associated \emph{stretched rectangle} $R'$ as follows. If the horizontal side length $b-a$ is larger than the vertical side length $d-c$, we let $R' = [a- \frac12(b-a) , b + \frac12(b-a)] \times [c,d]$ be the rectangle with the same center as $R$, twice the horizontal side length as $R$, and the same vertical side length as $R$. If $d-c > b-a$, we define $R'$ analogously with ``horizontal" and ``vertical" interchanged. 
\end{defn}

The following is our concentration bound for the distance across a rectangle.

\begin{lem} \label{lem-rectangle-perc} 
For $n\in\BB N$, let $\mcl R_n := [0,2n]\times [0,n]$, so that $\mcl R_n' =  [-n,3n] \times [0,n ]$.  
For each fixed $\zeta  \in (0,1)$, there exist $a_0,a_1  , A   > 0$ (depending only on $\zeta$ and $\gamma$) such that for $n\in\BB N$ and $\ep > 0$, we have (in the notation of Definition~\ref{def-rectangle}) 
\eqb \label{eqn-rectangle-perc}
\BB P\left[ D_{\wh h }^\ep\left(\bdy_{\op{L}} \mcl R_n , \bdy_{\op{R}} \mcl R_n ; \mcl R_n' \right) \leq  n^2 \max\left\{ A ,  e^{n^{1/2}}  \ep^{-\frac{1}{d_\gamma- \zeta } } \right\} \right] \geq 1 -  a_0 e^{-a_1 n  }  .
\eqe 
\end{lem}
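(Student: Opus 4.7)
The plan is a dependent site percolation argument applied to the truncated white-noise field $\wh h^\tr$ of~\eqref{eqn-wn-truncate}, exploiting the fact that $\wh h^\tr|_{V_1}$ and $\wh h^\tr|_{V_2}$ are independent whenever $\op{dist}(V_1,V_2) \geq 1/5$. First I would apply Lemma~\ref{lem-tr-compare-square} with $C = e^{n^{1/2}/10}$ to replace $D_{\wh h}^\ep$ by $D_{\wh h^\tr}^{\ep/C}$ up to a failure event of probability at most $a_0 e^{-a_1 n}$; the multiplicative factor $C^{1/(d_\gamma-\zeta/2)}$ that this costs is absorbed comfortably into the $e^{\sqrt n}$ slack in~\eqref{eqn-rectangle-perc}.

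Next I would tile $\mcl R_n$ by the $2n^2$ closed unit squares $\{S_{i,j}\}$ with integer corners, and call $S_{i,j}$ \emph{good} if both the left-to-right and the top-to-bottom restricted Liouville graph distances across $S_{i,j}$, with paths constrained to the small enlargement $S_{i,j}(1/100)$, are bounded by $M := \max\{A_0, (\ep/C)^{-1/(d_\gamma-\zeta/2)}\}$ for a constant $A_0 = A_0(\gamma,\zeta)$ to be chosen. Because $\wh h^\tr|_{S_{i,j}(1/100)}$ is measurable with respect to the white noise on a $(1/10+1/100)$-neighborhood of $S_{i,j}$, the goodness events of $S_{i,j}$ and $S_{i',j'}$ are independent whenever $\max(|i-i'|,|j-j'|)\geq 2$. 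Combining Lemma~\ref{lem-whole-plane-d} with the field comparison Lemma~\ref{lem-tr-compare-square}, the diameter control of Lemma~\ref{lem-max-ball-radius} (which ensures that for small $\ep/C$ no single mass-$\ep/C$ ball can escape $S_{i,j}(1/100)$), and translation invariance of $\wh h^\tr$, I can fix $\ep_0 = \ep_0(\gamma,\zeta) \in (0,1)$ so that $\BB P[S_{i,j}\text{ good}] \geq 1 - (\ep/C)^p$ for all $\ep/C \leq \ep_0$ and some $p = p(\gamma,\zeta) > 0$. For $\ep/C \geq \ep_0$, I choose $A_0 \geq \ep_0^{-1/(d_\gamma-\zeta/2)}$ and use the monotonicity of $\ep\mapsto D_h^\ep$ to make goodness automatic with probability at least $1 - \ep_0^p$.

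Then I would appeal to the Liggett--Schonmann--Stacey stochastic-domination theorem: the $2$-dependent field $(\BB 1_{S_{i,j}\text{ bad}})$ is dominated by an i.i.d.\ Bernoulli field of parameter $q$ that can be made arbitrarily small by taking $\ep_0$ sufficiently small and $A_0$ sufficiently large. Once $q$ is below the critical threshold for $\BB Z^2$ site percolation, a standard Peierls / exponential-decay argument shows that with probability at least $1 - a_0 e^{-a_1 n}$, $\mcl R_n$ admits a left-to-right crossing by good cells. On this event, concatenating the horizontal (or vertical) crossings of the consecutive good cells in such a crossing---together with the $O(1)$ additional mass-$\ep/C$ balls per step needed to link two crossings that terminate inside the overlap $S_{i,j}(1/100) \cap S_{i',j'}(1/100)$ of adjacent enlarged cells---produces a continuous path in $\mcl R_n'$ from $\bdy_{\op L}\mcl R_n$ to $\bdy_{\op R}\mcl R_n$ that is covered by at most $O(n^2)\cdot M$ balls of $\mu_{\wh h^\tr}$-mass at most $\ep/C$. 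A direct computation shows that $O(n^2)\cdot M$ is bounded by the right-hand side of~\eqref{eqn-rectangle-perc} after choosing $A$ large enough in terms of $A_0$ and using $(\ep/C)^{-1/(d_\gamma-\zeta/2)} \leq e^{\sqrt n/2}\ep^{-1/(d_\gamma-\zeta)}$ for $\ep < 1$.

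The main obstacle will be the uniform-in-$\ep$ control of the goodness probability, since Lemma~\ref{lem-whole-plane-d} is only phrased for point-to-point distances in the full plane. Upgrading it to side-to-side distances of a unit square confined to $S_{i,j}(1/100)$ requires Lemma~\ref{lem-max-ball-radius} (so that no covering ball leaves the enlargement) together with a compactness argument covering the two sides by finitely many point pairs, and a monotonicity argument to handle the regime $\ep/C \geq \ep_0$. A secondary subtlety is ensuring that crossings in adjacent good cells actually meet, which is why the definition of goodness includes both coordinate directions and is formulated inside the small enlargement $S_{i,j}(1/100)$, so that horizontal and vertical good-cell crossings automatically overlap in the thin strip shared by adjacent cells.
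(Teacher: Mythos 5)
Your overall strategy (reduce to the truncated field $\wh h^\tr$, run a block percolation argument on unit squares using the local distance estimate inherited from~\cite{dzz-heat-kernel}, and handle large $\ep$ by monotonicity plus a constant $A$) is the same as the paper's. But there is a genuine gap at the gluing step. You define a square $S_{i,j}$ to be good when its left--right and top--bottom crossing distances, restricted to $S_{i,j}(1/100)$, are at most $M$, and you then claim that crossings of \emph{adjacent} good cells ``automatically overlap'' in the shared strip, or can be linked with ``$O(1)$ additional mass-$\ep/C$ balls.'' Neither claim holds: a left--right crossing of $S_{i,j}$ terminates at some point of the shared edge, a crossing of $S_{i+1,j}$ starts at a possibly different point of that edge (or elsewhere in the thin overlap strip), and these points can be at distance of order $1$ from each other. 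Nothing in your good event controls the Liouville graph distance between two such unconstrained points, and in fact linking two points at macroscopic distance typically requires $\ep^{-1/d_\gamma+o_\ep(1)}$ balls, not $O(1)$. The paper sidesteps this entirely by defining $E_S^\ep$ in terms of point-to-point distances between the four \emph{fixed reference points} of $S$ (its corners), measured within the generous enlargement $S(1)$: adjacent squares share these reference points, so the crossings concatenate exactly and the triangle inequality gives the $2n^2\ep^{-1/(d_\gamma-\zeta)}$ bound with no extra linking cost. Your scheme could be repaired either by adopting this reference-point definition or by defining goodness via crossings of overlapping $2\times 1$ and $1\times 2$ rectangles (so that crossings of neighboring blocks are forced to intersect topologically), but as written the construction of the global path fails.

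Two smaller points. First, you apply Lemma~\ref{lem-tr-compare-square} directly on $\mcl R_n'$, but the constants $a_0,a_1$ there depend on the domain, which here grows with $n$; the paper instead proves the estimate for $D_{\wh h^\tr}^\ep$ and transfers to $\wh h$ by covering $\mcl R_n'$ with $O_n(n^2)$ unit balls and taking a union bound with $A\asymp n^{1/2}$ in each, which is where the $e^{n^{1/2}}$ slack comes from; your step needs the same localization. Second, the ``compactness argument over finitely many point pairs'' for the side-to-side bound is unnecessary: the side-to-side distance is a minimum over endpoint pairs, so it is bounded by the distance between any single fixed pair (e.g.\ the side midpoints), which is exactly how the single-square input is used. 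Your use of $2$-dependence plus Liggett--Schonmann--Stacey in place of the paper's hands-on Peierls argument (extracting $|P|/100$ squares with disjoint $S(2)$'s along each dual path) is a legitimate alternative for the percolation step.
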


When we apply Lemma~\ref{lem-rectangle-perc}, we will typically take $n \approx (\log\ep^{-1})^p$ for $p \in (1,2)$, so that the $ n^2$ and $e^{n^{1/2}}$ terms are negligible in comparison to $\ep^{-\frac{1}{d_\gamma- \zeta } } $. 
We note that~\eqref{eqn-rectangle-perc} implies that there is a continuous path in $\mcl R_n$ between the left and right boundaries of $\mcl R_n$ which can be covered by at most $ n^2 \max\left\{ A ,  e^{n^{1/2}}  \ep^{-\frac{1}{d_\gamma- \zeta } } \right\}$ Euclidean balls of $\mu_{\wh h}$-mass at most $\ep$ which are contained in $\mcl R_n'$ (this is because any path between the two connected components of $\mcl R_n' \setminus \mcl R_n$ must cross the left and right boundaries of $\mcl R_n$).
However, we need to take distances relative to $\mcl R_n'$ instead of $\mcl R_n$ since some of the Euclidean balls in the covering might not be contained in $\mcl R_n$. 

The starting point of the proof of Lemma~\ref{lem-rectangle-perc} is the following estimate from~\cite{dzz-heat-kernel}, which we will apply to each $1\times 1$ square in $\mcl R_n$ with corners in $\BB Z^2$.

\begin{lem} \label{lem-local-dist}
Recall the truncated field $\wh h^\tr$ from~\eqref{eqn-wn-truncate} and its associated Liouville graph distance. 
Also let $\BB S = [0,1]^2$ and $\BB S(1) = [-1 , 2]^2$ be the squares as defined at the beginning of this section and let $u_{\BB S}^1,\dots,u_{\BB S}^4$ be the midpoints of the four corners of $\BB S$. 
For each $\zeta \in (0,1)$, it holds with probability tending to 1 as $\ep\rta 0$ that
\eqb \label{eqn-local-dist}
 D^\ep_{\wh h^\tr }\left( u_{\BB S}^i, u_{\BB S}^j ; \BB S(1) \right) \leq  \ep^{- \frac{1}{d_\gamma  - \zeta } }  ,\quad \forall i,j \in \{1,\dots,4\}. 
\eqe
\end{lem}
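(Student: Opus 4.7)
The plan is to reduce Lemma~\ref{lem-local-dist} to the corresponding point-to-point bound for the whole-plane GFF provided by Lemma~\ref{lem-whole-plane-d}, paying a small multiplicative factor to pass from a whole-plane GFF $h$ to $\wh h^\tr$ via the coupling of Lemma~\ref{lem-gff-compare}. First I would couple $\wh h^\tr$ with a whole-plane GFF $h$ (normalized so that $h_1(0) = 0$) by applying Lemma~\ref{lem-gff-compare} with the Jordan domain $U = \BB S(6/5)$, whose $1/10$-interior $K$ contains $\BB S(1)$.

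Next, I fix the slowly growing threshold $C = C(\ep) := (\log \ep^{-1})^2$, which satisfies $C\to\infty$ while $C^{1/(d_\gamma - \zeta/2)} = \ep^{-o_\ep(1)}$. By Lemma~\ref{lem-gff-compare} applied with $A = \gamma^{-1} \log C$, the event
\[
\mcl E^\ep := \bigl\{ \max_{z\in K}|h(z) - \wh h^\tr(z)| \leq \gamma^{-1}\log C \bigr\}
\]
has probability at least $1 - a_0 e^{-a_1 (\log C)^2/\gamma^2} \to 1$. Since $d\mu_{\wh h^\tr} = e^{\gamma (\wh h^\tr - h)}\, d\mu_h$ on $K$, on the event $\mcl E^\ep$ any Euclidean ball $B \subset \BB S(1) \subset K$ with $\mu_h(B) \leq \ep/C$ also satisfies $\mu_{\wh h^\tr}(B) \leq \ep$; hence any covering witnessing $D_h^{\ep/C}(\cdot,\cdot;\BB S(1))$ is also a valid covering for $D_{\wh h^\tr}^\ep(\cdot,\cdot;\BB S(1))$, and so
\[
D_{\wh h^\tr}^\ep(z,w; \BB S(1)) \leq D_h^{\ep/C}(z,w; \BB S(1)) \quad \forall z,w\in \BB S(1).
\]

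Finally, I would apply Lemma~\ref{lem-whole-plane-d} with $\zeta/2$ in place of $\zeta$, $U = \BB S(1)$, and $(z,w) = (u_{\BB S}^i, u_{\BB S}^j)$ for each of the finitely many pairs $i,j \in \{1,\dots,4\}$. Since $\ep/C \to 0$ polynomially fast, this yields with polynomially high probability as $\ep\to 0$ that
\[
D_h^{\ep/C}(u_{\BB S}^i, u_{\BB S}^j; \BB S(1)) \leq (\ep/C)^{-1/(d_\gamma - \zeta/2)} = \ep^{-1/(d_\gamma - \zeta/2) + o_\ep(1)},
\]
and for $\ep$ small enough this is at most $\ep^{-1/(d_\gamma - \zeta)}$, because $1/(d_\gamma - \zeta/2) < 1/(d_\gamma - \zeta)$ and $C^{1/(d_\gamma - \zeta/2)}$ is sub-polynomial. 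A union bound over the finitely many pairs $(i,j)$ then completes the argument. There is no substantial obstacle; the only point that needs attention is verifying that the comparison in Lemma~\ref{lem-tr-compare-square} extends verbatim from distances restricted to $K$ to distances restricted to an arbitrary subset of $K$, which is immediate from the pointwise mass comparison just described.
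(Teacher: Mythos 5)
Your argument is correct and is essentially the paper's proof: the paper likewise reduces the statement to the point-to-point Liouville graph distance estimate from~\cite{dzz-heat-kernel} for a GFF (cited there for the zero-boundary field on $\BB S(1)$, which you repackage through Lemma~\ref{lem-whole-plane-d} for the whole-plane field) and transfers to $\wh h^\tr$ via the comparison coupling of Lemmas~\ref{lem-gff-compare}/\ref{lem-tr-compare-square}. Your explicit slowly growing threshold $C=(\log\ep^{-1})^2$ is just a concrete instantiation of the diagonal argument implicit in the paper's one-line deduction, and the remaining details (restriction to $\BB S(1)\subset K$, union bound over the finitely many pairs) are handled correctly.
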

\begin{proof}
The analogue of~\eqref{eqn-local-dist} with a zero-boundary GFF on $\BB S(1)$ in place of $\wh h^\tr$ is proven in~\cite{dzz-heat-kernel} (see, in particular,~\cite[Proposition 3.17 and Lemma 5.3]{dzz-heat-kernel} and note that $\wt D_{\gamma,\delta,\eta}(u,v)$ in~\cite{dzz-heat-kernel} denotes $\delta^2$-Liouville graph distance restricted to paths of disks which lie in the box of side length $2|u-v|$ centered at $(u+v)/2$, with sides parallel to the segment through $[u,v]$). 
The bound~\eqref{eqn-local-dist} follows from this and Lemma~\ref{lem-tr-compare-square}.
\end{proof}

\begin{figure}[t!]
 \begin{center}
\includegraphics[scale=1]{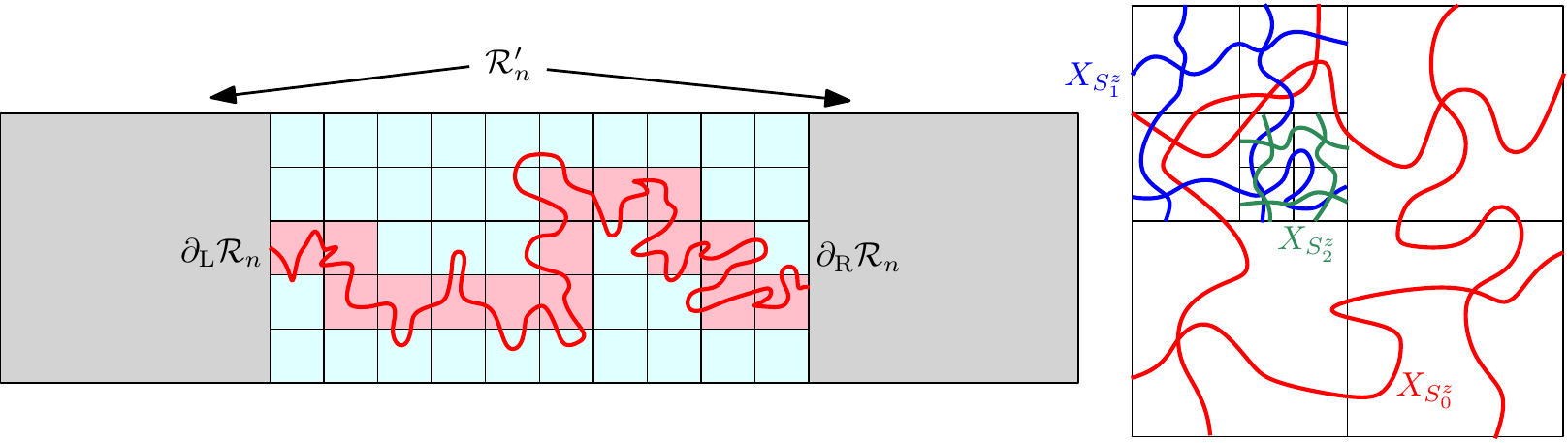}
\vspace{-0.01\textheight}
\caption{ \textbf{Left.} Illustration of the proof of Lemma~\ref{lem-rectangle-perc}. We show that there must exist a path from $\bdy_{\op{L}} \mcl R_n$ to $\bdy_{\op{R}} \mcl R_n$ consisting of unit side-length squares $S$ with the property that the $D_{\wh h^\tr}^\ep$-distance between the midpoints of any of the two sides of $S$, restricted to paths of disks which lie in the slightly expanded square $S(1)$, is at most $\ep^{-\frac{1}{d_\gamma-\zeta}}$ (squares in this path are shown in pink). This gives a Euclidean path (red) from $\bdy_{\op{L}} \mcl R_n$ to $\bdy_{\op{R}} \mcl R_n$ which can be covered by at most $2n^2 \ep^{-\frac{1}{d_\gamma-\zeta}}$ disks of $\mu_{\wh h^\tr}$-mass at most $\ep$ which are contained in $\mcl R_n'$ (larger rectangle).  
\textbf{Right.} To prove Proposition~\ref{prop-diam}, we use Lemma~\ref{lem-rectangle-perc} to find a Euclidean path across each $2^{-m-1}\times 2^{-m}$ or $2^{-m}\times 2^{-m-1}$ rectangle in $S$ with $m\geq \log_2 \ep^{-\beta}$ which can be covered by a bounded number of disks of $\mu_{\wh h^\tr}$-mass at most $\ep$. For each dyadic square $S\subset \BB S$ with side length at most $\ep^\beta$, we consider the set $X_S$ which is the union of the paths crossing four rectangles contained in $S$. The sets $X_S$ corresponding to successive dyadic squares containing $z\in \BB S$ must intersect, which allows us to bound the $D_{\wh h}^\ep$-diameter of each dyadic square of side length at most $\ep^\beta$.  
}\label{fig-diam}
\end{center}
\vspace{-1em}
\end{figure}

\begin{proof}[Proof of Lemma~\ref{lem-rectangle-perc}]
We will show that there are constants $a_0,a_1 , A > 0$ as in the statement of the lemma such that for $n\in\BB N$ and $\ep > 0$, 
\eqb \label{eqn-rectangle-perc-truncated}
\BB P\left[ D_{\wh h^\tr}^\ep\left(\bdy_{\op{L}} \mcl R_n , \bdy_{\op{R}} \mcl R_n  ; \mcl R_n'  \right) \leq  n^2 \max\left\{ A ,    \ep^{-\frac{1}{d_\gamma- \zeta } } \right\} \right] \geq 1 -  a_0 e^{-a_1 n}  .
\eqe 
Combining this with~\eqref{eqn-gff-compare} (applied with $A =  c n^{1/2} $ for an appropriate constant $c>0$) and taking a union bound of $O_n(n^2)$ Euclidean balls of radius 1 whose union covers $\mcl R_n'$ yields~\eqref{eqn-rectangle-perc}. 

See Figure~\ref{fig-diam}, left, for an illustration of the proof of~\eqref{eqn-rectangle-perc-truncated}. 
Let $p\in (0,1)$ be a small universal constant to be chosen later. 
We assume without loss of generality that $n\geq 3$ and let $\mcl S(\mcl R_n)$ be the set of unit side length squares\footnote{The reason for considering $[0,2n]\times [1,n-1]$ instead of $\mcl R_n $ is so that the expanded square $S(1)$ is contained in $[-1,2n+1]\times [0,n ] \subset \mcl R_n'$ instead of in $[-1,2n+1]\times [-1,n+1]$.} 
$S\subset [0,2 n] \times [1,n-1]$ with corners in $\BB Z^2$. 

For each square $S\in \mcl S(\mcl R_n)$ and $\ep \in (0,1)$, define the event
\eqbn
E_S^\ep := \left\{ D^\ep_{\wh h^\tr}\left( u_{S }^i, u_{S }^j ; S(1) \right) \leq  \ep^{- \frac{1}{d_\gamma - \zeta   }} ,\: \forall  i,j\in \{1,\dots,4\} \right\}    
\eqen
where $u_S^1,\dots,u_S^4$ denote the four corners of $S$.  
For each $S\in \mcl S(\mcl R_n)$, the re-centered field $\wh h^\tr(\cdot - v_S + v_{\BB s})  $ agrees in law with $\wh h^\tr$. 
By Lemma~\ref{lem-local-dist}, it therefore follows that we can find $\ep_* = \ep_*(p,\zeta,\gamma) > 0$ such that  
\eqb \label{eqn-perc-prob}
\BB P[E_S^\ep ] = \BB P[E_{\BB S}^\ep] \geq 1-p , \quad \forall S\in \mcl S(\mcl R_n) , \quad \forall \ep\in (0,\ep_*] .
\eqe 
Note that we are only asserting that \emph{each} $E_S^\ep$ individually has probability at least $1-p$ --- we are not yet claiming anything about the probabilities of the intersections of these events.

View $\mcl S(\mcl R_n)$ as a graph with two squares considered to be adjacent if they share an edge. We define the \emph{left boundary} of $\mcl S(\mcl R_n)$ to be the set of squares in $\mcl S(\mcl R_n)$ which intersect the left boundary of $[0,2n]\times [1,n-1]$. We similarly define the right, top, and bottom boundaries of $\mcl S(\mcl R_n)$.  

We claim that if $p$ is chosen sufficiently small, then for appropriate constants $a_0,a_1 > 0$ as in the statement of the lemma, it holds for each $\ep \in (0,\ep_*]$ and $n\in\BB N$ that with probability at least $1- a_0 e^{-a_1 n}$, we can find a path in $\mcl S(\mcl R_n)$ from the left boundary of $\mcl S(\mcl R_n)$ to the right boundary of $\mcl S(\mcl R_n)$ consisting of squares for which $E_S^\ep$ occurs. 

Assume the claim for the moment. Since each square $S(1)$ for $S\in \mcl S(\mcl R_n)$ is contained in $\mcl R_n'$, the definition of $E_S^\ep$ and the triangle inequality show that if a path as in the claim exists then the $ D_{ \wh h^\tr}^\ep$-distance between the left and right boundaries 
of $\mcl R_n$ along paths of disks which are contained in $\mcl R_n'$ is at most $ 2 n^2 \ep^{-1/(d_\gamma-\zeta )}$. 
This shows that~\eqref{eqn-rectangle-perc-truncated} holds for $\ep \in (0,\ep_*]$. 
Since $  D_{ \wh h_1^{\tr}}^\ep $ can only increase when $\ep$ decreases (since by definition we are taking an infimum over a smaller collection of sets of balls), it follows that~\eqref{eqn-rectangle-perc-truncated} is true for general $\ep > 0$ with $A =2 \ep_*^{-1/(d_\gamma - \zeta)} $. 

It remains only to prove the above claim. 
Let $\mcl S^*(\mcl R_n)$ be the graph whose squares are the same as the squares of $\mcl S(\mcl R_n)$, but with two squares considered to be adjacent if they share a corner or an edge, instead of only considering squares to be adjacent if they share an edge.
By planar duality, it suffices to show that if $p ,a_0, a_1$ are chosen appropriately, then for $\ep\in (0,\ep_*]$ it holds with probability at least $1-a_0 e^{-a_1 n}$ that there does \emph{not} exist a simple path in $\mcl S^*(\mcl R_n)$ from the top boundary to the bottom boundary of $\mcl S (\mcl R_n)$ consisting of squares for which $E_S^\ep$ does not occur. This will be proven by a standard argument for subcritical percolation.
By the definition~\eqref{eqn-wn-truncate} of $\wh h^{\tr}$, the event $E_S^\ep$ is a.s.\ determined by the restriction of the white noise $W$ to $S(2) \times \BB R_+$. In particular, $E_S^\ep$ and $E_{\wt S}^\ep$ are independent whenever $S(2)\cap \wt S(2) = \emptyset$. 
For each fixed deterministic simple path $ P$ in $\mcl S^*(\mcl R_n)$, we can find a set of at least $|P|/100$ squares hit by $P$ for which the expanded squares $S(2)$ are disjoint. 
By~\eqref{eqn-perc-prob}, applied once to each of these $|P|/100$ squares, if $\ep\in (0,\ep_*]$ then the probability that $E_S^\ep$ fails to occur for every square in $ P$ is at most $p^{| P|/100}$.

We now take a union bound over all simple paths $P$ in $\mcl S^*(\mcl R_n)$ connecting the top and bottom boundaries. For $k \in [n,2n^2]_{\BB Z}$, the number of such paths with $|P| = k$ is at most $ n 8^{k+1}$ since there are $2 n$ possible initial squares adjacent in the top boundary of $\mcl R_n$ and 8 choices for each step of the path. Combining this with the estimate in the preceding paragraph, we find that for $\ep \in (0,\ep_*]$ the probability of a top-bottom crossing of $\mcl S^*(\mcl R_n)$ consisting of squares for which $E_S^\ep$ does not occur is at most
\eqbn
  n \sum_{k=n}^{2 n^2} p^{k/100} 8^{k+1} , 
\eqen
which is bounded above by an exponential function of $n$ provided we take $p < 8^{-100}$.  
\end{proof}

From Lemma~\ref{lem-rectangle-perc}, the scaling properties of the field $\wh h$, and a union bound, we get the following. 

\begin{lem} \label{lem-rectangle-dist}
For each $\zeta\in (0,1)$, there exists $\lambda= \lambda(\zeta,\gamma) > 0$ such that the following is true.  
For $m\in\BB N$ and $\ep  > 0$, it holds with probability $1-O_m(e^{-\lambda m})$ as $m\rta\infty$, at a rate which is uniform in $\ep$, that for each $ 2^{-m+1} \times 2^{-m}$ rectangle $R\subset \BB S$ with corners in $2^{-m} \BB Z^2$, 
\eqb \label{eqn-rectangle-dist}
 D_{\wh h}^\ep\left( \bdy_{\op{L}} R , \bdy_{\op{R}} R ; R' \right)  
\leq \max\left\{ m^3 , \, \ep^{-\frac{1}{d_\gamma - \zeta  } }  2^{- \frac{1}{d_\gamma } \left(2  + \frac{\gamma^2}{2} - \zeta \right) m  }  \exp\left( \frac{\gamma}{d_\gamma}  \min_{z\in R'} \wh h_{2^{-m} }(z)  \right) \right\}.
\eqe  
\end{lem}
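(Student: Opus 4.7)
The plan is to combine the concentration estimate Lemma~\ref{lem-rectangle-perc} with the scale-invariance identity~\eqref{eqn-dist-scaling} of Lemma~\ref{lem-measure-scale}, and then take a union bound over the $O(4^m)$ rectangles $R$. The key point is that the failure probability $a_0 e^{-a_1 n}$ in Lemma~\ref{lem-rectangle-perc} beats a union bound over $4^m$ rectangles only if $n$ is a sufficiently large multiple of $m$; to rescale $R$ into a $2n \times n$ rectangle $\mcl R_n$ for such a value of $n$, I will apply Lemma~\ref{lem-measure-scale} at the refined scale $\delta_1 = 2^{-m}/n$ rather than at the ``natural'' scale $2^{-m}$.

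Concretely, fix a small $\zeta > 0$, pick $n = \lceil Cm \rceil$ with $C = C(\zeta,\gamma)$ large enough that $4^m e^{-a_1 n} = O(e^{-\lambda m})$ for some $\lambda > 0$, and set $\delta_1 = 2^{-m}/n$. For each rectangle $R$, pick $b \in \BB C$ so that $\delta_1^{-1}(R - b) = \mcl R_n$ and set $\tilde h := (\wh h - \wh h_{\delta_1})(\delta_1 \cdot + b) \eqD \wh h$. Because $\tilde h$ depends only on the restriction of the driving white noise to $\BB C \times [0,\delta_1^2]$, it is independent of $\wh h_{\delta_1}$, so Lemma~\ref{lem-rectangle-perc} applies to $\tilde h$ conditionally on $\wh h_{\delta_1}$. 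Combining this with Lemma~\ref{lem-measure-scale} (which bounds $D_{\wh h}^\ep(\bdy_{\op{L}} R, \bdy_{\op{R}} R; R')$ above by the corresponding distance for $\tilde h$ at mass $\ol T \ep$, where $\ol T = \delta_1^{-2-\gamma^2/2}\exp(-\gamma\max_{z\in R'}\wh h_{\delta_1}(z))$) and taking a union bound over the $O(4^m)$ rectangles produces, with probability $1-O(e^{-\lambda m})$, the simultaneous bound
\[
D_{\wh h}^\ep(\bdy_{\op{L}} R, \bdy_{\op{R}} R; R') \leq 2^{o(m)} \cdot \ep^{-1/(d_\gamma - \zeta)} \cdot 2^{-m(2+\gamma^2/2)/(d_\gamma - \zeta)} \cdot \exp\!\left(\tfrac{\gamma}{d_\gamma - \zeta}\max_{z\in R'}\wh h_{\delta_1}(z)\right),
\]
where the $2^{o(m)}$ prefactor absorbs $n^2 e^{n^{1/2}} n^{-(2+\gamma^2/2)/(d_\gamma - \zeta)}$ and the trivial constant $n^2 A$ from Lemma~\ref{lem-rectangle-perc}.

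To pass from $\wh h_{\delta_1}$ to $\wh h_{2^{-m}}$, from $\max$ to $\min$, and from the coefficient $\gamma/(d_\gamma - \zeta)$ to $\gamma/d_\gamma$, I will invoke three regularity estimates from Section~\ref{sec-wn}, each applied on $\BB S(1) \supset R'$ and each holding with polynomially high probability (hence surviving the union bound over $R$). Lemma~\ref{lem-mid-scale-max} with $A = n$ replaces $\wh h_{\delta_1}$ by $\wh h_{2^{-m}}$ uniformly with additive error $o(m)$; Lemma~\ref{lem-use-btis} applied to $\wh h_{2^{-m}}$ gives $\max_{R'} \wh h_{2^{-m}} - \min_{R'}\wh h_{2^{-m}} = o(m)$ (since $R'$ has diameter $O(2^{-m})$); and Lemma~\ref{lem-one-scale-max} bounds $|\wh h_{2^{-m}}|$ uniformly by $O(m)$, which lets me swap $\gamma/(d_\gamma-\zeta)$ for $\gamma/d_\gamma$ at the cost of a multiplicative $2^{O(\zeta m)}$ factor. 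All of these error factors collect into a single $2^{o(m)}$ prefactor, which I then absorb into a small additional $\zeta$ inside the exponent $(2+\gamma^2/2-\zeta)/d_\gamma$; the identity $(2+\gamma^2/2)/(d_\gamma-\zeta) - (2+\gamma^2/2-\zeta)/d_\gamma = \Theta(\zeta)$ shows that this absorption succeeds once $\zeta$ is taken sufficiently small. Finally, the maximum with $m^3$ handles the regime where $\ep$ is not too small, in which the constant $A$ from Lemma~\ref{lem-rectangle-perc} dominates the max inside~\eqref{eqn-rectangle-perc}.

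The main obstacle is purely one of bookkeeping: several auxiliary parameters (the target $\zeta$, the constant $C$ setting $n$, and the separate $\zeta$'s inside Lemmas~\ref{lem-mid-scale-max}, \ref{lem-use-btis}, and~\ref{lem-one-scale-max}) must be tuned in the correct order so that every $o(m)$ and $O(\zeta m)$ error is absorbed into a single final $\zeta$ in the conclusion, and each auxiliary event must be shown to hold with polynomially high probability (so the union bound over the exponentially-many rectangles does not degrade it). There is no conceptual difficulty beyond this combinatorial tuning.
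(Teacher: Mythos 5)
Your argument is correct and essentially reproduces the paper's proof: rescale each rectangle by a factor slightly larger than $2^m$ so that Lemma~\ref{lem-rectangle-perc} (applied conditionally on the coarse field, which is independent of the rescaled fine field, via the scaling relation of Lemma~\ref{lem-measure-scale}) beats the union bound over the $O(4^m)$ rectangles, then use Lemmas~\ref{lem-use-btis}, \ref{lem-one-scale-max}, and~\ref{lem-mid-scale-max} to replace $\max_{z\in R'}\wh h_{\delta_1}(z)$ by $\min_{z\in R'}\wh h_{2^{-m}}(z)$ and to absorb all $O(\zeta m)$ and $o(m)$ errors into a slightly adjusted $\zeta$. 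The only immaterial difference is your choice of intermediate aspect ratio $n=\lceil Cm\rceil$ with $C=C(\zeta,\gamma)$ tuned against the constant $a_1$, whereas the paper takes $n=2^{n_m}\asymp m^{3/2}$ so that the per-rectangle failure probability $e^{-a_1 m^{3/2}}$ dominates the union bound without tuning any constant.
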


Note that there is a \emph{minimum}, rather than a maximum, inside the exponential on the right side of~\eqref{eqn-rectangle-dist}. 
The minimum and maximum values of $\wh h_{2^{-m}}$ on $R$ typically differ by a small multiple of $\log m$ due to continuity estimates for $\wh h_{2^{-m}}$ (Lemma~\ref{lem-use-btis}).

\begin{proof}[Proof of Lemma~\ref{lem-rectangle-dist}]
Fix $\wt\zeta\in (0,1)$ to be chosen later, in a manner depending only on $\zeta$ and $\gamma$. 
Also set 
\eqbn
n_m := \lfloor \log_2   m^{3/2} \rfloor ,\quad \forall m \in \BB N  
\eqen
(in fact, $n_m = \lfloor \log_2   m^p \rfloor$ for any $1 < p < 2$ would suffice). 

By Lemmas~\ref{lem-one-scale-max} and~\ref{lem-mid-scale-max} (the latter is applied with $\delta = 2^{-m}$ and $A = 2^{n_m} \asymp m^{3/2}$), it holds with exponentially high probability as $m\rta\infty$ that 
\eqb \label{eqn-use-mid-scale-compare}  
\max_{z\in \BB S} |\wh h_{2^{-m}}(z)| \leq (2+\wt\zeta) \log 2^m \quad \op{and} \quad
  \max_{z,w\in \BB S : |z-w| \leq   2^{-m+2}   } |\wh h_{   2^{-m - n_m  } }(z) - \wh h_{2^{-m}}(w)| \leq \wt\zeta \log 2^m .
\eqe
If $R$ is a $  2^{-m+1}  \times 2^{-m}$ rectangle and $u_R$ denotes its bottom left corner, then $   2^{m+n_m} (R-u_R) $ is the rectangle $\mcl R_{2^{n_m} }$ of Lemma~\ref{lem-rectangle-perc} and $2^{m + n_m} (R' - u_R)$ is the rectangle $\mcl R_{2^{n_m}}'$ of Lemma~\ref{lem-rectangle-perc}. Moreover, the field $ ( \wh h - \wh h_{2^{-m-n_m}})(2^{-m-n_m}\cdot + u_R)$ agrees in law with $\wh h$ and is independent from $\wh h_{2^{-m-n_m}   }$, which means that the associated Liouville graph distance $D_{( \wh h - \wh h_{2^{-m-n_m}})(2^{-m-n_m}\cdot + u_R)}^\ep$ agrees in law with $D_{\wh h}^\ep$ and is independent from $\wh h_{2^{-m-n_m}}$.
Using~\eqref{eqn-dist-scaling} with $\delta = 2^{-m-n_m}$ and $U $ equal to the interior of $ R'$, we therefore get that the conditional law of $ D_{\wh h}^\ep\left( \bdy_{\op{L}} R , \bdy_{\op{R}} R ; R' \right)$ given $\wh h_{  2^{-m - n_m} }$ is stochastically dominated by the law of 
\eqbn
 D_{\wh h }^{T_R\ep}\left(\bdy_{\op{L}} \mcl R_{2^{n_m} } , \bdy_{\op{R}} \mcl R_{2^{n_m} } ; \mcl R_{2^{n_m} }' \right) 
\quad \text{for} \quad 
T_R := 2^{\left( 2+\frac{\gamma^2}{2}\right) ( m+n_m) } \exp\left( -  \gamma \max_{z\in R'}  \wh h_{ 2^{-m -  n_m} }(z)  \right) .
\eqen
If~\eqref{eqn-use-mid-scale-compare} holds, then 
\allb \label{eqn-rectangle-dist-T}
T_R  
&\geq 2^{ \left(2  + \frac{\gamma^2}{2}  + o_m(1) + o_{\wt\zeta}(1) \right) m  }  \exp\left( -  \gamma \min_{z\in R'} \wh h_{2^{-m} }(z) \right) \notag \\
&\geq 2^{ \left(2  + \frac{\gamma^2}{2}  + o_m(1) + o_{\wt\zeta}(1) \right) m  }  \exp\left( - \frac{d_\gamma - \wt\zeta}{d_\gamma}  \gamma \min_{z\in R'} \wh h_{2^{-m} }(z) \right) , 
\alle
where the $o_{\wt\zeta}(1)$ and $o_m(1)$ are each deterministic and independent of $\ep$ and the $o_{\wt\zeta}(1)$ error is also independent of $m$. Note that in the first line, we switched from the maximum of $\wh h_{  2^{-m - n_m}}(z)$ to the minimum of $\wh h_{2^{-m}}(z)$ (which gives a stronger estimate than the maximum) using the second inequality in~\eqref{eqn-use-mid-scale-compare}. Also, in the second line, we absorbed a small power of $e^{\wh h_{2^{-m}}(z)}$ into a factor of $2^{m o_{\wt\zeta}(1)}$ using the first inequality in~\eqref{eqn-use-mid-scale-compare}. By Lemma~\ref{lem-rectangle-perc} (applied with $T_R \ep$ in place of $\ep$ and $2^{n_m}$ in place of $n$) and a union bound over $O_m(2^{2m})$ rectangles $R$, we obtain the statement of the lemma upon choosing $\wt\zeta$ sufficiently small, in a manner depending only on $\zeta$ and $\gamma$. 
\end{proof}

The proof of Proposition~\ref{prop-diam} will use the following consequence of Lemma~\ref{lem-rectangle-dist}.

\begin{lem} \label{lem-rectangle-dist-multi}
Fix $\zeta\in (0,1)$ and $\beta \in (0,1)$. 
It holds with polynomially high probability as $\ep \rta 0$ that for each $m\in\BB N$ with $m \geq \log_2 \ep^{-\beta}$ and each $  2^{-m+1} \times 2^{-m}$ rectangle $R\subset \BB S$ with corners in $2^{-m} \BB Z^2$, 
\eqb \label{eqn-rectangle-dist-multi}
D_{\wh h}^\ep\left( \bdy_{\op{L}} R , \bdy_{\op{R}} R ; R' \right)  
\leq   \max\left\{ m^3, \ep^{-\frac{1}{d_\gamma -\zeta}}  2^{- \frac{1}{d_\gamma } \left(2  + \frac{\gamma^2}{2} - 2\gamma - \zeta\right) m  }  \right\} ;
\eqe 
and the same holds with $2^{-m} \times   2^{-m+1}$ rectangles but with $\bdy_{\op{B}}$ and $\bdy_{\op{T}}$ in place of $\bdy_{\op{L}}$ and $\bdy_{\op{R}}$. 
\end{lem}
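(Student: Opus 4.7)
The plan is to combine the single-scale estimate in Lemma~\ref{lem-rectangle-dist} with the pointwise bound on $\wh h_{2^{-m}}$ provided by Lemma~\ref{lem-one-scale-max}, together with a union bound over scales $m \geq \log_2 \ep^{-\beta}$. The key observation is that the random factor $\exp(\frac{\gamma}{d_\gamma}\min_{z\in R'} \wh h_{2^{-m}}(z))$ in \eqref{eqn-rectangle-dist} can be replaced by $2^{2\gamma m/d_\gamma}$, up to a small loss in the exponent, by controlling the field $\wh h_{2^{-m}}$ at its Gaussian scale.

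Concretely, I will fix an auxiliary parameter $\zeta_0 \in (0,\zeta]$ with $\zeta_0(1+\gamma) \leq \zeta$ (e.g.\ $\zeta_0 = \zeta/3$). First, I apply Lemma~\ref{lem-rectangle-dist} with $\zeta_0$ in place of $\zeta$: for each $m$, the bound \eqref{eqn-rectangle-dist} holds simultaneously for all horizontally-oriented dyadic rectangles at scale $m$, with failure probability $O(e^{-\lambda m})$ for some $\lambda = \lambda(\zeta_0,\gamma) > 0$. Next, I apply Lemma~\ref{lem-one-scale-max} on $U = \BB S(1/2) \supset R'$ at scale $\delta = 2^{-m}$ with parameter $\zeta_0$: with probability at least $1 - O(2^{-pm})$ for some $p = p(\zeta_0) > 0$, one has $\max_{z\in\BB S(1/2)}|\wh h_{2^{-m}}(z)| \leq (2+\zeta_0)m\log 2$. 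On the intersection of these two events, since $\min \leq \max|\cdot|$,
\[
\exp\Bigl(\tfrac{\gamma}{d_\gamma}\min_{z\in R'}\wh h_{2^{-m}}(z)\Bigr) \leq 2^{(2\gamma + \gamma\zeta_0)m/d_\gamma},
\]
and substituting into \eqref{eqn-rectangle-dist} yields \eqref{eqn-rectangle-dist-multi} after using $\zeta_0 \leq \zeta$ (to bound $\ep^{-1/(d_\gamma-\zeta_0)}$) and $\zeta_0(1+\gamma) \leq \zeta$ (to bound the $2^{-m}$ exponent).

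The last ingredient is a union bound over all $m \geq \log_2 \ep^{-\beta}$. Since the failure probability at scale $m$ is at most $O(e^{-\lambda m} + 2^{-pm})$, which is summable geometrically, the total failure probability is $O(\ep^{c\beta})$ for some $c = c(\zeta,\gamma) > 0$, and in particular polynomially small in $\ep$. The case of $2^{-m}\times 2^{-m+1}$ rectangles with $\bdy_{\op{B}},\bdy_{\op{T}}$ in place of $\bdy_{\op{L}},\bdy_{\op{R}}$ follows by repeating the argument for the rotated field $\wh h(i\,\cdot)$, which has the same law as $\wh h$ by the rotational invariance of $W$.

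I do not anticipate any serious obstacle: the argument is essentially a bookkeeping exercise that upgrades the single-scale estimate of Lemma~\ref{lem-rectangle-dist} to a simultaneous statement across all small scales. The only mildly delicate point is the choice of the auxiliary parameter $\zeta_0$, which must be small enough to absorb both the $-\zeta_0$ loss already present in Lemma~\ref{lem-rectangle-dist} and the additional $-\gamma\zeta_0$ loss incurred when replacing $\min\wh h_{2^{-m}}$ by its deterministic bound; the single condition $\zeta_0(1+\gamma) \leq \zeta$ handles both.
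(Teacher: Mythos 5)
Your proposal is correct and follows essentially the same route as the paper's proof: combine the single-scale estimate of Lemma~\ref{lem-rectangle-dist} with the uniform bound $\max_{z}|\wh h_{2^{-m}}(z)|\leq(2+\zeta_0)\log 2^m$ from Lemma~\ref{lem-one-scale-max}, take a union bound over the geometrically decaying failure probabilities for $m\geq\log_2\ep^{-\beta}$, and absorb the losses by shrinking the auxiliary parameter (your explicit condition $\zeta_0(1+\gamma)\leq\zeta$ is exactly the "possibly shrinking $\zeta$" step in the paper, and the rotation-invariance argument for the vertical rectangles is the implicit symmetry the paper relies on).
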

\begin{proof}
By Lemma~\ref{lem-one-scale-max}, it holds with exponentially high probability as $m\rta\infty$ that
\eqbn
 \max_{z\in \BB S} |\wh h_{2^{-m} }(z)  | \leq  \left( 2 + \zeta \right) \log 2^m  .
\eqen
Combining this with Lemma~\ref{lem-rectangle-dist}, taking a union bound over all integers $m\geq \log_2 \ep^{-\beta}$, and possibly shrinking $\zeta$ concludes the proof. 
\end{proof}

\begin{proof}[Proof of Proposition~\ref{prop-diam}] 
See Figure~\ref{fig-diam}, right, for an illustration of the proof. 
Fix $\wt\zeta \in (0,1)$ and $\beta \in (0,1)$, to be chosen later in a manner depending only on $\zeta$, and let $E^\ep = E^\ep(\wt\zeta,\beta)$ be the event of Lemma~\ref{lem-rectangle-dist-multi} with $\wt\zeta$ in place of $\zeta$, so that $E^\ep$ occurs with polynomially high probability as $\ep\rta 0$. On the event $E^\ep$, we can choose for each $m\geq \log_2 \ep^{-\beta}$ and each $2^{-m+1}\times 2^{-m}$ (resp.\ $2^{-m}\times 2^{-m+1}$) rectangle $R\subset \BB S$ with corners in $2^{-m}\BB Z^2$ a simple path $P_R$ in $R$ from $\bdy_{\op{L}} R$ to $\bdy_{\op{R}} R$ (resp.\ $\bdy_{\op{B}} R$ to $\bdy_{\op{T}} R$) which can be covered by at most
\eqbn
\max\left\{ m^3, \ep^{-\frac{1}{d_\gamma -\wt\zeta}}  2^{- \frac{1}{d_\gamma } \left(2  + \frac{\gamma^2}{2} - 2\gamma - \wt\zeta\right) m  }  \right\} 
\eqen
Euclidean balls of $\mu_{\wh h}$-mass at most $\ep$ which are contained in $R' \subset \BB S(1/2)$. For a dyadic square $S\subset \BB S$ with side length at most $\ep^\beta$, let $X_S$ be the \#-sign shaped set which is the union of the paths $P_R$ corresponding to the four $2^{-m-1}\times 2^{-m}$ or $2^{-m}\times 2^{-m-1}$ rectangles $R$ as above which are contained in $S$. Then $X_S$ is connected and contained in $S$. Furthermore, if $\wt S$ is one of the four dyadic children of $S$, then $X_{\wt S} \cap X_S\not= \emptyset$. We will prove the proposition by constructing connected paths between points of $\BB S$ using the $X_S$'s. 

Consider a dyadic square $S$ with $|S| = 2^{- \lceil \log_2   \ep^{-\beta} \rceil}$ and a point $z\in S$. Let $S = S_0^z , S_1^z,\dots$ be the sequence of dyadic descendants of $S$ containing $z$ (enumerated so that $S_{j-1}^z$ is the dyadic parent of $S_j^z$ for each $j \in\BB N$).
The preceding paragraph shows that on $E^\ep$, it holds for each $j \in \BB N$ that 
\eqb \label{eqn-dyadic-seq-diam}
\max_{ u \in X_{S_j^z} } D_{\wh h}^\ep \left( u , X_{S_{j-1}^z} ; \BB S(1/2) \right)\leq 4 \max\left\{ (\log_2 (1/|S_j^z|) )^3, \ep^{-\frac{1}{d_\gamma -\wt\zeta}}  |S_{j}^z|^{ \frac{1}{d_\gamma } \left(2  + \frac{\gamma^2}{2} - 2\gamma - \wt\zeta\right)   }  \right\} .
\eqe  

By Lemma~\ref{lem-max-ball-radius}, there exists $A=A(\gamma) > 0$ such that with polynomially high probability as $\ep\rta 0$, each Euclidean ball of $\mu_{\wh h}$-mass $\ep$ which intersects $\BB S$ is contained in $\BB S(1/2)$ and has Euclidean radius at least $2\ep^A$. This in particular implies that whenever $\wt S \subset\BB S$ is a dyadic square with $|\wt S| \leq \ep^{A}$, we have $D_{\wh h}^\ep\left( u ,v ; \BB S(1/2) \right) = 1$ for each $u,v\in \wt S$. If this is the case, we may sum the estimate~\eqref{eqn-dyadic-seq-diam} over all $j \in [1, \log_2 \ep^{\beta - A}]_{\BB Z}$ to find that 
\eqb \label{eqn-dist-to-X}
D_{\wh h}^\ep\left( z , X_S;  \BB S(1/2) \right) 
\preceq  \ep^{-\frac{1}{d_\gamma -\wt\zeta} + \frac{\beta}{d_\gamma } \left(2  + \frac{\gamma^2}{2} - 2\gamma - \wt\zeta\right)   }   +  (\log_2 \ep^{\beta - A} )^4 
\preceq  \ep^{ -\frac{1}{d_\gamma -\wt\zeta} + \frac{\beta}{d_\gamma } \left(2  + \frac{\gamma^2}{2} - 2\gamma - \wt\zeta\right)   } ,
\eqe 
with the implicit constant in $\preceq$ deterministic and independent of $\ep$ and $z$. 

The bound~\eqref{eqn-dist-to-X} holds simultaneously for every dyadic square $S$ of side length $ 2^{- \lceil \log_2  \ep^{-\beta} \rceil}$ and every $z\in S$ with polynomially high probability as $\ep\rta 0$. Furthermore, with polynomially high probability as $\ep\rta 0$ each $X_S$ for dyadic squares $S$ with $|S| =2^{- \lceil\log_2   \ep^{-\beta} \rceil}$ can be covered by at most $4 \ep^{ -\frac{1}{d_\gamma -\wt\zeta} + \frac{\beta}{d_\gamma } \left(2  + \frac{\gamma^2}{2} - 2\gamma - \wt\zeta\right)   }$ Euclidean balls contained in $\BB S(1/2)$, each of which has $\mu_{\wh h}$-mass at most $\ep$. It follows that with polynomially high probability as $\ep\rta 0$, 
\eqb \label{eqn-dyadic-diam}
\max_{z,w\in S} D^\ep_{\wh h}\left( z , w ; \BB S(1/2) \right) \preceq \ep^{ -\frac{1}{d_\gamma -\wt\zeta} + \frac{\beta}{d_\gamma } \left(2  + \frac{\gamma^2}{2} - 2\gamma - \wt\zeta\right)   } ,
\quad \forall \text{dyadic $S\subset \BB S$ with $|S| = 2^{-  \lceil \log_2  \ep^{-\beta} \rceil}$}.
\eqe 
By summing the bound~\eqref{eqn-dyadic-diam} over $O_\ep(\ep^{-\beta})$ dyadic squares of side length $2^{- \lceil \log_2  \ep^{-\beta} \rceil}$ whose union contains a path between two given points of $\BB S$, we get that with polynomially high probability as $\ep\rta 0$, 
\eqbn
\max_{z,w\in S}   D^\ep_{\wh h}\left( z , w ; \BB S(1/2) \right) \preceq \ep^{ -\frac{1}{d_\gamma -\wt\zeta} + \frac{\beta}{d_\gamma } \left(2  + \frac{\gamma^2}{2} - 2\gamma - \wt\zeta\right) - \beta} .
\eqen
We now obtain~\eqref{eqn-diam} by choosing $\beta$ and $\wt\zeta$ sufficiently small, in a manner depending only on $\zeta$.
\end{proof}

\begin{proof}[Proof of Theorem~\ref{thm-diam}]
The bound for point-to-point distance~\eqref{eqn-d-def} was already proven in Lemma~\ref{lem-whole-plane-d}, so we only need to prove~\eqref{eqn-d-diam}. 
For a compact set $K$ and an open set $U$ with $K \subset U$ as in the theorem statement, choose finitely many squares $S_1,\dots,S_k \subset U$ whose union covers $K$ and such that each of the expanded squares $S_j(1/2)$ for $j=1,\dots,k$ is also contained in $U$. Also fix $\zeta\in (0,1)$. 

By Lemma~\ref{lem-tr-compare-square}, the conclusion of Proposition~\ref{prop-diam} remains true with the white-noise field $\wh h$ replaced with the whole-plane GFF $h$. 
If $C> 0$ and $z\in\BB C$, then $h(C^{-1}(\cdot - z)) - h_C(z)$ agrees in law with $h$, equivalently the Liouville graph distance satisfies $D_{h(C^{-1}(\cdot-z))}^{\ep e^{\gamma h_C(z)}} \eqD D_h^\ep$. Since each $h_C(z)$ is a Gaussian random variable, we find that the conclusion of Proposition~\ref{prop-diam} remains true with $h$ in place of $\wh h$ and with $\BB S$ replaced with any other square $S\subset \BB C$ (with the rate of convergence of the probability as $\ep\rta 0$ depending on the square). 
Applying this to each of the squares $S_j$ above, we find that with polynomially high probability as $\ep\rta 0$,
\eqb \label{eqn-diam-proof-upper}
\max_{z,w\in K} D_h^\ep\left( z , w ; U \right) \leq \ep^{-\frac{1}{d_\gamma - \zeta}} .
\eqe 

To bound $ D_h^\ep\left( K , \bdy U \right)$, we first use~\cite[Lemma 6.1]{dzz-heat-kernel} to get that if $h^{\BB S(1)}$ is a zero-boundary GFF on $\BB S(1)$, then with polynomially high probability as $\ep\rta 0$, 
\eqb
D_{h^{\BB S(1)}}^\ep\left( \bdy \BB S , \bdy \BB S(1/2) \right) \geq \ep^{-\frac{1}{d_\gamma+\zeta}} .
\eqe
By the same argument as in the preceding paragraph, the same is true with $h$ in place of $h^{\BB S(1)}$ and with $\BB S$ replaced with any other square $S\subset\BB C$. Any path from $K$ to $\bdy U$ must cross $S_j(1/2) \setminus S_j$ for one of the squares $S_j$, $j=1,\dots,k$, above. We therefore obtain that with polynomially high probability as $\ep\rta 0$, 
\eqb\label{eqn-diam-proof-lower}
D_h^\ep\left( K , \bdy U \right)  \geq \ep^{-\frac{1}{d_\gamma + \zeta}}  .
\eqe 

By Lemma~\ref{lem-whole-plane-d} (applied for $z,w \in K$ and for $z\in K$ and $w\in \bdy U$, respectively) we also get a lower bound of $\ep^{-\frac{1}{d_\gamma + \zeta}}$ for the left side of~\eqref{eqn-diam-proof-upper} and an upper bound of $\ep^{-\frac{1}{d_\gamma - \zeta}}$ for the right side of~\eqref{eqn-diam-proof-lower} which each hold with polynomially high probability as $\ep\rta 0$. Taking a union bound over dyadic values of $\ep$ concludes the proof.
\end{proof}

\subsection{Lower bound for LFPP distances}
\label{sec-lfpp-lower}

In this subsection we will prove the lower bound for LFPP distances from Theorem~\ref{thm-lfpp-compare}, building on the estimates proven in Section~\ref{sec-diam}. In fact, we will prove the following slightly more quantitative statement.

\begin{prop} \label{prop-lfpp-lower}
Let $h$ be a whole-plane GFF normalized so that $h_1(0) = 0$. 
Also let $U\subset \BB C$ be a bounded open set and let $K\subset U$ be a compact set. 
For each $\zeta \in (0,1)$, it holds with polynomially high probability as $\delta \rta 0$ that the LFPP distance with exponent $\xi=\gamma/d_\gamma$ satisfies
\eqb \label{eqn-lfpp-lower}
D_{h,\LFPP}^\delta\left( K , \bdy U \right) \geq \delta^{1 - \frac{2}{d_\gamma}  - \frac{\gamma^2}{2 d_\gamma}  + \zeta } .
\eqe
\end{prop}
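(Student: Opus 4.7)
The plan is to argue by contradiction, comparing LFPP at scale $\delta$ to Liouville graph distance with mass parameter $\ep=\delta^{1/\beta}$ for a suitable $\beta \in (0, 2/(2+\gamma)^2)$. Fix small $\wt\zeta \in (0,\zeta/10)$ and $\beta$ slightly less than $2/(2+\gamma)^2$, depending only on $\zeta$ and $\gamma$. Using Lemma~\ref{lem-tr-compare-square} to replace the whole-plane GFF $h$ by the white-noise field $\wh h$ (at the cost of a multiplicative factor $\delta^{O(\wt\zeta)}$ on $\ep$), we work on the high-probability event on which: (i) $D_{\wh h}^\ep(K,\bdy U) \geq \ep^{-1/(d_\gamma+\wt\zeta)}$ (Theorem~\ref{thm-diam} and Lemma~\ref{lem-whole-plane-d}); (ii) $\max|\wh h_\delta|\leq (2+\wt\zeta)\log\delta^{-1}$ on a bounded neighborhood of $\ol U$ (Lemma~\ref{lem-one-scale-max}); (iii) $|h_\delta-\wh h_\delta|\leq\wt\zeta\log\delta^{-1}$ uniformly on this neighborhood (Lemma~\ref{lem-circle-avg-approx}); and (iv) the conclusion of Lemma~\ref{lem-rectangle-dist} holds simultaneously for every $2^{-m+1}\times 2^{-m}$ (or transposed) dyadic rectangle with $2^{-m}\asymp\delta$ in that neighborhood.

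Suppose, for contradiction, that there is a piecewise continuously differentiable path $P$ from $K$ to $\bdy U$ with LFPP length less than $L := \delta^{1 - 2/d_\gamma - \gamma^2/(2d_\gamma) + \zeta}$. By (ii) the Euclidean length of $P$ is at most $L\delta^{-(2+\wt\zeta)\xi}$, so choosing $0=t_0 < t_1 <\dots < t_N = T$ with $|P(t_{j+1})-P(t_j)|=\delta$ gives $N \leq L\delta^{-1-(2+\wt\zeta)\xi} + 1$. For each $j$, both endpoints $P(t_j), P(t_{j+1})$ lie in the ball $B_\delta(P(t_j))$, which is covered by a family of $O(1)$ dyadic rectangles as in (iv). By Lemma~\ref{lem-rectangle-dist} applied to each such rectangle and the triangle inequality for Liouville graph distance,
\[
D_{\wh h}^\ep\bigl(P(t_j),P(t_{j+1})\bigr) \leq C \max\!\Bigl\{(\log\delta^{-1})^3,\; \ep^{-1/(d_\gamma-\wt\zeta)}\,\delta^{(2+\gamma^2/2-\wt\zeta)/d_\gamma}\,\exp\!\bigl(\tfrac{\gamma}{d_\gamma}\min_{B_{2\delta}(P(t_j))}\wh h_\delta\bigr)\Bigr\} =: B_j.
\]
The LFPP length of the $j$-th segment satisfies $L_j \geq \delta\exp(\xi \min_{B_\delta(P(t_j))} h_\delta)$, and by (iii), $\exp(\tfrac{\gamma}{d_\gamma}\min \wh h_\delta)\leq \delta^{-\wt\zeta\xi}L_j/\delta$. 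Summing via the triangle inequality and using $\sum_j L_j \leq L$,
\[
D_{\wh h}^\ep(K,\bdy U) \leq \sum_{j=0}^{N-1} B_j \leq C'\bigl(N(\log\delta^{-1})^3 \,+\, \ep^{-1/(d_\gamma-\wt\zeta)}\delta^{(2+\gamma^2/2-\wt\zeta)/d_\gamma - 1 - \wt\zeta\xi}L\bigr).
\]
Substituting $\ep=\delta^{1/\beta}$ with $1/\beta$ just above $(2+\gamma)^2/2$, the $N$-term is absorbed, and if $\wt\zeta$ is small enough compared to $\zeta$ the right-hand side is strictly less than $\ep^{-1/(d_\gamma+\wt\zeta)}$, contradicting (i).

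The main subtlety, and the reason the exponent $\xi=\gamma/d_\gamma$ is the correct one, lies in the \emph{cancellation} between the $\exp(\tfrac{\gamma}{d_\gamma}\min\wh h_\delta)$ factor from the rectangle crossing bound (Lemma~\ref{lem-rectangle-dist}) and the $\exp(\xi\min h_\delta)$ factor from the LFPP length lower bound; any other choice of $\xi$ would leave a residual exponential factor in $\min \wh h_\delta$ that could not be controlled summed over $j$. Once this cancellation is arranged, the remaining $\delta$-exponent is determined by the $(2+\gamma^2/2)/d_\gamma$ factor in Lemma~\ref{lem-rectangle-dist} together with the $\delta^{-1}$ from having of order $(\text{Euclidean length})/\delta$ segments, yielding exactly $1-2/d_\gamma-\gamma^2/(2d_\gamma)$ after $\ep$ is eliminated via the Liouville graph distance lower bound. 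The remaining bookkeeping amounts to ensuring that the various $\wt\zeta$-level losses (from replacing $h$ by $\wh h$, from comparing $h_\delta$ and $\wh h_\delta$, from the $1/(d_\gamma\mp\wt\zeta)$ mismatch, and from enlarging each $\delta$-ball to a union of $O(1)$ dyadic rectangles) each contribute at most $O(\wt\zeta)$ to the final $\delta$-exponent, which can be arranged by taking $\wt\zeta$ small compared to $\zeta$; the constraint $\beta<2/(2+\gamma)^2$ is precisely what is needed to absorb the combinatorial $N$-term.
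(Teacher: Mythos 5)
Your overall architecture is the same as the paper's: compare LFPP at scale $\delta=\ep^{\beta}$ with $\beta$ just below $2/(2+\gamma)^2$ to Liouville graph distance at mass $\ep$, use the rectangle-crossing estimate (Lemma~\ref{lem-rectangle-dist}) to produce a factor $\exp(\frac{\gamma}{d_\gamma}\min \wh h_{\delta})$ that cancels against the factor $\exp(\xi \min h_\delta)$ in the LFPP length of each $\delta$-scale segment, and then invoke the lower bound $D^\ep(K,\bdy U)\geq \ep^{-1/(d_\gamma+\wt\zeta)}$ to close the argument; your exponent bookkeeping, including absorbing the combinatorial $N$-term using $\beta<2/(2+\gamma)^2$, is consistent with the paper's computation. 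However, there is a genuine gap at the central display: you claim
\[
D_{\wh h}^\ep\bigl(P(t_j),P(t_{j+1})\bigr)\ \leq\ C\max\Bigl\{(\log\delta^{-1})^3,\ \ep^{-\frac{1}{d_\gamma-\wt\zeta}}\delta^{\frac{1}{d_\gamma}(2+\frac{\gamma^2}{2}-\wt\zeta)}\exp\Bigl(\tfrac{\gamma}{d_\gamma}\min_{B_{2\delta}(P(t_j))}\wh h_\delta\Bigr)\Bigr\}
\]
``by Lemma~\ref{lem-rectangle-dist} applied to $O(1)$ dyadic rectangles and the triangle inequality.'' Lemma~\ref{lem-rectangle-dist} only bounds the distance between the two opposite \emph{sides} of a $2^{-m+1}\times 2^{-m}$ rectangle (within the stretched rectangle $R'$); it gives no control on the distance from an arbitrary interior point such as $P(t_j)$ to the crossing path, whose location inside the rectangle is not controlled. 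Two points inside a union of rectangles can be far in Liouville graph distance even when every rectangle crossing is short, so no finite chain of crossing bounds plus the triangle inequality yields a point-to-point bound. A point-to-point (equivalently, square-diameter) bound at scale $\delta$ requires chaining over \emph{all} dyadic scales below $\delta$ down to the scale where a single $\ep$-mass ball covers a square — this is exactly the content of the proof of Proposition~\ref{prop-diam} (Lemma~\ref{lem-rectangle-dist-multi}, the $X_S$ construction, and Lemma~\ref{lem-max-ball-radius}) — and, moreover, you would need a version of that diameter estimate which retains the field-dependent factor $\exp(\frac{\gamma}{d_\gamma}\wh h_\delta)$, which Proposition~\ref{prop-diam} as stated (for the fixed unit square) does not provide.

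The paper avoids this issue by never bounding point-to-point distances along the LFPP path: for each $\delta_\ep$-square $S$ traversed by the (approximate) LFPP geodesic it builds a \emph{connected} set $Y_S$, the union of twelve rectangle crossings, bounds the $D_{\wh h}^\ep$-diameter of $Y_S$ by the right side of your display with $v_S$ in place of $P(t_j)$, observes that $Y_S\cap Y_{\wt S}\neq\emptyset$ for adjacent squares, and then compares the covering number of the resulting connected crossing set with the \emph{set-to-set} lower bound $D_{\wh h}^\ep(K',\bdy U')$ for a slightly enlarged $K'$ and shrunken $U'$ — a crossing of the intermediate region suffices, so the geodesic's endpoints never need to be joined to the crossing skeleton. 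To repair your argument you should either adopt this $Y_S$/set-to-set device, or first prove a field-dependent diameter bound for $\delta$-squares by redoing the multi-scale argument of Proposition~\ref{prop-diam} conditionally on $\wh h_\delta$. A secondary, fixable point: Lemma~\ref{lem-circle-avg-approx} compares $\wh h_\delta$ to the circle average of a \emph{zero-boundary} GFF on $\BB S(1)$, so as in the paper you should first prove the statement for the zero-boundary field on a square (your item (iii) as written does not apply to the whole-plane $h$) and then transfer to the whole-plane GFF via Lemma~\ref{lem-whole-plane-compare}.
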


The basic idea of the proof of Proposition~\ref{prop-lfpp-lower} is as follows. We choose $\delta=\delta_\ep$ to be comparable to a small (but fixed) power of $\ep$ and consider a path from $K$ to $\bdy U$ along which the integral of $e^{\gamma h_\delta(z)}$ is close to minimal. We then concatenate the crossings of the  $2\delta_\ep \times \delta_\ep$ and $\delta_\ep\times 2\delta_\ep$ rectangles traversed by this path, as afforded by Lemma~\ref{lem-rectangle-dist}, to produce another path from $K$ to $\bdy U$ such that the number of $\mu_{\wh h}$-mass $\ep$ disks needed to cover this second path can be bounded above in terms of $D_{h,\LFPP}^{\delta_\ep}\left( K , \bdy U \right)$ (see Figure~\ref{fig-lfpp-lower}). Plugging in our known lower bound for $D_{\wh h}^\ep\left( K , \bdy U \right)$ (which comes from Theorem~\ref{thm-diam} and Lemma~\ref{lem-tr-compare-square}) then gives a lower bound for $D_{h,\LFPP}^{\delta_\ep}\left( K , \bdy U \right)$.  

For most of the proof, we will work with a zero-boundary GFF $h^{\BB S(1)}$ on the square $\BB S(1) = [-1,2]^2$ instead of a whole-plane GFF (mostly because of Lemma~\ref{lem-circle-avg-approx}). It will also be convenient to work with an approximate version of LFPP distances for which the paths interact with squares in a nice way (this is a LFPP analogue of the approximate Liouville graph distance considered in~\cite[Section 3]{dzz-heat-kernel}).
  
For $\delta>0$, let $m_\delta := \lceil \log_2  \delta^{-1} \rceil$ be the smallest integer with $2^{m_\delta} \geq \delta^{-1}$ and let $\mcl S_{2^{-m_\delta}}$ be the set of dyadic squares contained in $\BB S$ with side length $2^{-m_\delta}$.
For $z,w\in\BB S$ and $\delta>0$, define the \emph{approximate $\delta$-LFPP distance} from $z$ to $w$ with respect to $\wh h$ by
\eqb
\wh D_{\wh h ,\LFPP}^\delta(z,w ; \BB S) := \min_{S_0,\dots,S_k} \sum_{j =0}^{k} \delta e^{\xi \wh h_\delta(v_{S_j})}
\eqe 
where the minimum is over all sequences of distinct squares $S_0 ,\dots,S_k \in \mcl S_{2^{-m_\delta}}$ such that $z\in S_0$, $w\in S_k$, and $S_j$ and $S_{j-1}$ share a side for each $j=1,\dots,k$. Here we recall that $v_S$ denotes the center of $S$.

\begin{prop} \label{prop-lfpp-approx}
There is a coupling of $\wh h$ and $h^{\BB S(1)}$ such that the following is true. For each $\zeta\in (0,1)$ and each $\xi>0$, it holds with polynomially high probability as $\delta\rta 0$ that for each $z,w\in\BB S$, 
\eqb \label{eqn-lfpp-approx}
\delta^\zeta \left( \wh D_{\wh h ,\LFPP}^\delta(z,w ; \BB S) - \delta e^{\xi \wh h_\delta(v_{S_z}) } \right) \leq  D_{h^{\BB S(1)} ,\LFPP}^\delta(z,w;\BB S)  \leq \delta^{-\zeta} \wh D_{\wh h ,\LFPP}^\delta(z,w ; \BB S)  , 
\eqe 
where $S_z$ is the square of $\mcl S_{2^{-m_\delta}}$ containing $z$ for which $\wh h_\delta(v_{S_z})$ is maximized (this is the unique square containing $z$ if $z$ is not on the boundary of a square).
\end{prop}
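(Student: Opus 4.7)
The plan is to work under the coupling of Lemma~\ref{lem-circle-avg-approx} with a small auxiliary constant $\wt\zeta$ depending on $\zeta$ and $\xi$, combined with the modulus-of-continuity estimate of Lemma~\ref{lem-use-btis}. The good event $E_\delta$ will be the polynomially-high-probability event on which $|h_\delta^{\BB S(1)}(z) - \wh h_\delta(w)| \leq \wt\zeta \log \delta^{-1}$ and $|\wh h_\delta(z) - \wh h_\delta(w)| \leq \wt\zeta \log \delta^{-1}$ for every $z,w \in \BB S$ with $|z-w| \leq 4\delta$. On $E_\delta$, any field value may be swapped for the field value at a nearby point or at the center $v_S$ of a nearby square at multiplicative cost $\delta^{\pm\xi\wt\zeta}$; this is the only input I will use to convert between LFPP integrals and sums of square weights.

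For the upper bound $D_{h^{\BB S(1)},\LFPP}^\delta \leq \delta^{-\zeta} \wh D_{\wh h,\LFPP}^\delta$, I will start with a near-minimizing sequence $S_0,\dots,S_k$ for $\wh D(z,w)$ and feed the piecewise-linear test path $z \to v_{S_0} \to v_{S_1} \to \cdots \to v_{S_k} \to w$ into the LFPP integral. Consecutive centers are at distance $\delta$ (since adjacent squares share a side of length $\delta$), and on $E_\delta$ the integrand along the edge emanating from $v_{S_j}$ is at most $\delta^{-\xi\wt\zeta} e^{\xi \wh h_\delta(v_{S_j})}$, so that edge contributes at most $\delta^{1-\xi\wt\zeta} e^{\xi \wh h_\delta(v_{S_j})}$. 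Summing over $j$, including the two short endpoint segments of length at most $\delta$, and choosing $\wt\zeta$ so small that $\xi\wt\zeta < \zeta$, yields the upper bound.

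For the lower bound $\delta^\zeta (\wh D - \delta e^{\xi \wh h_\delta(v_{S_z})}) \leq D_{h^{\BB S(1)},\LFPP}^\delta$, I take a near-minimizing arc-length-parametrized path $P : [0,L] \to \ol{\BB S}$ and set $s_i = (i\delta) \wedge L$ for $i=0,\dots,N := \lceil L/\delta \rceil$. Let $R_i \in \mcl S_{2^{-m_\delta}}$ be a square containing $P(s_i)$, chosen so that $R_0 \ni z$ and $R_N \ni w$. Since $|P(s_i) - P(s_{i+1})| \leq \delta$, the squares $R_i$ and $R_{i+1}$ lie in a common $3\times 3$ block, so after inserting at most two filler squares between each consecutive pair and deleting repeats I obtain an admissible sequence for $\wh D(z,w)$. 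On $E_\delta$ the integrand is bounded below by $\delta^{\xi\wt\zeta} e^{\xi \wh h_\delta(v_{R_i})}$ on $[s_i,s_{i+1}]$, so each full length-$\delta$ subinterval contributes at least $\delta^{1+\xi\wt\zeta} e^{\xi \wh h_\delta(v_{R_i})}$ to the LFPP integral; the oscillation bound in $E_\delta$ absorbs the at-most-two filler weights per step and the weight of the terminal square $R_N$ into an overall factor $\delta^{-O(\wt\zeta)}$. This controls the sum of all sequence weights except the initial term $\delta e^{\xi \wh h_\delta(v_{R_0})}$, which is bounded above by $\delta e^{\xi \wh h_\delta(v_{S_z})}$ by definition of $S_z$ and which is precisely the term subtracted on the left-hand side.

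The main obstacle will be keeping track of endpoint effects when $L$ is short. The definition of $\wh D$ forces any sequence to ``pay'' for at least one square even when $z = w$ and the LFPP integral vanishes, so subtracting $\delta e^{\xi \wh h_\delta(v_{S_z})}$ is essential; in the trivial case $L=0$ the one-element sequence $\{S_z\}$ already gives $\wh D \leq \delta e^{\xi \wh h_\delta(v_{S_z})}$, making the left-hand side non-positive. The more delicate sub-case $0 < L \leq \delta$ with $w \notin S_z$ is what forces the comparison between the terminal square $R_N$ and $R_{N-1}$ (or, when $N=1$, between $R_1$ and $R_0$) through the oscillation estimate of Lemma~\ref{lem-use-btis}; once this comparison is in place, the entire argument closes with only a $\delta^{-\zeta}$ loss, as required.
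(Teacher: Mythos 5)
Your overall strategy matches the paper's: same coupling (Lemma~\ref{lem-circle-avg-approx} plus Lemma~\ref{lem-use-btis}), same comparison event, and your upper bound is essentially identical to the paper's, which routes the test path through points on the shared sides of consecutive squares rather than through the centers $v_{S_j}$ --- an immaterial difference. Your lower bound is a genuine variant. The paper loop-erases the near-geodesic square-by-square via last-exit times, so distinctness is built in, and then charges every square whose time interval is shorter than $\delta^{1+\zeta/2}$ to the most recent square with a long interval, using a distinctness argument to show at most five consecutive short squares. You instead sample the unit-speed path at arc-length increments of $\delta$, so each charged square automatically comes with a full length-$\delta$ stretch of path; this avoids the short-interval bookkeeping, at the cost of filler squares, which your oscillation event absorbs. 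Two small repairs: ``deleting repeats'' must be chronological loop erasure (literally deleting repeated squares destroys side-adjacency), which is harmless since the loop-erased sequence is a subsequence of yours and its total weight only decreases; and since $2^{-m_\delta}$ can be as small as $\delta/2$, consecutive sampled points may lie in squares two rows or columns apart, so you may need up to four fillers per step rather than two --- again only a constant.

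The genuine gap is the endpoint sub-case $0<L\le \delta$ with $w\notin S_z$, and it cannot be closed the way you propose, because~\eqref{eqn-lfpp-approx} is false verbatim in that regime: take $z$ and $w$ in the interiors of two adjacent squares of $\mcl S_{2^{-m_\delta}}$, straddling their common side with $|z-w|$ arbitrarily small. Every admissible sequence must contain both of these squares, so $\wh D_{\wh h ,\LFPP}^\delta(z,w ; \BB S) - \delta e^{\xi \wh h_\delta(v_{S_z})} \ge \delta e^{\xi \wh h_\delta(v_{S'})}$, where $S'$ is the square containing $w$; on any event where $\wh h_\delta$ is $O(\log\delta^{-1})$ this is at least a fixed positive power of $\delta$, whereas $D_{h^{\BB S(1)},\LFPP}^\delta(z,w;\BB S) \le |z-w| \max_{\BB S} e^{\xi h^{\BB S(1)}_\delta}$ tends to $0$ as $|z-w|\to 0$ with $\delta$ fixed. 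No comparison of $\wh h_\delta(v_{R_1})$ with $\wh h_\delta(v_{R_0})$ can rescue this; subtracting a single square's weight is simply not enough. (The paper's own charging scheme has the same blind spot: it needs at least one time interval of length at least $\delta^{1+\zeta/2}$, cf.\ the step leading to~\eqref{eqn-approx-lfpp-square}.) What your argument honestly yields is the dichotomy you should state and prove: if the chosen near-minimizing path has $L>\delta$, then $\delta^\zeta \wh D_{\wh h,\LFPP}^\delta(z,w;\BB S) \le D_{h^{\BB S(1)},\LFPP}^\delta(z,w;\BB S)$ with no subtraction at all, since every $R_i$ with $i\le N-2$ is paid by its own full interval and $R_{N-1}$, $R_N$ and the fillers are absorbed; while if $L\le\delta$, then all squares in your sequence have centers within $O(\delta)$ of $z$, so $\wh D_{\wh h,\LFPP}^\delta(z,w;\BB S) \le \delta^{-\zeta}\,\delta e^{\xi \wh h_\delta(v_{S_z})}$. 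Equivalently, the proposition holds with the subtracted term inflated to $\delta^{1-\zeta} e^{\xi\wh h_\delta(v_{S_z})}$, which is all that is used downstream: in the proof of Proposition~\ref{prop-lfpp-lower0} this error term is discarded via Lemma~\ref{lem-one-scale-max} anyway, and only macroscopically separated $z,w$ arise there.
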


The reason for the $-\delta e^{\xi \wh h_\delta(z)}$ in the lower bound in~\eqref{eqn-lfpp-approx} is that if $z$ and $w$ are contained in the same square of $\mcl S_{2^{-m_\delta}}$, then $\wh D_{h^{\BB S(1)} ,\LFPP}^\delta(z,w ; \BB S) = \delta e^{\xi \wh h_\delta(v_{S_z}))} $, whereas $D_{h^{\BB S(1)} ,\LFPP}^\delta(z,w;\BB S)$ might be much smaller than $ \delta e^{\xi \wh h_\delta(v_{S_z}))} $ (e.g., if $z=w$).

\begin{proof}[Proof of Proposition~\ref{prop-lfpp-approx}]
By Lemma~\ref{lem-use-btis} and~\ref{lem-circle-avg-approx}, and the triangle inequality, we can couple $\wh h$ and $h^{\BB S(1)}$ in such a way that for each $\zeta \in (0,1)$, it holds with polynomially high probability as $\delta \rta 0$ that
\eqb \label{eqn-use-circle-avg-approx} 
\max_{z,w\in \BB S: |z-w| \leq 4 \delta} \left( |h^{\BB S(1)}_\delta(z) - \wh h_\delta(w)| \vee |\wh h_\delta(z)  - \wh h_\delta(w)| \right) \leq \frac{\zeta}{2\xi} \log\delta^{-1} .
\eqe
Henceforth assume that~\eqref{eqn-use-circle-avg-approx} holds. We will show that~\eqref{eqn-lfpp-approx} holds. 
\medskip

\noindent\textit{Upper bound.} We first prove the second inequality in~\eqref{eqn-lfpp-approx}, which is easier. For $z,w\in \BB S$, we can find distinct squares $S_0,\dots,S_k \in \mcl S_{2^{-m_\delta}}$ such that $z\in S_0$, $w\in S_k$, $S_j$ and $S_{j-1}$ share a side for each $j=1,\dots,k$, and 
\eqbn
\sum_{j=0}^k \delta e^{\xi \wh h_\delta(v_{S_j})} \leq 2 \wh D_{\wh h ,\LFPP}^\delta(z,w;\BB S) . 
\eqen
Let $z_0 := z$, let $z_{k+1} := w$, and choose $z_j \in S_j \cap S_{j-1}$ for each $j=1,\dots,k$. 
Let $P$ be the concatenation of the line segments $[z_j,z_{j+1}]$ for $j=0,\dots,k$, traversed at unit speed. The segment $[z_j,z_{j+1}]$ is contained in $S_j$, so~\eqref{eqn-use-circle-avg-approx} implies that the maximum value of the circle average $h^{\BB S(1)}_\delta$ on this line segment is at most $\wh h_\delta(v_{S_j}) +\frac{\zeta}{2\xi} \log \delta^{-1}$. Summing over all such segments gives the desired bound (up to a deterministic constant factor which can be ignored by slightly shrinking $\zeta$). 
\medskip

\noindent\textit{Lower bound.}
Fix $z,w\in \BB S$ and let  $P : [0,T]\rta \BB S$ be a piecewise continuously differentiable simple path from $z$ to $w$, parametrized by Euclidean unit speed, with 
\eqb \label{eqn-lfpp-geo}
\int_0^T e^{\xi h^{\BB S(1)}_{\delta}(P (t))}   \, dt \leq 2 D_{h^{\BB S(1)},\LFPP}^{\delta} \left(z,w ; \BB S \right) .
\eqe 
We first construct an approximation $\wt P : [0,\wt T] \rta \BB S$ of $P$ such that $\wt P^{-1}(S)$ is either empty or a single connected interval for each square $S\in\mcl S_{2^{-m_\delta}}$ via the following inductive ``loop erasing" procedure.  
Let $t_0 = z = P(0)$ and let $S_0$ be a square of $\mcl S_{2^{-m_\delta}}$ containing $z$ (we make an arbitrary choice if there is more than one). 
Inductively, suppose that $j\in\BB N$ and times $0\leq t_0\leq \dots \leq t_{j-1} \leq T$ and squares $S_0,\dots,S_{j-1} \in \mcl S_{2^{-m_\delta}}$ have been defined in such a way that $t_i$ is the last time $t \in [0,T]$ with $P(t) \in  S_{i-1}$ for each $i = 1,\dots,j-1$. 
Let $t_j$ be the last time $t \in [0,T]$ for which $P(t) \in S_{j-1}$. 
If $t_j = T$, let $S_j = S_{j-1}$. 
If $t_j \not=T$, then since each $t_i$ for $i\leq j$ is the \emph{last} time that $P$ is in $S_i$, there must be a square of $\mcl S_{2^{-m_\delta}}$ other than $S_0,\dots,S_{j-1}$ with $P(t_j)$ on its boundary (so that $P$ has somewhere to go after time $t_j$). Let $S_j$ be such a square, chosen in such a way that $P((t_j,t_j+\ep])$ intersects $S_j$ for each $\ep >0$ (we make an arbitrary choice if there is more than one such square). 

Let $\mcl J$ be the smallest $j\in\BB N$ with $t_{j+1} = T$. 
Let $\wt P : [0,\wt T ] \rta \BB S$ be the concatenation of the straight line segments $[P(t_j), P(t_{j+1})] \subset S_j$ for $j = 0,\dots,\mcl J$, traversed at unit speed. Since the squares $S_j$ for $j=1,\dots,\mcl J$ are distinct, it follows that $\wt P^{-1}(S_j) = [t_j , t_{j+1}]$ for each $j =1,\dots,\mcl J$. 
 
We next show that
\eqb \label{eqn-approx-lfpp-path}
\int_0^{\wt T} e^{\xi \wh h_\delta( \wt P (t))}   \, dt \leq 2 \delta^{-\zeta/2} D_{h^{\BB S(1)},\LFPP}^\delta \left(z,w  ; \BB S \right) .
\eqe 
For this purpose, let $\wt t_j$ for $j \in \BB N_0$ be the unique time for which $\wt P(\wt t_j) = P(t_j)$. 
Then $\wt P|_{[\wt t_{j }, \wt t_{j+1}]}$ is a straight line segment contained in the square $S_j$, so the Euclidean length of $\wt P|_{[\wt t_{j }, \wt t_{j+1}]}$ is at most the Euclidean length of $P|_{[t_{j },t_{j+1}]}$. Furthermore, since $\wt P$ is parameterized by unit speed, $\wt t_{j+1} - \wt t_{j} \leq \sqrt 2 \times 2^{-m_\delta}  \leq 4\delta$, so by~\eqref{eqn-use-circle-avg-approx} and since $P$ has unit speed,
\eqbn
\max_{t \in [\wt t_{j-1} , \wt t_j]} \wh h_\delta(\wt P(t)) \leq  \frac{\zeta}{2\xi} \log \delta^{-1} + \min_{t \in [ t_{j-1} , t_{j-1}  + \wt t_j - \wt t_{j-1} ]}  h^{\BB S(1)}_\delta(P(t)) .
\eqen
By combining this with~\eqref{eqn-lfpp-geo}, we get~\eqref{eqn-approx-lfpp-path}. 

We will now argue that, for the squares $S_j$ defined above, 
\eqb \label{eqn-approx-lfpp-square}
\sum_{j=0}^{\mcl J } \delta e^{\xi \wh h_\delta(v_{S_j})} \leq 4 \delta^{-\zeta/2} \int_0^{\wt T} e^{\xi \wh h_\delta( \wt P (t))}   \, dt ,
\eqe
which combined with~\eqref{eqn-approx-lfpp-path} gives the first inequality in~\eqref{eqn-lfpp-approx} (after adjusting $\zeta$ appropriately). 

To prove~\eqref{eqn-approx-lfpp-square}, we need to deal with the squares $S_j$ for which $\wt t_{j+1} - \wt t_j$ is very small, in which case $\delta e^{\xi \wh h_\delta(v_{S_j})}$ is a poor approximation for the integral of $ e^{\xi \wh h_\delta(P(t))}$ over $[\wt t_{j } , \wt t_{j+1}]$. To this end, for $j \in \BB N$ we let $J_j$ be the largest $j'\leq j$ for which $\wt t_{j+1} - \wt t_{j } \geq \delta^{1+\zeta/2}$. We claim that $j-J_j \leq 5$ for $j=1,\dots,\mcl J$. Indeed, if $j-J_j \geq 6$, then $\wt P$ travels Euclidean distance at most $4\delta^{1+\zeta/2}$ between times $\wt t_{J_{j }+1}$ and $\wt t_{J_{j }+6}$, so can hit at most 4 possible squares during this time, which contradicts the fact that the squares $S_i$ for $i=J_j,\dots,J_j+5$ are distinct. It therefore follows from~\eqref{eqn-use-circle-avg-approx} that 
\eqbn
\delta e^{\xi \wh h_\delta(v_{S_j})} \leq \delta^{-\zeta/2} \int_{\wt t_{J_{j }}}^{\wt t_{J_j+1}} e^{\xi \wh h_\delta(\wt P(t) )} \, dt .
\eqen
We now sum over all $j$ with $\wt t_j \leq T$ and use~\eqref{eqn-approx-lfpp-path} and the fact that each term on the right is counted at most 4 times (since $j-J_j\leq 5$) to get~\eqref{eqn-approx-lfpp-square}.
\end{proof}

We can now prove the analogue of Proposition~\ref{prop-lfpp-lower} for the zero-boundary GFF.

\begin{figure}[t!]
 \begin{center}
\includegraphics[scale=1]{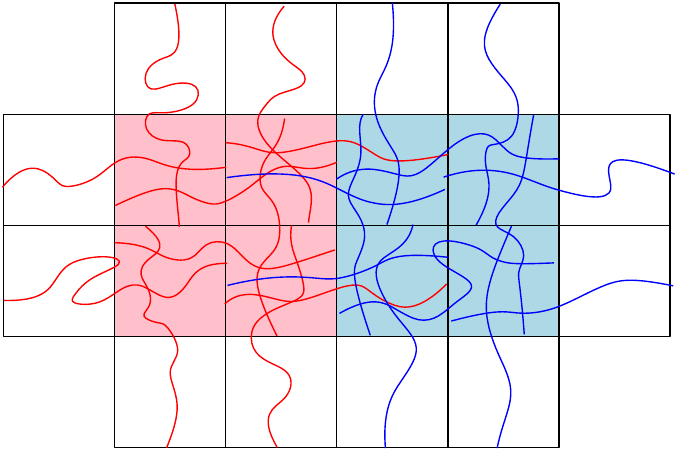}
\vspace{-0.01\textheight}
\caption{ The sets $Y_S$ used in the proof of Proposition~\ref{prop-lfpp-lower0} for two adjacent $\delta_\ep\times \delta_\ep$ squares $S$ (pink and light blue). Each of these sets is the union of 12 paths which cross the $\delta_\ep\times (\delta_\ep/2)$ or $(\delta_\ep/2) \times \delta_\ep$ rectangles which intersect the square. Each $Y_S$ is connected and the sets corresponding to adjacent squares intersect. 
}\label{fig-lfpp-lower}
\end{center}
\vspace{-1em}
\end{figure}

\begin{prop} \label{prop-lfpp-lower0}
Let $ K \subset U \subset \BB S$ with $K$ compact and $U$ open. Let $h^{\BB S(1)}$ be a zero-boundary GFF on $\BB S(1)$. 
For each $\zeta \in (0,1)$, it holds with polynomially high probability as $\delta \rta 0$ that 
\eqb \label{eqn-lfpp-lower0}
D_{h,\LFPP}^\delta\left(K , \bdy U \right) \geq \delta^{1 - \frac{2}{d_\gamma}  - \frac{\gamma^2}{2 d_\gamma}  + \zeta } .
\eqe
\end{prop}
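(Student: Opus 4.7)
The strategy is to reduce to a lower bound on the approximate LFPP distance $\wh D_{\wh h,\LFPP}^\delta$ via Proposition~\ref{prop-lfpp-approx}, then to convert a near-optimal path of squares for $\wh D_{\wh h,\LFPP}^\delta$ into a covering of a continuous $K$-to-$\bdy U$ path by Euclidean balls of $\mu_{\wh h}$-mass at most $\ep$. This yields an upper bound on $D_{\wh h}^\ep(K,\bdy U)$, to be compared with the lower bound $\ep^{-1/(d_\gamma+\zeta)}$ provided by Theorem~\ref{thm-diam} (transferred from the whole-plane GFF to $\wh h$ via Lemma~\ref{lem-tr-compare-square}). The correct LFPP exponent is forced out of this comparison precisely because $\xi = \gamma/d_\gamma$.

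Concretely, fix $\alpha \in (0,1)$ (for definiteness $\alpha = 1/2$), set $\ep := \delta^\alpha$, and extend the approximate LFPP distance to pairs of compact sets in the obvious way. Pick a near-minimizing sequence of squares $S_0,\dots,S_{\mcl J} \in \mcl S_{2^{-m_\delta}}$ with $S_0 \cap K \ne \emptyset$, $S_{\mcl J}$ meeting $\bdy U$, consecutive squares sharing a side, and
\eqbn
\sum_{j=0}^{\mcl J} \delta\, e^{\xi \wh h_\delta(v_{S_j})} \leq 2\, \wh D_{\wh h,\LFPP}^\delta(K,\bdy U).
\eqen
For each $S_j$, split it into two $(\delta/2)\times\delta$ or $\delta\times(\delta/2)$ sub-rectangles and invoke Lemma~\ref{lem-rectangle-dist} at scale $m = m_\delta+1$, using the continuity estimate Lemma~\ref{lem-use-btis} to replace $\min_{R'}\wh h_{2^{-m}}$ by $\wh h_\delta(v_{S_j})$ up to an additive $O(\zeta)\log\delta^{-1}$. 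The result is a continuous path between two opposite sides of $S_j$ which is covered by at most
\eqbn
\ep^{-1/(d_\gamma-\zeta)}\, \delta^{(2+\gamma^2/2)/d_\gamma - O(\zeta)}\, \exp\!\left(\xi\,\wh h_\delta(v_{S_j})\right)
\eqen
disks of $\mu_{\wh h}$-mass at most $\ep$, all contained in a slight enlargement of $S_j$. (This is where $\xi = \gamma/d_\gamma$ is used.) Concatenating these crossings along the path of squares and summing yields
\eqbn
D_{\wh h}^\ep(K,\bdy U') \leq C\, \ep^{-1/(d_\gamma-\zeta)}\, \delta^{(2+\gamma^2/2)/d_\gamma - 1 - O(\zeta)}\, \wh D_{\wh h,\LFPP}^\delta(K,\bdy U),
\eqen
where $U' \supset U$ is a slight enlargement chosen so that the expanded rectangles $R'$ used above lie in $U'$.

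By Theorem~\ref{thm-diam} and Lemma~\ref{lem-tr-compare-square}, $D_{\wh h}^\ep(K,\bdy U') \geq \ep^{-1/(d_\gamma+\zeta)}$ with polynomially high probability as $\ep \rta 0$. Combining with the previous display, solving for $\wh D_{\wh h,\LFPP}^\delta(K,\bdy U)$, and substituting $\ep = \delta^\alpha$ gives
\eqbn
\wh D_{\wh h,\LFPP}^\delta(K,\bdy U) \geq \delta^{1-(2+\gamma^2/2)/d_\gamma + O(\zeta)} = \delta^{1-2/d_\gamma - \gamma^2/(2d_\gamma) + O(\zeta)},
\eqen
since the $\ep$-factor contributes only $\delta^{\alpha \cdot O(\zeta)}$. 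Applying Proposition~\ref{prop-lfpp-approx} converts this into a bound on $D_{h^{\BB S(1)},\LFPP}^\delta(K,\bdy U)$ at the cost of a further factor of $\delta^\zeta$; the additive error term $\delta\, e^{\xi \wh h_\delta(v_{S_z})}$ appearing there is at most $\delta^{1-2\gamma/d_\gamma - O(\zeta)}$ by Lemma~\ref{lem-one-scale-max}, which is strictly smaller than the claimed lower bound because $(2-\gamma)^2/(2 d_\gamma) > 0$. Taking the initial $\zeta$ small enough absorbs all the accumulated $O(\zeta)$ errors and yields~\eqref{eqn-lfpp-lower0}.

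The main technical obstacles are: (i) verifying that the deterministic $m^3$ (and constant $A$) term in Lemma~\ref{lem-rectangle-dist} is dominated by the exponential term, which is automatic once $\ep$ is a fixed polynomial of $\delta$; (ii) controlling the fluctuations of $\wh h_\delta$ across each square precisely enough for $\wh h_\delta(v_{S_j})$ to be a good proxy for $\min_{R'}\wh h_{2^{-m}}$, so that the LFPP cost functional and the Liouville-graph-distance crossing cost exactly match up; and (iii) keeping all the expanded rectangles inside a fixed neighborhood of $U$ so that the Liouville graph distance lower bound in fact applies, which is handled by working with $U' \supset U$ and the stretched rectangles $R'$ of Definition~\ref{def-rectangle}.
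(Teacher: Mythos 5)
Your proposal follows the same architecture as the paper's proof (reduce to the approximate distance $\wh D_{\wh h,\LFPP}^\delta$ via Proposition~\ref{prop-lfpp-approx}, take a near-minimizing chain of $\delta$-squares, bound the $\ep$-Liouville graph distance crossing of each square via Lemma~\ref{lem-rectangle-dist}, sum, and compare with the lower bound on $D_{\wh h}^\ep(K',\bdy U')$ coming from Theorem~\ref{thm-diam} and Lemma~\ref{lem-tr-compare-square}), but there is a genuine gap at exactly the point you label obstacle (i), caused by your choice of scales. You set $\ep=\delta^\alpha$ with $\alpha\in(0,1)$, so $\ep\gg\delta$, and assert that the $\max$ with $m^3$ in Lemma~\ref{lem-rectangle-dist} is ``automatically'' dominated by the exponential term once $\ep$ is a fixed polynomial of $\delta$. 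This is false. On the regularity event $\max_{z\in\BB S}|\wh h_\delta(z)|\le (2+o(1))\log\delta^{-1}$ (Lemma~\ref{lem-one-scale-max}), the exponential term is only guaranteed to be at least $\ep^{-1/d_\gamma}\,\delta^{(2+\gamma)^2/(2d_\gamma)+o(1)}$, so it beats $(\log\delta^{-1})^3$ uniformly over squares only when $\ep\le\delta^{(2+\gamma)^2/2+o(1)}$, i.e.\ when $\delta=\ep^\beta$ with $\beta<2/(2+\gamma)^2$; this is precisely the constraint the paper imposes. With $\alpha=1/2$ the exponential term tends to $0$ on squares where $\wh h_\delta(v_{S_j})\approx -2\log\delta^{-1}$, and these are exactly the squares a near-minimizing LFPP chain prefers, since their weight $\delta e^{\xi\wh h_\delta(v_{S_j})}\approx\delta^{1+2\gamma^2/d_\gamma}$ is smallest. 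For such squares the only available crossing bound is the $m^3$ term, which is not proportional to the LFPP weight: the ratio of crossing cost to LFPP weight can be of order $\delta^{-1-2\gamma^2/d_\gamma+o(1)}$, polynomially larger than the factor $\ep^{-1/(d_\gamma-\zeta)}\delta^{(2+\gamma^2/2)/d_\gamma-1-O(\zeta)}$ your key display requires (for $\alpha<1$ that factor is $\delta^{-1+(2+\gamma^2/2-\alpha)/d_\gamma-O(\zeta)}\ll\delta^{-1}$). Consequently your intermediate bound on $D_{\wh h}^\ep(K,\bdy U')$ in terms of $\wh D_{\wh h,\LFPP}^\delta(K,\bdy U)$ fails, and running the rest of the argument with the correct worst-case per-square bound only yields $\wh D_{\wh h,\LFPP}^\delta(K,\bdy U)\gtrsim\delta^{1+2\gamma^2/d_\gamma-\alpha/(d_\gamma+\zeta)+o(1)}$, which is polynomially weaker than the target exponent $1-2/d_\gamma-\gamma^2/(2d_\gamma)$.

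The repair is to reverse the relation between the scales, as in the paper: compare $\delta$-LFPP with $\ep$-Liouville graph distance for $\delta=\ep^\beta$ with a fixed $\beta\in\left(0,\tfrac{2}{(2+\gamma)^2}\right)$, so that $\ep$ is a sufficiently high power of $\delta$; then on the regularity event the exponential term dominates for every square, including those with near-minimal field values, and the per-square proportionality between crossing cost and LFPP weight holds. A secondary point: concatenating a single left--right crossing per square does not by itself give a connected path from $K$ to $\bdy U$; you need crossings in both orientations arranged so that the crossing sets of adjacent squares intersect (the connected sets $Y_S$ built in the paper from all sub-rectangles overlapping a square), but this is routine once the scales are chosen correctly.
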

\begin{proof} 
Let $\beta  \in \left( 0, \frac{2}{(2+\gamma)^2} \right)$ be arbitrary (e.g., we could take $\beta=\frac{1}{(2+\gamma)^2}$). 
We will compare $\ep^\beta$-LFPP distances to $\ep$-Liouville graph distances (with respect to $\wh h$), then set $\delta=\ep^\beta$. By Proposition~\ref{prop-lfpp-approx}, it suffices to prove a lower bound for \emph{approximate} $\ep^\beta$-Liouville graph distances, i.e., it is enough to show that for each fixed $K \subset U  \subset \BB S$ as in the statement of the lemma, it holds with polynomially high probability as $\ep\rta 0$ that
\eqb \label{eqn-lfpp-lower-show}
 \wh D_{\wh h ,\LFPP}^{\ep^\beta}\left( K , \bdy U  \right) \geq \ep^{\beta\left( 1 - \frac{2}{d_\gamma} - \frac{\gamma^2}{2d_\gamma} \right) + \zeta} .
\eqe
Note that the error term $ \ep^\beta e^{\frac{\gamma}{d_\gamma} \wh h_{\ep^\beta}(v_{S_z}) }$ coming from the left side of~\eqref{eqn-lfpp-approx} does not pose a problem here: indeed, Lemma~\ref{lem-one-scale-max} shows that with polynomially high probability as $\ep\rta 0$, this term is at most $\ep^{\beta(1- 2\gamma/d_\gamma)}$ uniformly over all $z\in \BB S$ and we have $1-2\gamma/d_\gamma > 1-2/d_\gamma -\gamma^2/(2d_\gamma)$. 

Let $ \wt\zeta \in (0,1)$ which we will choose later, in a manner depending only on $\beta$, $\zeta$, and $\gamma$. 
Also set
\eqbn
\delta_\ep := 2^{- \lceil \log_2  \ep^{-\beta } \rceil} .
\eqen 

\noindent \textit{Step 1: regularity events.} We first define a regularity event, giving bounds for $\wh h_{\ep^\beta}$ and the $\ep$-approximate Liouville graph distance.
By Lemma~\ref{lem-mid-scale-max} (applied with $A =  \ep^\beta / \delta_\ep \leq 2$) and Lemma~\ref{lem-one-scale-max}, it holds with polynomially high probability as $\ep\rta 0$ that 
\eqb \label{eqn-field-control}
 |\wh h_{\delta_\ep}(z)| \vee |\wh h_{\ep^\beta}(z)|  \leq (2\beta +\wt\zeta)\log\ep^{-1} \quad \op{and} \quad
  |\wh h_{\delta_\ep}(z) - \wh h_{\ep^\beta}(z)| \leq \wt\zeta \log \ep^{-1} ,\quad \forall z\in\BB S .
\eqe
By Lemma~\ref{lem-rectangle-dist} (applied with $N=2$, $m = \lfloor \log_2  \ep^{-\beta } \rfloor -1$, and a sufficiently small choice of $\zeta$), it holds with polynomially high probability as $\ep \rta 0$ that for each $  \delta_\ep \times (\delta_\ep/2) $ rectangle $R\subset \BB S$ with corners in $\frac{\delta_\ep}{2} \BB Z^2$, 
\eqb \label{eqn-use-rectangle-dist}
D_{\wh h}^\ep\left( \bdy_{\op{L}} R , \bdy_{\op{R}} R ; R' \right)  
\leq \max\left\{ (\log \ep^{-1})^3 , \, \ep^{-\frac{1}{d_\gamma }  + \frac{\beta}{d_\gamma } \left(2  + \frac{\gamma^2}{2} \right)  - \wt\zeta }  \exp\left(   \frac{\gamma}{d_\gamma}  \min_{z\in R'} \wh h_{\delta_\ep }(z)  \right) \right\} ;
\eqe
and the same holds with $(\delta_\ep/2) \times  \delta_\ep$ rectangles and with $\bdy_{\op{B}}$ and $\bdy_{\op{T}}$ in place of $\bdy_{\op{L}}$ and $\bdy_{\op{R}}$. 
Henceforth assume that this is the case and that~\eqref{eqn-field-control} holds. 
\medskip

\noindent\textit{Step 2: bounding Liouville graph distances along paths of squares.}
Since $\beta  <  2/(2+\gamma)^2$, if we choose $\wt\zeta$ sufficiently small (in a manner depending only on $\beta$ and $\gamma$) then the first inequality in~\eqref{eqn-field-control} shows that the second term in the maximum on the right side of~\eqref{eqn-use-rectangle-dist} is larger than the first. Using this together with the second inequality in~\eqref{eqn-field-control}, we see that~\eqref{eqn-use-rectangle-dist} can be replaced with
\eqb \label{eqn-use-rectangle-dist'}
D_{\wh h}^\ep\left( \bdy_{\op{L}} R , \bdy_{\op{R}} R ; R' \right)  
\leq  \ep^{-\frac{1}{d_\gamma  }  + \frac{\beta}{d_\gamma } \left(2  + \frac{\gamma^2}{2}  \right)  - o_{\wt\zeta}(1) }  
\exp\left(   \frac{\gamma}{d_\gamma}  \min_{z\in R'} \wh h_{\ep^\beta }(z)   \right) ,
\eqe 
with the rate of the $o_{\wt\zeta}(1)$ deterministic and $\ep$-independent.

Recall that we are assuming that the event described above~\eqref{eqn-use-rectangle-dist} occurs.
Let $N$ be the right side of~\eqref{eqn-use-rectangle-dist'}. 
Then we can choose for each $ \delta_\ep \times (\delta_\ep/2)$ (resp.\ $(\delta_\ep/2) \times  \delta_\ep$) rectangle $R \subset \BB S$ with corners in $\frac{\delta_\ep}{2} \BB Z$ a simple path $P_R$ in $R$ from $\bdy_{\op{L}} R$ to $\bdy_{\op{R}} R$ (resp.\ $\bdy_{\op{B}} R$ to $\bdy_{\op{T}} R$) which can be covered by at most $N$ Euclidean balls of $\mu_{\wh h}$-mass at most $\ep$, each of which is contained in $R'$.

For each of the $\delta_\ep$-side length squares $S\in \mcl S_{\delta_\ep}$, let $Y_S$ be the union of the paths $P_R$ over the at most twelve $ \delta_\ep \times (\delta_\ep/2)$ or $(\delta_\ep/2) \times  \delta_\ep$ rectangles $R$ as above which overlap with $S$. See Figure~\ref{fig-lfpp-lower} for an illustration. Then $Y_S$ is connected (but not contained in $ S$) and, since the center $v_S$ is contained in each of the above rectangles $R$ and $R'\subset S(1/2)$ for each such rectangle $R$,  
\eqb \label{eqn-lfpp-max-Y}
\max_{z,w\in Y_S} D_{\wh h}^\ep\left( z,w ; S(1/2) \right)
\leq 12 \ep^{-\frac{1}{d_\gamma }  + \frac{\beta}{d_\gamma } \left(2  + \frac{\gamma^2}{2} \right) - o_{\wt\zeta}(1)  }  \exp\left(   \frac{\gamma}{d_\gamma}   \wh h_{\ep^\beta }(v_S)   \right)  .
\eqe 
Furthermore, if $S, \wt S \in \mcl S_{\delta_\ep}$ are two squares which share a side, then $Y_S\cap Y_{\wt S} \not=\emptyset$.  
\medskip

\noindent\textit{Step 3: comparison to approximate LFPP.}
Let $S_0,\dots,S_k \in \mcl S_{\delta_\ep}$ be a sequence of distinct squares such that $K  \cap S_0 \not=\emptyset$, $\bdy U \cap  S_k \not=\emptyset$, $S_j$ and $S_{j-1}$ share a side for each $j=1,\dots,k$, and
\eqb \label{eqn-approx-lfpp-setup}
\sum_{j=0}^k \ep^\beta \exp\left( \frac{\gamma}{d_\gamma} \wh h_{\ep^\beta}(v_{S_j}) \right) \leq 2 \wh D_{\wh h ,\LFPP}^{\ep^\beta}\left( K , \bdy U \right)  . 
\eqe

By~\eqref{eqn-lfpp-max-Y} and since $Y_{S_j}\cap Y_{S_{j-1}}\not=\emptyset$ for $j=1,\dots,k$,  
\eqb \label{eqn-lfpp-lower-sum}
D_{\wh h}^\ep\left(S_0 , S_k ; \BB S(1/2) \right) 
\preceq  \ep^{-\frac{1}{d_\gamma }  + \frac{\beta}{d_\gamma } \left(2  + \frac{\gamma^2}{2}  \right) - o_{\wt\zeta}(1)  }   \sum_{j=0}^k \exp\left(  \frac{\gamma}{d_\gamma} \wh h_{\ep^\beta}(v_{S_j})   \right)  
\eqe 
with a universal implicit constant. Comparing~\eqref{eqn-lfpp-lower-sum} to~\eqref{eqn-approx-lfpp-setup} shows that with polynomially high probability as $\ep\rta 0$,
\allb \label{eqn-lfpp-lower-last}
D_{\wh h}^\ep\left(S_0, S_k ; \BB S(1/2) \right) 
\preceq \ep^{-\frac{1}{d_\gamma }  + \frac{\beta}{d_\gamma } \left(2  + \frac{\gamma^2}{2} \right) - \beta - o_{\wt\zeta}(1) } \wh D_{\wh h ,\LFPP}^{\ep^\beta}\left( K , \bdy U\right) .
\alle
To lower-bound the left side of~\eqref{eqn-lfpp-lower-sum}, choose a compact set $K'$ containing $K$ in its interior and an open set $U'$ with $K' \subset U' \subset \ol U' \subset U$. Since $S_0$ and $S_k$ have side length $\delta_\ep$ and intersect $K $ and $\bdy U$, respectively, for small enough $\ep > 0$, we have $D_{\wh h}^\ep\left(S_0 , S_k ; \BB S(1/2) \right) \geq D_{\wh h}^\ep\left(K' , \bdy U' \right) $. We may therefore apply Lemma~\ref{lem-tr-compare-square} and Theorem~\ref{thm-diam} to find that with polynomially high probability as $\ep\rta 0$, the left side of~\eqref{eqn-lfpp-lower-last} is at least $\ep^{-\frac{1}{d_\gamma+\wt\zeta}}$. 
Plugging this into~\eqref{eqn-lfpp-lower-last}, re-arranging, and choosing $\wt\zeta$ sufficiently small (in a manner depending only on $\beta$, $\zeta$, and $\gamma$) yields~\eqref{eqn-lfpp-lower-show}. 
\end{proof}

\begin{proof}[Proof of Proposition~\ref{prop-lfpp-lower}]
Due to the conformal invariance of the law of the zero-boundary GFF, Proposition~\ref{prop-lfpp-lower0} implies the analogous statement with $\BB S$ replaced with any other square in $\BB C$. Lemma~\ref{lem-whole-plane-compare} then allows us to transfer this to the case of the whole-plane GFF. 
\end{proof}

\subsection{Upper bound for LFPP distances}
\label{sec-lfpp-upper}

We now conclude the proof of Theorem~\ref{thm-lfpp-compare} by proving an upper bound for LFPP distances. 

\begin{prop} \label{prop-lfpp-upper}
Let $h $ be a whole-plane GFF normalized so that its circle average over $\bdy\BB D$ is zero. 
For each open set $U\subset\BB C$, each compact set $K\subset U$, and each $\zeta \in (0,1)$, it holds with polynomially high probability as $\delta \rta 0$ that the LFPP distance with exponent $\xi=\gamma/d_\gamma$ satisfies
\eqbn
\max_{z,w\in K} D_{h ,\LFPP}^\delta \left( z,w ; U \right) \leq \delta^{1 - \frac{2}{d_\gamma} - \frac{\gamma^2}{2d_\gamma} -\zeta} .
\eqen
\end{prop}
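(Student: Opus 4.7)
I would prove Proposition~\ref{prop-lfpp-upper} by running the LFPP-to-Liouville-graph-distance comparison in the \emph{reverse} direction of the lower bound (Proposition~\ref{prop-lfpp-lower}): given a near-optimal Liouville graph distance cover of a path between two points of $K$ (supplied by Proposition~\ref{prop-diam}), I would estimate the LFPP cost of this path ball-by-ball. Following the setup of the lower bound, I would fix $\beta \in (0, 2/(2+\gamma)^2)$ and $\delta = \ep^\beta$, couple to a zero-boundary GFF on $\BB S(1)$ and to the white-noise field $\wh h$ via Lemmas~\ref{lem-whole-plane-compare} and~\ref{lem-gff-compare}, and use Proposition~\ref{prop-lfpp-approx} to reduce to an upper bound on the approximate LFPP distance $\wh D_{\wh h,\LFPP}^\delta$.

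By Proposition~\ref{prop-diam} together with Lemma~\ref{lem-tr-compare-square}, with polynomially high probability every pair of points in $K$ can be joined by a continuous path covered by at most $N \leq \ep^{-1/(d_\gamma - \zeta')}$ Euclidean balls $B_{r_i}(z_i)$ of $\mu_{\wh h}$-mass at most $\ep$. Lemma~\ref{lem-max-ball-radius} (applied with $\ul\beta = \beta$) forces $r_i \leq \delta$ with polynomially high probability, so the path traverses at most $k+1 = O(N)$ squares $S_0,\ldots,S_k \in \mcl S_{2^{-m_\delta}}$, and
\begin{equation*}
\wh D_{\wh h,\LFPP}^\delta(z,w;\BB S) \leq \sum_{j=0}^k \delta e^{\xi \wh h_\delta(v_{S_j})}.
\end{equation*}
For each $S_j$, I would combine the mass heuristic $\mu_{\wh h}(B_{r_i}(z_i)) \asymp r_i^{2+\gamma^2/2} e^{\gamma \wh h_{r_i}(z_i)} \leq \ep$ (for a ball containing $v_{S_j}$) with the circle-average continuity and single-scale comparison estimates (Lemmas~\ref{lem-use-btis},~\ref{lem-mid-scale-max}) to absorb the increment $\wh h_\delta(z_i) - \wh h_{r_i}(z_i)$ into a subpolynomial $\delta^{o_{\zeta'}(1)}$ correction, yielding the per-square estimate
\begin{equation*}
\delta\, e^{\xi \wh h_\delta(v_{S_j})} \lesssim \ep^{1/d_\gamma}\, r_i^{1 - (2+\gamma^2/2)/d_\gamma}\, \delta^{o_{\zeta'}(1)}.
\end{equation*}

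Summing over $j$ (using that each ball in the cover contributes to $O(1)$ squares since $r_i \leq \delta$), using $r_i \leq \delta$ to bound each $r_i^{1-(2+\gamma^2/2)/d_\gamma}$ by $\delta^{1-(2+\gamma^2/2)/d_\gamma}$, and substituting $N \leq \ep^{-1/(d_\gamma-\zeta')}$ and $\delta = \ep^\beta$ yields
\begin{equation*}
\wh D_{\wh h,\LFPP}^\delta(z,w;\BB S) \leq \ep^{1/d_\gamma - 1/(d_\gamma - \zeta')}\, \delta^{1 - (2+\gamma^2/2)/d_\gamma + o_{\zeta'}(1)} = \delta^{1 - 2/d_\gamma - \gamma^2/(2d_\gamma) - \zeta'/(\beta d_\gamma(d_\gamma-\zeta')) + o_{\zeta'}(1)}.
\end{equation*}
Choosing $\zeta'$ small enough in terms of $\beta,\gamma,\zeta$ gives the target $\delta^{1 - 2/d_\gamma - \gamma^2/(2d_\gamma) - \zeta}$. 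Converting back from $\wh D_{\wh h,\LFPP}^\delta$ to $D_{h,\LFPP}^\delta$ via Proposition~\ref{prop-lfpp-approx} and Lemma~\ref{lem-whole-plane-compare}, together with a union bound over a finite net of pairs $z,w\in K$ (and over dyadic values of $\delta$) completes the proof.

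The main obstacle I expect is the case in which the exponent $1 - (2+\gamma^2/2)/d_\gamma$ is negative (not ruled out by current bounds on $d_\gamma$, especially for small $\gamma$). In that regime the bound $r_i \leq \delta$ is used in the ``wrong direction'' in the summation step and $\sum_i r_i^{1 - (2+\gamma^2/2)/d_\gamma}$ is no longer dominated by $N\,\delta^{1-(2+\gamma^2/2)/d_\gamma}$. A possible remedy is to refine the cover using the upper-tail side of Lemma~\ref{lem-max-ball-radius} to also enforce $r_i \geq \ep^{\ol\beta}$ for some $\ol\beta > 2/(2-\gamma)^2$, requiring careful coordination of $\beta$ and $\ol\beta$; alternatively one can exploit cancellation between the factor $r_i^{-(2+\gamma^2/2)/d_\gamma}$ and the variance $\log(\delta/r_i)$ of the Gaussian increment $\wh h_\delta - \wh h_{r_i}$ via a dyadic-in-$r_i$ decomposition, so that the cases where $r_i$ is small are tamed by the fact that small radii in the cover appear precisely where $\wh h_{r_i}$ is correspondingly large.
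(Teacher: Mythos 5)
Your overall plan—compare LFPP at scale $\delta=\ep^\beta$ to $\ep$-Liouville graph distance and feed in Proposition~\ref{prop-diam}—is the right one, but the per-square estimate $\delta e^{\xi \wh h_\delta(v_{S_j})} \lesssim \ep^{1/d_\gamma} r_i^{1-(2+\gamma^2/2)/d_\gamma}\delta^{o(1)}$ is where the argument breaks, for two reasons. First, passing from $\mu_{\wh h}(B_{r_i}(z_i))\leq \ep$ (a constraint on the field at scale $r_i$) to a bound on $\wh h_\delta$ requires controlling the increment $\wh h_\delta(z_i)-\wh h_{r_i}(z_i)$, which is Gaussian with variance $\log(\delta/r_i)$. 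Lemma~\ref{lem-max-ball-radius} only confines the radii to $[\ep^{\ol\beta},\ep^{\ul\beta}]$ with $\ol\beta > 2/(2-\gamma)^2$ strictly larger than $\ul\beta < 2/(2+\gamma)^2$, so $\delta/r_i$ can be a fixed negative power of $\ep$; the variance is then of order $\log\ep^{-1}$, and the maximum of these increments over the polynomially many balls in the cover is of order $\log\ep^{-1}$, i.e.\ it contributes a genuine polynomial factor $\ep^{-c}$ with $c$ depending on $\gamma,\beta,\ol\beta$ and \emph{not} tunable by shrinking $\zeta'$. Lemmas~\ref{lem-use-btis} and~\ref{lem-mid-scale-max} cannot absorb this: the latter only covers scale ratios $A \leq e^{(\log\delta^{-1})^{1-\zeta}}$, i.e.\ subpolynomial gaps. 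Second, even granting the per-square bound, your summation needs $1-(2+\gamma^2/2)/d_\gamma\geq 0$, i.e.\ $d_\gamma\geq 2+\gamma^2/2$, which is not known at this stage (the known lower bound behaves like $2+\gamma^2/4$ for small $\gamma$, and importing Theorem~\ref{thm-d} would be circular since it rests on Theorem~\ref{thm-lfpp-compare}, hence on this very proposition). Your proposed remedies do not close this: enforcing $r_i\geq \ep^{\ol\beta}$ still leaves a per-term loss of order $\ep^{-(\ol\beta-\beta)\left|1-(2+\gamma^2/2)/d_\gamma\right|}$, again a fixed polynomial, while the ``dyadic-in-$r_i$ cancellation'' is precisely the multiscale estimate that would have to be proved and is the hard content of the step.

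The paper avoids both issues by never converting the mass constraint of an individual cover ball into a pointwise field bound. Instead it proves a \emph{lower} bound (Lemma~\ref{lem-square-dist}, via the percolation estimate Lemma~\ref{lem-annulus-perc} and the exact scaling relation of Lemma~\ref{lem-measure-scale}) for the $\ep$-Liouville graph distance across each square annulus $S(1)\setminus S$ of side length $\approx\delta_\ep$, of the form $\ep^{-\frac{1}{d_\gamma}+\frac{\beta}{d_\gamma}(2+\frac{\gamma^2}{2})+\zeta}\exp\left(\frac{\gamma}{d_\gamma}\max_{S(1)}\wh h_{\ep^\beta}\right)$. A near-optimal Liouville-graph-distance path is then followed square by square: each LFPP cost $\delta e^{\xi\wh h_{\ep^\beta}}$ is dominated by $\ep^{(\cdots)}$ times the number of mass-$\ep$ balls the path uses to cross that annulus, and these counts sum to $O\left(D_{\wh h}^\ep(z,w;\BB S(1/2))\right)$, which Proposition~\ref{prop-diam} bounds by $\ep^{-1/d_\gamma-o(1)}$. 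Everything stays at the single scale $\delta$, no ball radii appear, and no sign condition on $1-(2+\gamma^2/2)/d_\gamma$ is needed; to repair your ball-by-ball accounting you would essentially have to prove such an annulus-crossing lower bound anyway.
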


The basic outline of the proof of Proposition~\ref{prop-lfpp-upper} is similar to that of the corresponding lower bound, but the proof is somewhat more direct since we do not prove an analogue of Proposition~\ref{prop-diam} along the way and we do not need to consider approximate LFPP distances. In this setting, we need lower bounds for Liouville graph distance instead of upper bounds. We start by using a percolation argument which is very similar to that of Lemma~\ref{lem-rectangle-perc} to get a lower bound for the $\ep$-Liouville graph distance between the inner and outer boundaries of a square annulus which holds with superpolynomially high probability (Lemma~\ref{lem-annulus-perc}). Here, the idea is to construct a path which disconnects the inner and outer boundary of the annulus consisting of squares $S$ such that the Liouville graph distance across the square annulus $S(1/2)\setminus S$ is bounded below. We will then use Lemma~\ref{lem-annulus-perc} and a union bound to show that if $\delta$ is a small positive power of $\ep$, then with high probability, we have a lower bound for the $\ep$-Liouville graph distance across the square annulus $S(1)\setminus S$ simultaneously for all squares of side length $\delta$ contained in $\BB S$ with corners in $\delta \BB Z^2$ (this is analogous to Lemma~\ref{lem-rectangle-dist}). We then consider a path between $z,w\in\BB S$ which can be covered by a minimal number of $\mu_{\wh h}$-mass $\ep$ disks and use the aforementioned lower bound for Liouville graph distance together with the upper bound in Theorem~\ref{thm-diam} to construct a path whose $\delta$-LFPP length can be bounded above.

\begin{lem} \label{lem-annulus-perc} 
For $n\in\BB N$, define the closed square annulus $\mcl A_n :=  [-n,2n]^2 \setminus (0,n)^2$ and its inner and outer boundaries $\bdy_{\op{in}}\mcl A_n := \bdy ([0,n]^2)$ and $\bdy_{\op{out}}\mcl A_n := \bdy([-n,2n]^2)$. 
For each fixed $\zeta  \in (0,1)$, there exists $a_0,a_1  , \ep_*   > 0$ (depending only on $\zeta$ and $\gamma$) such that for $n\in\BB N$ and $\ep \in (0,\ep_*]$, 
\eqb \label{eqn-annulus-perc}
\BB P\left[  D_{\wh h}^\ep\left( \bdy_{\op{in}}\mcl A_n ,  \bdy_{\op{out}}\mcl A_n     \right) \geq    e^{-n^{1/2}} \ep^{-\frac{1}{d_\gamma +\zeta}  }    \right] \geq 1 -  a_0 e^{-a_1 n} .
\eqe 
\end{lem}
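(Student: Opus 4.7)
The plan is to mirror the proof of Lemma~\ref{lem-rectangle-perc} but via a \emph{dual} percolation argument: instead of building a crossing of good squares, we build a \emph{circuit} of good squares around $[0,n]^2$ inside $\mcl A_n$, and show that any ball-covering of any path from $\bdy_{\op{in}}\mcl A_n$ to $\bdy_{\op{out}}\mcl A_n$ must use many balls whenever it pierces a good square in this circuit. As usual we first do everything for the truncated field $\wh h^\tr$ (which has the local independence needed for percolation) and then transfer to $\wh h$ using Lemma~\ref{lem-gff-compare} at the end, which is where the $e^{-n^{1/2}}$ factor enters.

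For each unit square $S\subset \BB C$ with corners in $\BB Z^2$, declare $S$ to be \emph{good} (event $F_S^\ep$) if $D_{\wh h^\tr}^\ep(\bdy S,\bdy S(1/4);S(1/2)) \geq \ep^{-1/(d_\gamma + \zeta/2)}$. By translation invariance of $\wh h^\tr$, together with the application of~\cite[Lemma 6.1]{dzz-heat-kernel} exactly as in the proof of~\eqref{eqn-diam-proof-lower} (and Lemma~\ref{lem-tr-compare-square} to pass from a zero-boundary GFF to $\wh h^\tr$), the probability $\BB P[F_S^\ep]$ is the same for every $S$ and tends to $1$ as $\ep\rta 0$. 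In particular, given any $p\in(0,1)$ we have $\BB P[F_S^\ep]\geq 1-p$ for all $\ep\in (0,\ep_*(p,\zeta,\gamma)]$. Moreover, by~\eqref{eqn-wn-truncate} the event $F_S^\ep$ is determined by the restriction of $W$ to $B_{1/5}(S(1/2))\times\BB R_+$, so $F_S^\ep$ and $F_{\wt S}^\ep$ are independent whenever $S,\wt S$ are far enough apart (say, at $L^\infty$-distance $\geq 3$).

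Consider the collection $\mcl S(\mcl A_n)$ of unit squares $S$ with corners in $\BB Z^2$ that are contained in, say, $[-n+1,2n-1]^2\setminus(-1,n+1)^2$ (so that $S(1/2)\subset \mcl A_n$). View this as a planar graph with edge-adjacency, and its dual with corner-adjacency. By the standard planar-percolation duality argument used in Lemma~\ref{lem-rectangle-perc}, the event that there is \emph{no} good circuit in $\mcl S(\mcl A_n)$ separating $\bdy_{\op{in}}\mcl A_n$ from $\bdy_{\op{out}}\mcl A_n$ is contained in the event that there is a corner-connected path of \emph{bad} squares crossing the annulus. Exactly as in Lemma~\ref{lem-rectangle-perc}, we bound each such simple crossing path of length $k$ by $p^{k/C}$ (using that independent $F_S^\ep$'s occur along a positive fraction of any such path), sum over the $\preceq n\cdot 8^k$ choices of path with $k\geq n$, and take $p$ smaller than a universal threshold to get that a good circuit $\mcl C$ exists with probability at least $1-a_0 e^{-a_1 n}$.

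Now fix $\ep\in(0,\ep_*]$ and suppose both the good circuit exists and the event of Lemma~\ref{lem-max-ball-radius} (rescaled to $\mcl A_n$) holds, so that every Euclidean ball of $\mu_{\wh h^\tr}$-mass at most $\ep$ meeting $\mcl A_n$ has diameter less than $1/8$; this combined event holds with probability at least $1-a_0 e^{-a_1 n}$ (after adjusting constants, noting that the polynomial probability bound on ball sizes dominates the exponential one for small $\ep$). Given any continuous path $P$ from $\bdy_{\op{in}}\mcl A_n$ to $\bdy_{\op{out}}\mcl A_n$ covered by $N$ such balls, we may WLOG restrict $P$ to lie in $\ol{\mcl A_n}$ (take the last exit from $[0,n]^2$). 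Then $P$ must enter some $S\in\mcl C$ and, since $S(1/4)\subset \mcl A_n$, must later leave $S(1/4)$ before hitting $\bdy_{\op{out}}\mcl A_n$; the balls covering the corresponding sub-path of $P$ all meet $S(1/4)$ and hence are contained in $S(1/2)$, so their count is at least $D_{\wh h^\tr}^\ep(\bdy S,\bdy S(1/4);S(1/2))\geq \ep^{-1/(d_\gamma+\zeta/2)}$. This yields $D_{\wh h^\tr}^\ep(\bdy_{\op{in}}\mcl A_n,\bdy_{\op{out}}\mcl A_n)\geq \ep^{-1/(d_\gamma+\zeta/2)}$ with the desired probability. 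Finally, Lemma~\ref{lem-gff-compare} applied with $A = c n^{1/2}$ (covering $\mcl A_n$ by $O(n^2)$ unit balls and union-bounding) gives that $d\mu_{\wh h}/d\mu_{\wh h^\tr}$ is between $e^{-\gamma cn^{1/2}}$ and $e^{\gamma cn^{1/2}}$ on $\mcl A_n$ off an event of probability at most $a_0 e^{-a_1 n}$, which converts the $\wh h^\tr$-bound into $D_{\wh h}^\ep \geq e^{-n^{1/2}}\ep^{-1/(d_\gamma+\zeta)}$ upon absorbing $\zeta/2$ into $\zeta$.

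The main obstacle is the last step of the inner argument: showing that the sub-path of $P$ near a pierced good square $S$ is actually covered by balls contained in $S(1/2)$, so that the restricted Liouville graph distance inside $S(1/2)$ gives a valid lower bound. This is handled by invoking the uniform-diameter bound of Lemma~\ref{lem-max-ball-radius} over the whole region $\mcl A_n$, which forces every relevant ball to have diameter $< 1/8$; but one has to be careful about the \emph{uniform} control of ball sizes over a region whose area grows with $n$ (polynomial in $n$ loss is harmless because our target probability bound is exponential in $n$).
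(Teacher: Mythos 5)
Your overall architecture is the same as the paper's (per-square good events built from the single-square lower bound of Lemma~\ref{lem-annulus-lower} via Lemma~\ref{lem-tr-compare-square}, a duality/percolation construction of a blocking circuit of good squares, then transfer from $\wh h^\tr$ to $\wh h$ via Lemma~\ref{lem-gff-compare} with $A \asymp n^{1/2}$), but there are two genuine gaps in the execution. The more serious one is your treatment of the ball-diameter control: you impose it as a single \emph{global} event over all of $\mcl A_n$ (``every ball of $\mu_{\wh h^\tr}$-mass at most $\ep$ meeting $\mcl A_n$ has diameter $<1/8$''), justified by Lemma~\ref{lem-max-ball-radius} plus a union bound. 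That event's failure probability is of order $n^2 \ep^{p}$, which for a fixed $\ep \in (0,\ep_*]$ does not decay in $n$ at all (indeed it grows), whereas the lemma demands a bound of the form $1-a_0e^{-a_1 n}$ for \emph{all} $n$ uniformly over $\ep\in(0,\ep_*]$ with $\ep_*$ independent of $n$. Your parenthetical ``the polynomial bound dominates the exponential one for small $\ep$'' conflates the $\ep\rta 0$ regime with the required uniformity in $\ep$; the whole point of the percolation step is that only \emph{per-square} events with probability $\geq 1-p$ are needed, and the exponential-in-$n$ gain comes from the circuit construction, not from a union bound over $O(n^2)$ squares. This is exactly why the paper's good event $E_S^\ep$ has a second, per-square condition (every mass-$\ep$ disk meeting $S(1/2)$ is contained in $S(3/4)$), which both localizes the event for the percolation argument and removes any need for a global diameter event. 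Alternatively, if you use the \emph{unrestricted} distance $D_{\wh h^\tr}^\ep(S,\bdy S(1/2))$ in the good event (as the paper does), the piercing step needs no containment of the covering balls at all: any crossing of $S(1/2)\setminus S$ is automatically counted.

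The second gap is the probability estimate for your particular good event $F_S^\ep$, which uses the restricted distance from $\bdy S$ to $\bdy S(1/4)$ inside $S(1/2)$. The cited input (\cite[Lemma 6.1]{dzz-heat-kernel}, as used for~\eqref{eqn-diam-proof-lower}, and Lemma~\ref{lem-tr-compare-square}) gives a lower bound for crossing the ratio-$2$ annulus, i.e.\ for $D^\ep(S,\bdy S(1/2))$; since $D^\ep(S,\bdy S(1/4)) \leq D^\ep(S,\bdy S(1/2))$, a lower bound on the latter says nothing about crossings of your thinner annulus $S(1/4)\setminus S$, and no ratio-$2$ annulus separating $\bdy S$ from $\bdy S(1/4)$ fits inside it. So $\BB P[F_S^\ep]\rta 1$ does not follow from what you cite; you would need a thin-annulus version (provable, but requiring an extra covering or rescaling argument, and rescaling is delicate for the non-scale-invariant $\wh h^\tr$). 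Both gaps disappear if you simply take the paper's per-square event (unrestricted distance from $S$ to $\bdy S(1/2)$, plus the local ball-containment condition to ensure measurability with respect to $\wh h^\tr|_{S(1)}$); your duality-circuit step and the final transfer to $\wh h$ then go through essentially as in the paper.
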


As in the case of Lemma~\ref{lem-rectangle-perc}, the starting point of the proof of Lemma~\ref{lem-annulus-perc} is an estimate for a single square.

\begin{lem} \label{lem-annulus-lower}
Recall the truncated field $\wh h^\tr$ from~\eqref{eqn-wn-truncate} and its associated Liouville graph distance. 
For each $\zeta\in (0,1)$, it holds with probability tending to 1 as $\ep\rta 0$ that
\eqbn
 D_{\wh h^\tr}^\ep\left( \BB S , \bdy \BB S(1/2)   \right) \geq \ep^{-\frac{1}{d_\gamma+\zeta}} .
\eqen
\end{lem}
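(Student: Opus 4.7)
The plan is to transfer the known lower bound for the zero-boundary GFF---namely $D_{h^{\BB S(1)}}^\ep(\bdy\BB S, \bdy\BB S(1/2)) \geq \ep^{-1/(d_\gamma+\zeta/3)}$ with polynomially high probability, which follows from \cite[Lemma 6.1]{dzz-heat-kernel} as invoked in the proof of Theorem~\ref{thm-diam}---to the truncated white-noise field $\wh h^\tr$ via the coupling of Lemma~\ref{lem-gff-compare}. A first reduction: any continuous path from a point of $\BB S$ to a point of $\bdy\BB S(1/2)$ must exit $\BB S$, so truncating the path at its first hit of $\bdy\BB S$ gives a path from $\bdy\BB S$ to $\bdy\BB S(1/2)$ covered by no more balls than the original. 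Hence $D_{\wh h^\tr}^\ep(\BB S, \bdy\BB S(1/2)) \geq D_{\wh h^\tr}^\ep(\bdy\BB S, \bdy\BB S(1/2))$, and it suffices to prove the stated lower bound with $\bdy\BB S$ in place of $\BB S$.

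Couple $h^{\BB S(1)}$ with $\wh h^\tr$ using Lemma~\ref{lem-gff-compare} applied to $U = \BB S(1)$, so that the comparison set $K$ contains $\ol{\BB S(1/2)}$ with a positive $L^\infty$-buffer $\tau_0 > 0$ from $\bdy K$. The crucial step is passing from the unrestricted distance to the restricted distance $D_{\wh h^\tr}^\ep(\bdy\BB S, \bdy\BB S(1/2); K)$, on which Lemma~\ref{lem-tr-compare-square} gives control. For this, use Lemma~\ref{lem-max-ball-radius}, extended from $\BB S$ to the $\tau_0/2$-neighborhood of $\ol{\BB S(1/2)}$ by translation invariance in law of $\wh h^\tr$ and a finite-cover union bound, to deduce that with polynomially high probability every Euclidean ball $B$ with $\mu_{\wh h^\tr}(B) \leq \ep$ and $B \cap \ol{\BB S(1/2)} \neq \emptyset$ has radius at most $\ep^{\ul\beta}$ for any fixed $\ul\beta < 2/(2+\gamma)^2$. (If such a ball $B_r(z)$ had $r > \ep^{\ul\beta}$, then picking any $w \in B \cap \ol{\BB S(1/2)}$ and $w' = w + (\ep^{\ul\beta}/3)(z-w)/|z-w|$, the ball $B_{\ep^{\ul\beta}/3}(w')$ is contained in $B_r(z)$ and has center in the $\tau_0/2$-neighborhood, so its mass is at least $\ep$ by the extended lemma, contradicting $\mu_{\wh h^\tr}(B) \leq \ep$.) For $\ep$ small enough $\ep^{\ul\beta} < \tau_0$, so on this event every such ball lies in $K$ and the unrestricted and restricted distances from $\bdy\BB S$ to $\bdy\BB S(1/2)$ coincide.

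Fix any $C > 1$. Lemma~\ref{lem-tr-compare-square} applied with $h^1 = h^{\BB S(1)}$ and $h^2 = \wh h^\tr$ yields, with probability at least $1 - a_0 e^{-a_1(\log C)^2}$,
$$D_{\wh h^\tr}^\ep(\bdy\BB S, \bdy\BB S(1/2); K) \geq D_{h^{\BB S(1)}}^{C\ep}(\bdy\BB S, \bdy\BB S(1/2); K) \geq D_{h^{\BB S(1)}}^{C\ep}(\bdy\BB S, \bdy\BB S(1/2)),$$
the last step using that restricted distance dominates unrestricted. Chaining with the DZZ bound at level $C\ep$ gives $D_{\wh h^\tr}^\ep(\bdy\BB S, \bdy\BB S(1/2)) \geq (C\ep)^{-1/(d_\gamma+\zeta/3)}$, and since the exponent gap $1/(d_\gamma+\zeta/3) - 1/(d_\gamma+\zeta) > 0$ absorbs the constant factor $C^{-1/(d_\gamma+\zeta/3)}$ for all $\ep$ sufficiently small, the desired bound follows. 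The main obstacle will be the unrestricted-to-restricted reduction in the second paragraph: a priori an optimal covering could use large balls extending far outside $K$ on which the field comparison of Lemma~\ref{lem-gff-compare} gives no control, and ruling this out requires the a priori size bound on mass-$\ep$ balls that is obtained via Lemma~\ref{lem-max-ball-radius} and the short translation-and-covering argument.
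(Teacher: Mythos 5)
Your route is the same as the paper's: the paper proves this lemma simply by citing the zero-boundary GFF lower bound from~\cite{dzz-heat-kernel} (Proposition 3.17 and Lemma 6.1 there, which is exactly the input you invoke via the proof of Theorem~\ref{thm-diam}) and transferring it to $\wh h^\tr$ with Lemma~\ref{lem-tr-compare-square}; your contribution is to spell out the unrestricted-to-restricted reduction that the paper leaves implicit, and you have correctly identified that as the only nontrivial point.

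As written, though, that reduction has a hole. Your radius-control event only bounds the radii of mass-$\leq\ep$ balls that \emph{intersect} $\ol{\BB S(1/2)}$, whereas an optimal covering for the unrestricted distance may contain balls covering excursions of the path far outside $\BB S(1/2)$; such balls need not meet $\ol{\BB S(1/2)}$, can be large, and can stick out of $K$, so the claim that the unrestricted and restricted distances ``coincide'' does not follow from the event you construct. The fix is the same truncation device you already used at $\bdy \BB S$: cut the candidate path at its \emph{first} hitting time of $\bdy\BB S(1/2)$. The truncated path stays in $\ol{\BB S(1/2)}$ and runs from $\bdy\BB S$ to $\bdy\BB S(1/2)$; every ball of the original covering that it meets intersects $\ol{\BB S(1/2)}$, hence on your event has radius at most $\ep^{\ul\beta}$ and lies in $K$, and these balls cover the truncated path. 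This yields $D_{\wh h^\tr}^\ep(\bdy\BB S,\bdy\BB S(1/2)) \geq D_{\wh h^\tr}^\ep(\bdy\BB S,\bdy\BB S(1/2);K)$, which is all you need (equality is irrelevant). Two smaller points: in your translation-and-covering argument the ball you exhibit has radius $\ep^{\ul\beta}/3$, while Lemma~\ref{lem-max-ball-radius} controls radius-$\ep^{\ul\beta}$ balls, so either shift the center by $\ep^{\ul\beta}$ toward $z$ (then a full radius-$\ep^{\ul\beta}$ ball fits inside $B_r(z)$, since $|w'-z|+\ep^{\ul\beta}=|w-z|\leq r$) or run the lemma with a slightly larger exponent; the borderline case of equal masses is then excluded because $\mu_{\wh h^\tr}$ a.s.\ assigns positive mass to every open set. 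Finally, with $C$ fixed the comparison event of Lemma~\ref{lem-tr-compare-square} has probability only $1-a_0e^{-a_1(\log C)^2}$, so conclude either by letting $C=C(\ep)\rta\infty$ slowly or by noting that, since $C$ is arbitrary, the liminf of the success probability is $1$, which suffices for the ``probability tending to $1$'' statement.
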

\begin{proof}
This follows from~\cite[Proposition 3.17 and Lemma 6.1]{dzz-heat-kernel} (which give the analogous statement for Liouville graph distances with respect to a zero-boundary GFF on a square of appropriate side length) combined with Lemma~\ref{lem-tr-compare-square}. 
\end{proof}

\begin{proof}[Proof of Lemma~\ref{lem-annulus-perc}]
The proof is similar to that of Lemma~\ref{lem-rectangle-perc}.  
We will show that there are constants $a_0,a_1 , \ep_* > 0$ as in the statement of the lemma such that for $n\in\BB N$ and $\ep \in (0,\ep_*]$, 
\eqb \label{eqn-annulus-perc-truncated}
\BB P\left[  D_{\wh h^\tr}^\ep\left( \bdy_{\op{in}}\mcl A_n ,  \bdy_{\op{out}}\mcl A_n     \right) \geq  \ep^{-\frac{1}{d_\gamma +\zeta}  }   \right] \geq 1 -  a_0 e^{-a_1 n} .
\eqe  
Combining this with~\eqref{eqn-gff-compare} (applied with $A =  c n^{1/2} $ for an appropriate constant $c>0$) and taking a union bound over $O_n(n^2)$ Euclidean balls of radius 1 whose union covers $\mcl A_n$ yields~\eqref{eqn-annulus-perc}. 
 
Let $p\in (0,1)$ be a small universal constant to be chosen later. 
For $n\in\BB N$, let $\mcl S(\mcl A_n)$ be the set of unit side length squares with corners in $\BB Z^2$ such that $S(1)\subset \mcl A_n$. For $S\in \mcl S(\mcl A_n)$, let $E_S^\ep$ be the event that the following is true. 
\begin{enumerate}
\item $D^\ep_{\wh h^\tr}\left(  S , \bdy S(1/2) \right) \geq  \ep^{- \frac{1}{d_\gamma +\zeta  } } $. 
\item Each disk which intersects $S(1/2)$ and has $\mu_{\wh h^\tr}$-mass at most $\ep$ is contained in $S(3/4)$.
\end{enumerate}
The reason for the second condition is to make it so that $E_S^\ep$ is determined by $\wh h^\tr|_{S(3/4)}$. 
For each $S\in \mcl S(\mcl A_n)$, the field $  \wh h^\tr(\cdot - v_S + v_{\BB s}) $ agrees in law with $\wh h^\tr  $. 
It therefore follows from Lemma~\ref{lem-annulus-lower} and the fact that $\mu_{\wh h^\tr}$ assigns positive mass to every open set that we can find $\ep_* = \ep_*(p,\zeta,\gamma) > 0$ such that  
\eqb \label{eqn-perc-prob'}
\BB P[E_S^\ep ] = \BB P[E_{\BB S}^\ep] \geq 1-p \quad \forall S\in \mcl S(\mcl A_n) ,\quad \forall \ep\in (0,\ep_*]    .
\eqe 

View $\mcl S(\mcl A_n)$ as a graph with two squares considered to be adjacent if they share an edge.  
We claim that if $p$ is chosen sufficiently small, in a manner depending only on $\zeta$ and $\gamma$, then for appropriate constants $a_0,a_1 > 0$ as in the statement of the lemma, it holds for each $\ep \in (0,\ep_*]$ and $n\in\BB N$ that with probability at least $1- a_0 e^{-a_1 n}$, we can find a path $\mcl P$ in $\mcl S(\mcl A_n)$ which disconnects $\bdy_{\op{in}}\mcl A_n$ from $\bdy_{\op{out}} \mcl A_n$ such that $E_S^\ep$ occurs for each $S\in\mcl P$. 

Assume the claim for the moment. If a path $\mcl P$ as in the claim exists, then each Euclidean path from $\bdy_{\op{in}}\mcl A_n$ to $\bdy_{\op{out}} \mcl A_n$ must pass through one of the squares $S\in\mcl P$. Since $S(1/2)\subset\mcl A_n$ for each $S\in \mcl S(\mcl A_n)$, any path from $\bdy_{\op{in}}\mcl A_n$ to $\bdy_{\op{out}} \mcl A_n$ must cross one of the annuli $S(1/2) \setminus S $ for some $S\in \mcl P$. Since $E_S^\ep$ occurs for each such $S$, it follows that~\eqref{eqn-annulus-perc-truncated} holds. 
   
It remains only to prove the claim. Since $E_S^\ep$ is determined by $\wh h^\tr|_{S(1)}$,  the claim follows from exactly the same percolation-type argument given at the end of the proof of Lemma~\ref{lem-rectangle-perc}.  
\end{proof}

The following is an analogue of Lemma~\ref{lem-rectangle-dist} in the present setting, and is proven in a similar way.

\begin{lem} \label{lem-square-dist} 
For each $\beta \in \left(0,\frac{2}{(2+\gamma)^2}\right)$ and $\zeta \in (0,1)$, there exists $\ep_* = \ep_*(\zeta,\beta,\gamma) > 0$ such that the following is true. 
Let $\delta_\ep := 2^{- \lceil \log_2  \ep^{-\beta} \rceil}$. 
It holds with polynomially high probability as $\ep\rta 0$ that for each square $S\subset \BB S(1)$ with side length $\delta_\ep  $ and corners in $\delta_\ep \BB Z^2$, 
\eqb \label{eqn-square-dist}
 D_{\wh h}^\ep\left(  S , \bdy S(1) \right)  
\geq \ep^{-\frac{1}{d_\gamma } + \frac{\beta}{d_\gamma  } \left(2  + \frac{\gamma^2}{2}  \right) + \zeta    }  \exp\left(   \frac{\gamma}{d_\gamma} \max_{z\in S(1)} \wh h_{\ep^\beta }(z)   \right) .
\eqe 
\end{lem}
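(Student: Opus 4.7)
The strategy parallels that of Lemma~\ref{lem-rectangle-dist}, but uses the annulus crossing estimate of Lemma~\ref{lem-annulus-perc} (which yields a \emph{lower} bound on Liouville graph distance) in place of the rectangle crossing estimate of Lemma~\ref{lem-rectangle-perc}. Fix a small $\wt\zeta \in (0,1)$ (to be chosen depending on $\zeta$, $\beta$, $\gamma$), set $n_\ep := \lceil C \log \ep^{-1} \rceil$ for a large constant $C = C(\wt\zeta)$, and put $\delta := \delta_\ep / n_\ep$. For each admissible square $S \subset \BB S(1)$ with bottom-left corner $u_S \in \delta_\ep \BB Z^2$, the rescaling $z \mapsto \delta^{-1}(z - u_S)$ maps $S$ onto $[0,n_\ep]^2$ and $S(1)$ onto $[-n_\ep, 2n_\ep]^2$, so any continuous path in $\BB C$ from $S$ to $\bdy S(1)$ induces, after rescaling, a path that crosses the square annulus $\mcl A_{n_\ep}$ from inside to outside.

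Applying Lemma~\ref{lem-measure-scale} with $b = u_S$ and $U$ equal to the interior of $S(1)$, the reverse of~\eqref{eqn-dist-scaling} gives
\[
D_{\wh h}^\ep(S, \bdy S(1)) \;\geq\; D_{\wt h}^{\ul T \ep}\bigl(\bdy_{\op{in}} \mcl A_{n_\ep},\, \bdy_{\op{out}} \mcl A_{n_\ep}\bigr),
\]
where $\wt h := (\wh h - \wh h_\delta)(\delta\cdot + u_S) \eqD \wh h$ is independent of $\wh h_\delta$ and $\ul T = \delta^{-2 - \gamma^2/2} \exp\bigl(-\gamma \min_{z \in S(1)} \wh h_\delta(z)\bigr)$. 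By Lemma~\ref{lem-annulus-perc} applied to $\wt h$ (with $\wt\zeta$ in place of $\zeta$), the right-hand side is at least $e^{-n_\ep^{1/2}} (\ul T \ep)^{-1/(d_\gamma + \wt\zeta)}$ off an event of probability at most $a_0 e^{-a_1 n_\ep}$. Choosing $C$ large enough that $\ep^{-2\beta} a_0 e^{-a_1 C \log\ep^{-1}}$ is polynomially small in $\ep$, the union bound over the $O(\ep^{-2\beta})$ admissible squares succeeds. Substituting $\delta_\ep \asymp \ep^\beta$ and $n_\ep = \ep^{o_\ep(1)}$ converts this into
\[
D_{\wh h}^\ep(S, \bdy S(1)) \;\geq\; \ep^{\,-\frac{1}{d_\gamma + \wt\zeta} + \frac{\beta(2 + \gamma^2/2)}{d_\gamma + \wt\zeta} - o_\ep(1)} \exp\!\Bigl(\tfrac{\gamma}{d_\gamma + \wt\zeta} \min_{z \in S(1)} \wh h_\delta(z)\Bigr).
\]

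To obtain the stated form, it remains to replace $\min_{z \in S(1)} \wh h_\delta(z)$ by $\max_{z \in S(1)} \wh h_{\ep^\beta}(z)$ and the denominators $d_\gamma + \wt\zeta$ by $d_\gamma$. For arbitrary $u,v \in S(1)$ we bound $|\wh h_\delta(u) - \wh h_{\ep^\beta}(v)|$ by a telescoping sum, using Lemma~\ref{lem-mid-scale-max} (at scale $\delta_\ep$ with $A = n_\ep$) to control $|\wh h_\delta - \wh h_{\delta_\ep}|$, Lemma~\ref{lem-use-btis} (at scale $\delta_\ep$, applied to a cover of $S(1)$ of diameter $3\delta_\ep$ by $O(1)$ balls of diameter $\delta_\ep$) to control the oscillation of $\wh h_{\delta_\ep}$ on $S(1)$, and the trivial variance bound $\log(\ep^\beta / \delta_\ep) = O(1)$ for $|\wh h_{\delta_\ep}(v) - \wh h_{\ep^\beta}(v)|$. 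This yields
\[
\min_{z\in S(1)} \wh h_\delta(z) \;\geq\; \max_{z\in S(1)} \wh h_{\ep^\beta}(z) \;-\; o_{\wt\zeta}(1) \log \ep^{-1}
\]
with polynomially high probability. Combined with the a priori bound $\max_{z\in S(1)} |\wh h_{\ep^\beta}(z)| \leq (2+o_\ep(1)) \log \ep^{-1}$ from Lemma~\ref{lem-one-scale-max}, the change of denominator $d_\gamma + \wt\zeta \mapsto d_\gamma$ in the exponential costs only a factor $\ep^{O(\wt\zeta)}$. Choosing $\wt\zeta$ sufficiently small (and $C$ correspondingly large) absorbs all of these multiplicative losses into the slack~$\zeta$.

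The main source of technical care is the bookkeeping across the three distinct scales $\delta_\ep/n_\ep$, $\delta_\ep$, and $\ep^\beta$, together with the asymmetry between the $\min$ produced by~\eqref{eqn-dist-scaling} and the $\max$ appearing in the target bound. The constraint $\beta < 2/(2+\gamma)^2$ plays no explicit role in the scheme above; it simply delineates the regime relevant to the intended application (Proposition~\ref{prop-lfpp-upper}) and ensures that the dominant $\ep$-exponent $-1/d_\gamma + \beta(2 + \gamma^2/2)/d_\gamma$ is strictly negative, so that the $o_\ep(1) + O(\wt\zeta)$ losses from the rescaling, the min/max conversion, and the denominator adjustment can all be absorbed.
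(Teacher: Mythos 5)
Your skeleton is essentially the paper's: rescale the square annulus $S(1)\setminus S$ by the factor $n_\ep/\delta_\ep$ for a slowly growing $n_\ep$ (the paper takes $n_\ep = \lfloor (\log\ep^{-1})^{3/2}\rfloor$ rather than $C\log\ep^{-1}$, which is immaterial), use Lemma~\ref{lem-measure-scale} together with the fact that $(\wh h - \wh h_{\delta_\ep/n_\ep})((\delta_\ep/n_\ep)\cdot + u_S)\eqD \wh h$ is independent of $\wh h_{\delta_\ep/n_\ep}$ to reduce to the annulus-crossing lower bound of Lemma~\ref{lem-annulus-perc} at mass parameter $\ul T\ep$, take a union bound over the $O_\ep(\ep^{-2\beta})$ squares, and convert the coarse-field factor via Lemmas~\ref{lem-use-btis}, \ref{lem-mid-scale-max} and~\ref{lem-one-scale-max}, absorbing all losses by taking $\wt\zeta$ small.

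There is, however, a genuine gap at the central step. Lemma~\ref{lem-annulus-perc} is only stated for mass parameters in $(0,\ep_*]$, and you invoke it at the random parameter $\ul T\ep$ without verifying this hypothesis; moreover you assert that the assumption $\beta < 2/(2+\gamma)^2$ ``plays no explicit role,'' whereas in the paper's proof this is exactly where it enters. Conditioning on $\wh h_{\delta_\ep/n_\ep}$ (with respect to which $\ul T$ is measurable, and which is independent of the rescaled field), one must check $\ul T\ep \leq \ep_*$: using the a priori bound $\max_{z\in \BB S(1)}|\wh h_{\ep^\beta}(z)| \leq (2\beta+\wt\zeta)\log\ep^{-1}$ from Lemma~\ref{lem-one-scale-max} together with the continuity estimates relating $\wh h_{\delta_\ep/n_\ep}$ to $\wh h_{\ep^\beta}$, one gets $\ul T\ep \leq \ep^{1 - \beta(2+\gamma^2/2) - 2\gamma\beta - o_{\wt\zeta}(1)} = \ep^{1-\beta(2+\gamma)^2/2 - o_{\wt\zeta}(1)}$ uniformly over the squares, and the exponent is positive precisely because $\beta < 2/(2+\gamma)^2$ (and $\wt\zeta$ is small), so $\ul T\ep \leq \ep_*$ for all small $\ep$ and the lemma applies. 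Your argument can be repaired either this way, or by adding the observation that when $\ul T\ep > \ep_*$ the claimed bound is vacuous, since then $e^{-n_\ep^{1/2}}(\ul T\ep)^{-1/(d_\gamma+\wt\zeta)} \leq e^{-n_\ep^{1/2}}\ep_*^{-1/(d_\gamma+\wt\zeta)} \leq 1 \leq D_{\wh h}^{\ul T\ep}(\bdy_{\op{in}}\mcl A_{n_\ep},\bdy_{\op{out}}\mcl A_{n_\ep})$ once $n_\ep$ exceeds a constant; but as written the application of Lemma~\ref{lem-annulus-perc}, and the accompanying claim about the role of $\beta$, is unjustified.
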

\begin{proof}
Fix $\wt\zeta\in (0,1)$ to be chosen later, in a manner depending only on $\zeta$.
Also set 
\eqbn
n_\ep :=  \lfloor  (\log \ep^{-1})^{3/2} \rfloor .
\eqen
By Lemmas~\ref{lem-one-scale-max} and~\ref{lem-mid-scale-max} (the latter applied with $A = (\delta_\ep/\ep^\beta) n_\ep$), it holds with polynomially high probability as $\ep\rta 0$ that 
\eqb \label{eqn-field-control'}
\max_{z\in\BB S } |\wh h_{\ep^\beta}(z) | \leq (2\beta+\wt\zeta) \log\ep^{-1} 
 \quad \text{and} \quad 
 \max_{z,w\in \BB S : |z-w| \leq 4 \ep^\beta} |\wh h_{\delta_\ep/ n_\ep}(z) - \wh h_{\ep^\beta}(w)| \leq \wt\zeta \log \ep^{-1} . 
\eqe

If $S$ is a square as in the statement of the lemma and $u_S$ denotes its bottom-left corner, then the re-scaled translated square annulus $n_\ep  \delta_\ep^{-1} \left( \ol{S(1) \setminus S} - u_S \right)$ is equal to the annulus $\mcl A_{n_\ep}$ of Lemma~\ref{lem-annulus-perc}. 
Moreover, the field $ (\wh h - \wh h_{\delta_\ep/n_\ep})  ((\delta_\ep /n_\ep) \cdot + u_S ) $ agrees in law with $\wh h$ and is independent from $\wh h_{\delta_\ep/n_\ep }$, so the associated Liouville graph distance $D_{(\wh h - \wh h_{\delta_\ep/n_\ep})  ((\delta_\ep /n_\ep) \cdot + u_S ) }^\ep$ is independent from $\wh h_{\delta_\ep/n_\ep}$ and agrees in law with $D_{\wh h}^\ep$.
By~\eqref{eqn-dist-scaling} applied with $\delta = \delta_\ep / n_\ep$ and $U $ equal to the interior of $S$, it therefore follows that the conditional law of $D_{\wh h}^\ep\left( \bdy S  ,\bdy S(1) \right)$ given $\wh h_{\delta_\ep/n_\ep }$ stochastically dominates the law of
\eqbn
 D_{\wh h }^{T_S\ep}\left(\bdy_{\op{in}}\mcl A_{n_\ep} , \bdy_{\op{out}} \mcl A_{n_\ep} \right) 
\quad \text{for} \quad 
T_S := (n_\ep / \delta_\ep )^{2+\frac{\gamma^2}{2}} \exp\left( -  \gamma \min_{z\in S}  \wh h_{\delta_\ep/n_\ep }(z)  \right) .
\eqen

If~\eqref{eqn-field-control'} holds, then 
\allb \label{eqn-square-dist-T}
T_S  
&\leq    \ep^{ -  \beta \left( 2 + \frac{\gamma^2}{2} \right)  + o_{\wt\zeta}(1) + o_\ep(1)}   \exp\left( -  \gamma \max_{z\in S(1)} \wh h_{\ep^\beta }(z) \right) \notag \\
&\leq    \ep^{ -  \beta \left( 2 + \frac{\gamma^2}{2} \right)  + o_{\wt\zeta}(1) + o_\ep(1)}   \exp\left( - \frac{d_\gamma-\wt\zeta}{d_\gamma} \gamma \max_{z\in S(1)} \wh h_{\ep^\beta }(z) \right)  ,
\alle 
with the rate of the $o_{\wt\zeta}(1)$ and the $o_\ep(1)$ deterministic, and the rate of the former independent of $\ep$. Note that here we have used~\eqref{eqn-field-control'} to replace $\wh h_{\delta_\ep/n_\ep}$ by $\wh h_{\ep^\beta}$ and to replace a min by a max. By~\eqref{eqn-square-dist-T} and the first inequality in~\eqref{eqn-field-control'} and since $\beta < 2/(2+\gamma)^2$ (which implies $2+\gamma^2/2 - 2\gamma > 0$) if $\wt\zeta$ is chosen sufficiently small (in a manner depending only on $\zeta$, $\beta$, and $\gamma$) then $\max_S T_S \ep  = o_\ep(1)$ at a deterministic rate. In particular, if $\ep_*$ is as in Lemma~\ref{lem-annulus-perc} then for a small enough deterministic $\ep > 0$ we have $T_S \ep \leq \ep_*$ for all squares $S$ as above whenever~\eqref{eqn-field-control'} holds. By Lemma~\ref{lem-annulus-perc} (applied with $T_S \ep$ in place of $\ep$ and $n_\ep$ in place of $n$) and a union bound over $O_\ep(\ep^{-2\beta})$ squares $S$, we obtain the statement of the lemma provided we choose $\wt\zeta$ sufficiently small. 
\end{proof}

We will now prove the zero-boundary GFF analogue of Proposition~\ref{prop-lfpp-upper}. 

\begin{prop} \label{prop-lfpp-upper0}
Let $h^{\BB S(1)}$ be a zero-boundary GFF on $\BB S(1)$ and for $\delta>0$ let $D_{h^{\BB S(1)},\LFPP}^\delta$ be the associated LFPP metric with $\xi =\gamma/d_\gamma$. 
For each $\zeta \in (0,1)$, it holds with polynomially high probability as $\delta \rta 0$ that 
\eqbn
\max_{z,w\in \BB S} D_{h^{\BB S(1)},\LFPP}^\delta \left( z,w ; \BB S(1/2)    \right) \leq \delta^{1 - \frac{2}{d_\gamma} - \frac{\gamma^2}{2d_\gamma} - \zeta} .
\eqen
\end{prop}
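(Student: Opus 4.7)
The plan is to mirror the structure of Proposition~\ref{prop-lfpp-lower0} with the roles of the upper and lower estimates for Liouville graph distance reversed. Fix $\beta\in(0,2/(2+\gamma)^2)$, set $\delta_\ep:=2^{-\lceil\log_2\ep^{-\beta}\rceil}$, and observe that because paths in $\BB S$ are a fortiori paths in $\BB S(1/2)$, the upper bound half of Proposition~\ref{prop-lfpp-approx} reduces the claim to showing
\[
\max_{z,w\in\BB S}\wh D_{\wh h,\LFPP}^{\ep^\beta}(z,w;\BB S)\leq\ep^{\beta(1-2/d_\gamma-\gamma^2/(2d_\gamma))-\zeta/2}
\]
with polynomially high probability. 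Choose small $\wt\zeta,\zeta_1>0$ to be fixed later, and assemble a polynomially-high-probability regularity event on which: \textbf{(a)} $\max_{z,w\in\BB S}D^\ep_{\wh h}(z,w;\BB S(1/2))\leq N:=\ep^{-1/d_\gamma+\wt\zeta}$, via Theorem~\ref{thm-diam} and Lemma~\ref{lem-tr-compare-square}; \textbf{(b)} for every dyadic square $S\subset\BB S(1)$ of side $\delta_\ep$ with corners in $\delta_\ep\BB Z^2$, the lower bound
$D^\ep_{\wh h}(S,\bdy S(1))\geq B_S:=\ep^{-1/d_\gamma+\beta(2+\gamma^2/2)/d_\gamma+\zeta_1}\exp((\gamma/d_\gamma)\max_{S(1)}\wh h_{\ep^\beta})$
 of Lemma~\ref{lem-square-dist} holds; \textbf{(c)} every Euclidean ball of $\mu_{\wh h}$-mass at most $\ep$ meeting $\BB S$ has Euclidean radius less than $\delta_\ep/100$, by Lemma~\ref{lem-max-ball-radius}; \textbf{(d)} $\wh h_{\ep^\beta}$ has oscillation at most $\wt\zeta\log\ep^{-1}$ on every ball of radius $3\delta_\ep$ in $\BB S(1)$, by Lemma~\ref{lem-use-btis}; and \textbf{(e)} $\max_\BB S|\wh h_{\ep^\beta}|\leq(2\beta+\wt\zeta)\log\ep^{-1}$, by Lemma~\ref{lem-one-scale-max}.

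Fix $z,w\in\BB S$ and, using (a), take a continuous path $P\subset\BB S(1/2)$ from $z$ to $w$ covered by a family $\mcl B$ of at most $N$ balls of $\mu_{\wh h}$-mass $\leq\ep$. Let $\mcl V\subset\mcl S_{\delta_\ep}$ be the set of squares intersected by $P$, and split $\mcl V=\mcl V_1\sqcup\mcl V_2$ according to whether $P$ exits $S(1)$ at some time after first entering $S$. Every square in $\mcl V_2$ lies inside $S^\star(1)$ for the earliest $S^\star\in\mcl V_2$, so $|\mcl V_2|\leq 9$. For each $S\in\mcl V_1$ the sub-arc of $P$ from the first entry to $S$ until the first exit from $S(1)$ realises a path from $S$ to $\bdy S(1)$ and is therefore covered by at least $B_S$ balls of $\mcl B$, each of which meets $S(1)$; by (c), each ball of $\mcl B$ meets $S(1)$ for at most nine squares $S\in\mcl S_{\delta_\ep}$. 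Combining with (e) to control the endpoint contribution, $\sum_{S\in\mcl V}B_S\leq 9N+9\max_SB_S\leq 10N$ once $\wt\zeta,\zeta_1$ are small. Since $\xi=\gamma/d_\gamma$, the identity $e^{\xi\wh h_{\ep^\beta}(v_S)}\leq\ep^{1/d_\gamma-\beta(2+\gamma^2/2)/d_\gamma-\zeta_1}B_S$ then yields
\[
\sum_{S\in\mcl V}\ep^\beta e^{\xi\wh h_{\ep^\beta}(v_S)}\leq 10\,\ep^{\beta(1-2/d_\gamma-\gamma^2/(2d_\gamma))+\wt\zeta-\zeta_1}.
\]

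The last step is to upgrade this estimate to a bound on $\wh D_{\wh h,\LFPP}^{\ep^\beta}(z,w;\BB S)$, whose definition requires a walk of \emph{distinct}, side-adjacent squares in $\mcl S_{\delta_\ep}$ joining a square containing $z$ to one containing $w$. Since $P$ is continuous, $\mcl V$ is corner-connected in $\mcl S_{\delta_\ep}$; the eight-neighborhood augmentation $\mcl V':=\{S'\in\mcl S_{\delta_\ep}:|v_{S'}-v_S|\leq\sqrt 2\,\delta_\ep\text{ for some }S\in\mcl V\}$ is side-connected, has cardinality $\leq 9|\mcl V|$, and contains the squares containing $z$ and $w$, so one may extract a simple walk $S_0,\dots,S_k$ in $\mcl V'$ between them. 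Using (d), each $S'\in\mcl V'$ has some $S\in\mcl V$ with $e^{\xi\wh h_{\ep^\beta}(v_{S'})}\leq\ep^{-2\xi\wt\zeta}e^{\xi\wh h_{\ep^\beta}(v_S)}$, and so
\[
\sum_{j=0}^k\ep^\beta e^{\xi\wh h_{\ep^\beta}(v_{S_j})}\leq 90\,\ep^{\beta(1-2/d_\gamma-\gamma^2/(2d_\gamma))+\wt\zeta-\zeta_1-2\xi\wt\zeta}.
\]
Choosing $\zeta_1=\beta\zeta/3$ and then $\wt\zeta$ small enough that $\zeta_1+(2\xi-1)\wt\zeta\leq\beta\zeta/2$ finishes the proof. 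The main technical obstacle is exactly this combinatorial step: the path $P$ may revisit squares and pass between corner-adjacent ones, so producing an admissible simple walk for the approximate LFPP without inflating its associated sum requires both the $\mcl V'$-augmentation and the oscillation bound (d); a secondary subtlety handled above is that Lemma~\ref{lem-square-dist} does not directly apply to squares in which $P$ stalls, which is absorbed by the $|\mcl V_2|\leq 9$ estimate.
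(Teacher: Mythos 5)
Your overall strategy is the one the paper uses: compare LFPP at scale $\delta=\ep^\beta$ to Liouville graph distance at scale $\ep$, using the annulus-crossing lower bound of Lemma~\ref{lem-square-dist} to convert a near-optimal covering of a path by $\mu_{\wh h}$-mass-$\ep$ balls (Proposition~\ref{prop-diam}/Theorem~\ref{thm-diam}) into a bound on $\sum_S \ep^\beta e^{\xi \wh h_{\ep^\beta}(v_S)}$ over the visited squares; your bookkeeping (routing through $\wh D_{\wh h,\LFPP}$ and a double-counting over squares, with the stalled squares handled separately) differs only superficially from the paper's explicit construction of a piecewise-linear LFPP path, and the combinatorics (corner-connectedness, the $\mcl V'$ augmentation, the oscillation bound) is sound up to immaterial constants. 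One small slip: in (a) the exponent has the wrong sign — Theorem~\ref{thm-diam} gives $\max_{z,w}D_{\wh h}^\ep(z,w;\BB S(1/2))\leq \ep^{-1/(d_\gamma-\wt\zeta)}$, i.e.\ $N$ should be $\ep^{-1/d_\gamma-c\wt\zeta}$, whereas the event you state, with $N=\ep^{-1/d_\gamma+\wt\zeta}$, has probability tending to $0$; with the corrected $N$ your exponent accounting still closes after shrinking $\wt\zeta,\zeta_1$.

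The more substantive problem is the reduction through Proposition~\ref{prop-lfpp-approx}. Its upper bound controls $D_{h^{\BB S(1)},\LFPP}^\delta(z,w;\BB S)$ by $\delta^{-\zeta}\wh D_{\wh h,\LFPP}^\delta(z,w;\BB S)$, and $\wh D(\cdot,\cdot;\BB S)$ requires a walk of side-adjacent squares of $\mcl S_{2^{-m_\delta}}$ \emph{contained in} $\BB S$. But the covering path $P$ you take from (a) is only guaranteed to lie in $\BB S(1/2)$, so the visited family $\mcl V$, and a fortiori the augmentation $\mcl V'$, can contain squares not contained in $\BB S$; this genuinely occurs when $z$ or $w$ is at or near $\bdy\BB S$ (and the maximum in the statement is over all $z,w\in\BB S$), since the efficient ball coverings use balls protruding from $\BB S$. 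Hence the extracted walk need not be admissible for $\wh D_{\wh h,\LFPP}^{\ep^\beta}(z,w;\BB S)$, and the claimed bound does not follow as written. This is repairable: either re-prove the upper-bound half of Proposition~\ref{prop-lfpp-approx} with squares ranging over $\BB S(1/2)$ (its proof and the field comparison of Lemma~\ref{lem-circle-avg-approx} go through there, since $\BB S(1/2)$ stays well away from $\bdy\BB S(1)$), or do as the paper does and directly build a piecewise-linear path through square centers inside $\BB S(1/2)$ — which the statement being proven explicitly permits — and bound its LFPP length via the circle-average comparison, bypassing the $\BB S$-restricted approximate distance altogether.
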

\begin{proof}
Let $\beta \in \left(0,\frac{2}{(2+\gamma)^2} \right)$ be arbitrary (e.g., $\beta = \frac{1}{(2+\gamma)^2}$ would suffice). 
We will compare $\ep^\beta$-LFPP distances to $\ep$-Liouville graph distances with respect to $\wh h$, then set $\delta=\ep^\beta$. By Lemma~\ref{lem-circle-avg-approx}, it suffices to prove an upper bound for $\ep^\beta$-Liouville graph distances defined with $\wh h_{\ep^\beta}$ in place of $h_{\ep^\beta}$, i.e., it is enough to show that with polynomially high probability as $\ep\rta 0$, there exists for each $z,w\in\BB S$ a Euclidean unit-speed path $P_{\LFPP} : [0,T]\rta\BB S(1/2)$ such that 
\eqb \label{eqn-lfpp-upper-show}
\int_0^T e^{\frac{\gamma}{d_\gamma} \wh h_{\ep^\beta}(P_{\LFPP}(t)) } \, dt \leq \ep^{\beta\left( 1 - \frac{2}{d_\gamma} - \frac{\gamma^2}{2d_\gamma} \right) -  \zeta} .
\eqe

Let $ \wt\zeta \in (0,1)$ which we will choose later, in a manner depending only on $\beta$, $\zeta$, and $\gamma$. 
Also set $\delta_\ep := 2^{- \lfloor  \log_2 \ep^{-\beta } \rfloor}$,
as in Lemma~\ref{lem-square-dist}. 

Let $E^\ep = E^\ep(\beta,\wt\zeta)$ be the event of Lemma~\ref{lem-square-dist} but with $\wt\zeta$ in place of $\zeta$, i.e., $E^\ep$ is the event that~\eqref{eqn-square-dist} holds (with $\wt\zeta$ in place of $\zeta$) for each square $S\subset \BB S(1)$ with side length $\delta_\ep  $ and corners in $\delta_\ep \BB Z^2$. By Lemmas~\ref{lem-square-dist} and~\ref{lem-max-ball-radius}, it holds with polynomially high probability as $\ep\rta 0$ that
\eqb \label{eqn-lfpp-lower-event}
E^\ep \: \text{occurs} \quad \text{and} \quad \min_{z\in\BB S(1)} \mu_{\wh h}(B_{\delta_\ep/2}(z) ) \geq \ep .
\eqe
We note that the second condition in~\eqref{eqn-lfpp-lower-event} implies that any two points of $\BB S(1 )$ which lie at Euclidean distance at least $\delta_\ep$ from each other lie at $D_{\wh h}^\ep $-distance at least 2. 

Henceforth assume that~\eqref{eqn-lfpp-lower-event} holds and fix $z,w\in\BB S$. 
By the definition of $  D_{\wh h}^\ep$, we can find a continuous Euclidean path $P : [0,1] \rta \BB S(1/2) $ from $z$ to $w$ whose range can be covered by at most $D_{\wh h}^\ep(z,w;\BB S(1/2) )$ Euclidean balls of $\mu_{\wh h}$-mass at most $\ep$ which are contained in $\BB S(1/2)$.
We will use this path $P$ to construct a path from $z$ to $w$ whose $\ep^\beta$-LFPP length can be bounded above.

Let $\mcl S_{\delta_\ep}(1/2)$ be the set of squares $S\subset \BB S(1/2)$ with side length $\delta_\ep$ and corners in $\delta_\ep\BB Z^2$. We will inductively define a sequence of squares in $\mcl S_{\delta_\ep}(1/2)$. Let $t_0 = 0$ and let $S_0$ be a square of $\mcl S_{\delta_\ep}(1/2)$ which contains $z$. Inductively, if $j\in\BB N$ and $t_{j-1} \in [0, 1]$ and $S_{j-1} \in \mcl S_{\delta_\ep}(1/2)$ have been defined, let $t_j$ be the infimum of the times $t \in [ t_{j-1} , 1]$ for which $P(t)$ is not contained in the expanded square $S_{j-1}(1)$, or let $t_j = 1$ if no such $t$ exists. If $t_j = 1 $, let $S_j $ be a square of $\mcl S_{\delta_\ep}(1/2)$ containing $w$. Otherwise, choose $S_j \in \mcl S_{\delta_\ep}(1/2)$ so that $P$ enters $S_j$ immediately after time $t$.  

Let $\mcl J$ be the smallest $j\in\BB N$ for which $t_{j+1} = 1$ ($\mcl J$ must be finite since $P$ is continuous). Let  $P_{\LFPP} : [0,T] \rta \BB S$ be the concatenation of the straight line segments $[P(t_j) , P(t_{j+1})]$ for $j\in [0 ,\mcl J ]_{\BB Z}$, traversed at unit speed.
Then $P_{\LFPP}$ is a path from $z$ to $w$. Since each $t_{j+1}$ for $j\leq \mcl J-1$ is the first time after $t_j$ at which $P$ exits $S_j(1)$, each of the segments $[P(t_j) , P(t_{j+1})]$ is contained in $S_j(1)$, and in particular its Euclidean length is at most a universal constant times $\ep^\beta$. Hence
\eqb \label{eqn-lfpp-upper-path}
\int_0^T e^{\frac{\gamma}{d_\gamma} \wh h_{\ep^\beta}(P_{\LFPP}(t)) } \, dt
\preceq \sum_{j=0}^{\mcl J} \ep^\beta \exp\left(\frac{\gamma}{d_\gamma} \max_{z\in S_j(1)} \wh h_{\ep^\beta}(z) \right) . 
\eqe 

For each $j\in [0,\mcl J-1]_{\BB Z}$ the path $P$ travels across the square annulus $S_j(1) \setminus S_j$ during the time interval $[t_j,t_{j+1}]$. By~\eqref{eqn-square-dist}, it follows that 
\eqb \label{eqn-lfpp-upper-inc}
D_{\wh h}^\ep\left( P(t_j) , P(t_{j+1})   ; \BB S(1/2) \right)   \geq  \ep^{-\frac{1}{d_\gamma } + \frac{\beta}{d_\gamma  } \left(2  + \frac{\gamma^2}{2}  \right) + \wt\zeta    }  \exp\left(   \frac{\gamma}{d_\gamma} \max_{z\in S_j(1)} \wh h_{\ep^\beta }(z)   \right) .
\eqe 
By our choice of $P$, the sum of the left side of this inequality over all $j \in [0 , \mcl J-1]_{\BB Z}$ is at most $   D_{\wh h}^\ep(z,w;\BB S) + 2\mcl J$ (the term $2\mcl J$ comes from double-counting the disks which contain the points $P(t_j)$). Since $|P(t_j) - P(t_{j+1})| \geq \delta_\ep$, the second condition in~\eqref{eqn-lfpp-lower-event} shows that none of the pairs of points $(P(t_j) , P(t_{j+1}))$ can be contained in a single Euclidean ball of $\mu_{\wh h}$-mass $\ep$.
Therefore, $\mcl J \leq  D_{\wh h}^\ep\left(z,w;\BB S(1/2) \right)$. 
Consequently, summing~\eqref{eqn-lfpp-upper-inc} over all $j$ gives
\eqb \label{eqn-lfpp-upper-sum}
 \sum_{j=0}^{\mcl J}  \exp\left(\frac{\gamma}{d_\gamma} \max_{z\in S_j(1)} \wh h_{\ep^\beta}(z) \right)
 \leq  3 \ep^{ \frac{1}{d_\gamma } - \frac{\beta}{d_\gamma  } \left(2  + \frac{\gamma^2}{2}  \right) - \wt\zeta    }   D_{\wh h}^\ep\left(z,w;\BB S(1/2) \right) .
\eqe 
By Proposition~\ref{prop-diam}, we have $\max_{z,w\in\BB S}  D_{\wh h}^\ep\left(z,w;\BB S(1/2) \right) \leq \ep^{-\frac{1}{d_\gamma}- \wt\zeta}$ with polynomially high probability as $\ep\rta 0$. Combining this with~\eqref{eqn-lfpp-upper-path} and~\eqref{eqn-lfpp-upper-sum} and choosing $\wt\zeta$ sufficiently small, in a manner depending only on $\zeta$, $\beta$, and $\gamma$, shows that~\eqref{eqn-lfpp-upper-show} holds with polynomially high probability as $\ep\rta0$. 
\end{proof}

\begin{proof}[Proof of Proposition~\ref{prop-lfpp-upper}]
Proposition~\ref{prop-lfpp-upper0} implies the analogous statement with $\BB S$ replaced with any other square $S\subset \BB C$ (with the rate of convergence of the probability depending on the square). Lemma~\ref{lem-whole-plane-compare} then implies that the same is true with a whole-plane GFF in place of a zero-boundary GFF on $S(1)$. We obtain the proposition statement from this by covering $K$ by a finite union of squares $S$ such that $S(1/2)$ is contained in $U$. 
\end{proof}

\begin{proof}[Proof of Theorem~\ref{thm-lfpp-compare}]
The lower bound for the point-to-point distance in~\eqref{eqn-lfpp-compare} follows from Proposition~\ref{prop-lfpp-lower} applied with $K=\{z\}$ and $U$ an open set containing $z$ but not $w$. The upper bound follows from Proposition~\ref{prop-lfpp-upper} applied with $K$ chosen so that $z,w\in K$. The bounds for other LFPP distances in~\eqref{eqn-lfpp-diam} are immediate from Propositions~\ref{prop-lfpp-lower} and~\ref{prop-lfpp-upper} together with~\eqref{eqn-lfpp-compare}. 
\end{proof}

\section{Connection to random planar maps}
\label{sec-planar-map}

In this section we will relate Liouville graph distance to random planar maps and thereby prove Theorem~\ref{thm-ball-size}. 
The basic strategy for doing so is discussed in Section~\ref{sec-planar-map-discussion}. 
To carry out this approach, we will first need to provide some background on SLE and LQG which will allow us to express the mated-CRT map as the adjacency graph of a certain random collection of $\ep$-LQG mass ``cells" in the plane (Section~\ref{sec-lqg-prelim}). 
In Section~\ref{sec-sle-compare}, we compare distances in this adjacency graph of cells to Liouville graph distance. Actually, we will prove upper and lower bounds for distances in the adjacency graph in terms of two minor variants of Liouville graph distance defined using slightly different types of Euclidean balls. 
In Section~\ref{sec-ball-size} we complete the proof of Theorem~\ref{thm-ball-size} in the setting of the mated-CRT map, then transfer to other random planar maps using the results of~\cite{ghs-map-dist}.

\subsection{Background on SLE and LQG}
\label{sec-lqg-prelim}

\subsubsection{Liouville quantum gravity surfaces}

Fix $\gamma \in (0,2)$. Following~\cite{shef-kpz,shef-zipper,wedges}, we define a \emph{$\gamma$-Liouville quantum gravity surface} to be an equivalence class of pairs $(\mcl D,h)$ where $\mcl D\subset\BB C$ and $h$ is a distribution on $\mcl D$ (which we will always take to be a realization of some variant of the Gaussian free field), with two such pairs $(\mcl D,h)$ and $(\wt{\mcl D} , \wt h)$ considered to be equivalent if there is a conformal map $f : \wt{\mcl D} \rta \mcl D$ such that
\eqb \label{eqn-lqg-coord}
\wt h = h \circ f + Q\log |f'| \quad \text{for} \quad Q = \frac{2}{\gamma}  +\frac{\gamma}{2} .
\eqe
One motivation for this definition is that by~\cite[Proposition 2.1]{shef-kpz}, if $\wt h$ and $h$ are related as in~\eqref{eqn-lqg-coord} then a.s.\ the $\gamma$-LQG measure $\mu_h$ is the pushforward of $\mu_{\wt h}$ under $f$, so $\mu_h$ is a well-defined functional of the LQG surface (as noted in Section~\ref{sec-lfpp-exponent}, we expect the conjectural $\gamma$-LQG metric to satisfy a similar invariance property). If $(\mcl D , h)$ is an equivalence class representative, then the distribution $h$ is called an \emph{embedding} of the $\gamma$-LQG surface into $\mcl D$. 

In this paper, the only types of $\gamma$-LQG surfaces which we will be interested in are the ones corresponding to whole-plane and zero-boundary GFF's and the so-called \emph{$\gamma$-quantum cone} which is defined in~\cite[Definition 4.10]{wedges}. We will not need the precise definition. Roughly speaking, the $\gamma$-quantum cone describes the local behavior of a general $\gamma$-LQG surface at a point sampled from its $\gamma$-LQG measure~\cite[Proposition 4.13(ii) and Lemma~A.10]{wedges}. 

We will only ever work with a particular embedding of the $\gamma$-quantum cone called the \emph{circle-average embedding}, which is the random distribution $h$ on $\BB C$ defined in~\cite[Definition 4.10]{wedges}. Aside from its connection to the mated-CRT map (as we discuss below), the most important property of this distribution for our purposes is that $h|_{\BB D}$ agrees in law with the corresponding restriction of a whole-plane GFF plus $-\gamma\log|\cdot|$, normalized so that its circle average over $\bdy\BB D$ is zero. The $\gamma$-log singularity arises from the fact that a $\gamma$-LQG surface has such a log singularity at a typical point from the perspective of the $\gamma$-LQG measure~\cite[Section 3.3]{shef-kpz}. 

\subsubsection{Space-filling SLE$_\kappa$}

In this subsection we will review the construction of whole-plane space-filling SLE$_\kappa$ from $\infty$ to $\infty$ for $\kappa > 4$, which first appeared in~\cite[Section 1.2.3]{ig4}. This is a random space-filling curve in $\BB C$ which a.s.\ hits each fixed point exactly once (but has an uncountable fractal set of multiple points). In the case when $\kappa \geq 8$, ordinary SLE$_\kappa$ is already space-filling~\cite{schramm-sle} and whole-plane space-filling SLE$_\kappa$ from $\infty$ to $\infty$ is just a two-sided variant of ordinary SLE$_\kappa$. For $\kappa \in (4,8)$, however, ordinary SLE$_\kappa$ is not space-filling and space-filling SLE$_\kappa$ can be obtained from ordinary SLE$_\kappa$ by, roughly speaking, iteratively filling in the ``bubbles" which it disconnects from its target point by SLE$_\kappa$-type curves.

We will not need many properties of space-filling SLE$_\kappa$ here (only some estimates from~\cite{ghm-kpz} which the reader can take as a black box), but we provide a moderately detailed review for the sake of context. 
The basic idea of the construction is to first construct the outer boundary of the curve $\eta$ stopped at the first time it hits each $z\in\BB Q^2$, then interpolate these boundaries to get a space-filling curve. By SLE duality~\cite{zhan-duality1,zhan-duality2,dubedat-duality,ig1,ig4} the outer boundaries should be SLE$_{\ul\kappa}$-type curves for $\ul\kappa=16/\kappa$. We will define these curves using the theory of imaginary geometry~\cite{ig1,ig4}. 

Let $\chi^{\op{IG}} := 2/\sqrt{\ul\kappa} -\sqrt{\ul\kappa}/2$ and let $h^{\op{IG}}$ be a whole-plane GFF viewed modulo a global additive multiple of $2\pi\chi^{\op{IG}}$, as in~\cite{ig4} (here IG stands for ``Imaginary Geometry" and is used to distinguish the field $h^{\op{IG}}$ from the field $h$ used to construct the LQG measure).  
By~\cite[Theorem 1.1]{ig4}, for $z\in \BB C$, we can construct the \emph{flow lines} $\eta_z^L$ and $\eta_z^R$ of $h$ started from $z$ with angles $\pi/2$ and $-\pi/2$, respectively. These curves will be the left and right boundaries of $\eta$ stopped upon hitting $z$. 

For distinct $z,w \in \BB Q^2$, the flow lines $\eta_z^L$ and $\eta_w^L$ a.s.\ merge upon intersecting, and similarly with $R$ in place of $L$. The two flow lines $\eta_z^L$ and $\eta_z^R$ started at the same point a.s.\ do not cross, but these flow lines bounce off each other without crossing if and only if $\kappa \in (4,8)$~\cite[Theorem~1.7]{ig4}. 

We define a total order on $\BB Q^2$ by declaring that $z$ comes before $w$ if and only if $w$ is in a connected component of $\BB C\setminus (\eta_z^L\cup \eta_z^R)$ which lies to the right of $\eta_z^L$ (equivalently, to the left of $\eta_z^R$).  The whole-plane analogue of~\cite[Theorem~4.12]{ig4} (which can be deduced from the chordal case; see~\cite[Section~1.4.1]{wedges}) shows that there is a.s.\ a well-defined continuous curve $\eta : \BB R\rta \BB C$ such that the following is true. The curve $\eta$ traces the points of $\BB Q^2$ in the above order, is such that $\eta^{-1}(\BB Q^2)$ is a dense set of times, and is continuous when parameterized by Lebesgue measure. This curve $\eta$ is defined to be the whole-plane space-filling SLE$_{\kappa}$ from $\infty$ to $\infty$. 

In the case when $\kappa \geq 8$, the left/right boundary curves $\eta_z^L$ and $\eta_z^R$ do not bounce off each other, so for $a < b$ the set $\eta([a,b])$ has the topology of a closed disk. In contrast, for $\kappa \in (4,8)$ the curves $\eta_z^L$ and $\eta_z^R$ intersect in an uncountable fractal set and for $a<b$ the interior of the set $\eta([a,b])$ a.s.\ has countably many connected components, each of which has the topology of a disk (see Figure~\ref{fig-LR-def}, right). 
 
\begin{figure}[t!]
 \begin{center}
\includegraphics[scale=1]{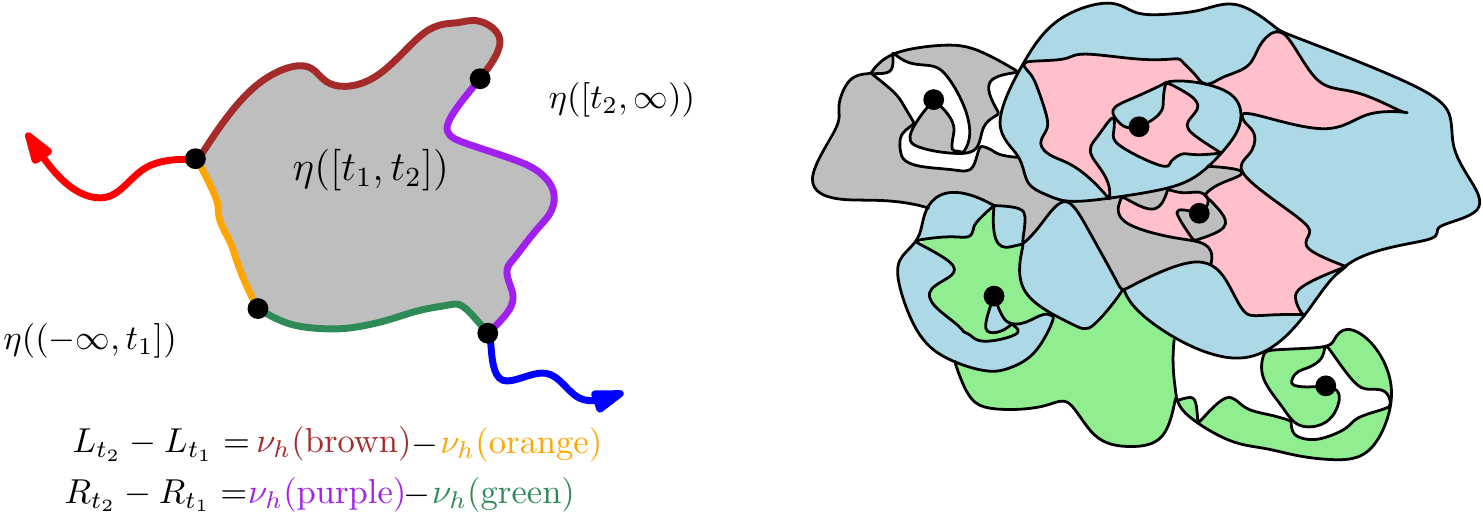}  
\vspace{-0.01\textheight}
\caption{ \textbf{Left.} Definition of the left/right quantum boundary length process $(L,R)$ for the space-filling SLE$_\kappa$ curve $\eta$, which is shown to be a pair of correlated Brownian motions in~\cite[Theorem 1.9]{wedges}. This figure corresponds to the case $\kappa \geq 8$ ($\gamma \in (0,\sqrt 2]$) since the image of each interval under $\eta$ is simply connected.
\textbf{Right.} Illustration of four typical space-filling SLE cells of the form $\eta([x-\ep,x])$ in the case $\kappa \in (4,8)$ ($\gamma \in (\sqrt 2 , 2)$). The picture is slightly misleading since the set of ``pinch points" where the left and right boundaries of each cell meet is actually uncountable, with no isolated points, but has Hausdorff dimension less than 2. The points where $\eta$ starts and finishes filling in each cell are shown with black dots. The grey and green cells intersect at several points, but do not share a connected boundary arc so are \emph{not} considered to be adjacent. This is natural since one can think of the blue cell as lying in between the grey and green cells. In fact, two cells which intersect, but do not share a connected boundary arc, will always be separated by another cell in this manner.  
}\label{fig-LR-def}
\end{center}
\vspace{-1em}
\end{figure} 

\subsubsection{Mated-CRT maps and SLE-decorated LQG}
\label{sec-peanosphere}

As in Section~\ref{sec-planar-map-discussion}, let $\gamma \in (0,2)$, let $h$ be the circle-average embedding of a $\gamma$-quantum cone, and let $\eta$ be a whole-plane space-filling SLE$_\kappa$ curve with $\kappa = 16/\gamma^2 > 4$ sampled independently from $h$ and then parametrized by $\gamma$-LQG mass with respect to $h$, i.e., so that $\eta(0) = 0$ and $\mu_h(\eta([s,t])) = t-s$ whenever $-\infty < s < t < \infty$. Let $\nu_h$ be the $\gamma$-LQG boundary length measure associated with $h$ (as in~\cite[Section 6]{shef-kpz}). We let $L : \BB R \rta \BB R$ be the process such that $L_0 = 0$ and for $t_1 < t_2$, 
\allb \label{eqn-LR-def}
L_{t_2} - L_{t_1} &=\nu_h\left( \text{left boundary of $ \eta([t_1,t_2]) \cap \eta([t_2,\infty))$} \right) \notag \\
&\qquad - \nu_h\left( \text{left boundary of $ \eta([t_1,t_2]) \cap \eta((-\infty,t_1])$} \right)  .
\alle
Similarly define $R $ with ``right" in place of ``left". See Figure~\ref{fig-LR-def}, left, for an illustration. Then~\cite[Theorem 1.9]{wedges} (and~\cite[Theorem 1.1]{kappa8-cov} in the case $\gamma  < \sqrt 2$) shows that $(L,R)$ has the law of a correlated two-sided two-dimensional Brownian motion with $\op{Corr}(L,R) = -\cos(\pi \gamma^2/4)$. In other words, $(L,R)$ is the same as the process used to construct the mated-CRT map in Section~\ref{sec-planar-map-discussion}. 

If we let $\mcl G^\ep$ for $\ep  >0$ be the mated-CRT map constructed from $(L,R)$, then one sees from~\eqref{eqn-LR-def} that two cells $\eta([x_1 - \ep  , x_1])$ and $\eta([x_2-\ep ,x_2])$ for $x_1,x_2 \in \ep \BB Z $ intersect along a non-trivial connected boundary arc if and only if the mated-CRT map adjacency condition~\eqref{eqn-inf-adjacency} holds for either $L$ or $R$.  Thus the mated-CRT map is isomorphic to the adjacency graph of space-filling SLE cells, with cells considered to be adjacent if they intersect along a non-trivial connected boundary arc. Note that for $\kappa \in (4,8)$, it is possible for two cells to intersect along a cantor-like set, but not a non-trivial connected boundary arc, in which case the cells are not considered to be adjacent (see Figure~\ref{fig-LR-def}, right).

\subsection{Comparing Liouville graph distance and SLE cell distance}
\label{sec-sle-compare}

In this subsection we will prove a proposition which allows us to compare distances in the adjacency graph of space-filling SLE cells discussed above with Liouville graph distances (see Proposition~\ref{prop-sle-metric-compare}). For this purpose we first need to introduce a few variants of Liouville graph distance. We start with the analogue of Liouville graph distance with SLE cells used in place of Euclidean balls.

\begin{defn} \label{def-sle-metric}
Let $h$ be some variant of the GFF on the whole plane such that $\mu_h(\BB C) =\infty$ a.s.\ and let $\eta$ be an independent whole-plane space-filling SLE$_\kappa$ curve, sampled independently from $h$ and then parametrized by $\gamma$-LQG mass with respect to $h$, i.e., so that $\eta(0) = 0$ and $\mu_h(\eta([s,t])) = t-s$ whenever $-\infty < s < t < \infty$. For $U\subset\BB C$, $\ep>0$, and $z_1,z_2\in \BB C$, we let $D_{h,\eta}^\ep(z_1,z_2 ; U)$ be equal to 1 plus the graph distance from the cell containing $z_1$ to the cell containing $z_2$ in the graph of cells of the form $\eta([x-\ep,x])$ for $x\in\ep\BB Z$ which are contained in $\ol U$, with two such cells considered to be adjacent if they intersect along a non-trivial connected boundary arc.

We also let $\wt D_{h,\eta}^\ep(z_1,z_2; U)$ be the minimum number of SLE segments of the form $\eta([a,b])$ for $0 < b-a \leq \ep$ which are contained in $\ol U$ and whose union contains a Euclidean path from $z_1$ to $z_2$. 

We abbreviate $D_{h,\eta}^\ep(\cdot,\cdot) := D_{h,\eta}^\ep(\cdot,\cdot ; \BB C)$ and $\wt D_{h,\eta}^\ep(\cdot,\cdot  ) :=  D_{h,\eta}^\ep(\cdot,\cdot  ; \BB C ) $. 
\end{defn}

We are mostly interested in $D_{h,\eta}^\ep$ since in the case when $h$ is the field corresponding to a $\gamma$-quantum cone, we know from the preceding subsection that $D_{h,\eta}^\ep(z_1,z_2 )$ is equal to 1 plus the graph distance in the mated-CRT map $\mcl G^\ep$ between the vertices corresponding to the cells containing $z_1$ and $z_2$. 
On the other hand, the definition of $\wt D_{h,\eta}^\ep$ is more closely analogous to the definition of Liouville graph distance and in particular 
\eqb \label{eqn-sle-dist-mono}
\wt D_{h,\eta}^\ep(z_1,z_2 ; U) \leq \wt D_{h,\eta}^{\ep'}(z_1,z_2 ; U) ,\quad \forall z_1,z_2\in U ,\quad \forall \ep' \in (0,\ep] .
\eqe
The analogous relationship does not hold for $D_{h,\eta}^\ep$. 

It is obvious that 
\eqb 
\wt D_{h,\eta}^\ep(z_1,z_2 ; U) \leq D_{h,\eta}^\ep(z_1,z_2  ; U) ,\quad \forall z_1,z_2\in U .
\eqe
One also has the following reverse relationship. 

\begin{lem} \label{lem-sle-metric}
Suppose we are in the setting of Definition~\ref{def-sle-metric}. There is a deterministic constant $c > 0$, depending only on $\gamma$, such that the following is true. Let $U \subset V\subset \BB C$ be connected open sets. On the event that each cell $\eta([x-\ep,x])$ for $x\in\ep\BB Z$ which intersects $\ol U$ is contained in $\ol V$,  
\eqb \label{eqn-sle-metric} 
D_{h,\eta}^\ep(z_1,z_2 ; V) \leq c \wt D_{h,\eta}^\ep(z_1,z_2; U) ,\quad \forall z_1,z_2 \in U .
\eqe 
In particular, if $h$ is a whole-plane GFF normalized so that $h_1(0) = 0$ or the circle average embedding of a $\gamma$-quantum cone and $\ol U \subset V \subset \ol V\subset \BB D$, then~\eqref{eqn-sle-metric} holds with polynomially high probability as $\ep\rta 0$. 
\end{lem}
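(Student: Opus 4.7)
The plan is to take a near-optimal covering of a path by SLE segments in $\ol U$ (witnessing $\wt D_{h,\eta}^\ep$) and convert it, with only bounded multiplicative overhead, into a walk in the cell adjacency graph $\mcl G^\ep$ restricted to cells contained in $\ol V$. Fix $z_1,z_2\in U$ on the event in the hypothesis, set $N := \wt D_{h,\eta}^\ep(z_1,z_2;U)$, and pick segments $\eta([a_i,b_i])\subset \ol U$ of time length at most $\ep$, $i=1,\dots,N$, whose union contains a continuous path $P$ from $z_1$ to $z_2$.

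First, I would replace each segment $\eta([a_i,b_i])$ by the cells of the form $\eta([x-\ep,x])$, $x\in\ep\BB Z$, associated with the grid intervals $[x-\ep,x]$ that meet $[a_i,b_i]$. Any interval of length at most $\ep$ meets at most two such grid intervals, so each segment gives rise to at most two cells, each of which contains a non-trivial sub-arc of $\eta([a_i,b_i])\subset \ol U$ and hence intersects $\ol U$; the hypothesis then forces every such cell to lie in $\ol V$. Moreover, the (at most two) cells coming from a single segment are consecutive in the SLE time order, and consecutive cells in time order automatically share the non-trivial boundary arc traced out by $\eta$ when it passes from one to the next, so they are graph-adjacent in $\mcl G^\ep$. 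This already handles intra-segment connectivity at essentially no cost.

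The main step, which I expect to be the main obstacle, is bridging between the cells attached to segment $i$ and those attached to segment $i+1$. Since the segments together cover a continuous path, consecutive segments intersect in $\BB C$, and so there are cells $C_i$ and $C_{i+1}$ drawn from the two groups whose closures share at least one point. Here I would invoke the structural fact about whole-plane space-filling SLE$_\kappa$ cells recalled in Section~\ref{sec-peanosphere} and illustrated in Figure~\ref{fig-LR-def}: two distinct $\ep$-cells that intersect but do not share a non-trivial boundary arc are always separated by a third cell that is adjacent to both, so any two intersecting cells lie at graph distance at most $2$ in $\mcl G^\ep$. Chaining these bridges over $i=1,\dots,N-1$ produces a walk of length $O(N)$ in $\mcl G^\ep$ from the cell containing $z_1$ to the cell containing $z_2$, yielding~\eqref{eqn-sle-metric} with a deterministic $c=c(\gamma)$. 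The subtle point is to guarantee that any intermediate bridging cell also lies in $\ol V$; since such a cell must touch the two cells $C_i,C_{i+1}\subset \ol V$, which themselves contain points of $\ol U$, a small thickening argument (or, equivalently, the introduction of an auxiliary open set $U'$ with $\ol U\subset U'\subset \ol U'\subset V$ and applying the hypothesis to $U'$) should suffice.

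For the final assertion, it is enough to exhibit an event of polynomially high probability as $\ep\rta 0$ on which every cell $\eta([x-\ep,x])$ intersecting $\ol U$ has Euclidean diameter smaller than $\op{dist}(\ol U,\bdy V)$. For the circle-average embedding of a $\gamma$-quantum cone such H\"older-type bounds on space-filling SLE$_\kappa$ parametrized by quantum mass are provided by~\cite{ghm-kpz}; for the whole-plane GFF normalized so that $h_1(0)=0$, the same estimate can be transferred via Lemma~\ref{lem-whole-plane-compare} and the absolute continuity of the two fields' restrictions to a neighborhood of $\ol U$ modulo a centered Gaussian harmonic correction.
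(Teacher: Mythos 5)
Your overall strategy matches the paper's: cover each quantum-mass-$\le\ep$ segment by at most two grid cells (and note that consecutive cells in time order are adjacent), so the whole problem reduces to bridging between cells that intersect but need not share a non-trivial connected boundary arc. The genuine weak point is how you bridge. The statement you lean on --- that two intersecting cells which do not share a connected boundary arc are always separated by a \emph{single} cell adjacent to both, so that any two intersecting cells are at graph distance at most $2$ --- is taken from the caption of Figure~\ref{fig-LR-def}, which is an informal illustration, not a quotable result; no distance-$2$ bound is proved in the paper or in the references, and for $\kappa\in(4,8)$ several cells can be pinched at a single multiple point of $\eta$, so the defensible statement is only ``boundedly many''. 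The paper's proof uses exactly that: the number of times $\eta$ hits any fixed point is at most a deterministic constant $c'=c'(\gamma)$ (\cite[Section 6]{ghm-kpz}, \cite[Section 8.2]{wedges}), so two cells meeting at a point $w$ can be joined by a chain of at most $c'$ cells, \emph{all containing $w$}, with consecutive cells sharing a non-trivial boundary arc.

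This choice also disposes of the containment problem you flag, whereas your proposed fix does not prove the lemma as stated. Since consecutive segments of the covering meet at a point $w\in\ol U$ (both segments lie in $\ol U$), you may take the two cells to be bridged to both contain $w$; then every cell in the paper's chain contains $w$, hence intersects $\ol U$, hence lies in $\ol V$ by the very event in the hypothesis --- no thickening is needed, and the constant $c$ is deterministic and depends only on $\gamma$. Your alternative of introducing $U'$ with $\ol U\subset U'\subset \ol U'\subset V$ and applying the hypothesis to $U'$ establishes the inequality on a \emph{different} event from the one in the statement, so it yields only a variant of the deterministic claim (enough for the probabilistic ``in particular'' part, but not for the lemma as written, which is also what gets invoked later in the paper). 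The last assertion is fine: the paper obtains it from the bound $\ep^q$ on the maximal diameter of cells meeting $\ol U$ (\cite[Lemma 3.3]{ghs-dist-exponent}), and your route via \cite{ghm-kpz} together with Lemma~\ref{lem-whole-plane-compare} is an acceptable substitute.
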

\begin{proof}
Suppose we are working on the event that each cell $\eta([x-\ep,x])$ for $x\in\ep\BB Z$ which intersects $\ol U$ is contained in $\ol V$. We will prove~\eqref{eqn-sle-metric}. Each segment $\eta([a,b])$ with $0 < b- a \leq \ep$ which is contained in $U$ is contained in the union of at most two segments of the form $\eta([x-\ep,x])$ for $x\in\ep\BB Z$ which intersect $\ol U$, hence are contained in $\ol V$. 
It follows that the minimum number of cells of the form $\eta([x-\ep,x])$ for $x\in\ep\BB Z$ which are contained in $\ol V$ and whose union contains a Euclidean path in $\ol U$ from $z_1$ to $z_2$ is bounded above by $c \wt D_{h,\eta}^\ep(z_1,z_2 ; U)$. 

The above minimum is not the same as $D_{h,\eta}^\ep(z_1,z_2 ;  V )$ when $\gamma \in (\sqrt 2 , 2)$ since in this case two cells can intersect but not share a non-trivial connected boundary arc (see Figure~\ref{fig-LR-def}, right), in which case they do not count as being adjacent for the purposes of defining $D_{h,\eta}^\ep(z_1,z_2; V)$. However, the number of times that $\eta$ can hit any fixed point of $\BB C$ is at most a deterministic, $\gamma$-dependent constant $c'$ (see~\cite[Section 6]{ghm-kpz} or~\cite[Section 8.2]{wedges}), so any two cells which intersect at a point $w \in U$ can be joined by a path of at most $c'$ cells which also intersect $w$ and such that any two successive cells in the path share a non-trivial boundary arc. Each cell in this path must be contained in $\ol V$ by assumption, so we get~\eqref{eqn-sle-metric} with $c = 2c'$. 

The last statement follows since standard SLE/LQG estimates show that there is a $q = q(\gamma) > 0$ such that the maximal diameter of the cells $\eta([x-\ep,x])$ which intersect $\ol U$ is at most $\ep^q$ with polynomially high probability as $\ep\rta 0$ (see, e.g.,~\cite[Lemma 3.3]{ghs-dist-exponent}), which implies that each such cell is contained in $\ol V$ with polynomially high probability as $\ep\rta 0$. 
\end{proof}

In the case of the whole-plane GFF, we will bound $D_{h,\eta}^\ep$-distances above and below by distances with respect to two minor variants of Liouville graph distances which we now define.  
For $z\in\BB C$ and $\ep > 0$, let 
\eqb \label{eqn-lgd-upper}
\ol R^\ep(z):= \sup\left\{ r > 0 : \mu_h(B_{2r}(z)) \leq \ep \right\} \quad \op{and} \quad \ol B^\ep(z):= B_{\ol R^\ep(z)}(z). 
\eqe 
Also let 
\eqb \label{eqn-lgd-lower}
\ul R^\ep(z) := \sup\left\{r > 0 : e^{\gamma h_r(z)} r^{2+\frac{\gamma^2}{2}} \leq \ep \right\} \quad \op{and} \quad
\ul B^\ep(z) := B_{\ul R^\ep(z)}(z) .
\eqe  
For an open set $U\subset\BB C$ and $z_1,z_2\in U$, we define the modified Liouville graph distance $\ol D_h^\ep\left(z_1,z_2 ; U \right)$ to be the minimum number of balls of the form $B_r(w)$ with $r\leq \ol R^\ep(w)$, $w \in U$, and $B_r(w) \subset \ol U$ whose union contains a path from $z_1$ to $z_2$. We similarly define $\ul D_h^\ep(z_1,z_2 ; U)$ but with $\ul R^\ep(w)$ in place of $\ol R^\ep(w)$. 

We will need the following partial analogue of Theorem~\ref{thm-diam} for the above variants of Liouville graph distance. 

\begin{prop} \label{prop-lgd-variant}
Let $h$ be a whole-plane GFF normalized so that its circle average over $\bdy\BB D$ is zero. 
For each open set $U\subset\BB C$, each compact connected set $K\subset U$, and each $\zeta \in (0,1)$, it holds with polynomially high probability as $\ep\rta 0$ that
\eqb \label{eqn-lgd-variant}
 \max_{z,w \in K} \ol D_h^\ep(z,w; U ) \leq \ep^{-\frac{1}{d_\gamma-\zeta}}  
\quad \op{and} \quad
  \ul D_h^\ep( K  , \bdy U  ) \geq \ep^{-\frac{1}{d_\gamma+\zeta}}  .
\eqe  
\end{prop}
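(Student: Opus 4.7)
The strategy is to compare $\ol D_h^\ep$ and $\ul D_h^\ep$ to the standard Liouville graph distance $D_h^{\ep^{1\pm\delta}}$ for a small parameter $\delta>0$, and then invoke Theorem~\ref{thm-diam}. The key technical input is the scaling identity
\[
\mu_h(B_r(z)) = r^{2 + \gamma^2/2}\, e^{\gamma h_r(z)}\, M_{z,r}, \qquad M_{z,r} := \mu_{h(r\cdot+z) - h_r(z)}(\BB D),
\]
which follows from the LQG coordinate change~\eqref{eqn-lqg-coord} applied to the conformal map $w \mapsto rw+z$ together with the relation $d\mu_{h+c} = e^{\gamma c} d\mu_h$. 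For each fixed $(z,r)$, the translation and scale invariance modulo additive constants of the whole-plane GFF show that $M_{z,r}$ has the same law as $\mu_{h^{(0)}}(\BB D)$ for a whole-plane GFF $h^{(0)}$ normalized to have zero circle average on $\bdy\BB D$; in particular, it has finite positive moments up to order $4/\gamma^2$ and all negative moments.

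Combining Markov's inequality at sufficiently high positive and negative moments with a union bound over $(z,r)$ in a polynomially-fine discretization $(\ep^{\alpha'}\BB Z^2\cap K) \times \{2^{-k}\}_{0 \leq k \leq \alpha \log_2\ep^{-1}}$, and interpolating via monotonicity of $r \mapsto \mu_h(B_r(z))$ and moment-based continuity in $z$ (of the type used in the proof of Lemma~\ref{lem-max-ball-radius}), we plan to show that for any $\delta\in(0,1)$ and $\alpha<\infty$, with polynomially high probability as $\ep\rta 0$,
\[
\ep^\delta \leq M_{z,r} \leq \ep^{-\delta} \quad \text{uniformly over } z\in K,\; r\in[\ep^\alpha,1].
\]
Combined with Lemmas~\ref{lem-use-btis} and~\ref{lem-one-scale-max} applied at scale $\ep^\alpha$, this also shows that $\ol R^\ep(z)$ and $\ul R^\ep(z)$ both lie in $[\ep^\alpha, 1]$ uniformly for $z$ in a neighborhood of $K$, once $\alpha$ is chosen large enough depending only on $\gamma$.

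For the lower bound on $\ul D_h^\ep$, the definition of $\ul R^\ep(z)$ together with continuity of $r \mapsto r^{2+\gamma^2/2} e^{\gamma h_r(z)}$ in $r$ yields $\mu_h(\ul B^\ep(z)) = \ep\, M_{z,\ul R^\ep(z)} \leq \ep^{1-\delta}$ on the regularity event, and monotonicity in $r$ shows that \emph{every} ball admissible in the definition of $\ul D_h^\ep$ has $\mu_h$-mass at most $\ep^{1-\delta}$. Hence $\ul D_h^\ep(K,\bdy U) \geq D_h^{\ep^{1-\delta}}(K,\bdy U)$, and Theorem~\ref{thm-diam} bounds the latter from below by $\ep^{-(1-\delta)/(d_\gamma+\zeta/2)} \geq \ep^{-1/(d_\gamma+\zeta)}$ once $\delta$ is small enough in terms of $\zeta$. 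For the upper bound on $\ol D_h^\ep$, applying the same scaling-identity estimate at radii $r$ and $2r$ and using the Gaussian tail for the increment $h_{2r}(w)-h_r(w)$ (which has variance $\log 2$) gives $\mu_h(B_{2r}(w)) \leq \ep^{-\delta'} \mu_h(B_r(w))$ uniformly; consequently any ball $B_r(w) \subset \ol U$ with $\mu_h(B_r(w)) \leq \ep^{1+\delta'}$ automatically satisfies $\mu_h(B_{2r}(w)) \leq \ep$, so is admissible in the definition of $\ol D_h^\ep$. This yields $\ol D_h^\ep(z,w;U) \leq D_h^{\ep^{1+\delta'}}(z,w;U)$, and Theorem~\ref{thm-diam} bounds the right-hand side by $\ep^{-(1+\delta')/(d_\gamma-\zeta/2)} \leq \ep^{-1/(d_\gamma-\zeta)}$ once $\delta'$ is small.

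The main obstacle is obtaining the uniform upper bound on $M_{z,r}$ with the correct polynomial probability decay: although for a single $(z,r)$ this is immediate from the moment bounds on $\mu_{h^{(0)}}(\BB D)$, making the estimate uniform over $z \in K$ and $r \in [\ep^\alpha, 1]$ requires a Kolmogorov-type continuity argument for $(z,r) \mapsto \mu_h(B_r(z))$. All the ingredients---moment bounds for $\mu_h$, continuity of circle averages (Lemma~\ref{lem-use-btis}), and moment estimates of the type used in Lemma~\ref{lem-max-ball-radius}---are already available in the paper, so this step is expected to be routine but technically involved.
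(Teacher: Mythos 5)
Your argument is correct in outline, but it takes a genuinely different route from the paper. The paper disposes of this proposition by quoting \cite[Proposition 6.2]{dzz-heat-kernel}, which compares $\ol D_h^\ep$ and $\ul D_h^\ep$ to $D_h^\ep$ up to factors $\ep^{\pm \zeta}$ --- but only for distances between \emph{disjoint compact sets}. The lower bound in \eqref{eqn-lgd-variant} then follows from \eqref{eqn-diam-proof-lower}, while the diameter upper bound cannot be read off from that comparison, so the paper re-runs the entire multi-scale/percolation argument of Section~\ref{sec-diam} with $\ol D_h^\ep$ in place of $D_h^\ep$, using the comparison only to obtain the analogue of Lemma~\ref{lem-local-dist}. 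You instead prove a ball-by-ball domination on a single regularity event: every ball admissible for $\ul D_h^\ep$ has $\mu_h$-mass at most $\ep^{1-\delta}$, and every ball of mass at most $\ep^{1+\delta'}$ is admissible for $\ol D_h^\ep$, so that $\ul D_h^\ep \geq D_h^{\ep^{1-\delta}}$ and $\ol D_h^\ep \leq D_h^{\ep^{1+\delta'}}$ as distances, and the quantitative bounds for $D_h^\ep$ finish the proof. Your scaling identity $\mu_h(B_r(z)) = r^{2+\gamma^2/2} e^{\gamma h_r(z)} M_{z,r}$ and the law of $M_{z,r}$ are correct. What your route buys is self-containedness (no appeal to \cite[Proposition 6.2]{dzz-heat-kernel} and no second pass through Section~\ref{sec-diam}); what it costs is the uniform estimate $\ep^{\delta} \leq M_{z,r} \leq \ep^{-\delta}$ over $z$ in a compact set and $r \in [\ep^\alpha,1]$, which appears nowhere in the paper and genuinely requires the chaining/discretization argument you sketch (all ingredients are standard, so this is extra work rather than an obstruction).

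A few loose ends should be tightened. First, Theorem~\ref{thm-diam} as stated is an almost sure limit and does not by itself give bounds with polynomially high probability at a fixed $\ep$; cite instead the quantitative statements established in its proof, namely \eqref{eqn-diam-proof-upper} (equivalently Proposition~\ref{prop-diam} together with Lemma~\ref{lem-tr-compare-square} and the rescaling step) for the upper bound and \eqref{eqn-diam-proof-lower} for the lower bound --- these precede the present proposition, so there is no circularity. Second, your regularity event controls $\ul R^\ep(w)$ only for $w$ in a neighborhood of $K$ (or of $\ol U$), but the definition of $\ul D_h^\ep(K,\bdy U)$, read as in Theorem~\ref{thm-diam}, places no a priori restriction on where admissible balls are centered; you must also rule out an admissible ball with faraway center and radius large enough to reach $\ol U$, i.e.\ show that with polynomially high probability there is no $w$ with $\op{dist}(w, \ol U) \leq \ul R^\ep(w)$ and $\ul R^\ep(w) \geq 1$. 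This forces $h_r(w)$ to be of order $-\log \ep^{-1}$ at some scale $r \geq 1$ and so has polynomially small probability after a union bound over dyadic scales, but it has to be included. Third, Lemmas~\ref{lem-use-btis} and~\ref{lem-one-scale-max} concern $\wh h_\delta$ rather than the circle average $h_\delta$; transfer them via Lemma~\ref{lem-circle-avg-approx} and Lemma~\ref{lem-whole-plane-compare} (or a direct Gaussian estimate). Finally, in the $\ol D_h^\ep$ direction your doubling estimate only covers radii $r \geq \ep^\alpha$; for smaller radii admissibility follows from the second half of Lemma~\ref{lem-max-ball-radius} (after choosing $\alpha$ slightly larger than $2/(2-\gamma)^2$), and one should run the $D_h^{\ep^{1+\delta'}}$ bound in a slightly smaller domain $U'$ with $\ol{U'} \subset U$ so that the covering balls have centers in $U$. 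None of these is a serious obstacle; with them, your proof goes through.
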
 
\begin{proof}
By \cite[Proposition 6.2]{dzz-heat-kernel} together with Lemma~\ref{lem-whole-plane-compare} and its variants for $\ol D_h^\ep$ and $\ul D_h^\ep$ (which are proven in the same way), for any bounded connected set $U\subset\BB C$, any fixed disjoint compact sets $K_1 ,K_2 \subset U $, and any $\zeta \in (0,1)$, it holds with polynomially high probability as $\ep\rta 0$ that
\eqb \label{eqn-lgd-variant-compare}
\ep^\zeta D_h^\ep(K_1,K_2 ; U) \leq  \ol D_h^\ep(K_1,K_2 ; U)  \leq   \ep^{-\zeta} D_h^\ep(K_1,K_2 ; U) 
\eqe  
and the same holds with $\ul D_h^\ep$ in place of $\ol D_h^\ep$. The lower bound for $\ul D_h^\ep(K,\bdy U)$ in~\eqref{eqn-lgd-variant} is immediate from this and~\eqref{eqn-diam-proof-lower} from the proof of Theorem~\ref{thm-diam}. The desired upper bound for $\ol D_h^\ep(z,w;U)$ follows from exactly the same argument given in Section~\ref{sec-diam} (with~\eqref{eqn-lgd-variant-compare} used to prove the needed analogue of Lemma~\ref{lem-local-dist}).  
\end{proof}

The main result of this subsection is the following proposition. 

\begin{prop} \label{prop-sle-metric-compare}
Let $h$ be a whole-plane GFF normalized so that its circle average over $\bdy \BB D$ is zero and fix $\zeta\in (0,1)$. 
Also let $U_1 \subset U_2 \subset U_3 \subset\BB C$ be bounded, connected open sets with $\ol U_1\subset U_2$ and $\ol U_2 \subset U_3$. It holds with polynomially high probability as $\ep\rta 0$ that
\eqb \label{eqn-sle-metric-compare}
\ul D_h^{\ep^{1-\zeta}}\left( z ,w ; U_3 \right) \leq  D_{h,\eta}^\ep\left( z ,w ; U_2 \right)   \leq \ep^\zeta \ol D_h^\ep\left( z ,w ; U_1 \right) ,\quad \forall z,w \in U_1 .
\eqe 
In particular, for any open set $U\subset\BB C$ and any compact set $K \subset U$, it holds with polynomially high probability as $\ep\rta 0$ that
\eqb \label{eqn-sle-metric-prob}
 \max_{z,w \in K} D_{h,\eta}^\ep(z,w; U ) \leq \ep^{-\frac{1}{d_\gamma-\zeta}}  
\quad \op{and} \quad
 D_{h,\eta}^\ep( K  , \bdy U  ) \geq \ep^{-\frac{1}{d_\gamma+\zeta}}   ,
\eqe  
and the same is true with $\ul D_h^\ep$ or $\ol D_h^\ep$ in place of $D_{h,\eta}^\ep$. 
Moreover, the conclusion of Theorem~\ref{thm-diam} remains true with any of $\ul D_h^\ep$, $\ol D_h^\ep$, or $D_{h,\eta}^\ep$ in place of $D_h^\ep$. 
\end{prop}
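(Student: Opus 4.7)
The plan is to establish the two-sided bound~\eqref{eqn-sle-metric-compare} using standard SLE/LQG regularity estimates, then deduce~\eqref{eqn-sle-metric-prob} by combining with Proposition~\ref{prop-lgd-variant}, and finally obtain the Theorem~\ref{thm-diam} analog via a Borel-Cantelli/monotonicity argument over dyadic values of $\ep$. The essential input, which follows from the SLE cell estimates of~\cite{ghm-kpz} (as used e.g.\ in~\cite[Lemma 3.3]{ghs-dist-exponent}), is a \emph{roughly spherical} property for cells: for any $\zeta' \in (0,1)$, it holds with polynomially high probability as $\ep \to 0$ that every cell $\eta([x-\ep,x])$ intersecting $\ol U_1$ (resp.\ $\ol U_2$) is contained in $U_2$ (resp.\ $U_3$) and has Euclidean diameter within a factor $\ep^{-\zeta'}$ of its inradius. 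Combined with the circle-average estimates of Lemmas~\ref{lem-use-btis}--\ref{lem-mid-scale-max} (which control $e^{\gamma h_r(w)} r^{2+\gamma^2/2}$ on the relevant scales), this lets us pass between Euclidean balls of $\mu_h$-mass at most $\ep$ and SLE cells of $\mu_h$-mass exactly $\ep$ with only $\ep^{\pm o(1)}$ slack.

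For the upper bound $D_{h,\eta}^\ep(z,w;U_2) \leq \ep^\zeta \ol D_h^\ep(z,w;U_1)$, I would take an almost-optimal $\ol D_h^\ep$-covering of a path from $z$ to $w$ by balls $B_1,\ldots,B_N \subset \ol U_1$ with $\mu_h(B_i) \leq \ep$. The roughly-spherical property, combined with the fact that each cell has $\mu_h$-mass exactly $\ep$, implies that the Euclidean diameter of any cell intersecting $\ol U_1$ is at most $\ep^{-o(1)}$ times $\ol R^\ep(w)$ uniformly in $w \in \ol U_1$, so each $B_i$ intersects at most $\ep^{-o(1)}$ cells. Tracing the path through these cells produces a sequence of consecutively intersecting cells in $\ol U_2$ of total length at most $\ep^{-o(1)} N$; Lemma~\ref{lem-sle-metric} converts this into a $D_{h,\eta}^\ep$-path of the same length, up to multiplication by the universal constant $c$ (to handle cell pairs that intersect without sharing a non-trivial boundary arc). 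Absorbing the $\ep^{-o(1)}$ factor into the polynomial slack $\ep^\zeta$ yields the desired inequality.

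For the lower bound $\ul D_h^{\ep^{1-\zeta}}(z,w;U_3) \leq D_{h,\eta}^\ep(z,w;U_2)$, I would take an optimal $D_{h,\eta}^\ep$-path of cells $C_1,\ldots,C_M \subset \ol U_2$ from $z$ to $w$ and enclose each $C_j$ in the Euclidean ball $B_{r_j}(w_j)$ with $w_j \in C_j$ and $r_j = \mathrm{diam}(C_j)$. The inscribed ball of $C_j$ has $\mu_h$-mass at most $\mu_h(C_j) = \ep$, and the roughly-spherical estimate plus polynomial doubling of $\mu_h$ at this scale (via circle-average control) gives $\mu_h(B_{r_j}(w_j)) \leq \ep^{1-\zeta}$, hence $e^{\gamma h_{r_j}(w_j)} r_j^{2+\gamma^2/2} \leq \ep^{1-\zeta}$, i.e., $r_j \leq \ul R^{\ep^{1-\zeta}}(w_j)$. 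Since $C_j \subset \ol U_2$ and $r_j = \ep^{o(1)}$, we have $B_{r_j}(w_j) \subset \ol U_3$ for small $\ep$ (this is why we need the gap $\ol U_2 \subset U_3$), so the $B_j$'s form a valid $\ul D_h^{\ep^{1-\zeta}}$-covering.

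The bounds~\eqref{eqn-sle-metric-prob} then follow immediately by combining~\eqref{eqn-sle-metric-compare} with Proposition~\ref{prop-lgd-variant} and choosing appropriate $\zeta$'s: for the upper bound one applies Proposition~\ref{prop-lgd-variant} at a $\zeta'$ slightly larger than the target $\zeta$ to absorb the extra $\ep^\zeta$ factor, and analogously for the lower bound. A standard covering of $K$ by compact sets whose enlargements lie in $U$ reduces the pair $(K,U)$ of~\eqref{eqn-sle-metric-prob} to the three-domain setup of~\eqref{eqn-sle-metric-compare}. For the almost-sure Theorem~\ref{thm-diam} analog, monotonicity of each of $D_{h,\eta}^\ep$, $\ol D_h^\ep$, $\ul D_h^\ep$ in $\ep$ (each non-increasing) upgrades the polynomially-probable dyadic-scale bounds to almost-sure statements for all small $\ep$ via Borel-Cantelli, exactly as at the end of the proof of Theorem~\ref{thm-diam}. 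The main technical obstacle is formulating the roughly-spherical SLE estimate uniformly over all cells intersecting $\ol U_1$ and propagating the resulting $\ep^{\pm o(1)}$ factors cleanly through the two differently-normalized Liouville graph distances $\ol D_h^\ep$ and $\ul D_h^\ep$.
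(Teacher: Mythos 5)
Your outline follows the paper's strategy in broad strokes (cell regularity from~\cite{ghm-kpz} plus mass--radius bounds from Lemma~\ref{lem-max-ball-radius}, a covering argument in each direction, then Proposition~\ref{prop-lgd-variant} and a dyadic union bound), but the two central comparisons in~\eqref{eqn-sle-metric-compare} are asserted rather than proved, and the missing mechanisms are the crux. For the upper bound, ``each $B_i$ intersects at most $\ep^{-o(1)}$ cells'' does not follow from an upper bound on cell diameters: arbitrarily many cells could meet a single ball no matter how small they are compared to $\ol R^\ep(w)$, and the uniform comparison $\op{diam}\eta([x-\ep,x]) \leq \ep^{-o(1)} \ol R^\ep(w)$ is itself unjustified --- on the regularity event both quantities are only pinned down to a fixed polynomial range $[\ep^{p_2},\ep^{p_1}]$, so their ratio can a priori be a fixed negative power of $\ep$. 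What the count actually requires, and what the paper uses (this is the reason $\ol R^\ep$ is defined through the ball of \emph{doubled} radius), is: (i) no mass-$\ep$ cell can be properly contained in $B_{2\ol R^\ep(z)}(z)$, which also has $\mu_h$-mass $\ep$, so every cell meeting $\ol B^\ep(z)$ must cross the annulus $B_{2\ol R^\ep(z)}(z)\setminus \ol B^\ep(z)$; and (ii) the~\cite{ghm-kpz} estimate applied to the crossing segment shows each such cell contains a Euclidean ball of radius at least $\ol R^\ep(z)^{1+\xi}$ inside $B_{2\ol R^\ep(z)}(z)$, so a packing/area argument bounds the number of intersecting cells by $\ep^{-o(1)}$. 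A lower bound on the area each intersecting cell occupies near the ball is what is needed, not an upper bound on its diameter.

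For the lower bound, the deduction ``$\mu_h(B_{r_j}(w_j)) \leq \ep^{1-\zeta}$, hence $e^{\gamma h_{r_j}(w_j)} r_j^{2+\gamma^2/2} \leq \ep^{1-\zeta}$'' is not valid: the circle-average proxy is not dominated by the ball mass in general; you would need the reverse comparison $\mu_h(B_r(w)) \geq r^{o(1)} e^{\gamma h_r(w)} r^{2+\gamma^2/2}$ to hold \emph{uniformly at the random, cell-dependent centers $w_j$ and radii $r_j$}, and the lemmas you invoke (Lemmas~\ref{lem-use-btis}--\ref{lem-mid-scale-max}) concern the white-noise field $\wh h_\delta$, not circle averages of $h$, and give no such mass-versus-proxy bound. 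This is exactly why the paper routes the argument through the deterministic lattice $\ep^{2p_2}\BB Z^2 \cap U_3$ and Lemma~\ref{lem-circle-avg-ball-mass}: a union bound gives $\mu_h\big(B_{\ep^{p_2\xi(1-\zeta)} \ul R^{\ep^{1-\zeta}}(z)}(z)\big) \geq \ep$ at every lattice point, and since the cell's inscribed ball has mass $<\ep$, such a ball around a lattice point deep inside the inscribed ball cannot fit inside it, forcing $\ul R^{\ep^{1-\zeta}}(z) \geq \op{diam}\eta([x-\ep,x])$ and hence the containment of the cell in $\ul B^{\ep^{1-\zeta}}(z)$; your appeal to ``polynomial doubling of $\mu_h$'' at random centers would need a comparable net-plus-union-bound device, which is absent. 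Finally, your dyadic/Borel--Cantelli step assumes $D_{h,\eta}^\ep$ is non-increasing in $\ep$; it is not (the paper states this explicitly after Definition~\ref{def-sle-metric}), and the upgrade over dyadic scales must go through the monotone variant $\wt D_{h,\eta}^\ep$ using~\eqref{eqn-sle-dist-mono} and Lemma~\ref{lem-sle-metric}.
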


Proposition~\ref{prop-sle-metric-compare} does \emph{not} apply directly in the setting of Section~\ref{sec-peanosphere} since we are working with a whole-plane GFF instead of a $\gamma$-quantum cone. We will transfer to the case of a $\gamma$-quantum cone in Proposition~\ref{prop-cone-diam} below. 
For the proof of the lower bound for $D_{h,\eta}^\ep$ in Proposition~\ref{prop-sle-metric-compare}, we need the following basic estimate for the $\gamma$-LQG measure.

\begin{lem} \label{lem-circle-avg-ball-mass}
Let $h$ and $U$ be as in Proposition~\ref{prop-sle-metric-compare} and let $\ul R^\ep(z)$ for $\ep>0$ and $z\in\BB C$ be as in~\eqref{eqn-lgd-upper}. For each $\zeta ,  \xi \in (0,1)$, 
\eqbn
\BB P\left[ \mu_h\left(B_{ \ep^{ \zeta} \ul R^\ep(z)}(z) \right) \geq  \ep^{1+\zeta\left(2 + \frac{\gamma^2}{2} \right) + \xi} \right] \geq 1 - O_\ep\left( \ep^{\frac{\xi^2}{8\gamma^2 \zeta}}  \right) ,
\eqen
at a rate which is uniform over all $z\in U$. 
\end{lem}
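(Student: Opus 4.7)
My plan is to decompose $\mu_h(B_r(z))$ via the $\gamma$-LQG coordinate change formula into the product of an explicit factor $r^{2+\gamma^2/2} e^{\gamma h_r(z)}$ and a rescaled LQG mass on the unit disk, and then to handle the random scale $r = \ep^\zeta \ul R^\ep(z)$ via the strong Markov property of the circle-average Brownian motion together with the radial/lateral decomposition of the whole-plane GFF. The main obstacle I anticipate is showing that, even though $\rho := \ep^\zeta \ul R^\ep(z)$ is random, the rescaled mass factor is distributed as $\mu_h(B_1(0))$ and is independent of the relevant Gaussian increment.

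For any fixed $r > 0$, applying the coordinate change~\cite[Proposition 2.1]{shef-kpz} to the map $w \mapsto rw + z$ yields
\[
\mu_h(B_r(z)) = r^{2+\gamma^2/2} e^{\gamma h_r(z)} \mu_{\tilde h^r}(B_1(0)), \qquad \tilde h^r(w) := h(rw+z) - h_r(z),
\]
where the scale and translation invariance of the whole-plane GFF modulo additive constants ensures that $\tilde h^r$ is a whole-plane GFF normalized so that its circle average over $\bdy \BB D$ is zero. By~\cite[Section 3.1]{shef-kpz}, the process $t \mapsto B_t := h_{e^{-t}}(z)$ is a standard Brownian motion, and $\tau := -\log \ul R^\ep(z)$ is the first hitting time of $\log \ep$ by the drifted Brownian motion $\gamma B_t - (2+\gamma^2/2) t$, with equality at $\tau$ by continuity. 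Setting $\sigma := \tau + \zeta \log \ep^{-1}$ so that $\rho = e^{-\sigma}$, a direct computation gives
\[
\rho^{2+\gamma^2/2} e^{\gamma h_\rho(z)} = e^{\gamma B_\sigma - (2+\gamma^2/2)\sigma} = \ep^{1+\zeta(2+\gamma^2/2)} \cdot e^{\gamma W},
\]
where $W := B_\sigma - B_\tau$ is a centered Gaussian of variance $\zeta \log \ep^{-1}$ independent of $\mcl F_\tau$ by the strong Markov property of $B$ at $\tau$.

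I would then exploit the radial/lateral decomposition of the whole-plane GFF around $z$, in which the lateral part is independent of the radial (circle-average) process $B$. The circle averages $\tilde h^\rho_{r'}(0) = h_{\rho r'}(z) - h_\rho(z) = B_{\sigma - \log r'} - B_\sigma$ correspond to the strong-Markov increment $\{B_{\sigma+s} - B_\sigma\}_{s \geq 0}$, which is a standard Brownian motion independent of $\mcl F_\sigma$ and hence of $W \in \mcl F_\sigma$. The lateral part of $\tilde h^\rho$ is a deterministic rescaling (by $\rho \in \mcl F_\tau$) of the lateral part of $h$, which is independent of the full radial process and therefore of $W$. It follows that $\tilde h^\rho$ has the law of a whole-plane GFF normalized at $\bdy \BB D$ and is independent of $W$, so $M := \mu_{\tilde h^\rho}(B_1(0))$ has the law of $\mu_h(B_1(0))$ and is independent of $W$. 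Since all negative moments of $\mu_h(B_1(0))$ are finite in the subcritical regime (a standard GMC fact, see e.g.~\cite{rhodes-vargas-review}), for each $q > 0$ there is $C_q < \infty$ with $\BB P[M < \delta] \leq C_q \delta^q$ for all $\delta > 0$.

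To conclude, writing $\mu_h(B_\rho(z)) = \ep^{1+\zeta(2+\gamma^2/2)} e^{\gamma W} M$, the complement of the target event is contained in $\{e^{\gamma W} < \ep^{\xi/2}\} \cup \{M < \ep^{\xi/2}\}$. The Gaussian tail bound yields
\[
\BB P\!\left[\gamma W \leq -\tfrac{\xi}{2} \log \ep^{-1}\right] \leq \exp\!\left(-\tfrac{(\xi/2)^2 \log^2 \ep^{-1}}{2 \gamma^2 \zeta \log \ep^{-1}}\right) = \ep^{\xi^2/(8\gamma^2 \zeta)},
\]
and choosing $q = \xi/(4 \gamma^2 \zeta)$, so that $q \xi /2 = \xi^2/(8\gamma^2\zeta)$, bounds the $M$-tail by the same quantity. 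Uniformity over $z \in U$ follows because the laws of $\tilde h^r$, $W$, and $M$ in the argument above do not depend on $z$, thanks to the translation invariance of the whole-plane GFF modulo additive constants.
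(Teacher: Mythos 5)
Your first half is essentially the paper's first step: viewing $\ul R^\ep(z)$ as a stopping radius for the circle-average process and using the strong Markov property to get that $W = h_{\ep^\zeta \ul R^\ep(z)}(z) - h_{\ul R^\ep(z)}(z)$ is centered Gaussian with variance $\zeta\log\ep^{-1}$, whose tail gives exactly the exponent $\xi^2/(8\gamma^2\zeta)$; this matches the paper's use of~\cite[Proposition 3.2]{shef-kpz} and the Gaussian tail bound, and your exponent bookkeeping at the end is correct. The second half, however, has a genuine gap. You claim that the lateral part of $\tilde h^\rho$ is ``a deterministic rescaling (by $\rho\in\mcl F_\tau$) of the lateral part of $h$, which is independent of the full radial process,'' and conclude that $M=\mu_{\tilde h^\rho}(B_1(0))$ has exactly the law of $\mu_h(B_1(0))$ and is independent of $W$. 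For $z\neq 0$ this does not follow: $h$ is normalized so that its average over $\bdy B_1(0)$ vanishes, and $\bdy B_1(0)$ is a circle \emph{not} centered at $z$, so the value (as opposed to the increments) of $B_t=h_{e^{-t}}(z)$ contains the average over $\bdy B_1(0)$ of the lateral part of $h$ about $z$. In particular $t\mapsto B_t$ is not a standard Brownian motion started from $0$, and more importantly the stopping time $\tau$ --- hence the rescaling factor $\rho$ --- is \emph{not} a functional of the radial process alone and is \emph{not} independent of the lateral field. Your exact scale-invariance step (rescaling the lateral part by a factor that secretly depends on it) therefore does not identify the law of $h^{\mathrm{lat}}(\rho\,\cdot+z)$, and the assertion that $M$ has the law of $\mu_h(B_1(0))$ --- which is precisely what you use to import the finite negative moments --- is unjustified. (The independence of $W$ from the post-$\sigma$ radial increments and from the frozen lateral data can in fact be rescued by a more careful conditioning argument, but the identification of the law of $M$ cannot, as written.)

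The paper sidesteps this entirely by never identifying the law of the field at the random scale: it applies~\cite[Lemma 4.6]{shef-kpz}, a comparison valid uniformly over radii, to get that with superpolynomially high probability $\mu_h\big(B_{\ep^\zeta\ul R^\ep(z)}(z)\big)\geq \ep^{\xi/2}\big(\ep^\zeta\ul R^\ep(z)\big)^{2+\gamma^2/2}e^{\gamma h_{\ep^\zeta\ul R^\ep(z)}(z)}$, and then combines this with the Gaussian increment bound and the defining identity $\ul R^\ep(z)^{2+\gamma^2/2}e^{\gamma h_{\ul R^\ep(z)}(z)}=\ep$. To repair your argument you would need either to invoke such a uniform-over-scales lower bound in place of the exact law of $M$, or to condition on $h$ outside $B_{\ul R^\ep(z)}(z)$ via the Markov property and control the oscillation on $B_\rho(z)$ of the resulting harmonic correction (together with a uniform small-ball estimate for the zero-boundary part); either route requires additional work that the current write-up does not contain.
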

\begin{proof}
By~\cite[Proposition 3.2]{shef-kpz}, $ h_{\ep^\zeta \ul R^\ep(z)}(z) - h_{\ul R^\ep(z)}(z)$ is centered Gaussian with variance $\log \ep^{-\zeta}$. By the Gaussian tail bound,
\eqbn
\BB P\left[  | h_{\ep^\zeta \ul R^\ep(z)}(z) - h_{\ul R^\ep(z)}(z) | \leq \frac{\xi}{2\gamma } \log \ep^{-1} \right] \geq 1 - O_\ep\left( \ep^{\frac{\xi^2}{8\gamma^2 \zeta} }  \right) .
\eqen
By~\cite[Lemma 4.6]{shef-kpz}, 
\eqbn
\BB P\left[ \mu_h\left(B_{ \ep^{ \zeta} \ul R^\ep(z)}(z) \right) \geq \ep^{\frac{\xi}{2}} \left( \ep^\zeta \ul R^\ep(z)   \right)^{2 + \frac{\gamma^2}{2}} \exp\left(  \gamma    h_{\ep^\zeta \ul R^\ep(z)}(z) \right) \right] \geq 1 - O_\ep(\ep^p) ,\: \forall p > 0 .
\eqen
We conclude by combining these estimates and recalling that $\ul R^\ep(z)^{2+\gamma^2/2} e^{\gamma h_{\ul R^\ep(z)}} = \ep$ by definition. 
\end{proof}

\begin{proof}[Proof of Proposition~\ref{prop-sle-metric-compare}]
Fix a small parameter $\xi\in (0,1)$ to be chosen later, in a manner depending only on $\zeta$ and $\gamma$. 
\medskip

\noindent\textit{Step 1: regularity event for balls and cells.}
By Lemma~\ref{lem-max-ball-radius} (and a union bound over dyadic values of $\ep$), there exists $\wt p_2 > \wt p_1 > 0$ depending only on $\gamma$ such that with polynomially high probability as $\ep\rta 0$, 
\eqb \label{eqn-compare-event}
\text{Each Euclidean ball $B\subset U_3$ with $\mu_h(B) = \delta \in (0,\ep^\xi]$ has radius in $[\delta^{\wt p_2} , \delta^{\wt p_1}]$.} 
\eqe 
By~\cite[Proposition 3.4 and Remark 3.9]{ghm-kpz}, for each $\xi \in (0,1)$, it holds with superpolynomially high probability as $\ep\rta 0$ that the following is true: for each $\delta \in (0,\ep^\xi]$ and each $a,b\in\BB R$ with $a<b$, $\eta([a,b])\subset U_3$, and $\op{diam}\eta([a,b]) \geq \delta $, the set $\eta([a,b])$ contains a Euclidean ball of radius at least $\delta^{1+\xi}$. Let $E^\ep = E^\ep(\xi)$ be the event that this is the case and~\eqref{eqn-compare-event} holds, so that $E^\ep$ occurs with polynomially high probability as $\ep\rta 0$, with the exponent depending only on $\gamma$. 

We first argue that if $\xi$ is chosen sufficiently small (in a manner depending only on $\gamma$) then there exists $p_2 > p_1 > 0$ (depending only on $\gamma$) such that on $E^\ep$,  
\eqb \label{eqn-compare-max-diam}
\ep^{p_2} \leq \ol R^\ep(z)  \leq \ep^{p_1}  ,\: \forall z\in U_3  
\quad \op{and} \quad 
\ep^{p_2} \leq \op{diam}\eta([x-\ep,x]) \leq \ep^{p_1} ,\: \forall x \in \ep\BB Z \: \text{with} \: \eta([x-\ep,x])\subset U_3 .
\eqe 
Indeed, the bounds for $\ol R^\ep(z)$ in~\eqref{eqn-compare-max-diam} (for any $p_1 < \wt p_1 < \wt p_2 < p_2$) are immediate from~\eqref{eqn-compare-event} since $\mu_h( B_{2\ol R^\ep(z)}(z) )  = \ep$. Since $\mu_h(\eta([x-\ep,x])) =  \ep$, the lower bound for $\op{diam}\eta([x-\ep,x])$ is also immediate from~\eqref{eqn-compare-event}. To get the upper bound for $\op{diam}(\eta([x-\ep,x]))$, we first use the condition on $\eta$ in the definition of~\eqref{eqn-compare-event} to get that each of the cells $\eta([x-\ep,x])$ which is contained in $U_3$ must contain a Euclidean ball of radius at least $(\ep^\xi \wedge \op{diam}(\eta([x-\ep,x]))^{1+\xi}$. This ball has $\mu_h$-mass at most $\ep$, so by~\eqref{eqn-compare-event} has radius at most $\ep^{\wt p_1}$. This gives the upper bound in~\eqref{eqn-compare-max-diam} for any $p_1 > \wt p_1$ provided $\xi$ is chosen sufficiently small. 
\medskip

\noindent\textit{Step 2: upper bound for $D_{h,\eta}^\ep$.}
We now compare $D_{h,\eta}^\ep$ and $\ol D_h^\ep$. Assume that $E^\ep $ occurs. 
To lighten notation, let $2\ol B^\ep(z) := B_{2\ol R^\ep(z)}(z)$, so that $\mu_h(2\ol B^\ep(z)) = \ep$. 
We assume that $\ep$ is chosen sufficiently small such that on $E^\ep$, each ball of the form $2\ol B^\ep(z)$ which intersects $U_1$ and each cell $\eta([x-\ep,x])$ for $x\in\ep\BB Z$ which intersects such a ball is contained in $U_2$ (this is the case for small enough $\ep$ by~\eqref{eqn-compare-max-diam}).

For $z\in U_2$, none of the SLE cells $\eta([x-\ep,x])$ for $x\in\ep\BB Z$ (which each have $\mu_h$-mass $\ep$) is properly contained in $2 \ol B^\ep(z)$, so each such cell which intersects $\ol B^\ep(z) $ must cross the annulus $2\ol B^\ep(z) \setminus \ol B^\ep(z)$. Since $ \ol R^\ep(z) \leq \ep^{p_1} \leq \ep^\xi$ by~\eqref{eqn-compare-max-diam}, the condition on $\eta$ in the definition of $E^\ep$ together with~\eqref{eqn-compare-max-diam} (applied to a segment of $\eta|_{[x-\ep,x]}$ which crosses the annulus) shows that each such cell contains a Euclidean ball of radius at least $\ol R^\ep(z)^{1+\xi} \geq \ep^{ p_2 \xi } \ol R^\ep(z) $ which is itself contained in $2\ol B^\ep(z)$. Such a ball has Lebesgue measure at least $\pi \ep^{ 2 p_2 \xi } \ol R^\ep(z)^2$, so there can be at most $4 \ep^{-2 p_2 \xi}$ such balls contained in $2\ol B^\ep(z)$. Hence there can be at most $4 \ep^{-2 p_2 \xi}$ cells of the form $\eta([x-\ep,x])$ which intersect $\ol B^\ep(z)$. In particular, any connected subset of $U_1$ which can be covered by $N$ balls of the form $\ol B^\ep(z)$ can be covered by $\ep^{-2p_2\xi} N$ cells of the form $\eta([x-\ep,x])$ for $x\in\ep\BB Z$. 
Each such cell is contained in $U_2$ by the assumption in the preceding paragraph.  
If we choose $\xi \leq \zeta/(2p_2)$, this shows that with polynomially high probability $ D_{h,\eta}^\ep(z,w ; U_2)   \leq \ep^\zeta \ol D_h^\ep(z , w ; U_1)$ for all $z ,w \in U_1$.
 \medskip

\noindent\textit{Step 3: lower bound for $D_{h,\eta}^\ep$.}
It remains to compare $\ul D_h^\ep$ and $D_{h,\eta}^\ep$. On the event $E^\ep$ above, each cell $\eta([x-\ep  ,x])$ which is contained in $U_2$ contains a Euclidean ball $B_x^\ep$ of radius at least $(\op{diam}\eta([x-\ep  ,x]) )^{1+\xi} \geq \ep^{p_2 \xi} \op{diam} \eta([x-\ep,x])$. By~\eqref{eqn-compare-max-diam}, this ball $B_x^\ep$ has radius at least $\ep^{p_2(1+ \xi)}$ and hence contains a point of $(\ep^{2 p_2 } \BB Z^2) \cap U_3$ provided $\ep$ is small enough that $\ep^{p_1} \leq \op{dist}(\bdy U_2 ,\bdy U_3)$. 

By Lemma~\ref{lem-circle-avg-ball-mass} (applied with $\ep^{1-\zeta}$ in place of $\ep$, $  p_2 \xi$ in place of $\zeta$, and $\xi^{1/4}$, say, in place of $\xi$)  and a union bound over $(\ep^{2p_2} \BB Z^2) \cap U_3$, if $\xi$ is chosen sufficiently small, in a manner depending only on $\gamma$, then with polynomially high probability as $\ep\rta 0$, 
\eqb \label{eqn-min-circle-avg}
  \mu_h\left(B_{ \ep^{ p_2\xi(1-\zeta) } \ul R^{\ep^{1-\zeta}}(z)}(z) \right)  
  \geq \ep^{1-\zeta +   p_2 \xi (1-\zeta) \left(2 + \frac{\gamma^2}{2} \right) + \xi^{1/4} (1-\zeta) }  
  \geq \ep,
  \quad \forall z \in (\ep^{2 p_2 } \BB Z^2) \cap U_3 .
\eqe 
Furthermore, by a standard Gaussian estimate (see, e.g.,~\cite[Proposition 2.4]{lqg-tbm2}) it holds with polynomially high probability as $\ep\rta 0$ that each of the balls $\ul B^{\ep^{1-\zeta}}(z)$ for $z\in U_2$ is contained in $U_3$.

Henceforth assume that $E^\ep$ occurs,~\eqref{eqn-min-circle-avg} holds, and the event described just after~\eqref{eqn-min-circle-avg} occurs. 
Since the radius of $B_x^\ep$ is at least $\ep^{p_2(1+\xi)}$ for each $x \in \ep\BB Z$ with $\eta([x-\ep,x]) \subset U_2$, for each such $x$ we can find $z\in (\ep^{2 p_2 } \BB Z^2) \cap B_x^\ep$ which lies at Euclidean distance at least $\frac14 \op{diam} B_x^\ep$ from $\bdy B_x^\ep$. Since $\mu_h(B_x^\ep) \leq \ep$ and by~\eqref{eqn-min-circle-avg}, the ball $B_{ \ep^{2p_2\xi(1-\zeta)} \ul R^{\ep^{1-\zeta}}(z)}(z)$ cannot be contained in $B_x^\ep$, which means that $\ul R^{\ep^{1-\zeta}}(z) \geq \frac14 \ep^{- p_2\xi(1-\zeta)} \op{diam}(B_x^\ep) \geq \op{diam} \eta([x-\ep,x])$. In other words, $\eta([x-\ep,x]) \subset \ul B^{\ep^{1-\zeta}}(z)$. We also have $\ul B^{\ep^{1-\zeta}}(z) \subset U_3$ by our above assumption. Since a $z\in \eta([x-\ep,x])$ with this property can be found for any $x\in \ep\BB Z$, we obtain the left inequality in~\eqref{eqn-sle-metric-compare}.

The bound~\eqref{eqn-sle-metric-prob} and its variants for $\ul D_h^\ep$ and $\ol D_h^\ep$ is immediate from~\eqref{eqn-sle-metric-compare} and Proposition~\ref{prop-lgd-variant}. The analogue of Theorem~\ref{thm-diam} for $D_{h,\eta}^\ep$ is immediate from~\eqref{eqn-sle-metric-prob} and a union bound over dyadic values of $\ep$ (here we also need to use~\eqref{eqn-sle-dist-mono} and Lemma~\ref{lem-sle-metric} since $D_{h,\eta}^\ep$ is not monotone in $\ep$). Similar statements hold for $\ul D_h^\ep$ and $\ol D_h^\ep$. 
\end{proof}

\subsection{Ball growth exponent for random planar maps}
\label{sec-ball-size}

In order to study mated-CRT maps, we need to transfer the conclusion of Proposition~\ref{prop-sle-metric-compare} from the case of a whole-plane GFF to the case of a $\gamma$-quantum cone. We restrict attention to balls contained in the unit disk to avoid technicalities related to our choice of embedding for the $\gamma$-quantum cone. 

\begin{prop} \label{prop-cone-diam}
Let $h$ be the circle average embedding of a $\gamma$-quantum cone. 
For each $\zeta\in (0,1)$ and each $\rho \in (0,1)$, it holds with polynomially high probability as $\ep\rta 0$ (at a rate depending on $\rho$, $\zeta$, and $\gamma$) that  
\eqb \label{eqn-cone-diam}
D_{h,\eta}^\ep\left( 0 , \bdy B_\rho(0) \right) \geq \ep^{-\frac{1}{d_\gamma+\zeta}} \quad \op{and} \quad
\max_{z,w\in B_\rho(0) } D_{h,\eta}^\ep\left( z,w;  \BB D \right) \leq \ep^{-\frac{1}{d_\gamma -\zeta}} .
\eqe  
\end{prop}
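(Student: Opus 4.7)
The strategy is to transfer the estimates of Proposition~\ref{prop-sle-metric-compare} from the whole-plane GFF to the $\gamma$-quantum cone, using (i) the explicit description of the circle-average embedding on $\BB D$ and (ii) the scale invariance of the quantum cone. Recall from Section~\ref{sec-lqg-prelim} that $h|_{\BB D}$ has the same law as $(\wt h - \gamma\log|\cdot|)|_{\BB D}$, where $\wt h$ is a whole-plane GFF with $\wt h_1(0) = 0$; fix a coupling so these fields are equal on $\BB D$ and $\eta$ is independent of $\wt h$. Let $\wt\eta$ denote the same curve as $\eta$ but reparametrized by $\mu_{\wt h}$-mass; then $(\wt h,\wt\eta)$ satisfies the hypotheses of Proposition~\ref{prop-sle-metric-compare}. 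The only obstruction to a direct comparison is the singularity of $-\gamma\log|\cdot|$ at $0$, which makes $\mu_h = |\cdot|^{-\gamma^2}\mu_{\wt h}$ only locally comparable to $\mu_{\wt h}$ on sets bounded away from $0$.

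First I would fix $0 < r < \rho < \rho' < 1$ and let $A := \ol{B_{\rho'}(0)}\setminus B_r(0)$. On $A$ the potential $-\gamma\log|\cdot|$ is bounded, so $\mu_h$ and $\mu_{\wt h}$ are comparable with deterministic multiplicative constants; hence every cell of $\eta$ contained in $A$ of $\mu_h$-mass $\ep$ has $\mu_{\wt h}$-mass of the same order, and conversely. Combining this with the SLE cell-diameter control used in the proof of Proposition~\ref{prop-sle-metric-compare} (to ensure that cells meeting $A$ remain inside $\BB D$) and applying Proposition~\ref{prop-sle-metric-compare} to $(\wt h,\wt\eta)$ with nested open sets surrounding $A$ yields that with polynomially high probability as $\ep\rta 0$,
\begin{equation*}
\max_{z,w\in A}D_{h,\eta}^\ep(z,w;\BB D) \leq \ep^{-1/(d_\gamma-\zeta/2)}
\quad\text{and}\quad
D_{h,\eta}^\ep\bigl(\bdy B_r(0),\bdy B_\rho(0);\BB D\bigr) \geq \ep^{-1/(d_\gamma+\zeta/2)} .
\end{equation*}

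To conclude, I would use the small-cell estimate (e.g., \cite[Lemma~3.3]{ghs-dist-exponent}): with polynomially high probability, every cell $\eta([x-\ep,x])$ intersecting any given compact subset of $\BB C$ has Euclidean diameter at most $\ep^c$ for some $c=c(\gamma)>0$. The lower bound in~\eqref{eqn-cone-diam} is then immediate, since the cell containing $0$ lies in $B_{\ep^c}(0)\subset B_r(0)$ for small $\ep$, so any cell path from $0$ to $\bdy B_\rho(0)$ (whether or not restricted to $\BB D$) contains a sub-path of small cells all lying in a slight enlargement of $A\subset\BB D$ and crossing between $\bdy B_r(0)$ and $\bdy B_\rho(0)$; the second displayed estimate provides the desired lower bound. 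For the upper bound, I would invoke the scale invariance of the circle-average embedding of the $\gamma$-quantum cone (see, e.g., \cite[Proposition~4.13(i)]{wedges}): for any $C>0$, $(h(C\cdot)+Q\log C,\;C^{-1}\eta(\cdot))\eqD (h,\eta)$, and the coordinate change preserves $\mu_h$-mass $\ep$ cells. Applying the annular upper bound from the previous paragraph at each dyadic scale $r_k := 2^{-k}\rho$ and rescaling back shows that the $D_{h,\eta}^\ep$-diameter of the dyadic annulus $A_k := \ol{B_{r_{k-1}}(0)}\setminus B_{r_k}(0)$ is at most $\ep^{-1/(d_\gamma-\zeta/2)}$ with polynomially high probability, uniformly in $k$. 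Any $z\in B_\rho(0)$ either lies inside the cell containing $0$ or in $A_k$ for some $k=O(\log\ep^{-1})$, so a union bound over these $O(\log\ep^{-1})$ scales together with the triangle inequality gives the required upper bound, with the polylogarithmic loss absorbed into the slack between $\zeta/2$ and $\zeta$ in the exponent. The main technical obstacle is precisely this dyadic iteration: one must verify that the regularity events underlying the annular step hold uniformly across the $O(\log\ep^{-1})$ dyadic scales, which requires a uniform polynomial decay rate under the quantum-cone rescaling.
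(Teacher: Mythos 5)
Your lower bound and the general architecture (reduce to a whole-plane GFF plus $\gamma\log|\cdot|^{-1}$ on $\BB D$, apply Proposition~\ref{prop-sle-metric-compare} on a fixed annulus away from the origin, then handle the upper bound by a dyadic decomposition around $0$) match the paper's proof, but the upper bound as you have written it has two genuine gaps. First, the exact scale invariance you invoke is false: for a fixed deterministic $C$, the field $h(C\cdot)+Q\log C$ is an embedding of the same quantum surface but it is \emph{not} the circle-average embedding (its average over $\bdy \BB D$ is $h_C(0)+Q\log C\neq 0$), so $(h(C\cdot)+Q\log C, C^{-1}\eta)$ does not have the law of $(h,\eta)$; the invariance in \cite[Proposition 4.13]{wedges} involves a \emph{random} rescaling radius. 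The correct version of your rescaling step (which is what the paper does, working directly with the whole-plane GFF plus $\gamma\log|\cdot|^{-1}$) replaces the mass parameter $\ep$ at scale $2^{-n}$ by the random quantity $T_n\ep$ with $T_n = 2^{(2+\gamma^2/2)n}e^{\gamma h_{2^{-n}}(0)}$, so one must control the Gaussian circle averages $h_{2^{-n}}(0)$ uniformly over the $O(\log\ep^{-1})$ scales and then compare distances at mass scale $T_n\ep$ to those at mass scale $\ep$. Since $D_{h,\eta}^\ep$ is \emph{not} monotone in $\ep$, this comparison forces one to pass to the monotone variant $\wt D_{h,\eta}^\ep$ of Definition~\ref{def-sle-metric} and convert back at the end via Lemma~\ref{lem-sle-metric}; none of this bookkeeping appears in your argument, and it is not a matter of "verifying uniform polynomial rates" for an invariance that does not hold.

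Second, your covering of $B_\rho(0)$ near the origin is wrong: the cell $\eta([x_0-\ep,x_0])$ containing $0$ need not contain any Euclidean neighborhood of $0$, and points at distance, say, $e^{-1/\ep}$ from $0$ lie in dyadic annuli $A_k$ with $k$ far larger than $O(\log\ep^{-1})$, so the claim that every $z\in B_\rho(0)$ lies in the cell containing $0$ or in one of $O(\log\ep^{-1})$ annuli is unjustified. One needs a separate argument to bound the number of cells meeting a small ball around the origin: the paper chooses $q=q(\gamma)>0$ with $\mu_h(B_{2\ep^q}(0))\leq \ep$ with polynomially high probability (so no cell is contained in $B_{2\ep^q}(0)$) and then uses the bound of \cite[Proposition 3.4]{ghm-kpz} on the number of crossings of $B_{2\ep^q}(0)\setminus B_{\ep^q}(0)$ by $\eta$ to cover $B_{\ep^q}(0)$ by at most $\ep^{-\wt\zeta}$ segments of mass at most $\ep$, and only runs the dyadic iteration over scales $2^{-n}\geq \ep^q$. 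Without an ingredient of this kind your dyadic sum does not terminate and the neighborhood of the origin, where the $\gamma$-log singularity lives, is not covered.
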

\begin{proof}
Recall that $h|_{\BB D}$ agrees in law with the corresponding restriction of a whole-plane GFF plus $\gamma \log (|\cdot|^{-1})$, normalized so that its circle average over $\bdy\BB D$ is 0.  
Hence it is enough to prove the lemma with $h$ replaced with a whole-plane GFF plus $\gamma \log (|\cdot|^{-1})$. We assume that this replacement has been made throughout the proof. 

The lower bound in~\eqref{eqn-cone-diam} is immediate from Proposition~\ref{prop-sle-metric-compare} for $D_{h,\eta}^\ep$ (applied with $K = \bdy B_{\rho/2}(0)$ and $U = B_{\rho}(0)\setminus B_{\rho/4}(0)$, say) since $h|_{\BB D \setminus B_{\rho/2}(0)}$ differs from the corresponding restriction of a whole-plane GFF by a deterministic, bounded function. 

To get the upper bound, we will bound the distance across dyadic annuli centered at 0, then sum over the annuli. Recall that $\wt D_{h ,\eta }^{ \ep} $ is the modified version of $D_{h,\eta}^\ep$ from Definition~\ref{def-sle-metric}. For most of the argument we will use this distance instead of $D_{h ,\eta }^{ \ep}$ since the former is monotone in $\ep$ (recall~\eqref{eqn-sle-dist-mono}). 
We will switch back to $D_{h,\eta}^\ep$ at the end by means of Lemma~\ref{lem-sle-metric}. 
 
Let $\wt\zeta \in (0,1)$ be a small parameter to be chosen later, in a manner depending only on $\zeta$ and $\gamma$. 
Also fix $r \in (0,  \rho \wedge ( 1-\rho) )$. For $n\in\BB N_0$, define the annulus $\mcl A_n := B_{2^{-n} \rho}(0) \setminus B_{2^{-n-1}\rho}(0)$ and the slightly larger annulus $\mcl A_n' := B_{2^{-n} (\rho +r) }(0) \setminus B_{2^{-n-1}(\rho-r) }(0) $. 
Define the re-scaled field/curve pair 
\eqb \label{eqn-scaled-field}
h^n := h(2^n\cdot) - h_{2^{-n}}(0) \quad \op{and} \quad \eta^n :=  2^{-n} \eta\left( T_n^{-1} \cdot     \right) \quad \text{for} \quad T_n := 2^{(2+\gamma^2/2) n} e^{\gamma h_{2^{-n}}(0)} .
\eqe 
Then $(h^n,\eta^n) \eqD (h,\eta)$ (note that $\eta^n$ is parametrized by $\mu_{h^n}$-mass by the $\gamma$-LQG coordinate change formula~\cite[Proposition 2.1]{shef-kpz}). Furthermore, each segment $\eta^n([a,b])$ with $0 <b-a \leq \ep$ is equal to the re-scaled segment $2^{-n} \eta([T_n^{-1} a , T_n^{-1} b])$. 
Therefore,
\eqb \label{eqn-cone-compare-scale}
\wt D_{h,\eta}^\ep(z,w ; \mcl A_n') = \wt D_{h^n,\eta^n}^{T_n \ep}(2^n z , 2^n w ; \mcl A_0')  ,\quad \forall z,w \in \mcl A_n' .
\eqe

By standard estimates for the $\gamma$-LQG measure (see, e.g.,~\cite[Lemma A.3]{ghs-dist-exponent}), we can find $q = q(\gamma) > 0$ such that with polynomially high probability as $\ep\rta 0$, we have $\mu_h(B_{2\ep^q}(0)) \leq \ep$, which means that $B_{2\ep^q}(0)$ does not contain any $\ep$-LQG mass segment of $\eta$. It follows from~\cite[Proposition 3.4 and Remark 3.9]{ghm-kpz} that with superpolynomially high probability as $\ep\rta 0$, the number of crossings of $B_{2\ep^q}(0)\setminus B_{\ep^q}(0)$ by $\eta$ is at most $\ep^{-\wt\zeta}$. Consequently, with polynomially high probability as $\ep\rta 0$, $B_{\ep^q}(0)$ can be covered by at most $\ep^{-\wt\zeta}$ segments of $\eta$ which are contained in $B_{2\ep^q}(0)$ and hence $\mu_h$-mass at most $\ep$ and so
\eqb \label{eqn-cone-compare-ball}
\max_{z,w\in B_{\ep^q}(0)}  \wt D_{h,\eta}^\ep\left(z,w ; \BB D \right) \leq \ep^{-\wt\zeta}  .
\eqe

We will now estimate $\wt D_{h^n,\eta^n}^{T_n\ep} |_{\mcl A_0'}$ for $n\in\BB N_0$ with $2^{-n}\geq \ep^q$, then sum over all such $n$. 
For each such $n$, let $E^n = E^n(\ep)$ be the event that 
\eqbn
|h_{2^{-n}}(0)| \leq \gamma \log 2^n  + \frac{\wt\zeta}{ \gamma} \log \ep^{-1}
\quad \op{and} \quad 
\max_{z,w\in \mcl A_0} \wt D_{h^n,\eta^n}^{T_n \ep }(z,w ; \mcl A_0') \leq \ep^{-\frac{1}{d_\gamma-\zeta/2}} .
\eqen   

The random variable $h_{2^{-n}}(0)$ is centered Gaussian with mean $\gamma \log 2^n$ and variance $\log 2^n$~\cite[Section 3.1]{shef-kpz}, so the probability that the first condition in the definition of $E^n$ fails decays polynomially in $\ep$, uniformly over all $n\in\BB N$ with $2^{-n} \geq \ep^q$. If $|h_{2^{-n}}(0)| \leq \left(\gamma + \frac{\wt\zeta}{ \gamma} \right) \log 2^n$, then if $\wt\zeta$ is chosen sufficiently small, in a manner depending only on $\gamma$ we have (in the notation of~\eqref{eqn-scaled-field}) $T_n  \geq 2^{(2+\gamma^2/2 - \gamma  ) n} \ep^{\wt\zeta}  \geq  \ep^{\wt\zeta}$. Since $(h^n,\eta^n) \eqD (h,\eta)$ and $h|_{\mcl A_0'}$ differs from the corresponding restriction of a whole-plane GFF by a deterministic function which is bounded independently of $\ep$ and $n$, we infer from Proposition~\ref{prop-sle-metric-compare} and~\eqref{eqn-sle-dist-mono} (to compare $\wt D_{h^n,\eta^n}^{T_n \ep}$ to $\wt D_{h^n,\eta^n}^{\ep^{1 + \wt \zeta}}$) that if $\wt \zeta$ is chosen sufficiently small, in a manner depending only on $\gamma$ and $\zeta$, then $E^n$ occurs with polynomially high probability as $\ep\rta 0$, uniformly over all $n\in\BB N_0$ with $2^{-n} \geq \ep^q$. By a union bound over logarithmically many values of $n$, we see that with polynomially high probability as $\ep\rta 0$, $E^n$ occurs for every such $n$. Combining this with~\eqref{eqn-cone-compare-scale} and~\eqref{eqn-cone-compare-ball}, and summing over all $n$ with $2^{-n}\geq\ep^q$, we see that the upper bound in~\eqref{eqn-cone-diam} holds with $\wt D_{h,\eta}^\ep$ in place of $D_{h,\eta}^\ep$ with polynomially high probability as $\ep\rta 0$. We then convert from $\wt D_{h,\eta}^\ep$ to $D_{h,\eta}^\ep$ by means of Lemma~\ref{lem-sle-metric}.
\end{proof}

As a consequence of Proposition~\ref{prop-cone-diam}, we obtain Theorem~\ref{thm-ball-size} in the case of the mated-CRT map. 

\begin{prop} \label{prop-crt-ball}
Let $\mcl G = \mcl G^1$ be the $\gamma$-mated-CRT map with unit increment size. For each $\zeta \in (0,1)$, it holds with polynomially high probability as $r \rta \infty$ (at a rate depending only on $\zeta$ and $\gamma$) that the volume of the metric ball of radius $r$ satisfies
\eqb \label{eqn-crt-ball}
r^{d_\gamma - \zeta} \leq \#\mcl B_r^{\mcl G}(0) \leq r^{d_\gamma + \zeta } .
\eqe  
\end{prop}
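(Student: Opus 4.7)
The plan is to use Brownian scaling together with Proposition~\ref{prop-cone-diam} to translate the ball volume estimates into Euclidean statements about SLE cells. First I observe that Brownian scaling of $(L,R)$ implies $\mcl G^\ep \eqD \mcl G = \mcl G^1$ as rooted planar maps for every $\ep > 0$ (the map $x\mapsto \ep x$ implements the vertex isomorphism, sending root to root), so it suffices to prove the two inequalities in~\eqref{eqn-crt-ball} for $\#\mcl B_r^{\mcl G^\ep}(0)$ with $\ep = \ep(r)$ chosen differently for each direction. Via the SLE/LQG representation from Section~\ref{sec-peanosphere}, $\mcl G^\ep$ is realized as the adjacency graph of the cells $\{\eta([x-\ep, x])\}_{x\in\ep\BB Z}$ under the circle-average embedding of a $\gamma$-quantum cone, so that the graph distance between the cells containing two points $z,w$ equals $D_{h,\eta}^\ep(z,w) - 1$. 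Fix $\rho \in (0,1)$; standard LQG estimates show that $\mu_h(\ol B_\rho(0)) \in [\ep^{\zeta/10}, \ep^{-\zeta/10}]$ with polynomially high probability as $\ep\rta 0$, and that the cell containing $0$ has Euclidean diameter at most $\ep^q$ (for some $q = q(\gamma) > 0$) with polynomially high probability, by the kind of cell-regularity estimates used in the proof of Proposition~\ref{prop-sle-metric-compare}.

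For the upper bound, I will take $\ep := r^{-d_\gamma - \zeta/2}$. Applying the first inequality in~\eqref{eqn-cone-diam} with a sufficiently small parameter in place of $\zeta$ will give $D_{h,\eta}^\ep(0, \bdy B_\rho(0)) \geq \ep^{-1/(d_\gamma + \zeta/8)} > r + 2$ with polynomially high probability. Consequently, no cell at graph distance $\leq r$ from the root cell (which itself lies inside $B_\rho(0)$ for small $\ep$) can contain any point of $\bdy B_\rho(0)$; by connectedness of the chain of cells witnessing this graph distance, every cell in $\mcl B_r^{\mcl G^\ep}(0)$ must then be contained in $\ol B_\rho(0)$. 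Since the cells have pairwise disjoint interiors and each carries $\mu_h$-mass exactly $\ep$, their number is at most $\mu_h(\ol B_\rho(0))/\ep + 1 \leq \ep^{-1-\zeta/10} + 1 \leq r^{d_\gamma + \zeta}$ for $r$ large.

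For the lower bound, I will take $\ep := r^{-d_\gamma + \zeta/2}$. The second inequality in~\eqref{eqn-cone-diam} with a small parameter yields $\max_{z,w\in B_\rho(0)} D_{h,\eta}^\ep(z,w ; \BB D) \leq \ep^{-1/(d_\gamma - \zeta/8)} \leq r$ with polynomially high probability, so the cells intersecting $B_\rho(0)$ are all within graph distance $r$ of the root cell and therefore lie in $\mcl B_r^{\mcl G^\ep}(0)$. These cells cover $B_\rho(0)$ and each has $\mu_h$-mass $\ep$, so there are at least $\mu_h(B_\rho(0))/\ep \geq \ep^{-1 + \zeta/10} \geq r^{d_\gamma - \zeta}$ of them.

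The main obstacle is only bookkeeping rather than a genuine difficulty: tuning the slack in the exponents $d_\gamma \pm \zeta/2$ (my choice of $\ep$), $d_\gamma \pm \zeta/8$ (my applications of Proposition~\ref{prop-cone-diam}), and $\pm\zeta/10$ (the slack for $\mu_h(\ol B_\rho(0))$) so that they combine to give the target $d_\gamma \pm \zeta$ with a polynomial rate of convergence in $r$, and also combining the various polynomially high probability events into one. The final transfer from $\mcl G^\ep$ to $\mcl G$ is then immediate from the Brownian scaling noted above.
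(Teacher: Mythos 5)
Your argument is correct and follows essentially the same route as the paper: realize $\mcl G^\ep \eqD \mcl G$ via Brownian scaling and the SLE/LQG cell representation, apply Proposition~\ref{prop-cone-diam} with $\ep$ a suitable power of $r$ to trap the graph ball between the cells contained in and the cells covering $B_\rho(0)$, and count cells using the fact that each has $\mu_h$-mass $\ep$ together with moment bounds for $\mu_h(B_\rho(0))$. The only differences are cosmetic (the paper counts points $\eta(x)\in B_\rho(0)$ rather than cells, and handles the exponent slack by "possibly shrinking $\zeta$", exactly the bookkeeping you flag).
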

\begin{proof}
Recall that for $\ep > 0$, the mated-CRT map $\mcl G^\ep$ agrees in law with $\mcl G$. 
Furthermore, the map $\ep\BB Z \ni x \mapsto \eta([x-\ep,x])$ is a graph isomorphism from $\mcl G^\ep$ to the adjacency graph of cells $\eta([x-\ep,x])$ for $x\in\ep\BB Z$. 
Proposition~\ref{prop-cone-diam} implies that for each $\rho \in (0,1)$, it holds with polynomially high probability as $\ep\rta 0$ that 
\eqb \label{eqn-quantum-euc-balls}
\eta\left(  \mcl B_{\ep^{-1/(d_\gamma+\zeta)} }^{\mcl G^\ep}(0)  \right)   \subset B_\rho(0) \quad \op{and} \quad 
 \mcl B_{\ep^{-1/(d_\gamma-\zeta)} }^{\mcl G^\ep}(0) \supset \eta^{-1}\left( B_\rho(0) \cap \eta(\ep\BB Z) \right) .
\eqe 
Since $h|_{\BB D}$ agrees in law with the corresponding restriction of a whole-plane GFF plus $ \gamma \log |\cdot|$, it is easily seen (see, e.g.,~\cite[Lemmas A.2 and A.3]{ghs-dist-exponent}) that $\mu_h(B_\rho(0) )$ has finite moments of all negative orders and a finite moment of some positive order, so by Markov's inequality it holds with polynomially high probability as $\ep\rta 0$ that $\ep^\zeta \leq \mu_h\left(B_\rho(0) \right) \leq \ep^{-\zeta}$. Since the cells $\eta([x-\ep,x])$ have $\mu_h$-mass $\ep$, it holds with polynomially high probability as $\ep\rta 0$ that
\eqbn
\ep^{-1+\zeta} \leq  \#\left(  B_\rho(0) \cap \eta(\ep\BB Z) \right)   \leq \ep^{-1-\zeta} .
\eqen
Combining this with~\eqref{eqn-quantum-euc-balls} (applied with $\ep = r^{-d_\gamma-\zeta}$ and with $\ep = r^{-d_\gamma+\zeta}$), possibly shrinking $\zeta$, and recalling that $\mcl G^\ep \eqD \mcl G$ shows that~\eqref{eqn-crt-ball} holds with polynomially high probability as $r \rta \infty$.
\end{proof}

\begin{proof}[Proof of Theorem~\ref{thm-ball-size}]
The theorem statement in the case when $\mcl M =\mcl G$ is a mated-CRT map follows from Proposition~\ref{prop-crt-ball} and a union bound over dyadic values of $r$. If $\mcl M$ is one of the other planar maps listed above the theorem statement, let $\mcl G$ be the mated-CRT map with the same value of $\gamma$ as $\mcl M$. Proposition~\ref{prop-crt-ball} together with the coupling results for $\mcl M$ and $\mcl G$ established in~\cite{ghs-map-dist} (see in particular~\cite[Theorem 1.5 and Lemma 1.12]{ghs-dist-exponent}) shows that for each $\zeta\in (0,1)$, it holds with polynomially high probability as $r \rta\infty$ that 
\eqb \label{eqn-map-ball}
r^{d_\gamma - \zeta} \leq \#\mcl B_r^{\mcl M}(0) \leq r^{d_\gamma + \zeta } .
\eqe  
We now conclude as above by means of a union bound over dyadic values of $r$. 
\end{proof}

\appendix
 
\section{Proof of Lemma~\ref{lem-gff-compare}}
\label{sec-gff-compare}
 
We will compare $\wh h^\tr$ and $h^U$, and then $\wh h^\tr$ and $\wh h$. The comparison of $h^U$ and $h$ follows from Lemma~\ref{lem-whole-plane-compare}.

\begin{lem} \label{lem-h-wn-diff}
If $U\subset \BB C$ is a bounded Jordan domain then we can couple $\wh h^\tr$ with the zero-boundary GFF $h^U$ on $U$ in such a way that the following is true. 
If we let $K$ be the set of points in $U$ which lie at distance at least $\frac{1}{10}$ from $\bdy U$, then $(h - \wh h^\tr)|_K$ a.s.\ admits a modification which is a continuous Gaussian random function and there are constants $c_0,c_1 >0$ depending only on $U$ such that for $A>1$,
\eqb \label{eqn-h-wn-diff}
\BB P\left[ \max_{z\in K} |(h^U - \wh h^\tr)(z)| \leq A \right] \geq 1 - c_0 e^{-c_1 A^2} .
\eqe
\end{lem}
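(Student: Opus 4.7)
The plan is to couple $\wh h^\tr$ and $h^U$ through their joint white-noise representations and then verify continuity of the difference via Kolmogorov's criterion and the Borell--TIS inequality, exactly as in the proof of Lemma~\ref{lem-whole-plane-compare}. Reusing the white noise $W$ underlying $\wh h^\tr$ (extended to $\BB C \times [0,\infty)$ if necessary), I define
\[
h^U(z) := \sqrt{\pi} \int_0^\infty \int_{\BB C} p_U(s/2 ; z, w)\, W(dw, ds),
\]
where $p_U$ is the Dirichlet heat kernel on $U$. A Chapman--Kolmogorov computation gives covariance $\pi \int_0^\infty p_U(s; z_1, z_2)\, ds = 2\pi G_U(z_1, z_2)$, which matches the zero-boundary GFF covariance under the paper's normalization, so $h^U$ has the correct law. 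With this coupling,
\[
(h^U - \wh h^\tr)(z) = \sqrt{\pi} \int_1^\infty \int p_U(s/2; z, w)\, W(dw, ds) + \sqrt{\pi} \int_0^1 \int \phi_z(s,w)\, W(dw, ds),
\]
where $\phi_z(s,w) := p_U(s/2; z, w) - p_{B_{1/10}(z)}(s/2; z, w) \ge 0$ for $z \in K$, since $B_{1/10}(z)\subset U$ there.

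The first integral is easy: for $s \ge 1$ the kernel $p_U(s/2; z, w)$ is jointly smooth in $z$ with bounds depending only on $U$, and its total $L^2(dw\, ds)$ mass on $[1,\infty) \times \BB C$ is finite from the exponential decay of $p_U(s;z,z)$ as $s \to \infty$ on bounded $U$. For the main second integral, I will apply the strong Markov property at the first exit time $\tau^z$ of Brownian motion from $B_{1/10}(z)$ to obtain $\phi_z(s,w) = E_z\!\left[\BB 1_{\{\tau^z \le s/2 < \tau_U\}}\, p_U(s/2 - \tau^z; \mcl B_{\tau^z}, w)\right]$. Integrating over $w$ yields $\int \phi_z(s,w)\, dw \le P_z[\tau^z \le s/2] \le C_0 e^{-c_0/s}$ from standard Brownian exit-time estimates (with $c_0, C_0>0$ depending only on the fixed radius $1/10$), and the pointwise bound $\phi_z \le p(s/2; z, \cdot)$ then gives $\int \phi_z(s,w)^2\, dw \le C_0'\, s^{-1} e^{-c_0/s}$, which is integrable on $(0,1]$. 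Summing the two contributions bounds the pointwise variance of $h^U - \wh h^\tr$ uniformly on $K$ by a constant depending only on $U$.

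To upgrade to a continuous modification I will establish the H\"older-type bound $\op{Var}\!\big((h^U - \wh h^\tr)(z_1) - (h^U - \wh h^\tr)(z_2)\big) \le C_U\, |z_1 - z_2|^{2\alpha}$ for some $\alpha > 0$; via Gaussianity this suffices in every $L^p$, so Kolmogorov's criterion applies. The $s \ge 1$ contribution is $O(|z_1 - z_2|^2)$ from pointwise Lipschitz estimates on $p_U$. The main obstacle is the $s \le 1$ contribution, where the killing domain $B_{1/10}(z)$ itself depends on the starting point. I will handle this via translation invariance, writing $p_{B_{1/10}(z)}(s/2; z, w) = q(s/2;\, w - z)$ for $q(t;u) := p_{B_{1/10}(0)}(t; 0, u)$, so that the $z$-dependence of this kernel reduces to a spatial translation in $w$. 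Splitting the $s$-integral at $s = |z_1-z_2|^\beta$ for some small $\beta > 0$, the small-$s$ piece is negligible by the exponential suppression $e^{-c_0/s}$, while the large-$s$ piece is treated by standard heat-kernel gradient estimates (or a Fourier bound for translation differences) applied to both $p_U$ and $q$.

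With the H\"older variance bound in hand, Kolmogorov's criterion produces the required continuous modification of $(h^U - \wh h^\tr)|_K$, and the uniform pointwise variance bound then allows the Borell--TIS inequality to deliver~\eqref{eqn-h-wn-diff} exactly as in the conclusion of the proof of Lemma~\ref{lem-whole-plane-compare}. The principal technical challenge throughout is the H\"older variance estimate on the $s \le 1$ piece, specifically managing the dependence of the killing domain $B_{1/10}(z)$ on the base point $z$; the reduction to the fixed kernel $q$ via translation invariance is what makes this tractable.
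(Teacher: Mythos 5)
Your coupling and overall architecture coincide with the paper's: construct $h^U$ from the same white noise $W$ via the Dirichlet heat kernel, write $(h^U - \wh h^\tr)|_K$ as a Gaussian process whose increment variance is $\pi\int_0^1 \|\phi_{z_1}(s,\cdot) - \phi_{z_2}(s,\cdot)\|_{L^2(U)}^2\,ds$ plus a tame $s\geq 1$ term (your $\phi_z$ is exactly the paper's $q_s(z,\cdot)$), verify the Kolmogorov continuity criterion, and finish with Borell--TIS. Where you genuinely diverge from the paper is in the key H\"older estimate for the increment variance. The paper factors $q_s(z,w) = \frac{1}{\pi s}e^{-|w-z|^2/s}\,\wt q_s(z,w)$, where $\wt q_s$ is a Brownian-bridge escape probability, couples the bridges started at two nearby points to get $|\wt q_s(z_1,\cdot) - \wt q_s(z_2,\cdot)| = O(|z_1-z_2|)$ uniformly, and then does a somewhat delicate hand computation on the resulting Gaussian integrals (equations~\eqref{eqn-h-wn-int1}--\eqref{eqn-h-wn-int3}) to land on $\op{Var}(f(z_1)-f(z_2)) \preceq |z_1-z_2|$. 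You instead (i) strip the moving killing ball down to a fixed kernel via translation invariance, $p_{B_{1/10}(z)}(t;z,w) = q(t;w-z)$, so that the $z$-dependence of that term is an honest spatial translation; (ii) split the time integral at $\delta = |z_1-z_2|^\beta$; (iii) discard the small-$s$ piece using the strong-Markov/exit-time suppression $\int\phi_z(s,\cdot)^2\,dw \preceq s^{-1}e^{-c/s}$, which is super-polynomially small for $s\le\delta$; and (iv) on $[\delta,1]$ just use the crude $L^2$ gradient bound $\|\nabla_z p_U(s/2;z,\cdot)\|_{L^2}, \|\nabla_u q(s/2;\cdot)\|_{L^2} \preceq s^{-1/2}$ without exploiting any cancellation between $p_U$ and $q$. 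The upshot is $\op{Var}(f(z_1)-f(z_2)) \preceq |z_1-z_2|^2\log(1/|z_1-z_2|)$. This avoids the bridge coupling entirely and actually yields a better H\"older exponent, and the paper's argument is in some ways more fragile (it explicitly tracks the $e^{\pm\ep\re w/s}$ factors and the second-order Taylor cancellation). The trade-off is that you need an interior gradient estimate for $p_U$ on $K$; this is standard parabolic regularity since $K\subset U$ at positive distance from $\bdy U$, but it is an ingredient the paper's argument does not touch. Either route is adequate since Kolmogorov only needs any positive exponent once Gaussianity handles the higher moments.
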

\begin{proof}
Recall the white noise $W$ used to define $\wh h^\tr$ in~\eqref{eqn-wn-truncate} and for $0 < t < \wt t \leq\infty $, let
\eqb
 h^U_{t,\wt t}(z) := \sqrt \pi \int_{t^2}^{\wt t^2} p_U(s/2; z,w) \, W(dw,ds)  .
\eqe
It is easily checked using the Kolmogorov continuity criterion that $h^U_{t,\infty}$ for $t>0$ a.s.\ admits a continuous modification. Furthermore, the distributional limit $h^U := \lim_{t\rta 0} h^U_{t,\infty}$ is the zero-boundary GFF on $U$~\cite[Lemma 5.4]{rhodes-vargas-review}. 
This gives a coupling of $h^U$ with $\{\wh h_t^\tr\}_{t \in [0,1]}$. 

Set $f_t(z) := \wh h_{t,1}^U(z ) - \wh h_t^\tr(z )$, so that
\eqbn
f_t(z) = \sqrt\pi \int_{t^2}^1 \int_U  q_s(z,w) \,W(dw,ds) \quad \op{for} \quad q_s(z,w) :=  p_U(s/2 ;z ,w) - p_{B_{1/10}(z )}(s/2 ; z  ,w) . 
\eqen 
Since Brownian motion started from $z$ is extremely unlikely to travel distance further than $\frac{1}{10} \wedge \op{dist}(z,\bdy U)$ before time $t$ when $t$ is small, $\op{Var} f_t(z) = \int_{t^2}^1 \int_U q_s(z,w) \,dw\,ds$ converges as $t\rta 0$ for each fixed $z\in U$.
This shows that the function $f(z) := \lim_{t\rta 0} f_t(z)$ is a.s.\ defined for Lebesgue-a.e.\ $z \in U$ and is Gaussian with covariances
\allb \label{eqn-h-wn-diff-cov}
 \op{Cov} \left( f  (z_1) , f(z_2)   \right) 
  =  \pi \int_0^1  \int_{U} q_s(z_1,w) q_s(z_2,w) \,dw  \, ds   , \quad \forall  z_1,z_2 \in U . 
\alle

To show that $f|_K$ admits a continuous modification, we will show that 
\eqb \label{eqn-h-wn-diff-kc}
\op{Var}(f(z_1) - f(z_2)) \preceq |z_1-z_2|  ,\quad \text{uniformly over all $z_1,z_2 \in K$}.
\eqe
Since $f$ is Gaussian, this together with the Kolmogorov continuity criterion will show that a.s.\ $f$ is locally H\"older continuous with any exponent less than $1/2$. 
This shows that $(h^U - \wh h^\tr)|_K = \wh h^U_{1,\infty} + f$ a.s.\ admits a continuous modification.
Furthermore, since $K$ is compact a.s.\ $\max_{z\in K} |(h^U - \wh h^\tr)(z) |  <\infty$ so the Borell-TIS inequality~\cite{borell-tis1,borell-tis2} (see, e.g.,~\cite[Theorem 2.1.1]{adler-taylor-fields}) shows that $\BB  E[\max_{z\in K} |(h^U - \wh h^\tr)(z) |] < \infty$ and that~\eqref{eqn-h-wn-diff} holds for an appropriate choice of $c_0$ and $c_1$. 

It remains only to prove~\eqref{eqn-h-wn-diff-kc}. 
This is done by an elementary but somewhat tedious calculation.
By translating and rotating $U$, it suffices prove~\eqref{eqn-h-wn-diff-kc} in the case when $U$ is such that $\ep,-\ep \in K$ with $z_1 = \ep$ and $z_2 = -\ep$, with the implicit constant depending only on the size and shape of $U$. 
Here and throughout the proof, we identify $\BB C$ with $\BB R^2$, so $\ep,-\ep \in \BB C$ correspond to the points $(\ep,0)$ and $(-\ep,0)$.
Using~\eqref{eqn-h-wn-diff-cov}, we find that
\allb \label{eqn-h-wn-int0}
\op{Var}\left(  f(\ep) - f(-\ep) \right)  
&=  \op{Var} f(\ep)  + \op{Var} f(-\ep) - 2\op{Cov}(f(\ep) , f(0) )  \notag \\
&= \pi \int_0^1  \int_{U} q_s(w,\ep)^2 + q_s(w,-\ep)^2 - 2q_s(w,\ep)q_s(w,-\ep) \,dw  \, ds  .
\alle
Since $|q_s(z,w)| \leq 2 p(s/2;z,w)$, it is clear that $\int_0^\ep \int_{U} q_s(w,\ep)^2 + q_s(w,-\ep)^2 - 2q_s(w,\ep)q_s(w,-\ep) \,dw  \, ds   = O_\ep(\ep)$. 
We therefore only need to bound the integral from $\ep$ to 1. 

If $\mcl B^z$ denotes a standard planar Brownian motion started from $z$, then the law of $\mcl B_{s/2}^z$ is $\frac{1}{ \pi s} e^{ -   |w|^2 /s } \,dw$ and the conditional law of $\mcl B^z|_{[0,s]}$ given $\{\mcl B_s^z = w\}$ is that of a Brownian bridge from $z$ to $w$ in time $s/2$. Hence, if $\mcl B^{s,z,w}$ denotes such a Brownian bridge, then
\eqb \label{eqn-bridge-heat-kernel}
p_U(s/2 ;z,w)  = \frac{1}{ \pi s} e^{ - \frac{1}{s}  |w-z|^2 } \BB P\left[ \mcl B^{s,z,w}([0,s/2]) \subset U \right]. 
\eqe 
By~\eqref{eqn-bridge-heat-kernel} (applied for $U$ and with $B_{1/10}(z)$ in place of $U$) we see that for $z\in K$,  
\allb \label{eqn-h-wn-bridge}
&q_s(z,w) = \frac{1}{ \pi s} e^{ - \frac{1}{s} |w-z|^2 } \wt q_s(z,w) , 
&\qquad \text{for} \: \wt q_s(z,w) := \BB P\left[ \mcl B^{s,z,w} \: \text{exits $B_{1/10}(z)$ but not $U$ before time $s/2$}\right] .
\alle

Plugging~\eqref{eqn-h-wn-bridge} into~\eqref{eqn-h-wn-int0} shows that
\allb \label{eqn-h-wn-int1}
&\pi \int_{\ep}^1  \int_{U} q_s(w,\ep)^2 + q_s(w,-\ep)^2 - 2q_s(w,\ep)q_s(w,-\ep) \,dw  \, ds \notag \\ 
&= \int_{\ep}^1  \int_{U} \frac{1}{\pi  s^2} \left( e^{ - \frac{2}{s}   |w-\ep|^2 }  \wt q_s(w,\ep)^2 + e^{ - \frac{2}{s}  |w+\ep|^2  }   \wt q_s(w,-\ep)^2 
- 2 e^{-\frac{1}{s} (|w-\ep|^2 + |w+\ep|^2)}\wt q_s(w,\ep)\wt q_s(w,-\ep) \right) \,dw  \, ds  \notag\\
&= \int_{\ep}^1  \int_{U} \frac{1}{\pi  s^2} e^{-\frac{2}{s} (|w|^2 + \ep^2)} \left( e^{ - \frac{\ep}{s}   \re w }  \wt q_s(w,\ep)^2 + e^{ \frac{\ep}{s}    \re w }   \wt q_s(w,-\ep)^2  - 2  \wt q_s(w,\ep)\wt q_s(w,-\ep) \right) \,dw  \, ds  .
\alle
To bound this last integrand, we couple the Brownian bridges from~\eqref{eqn-h-wn-bridge} for $s=\pm\ep$ in such a way that $\mcl B^{s,\ep,w}_u = \mcl B_u - \frac{2u}{s} \mcl B_{s/2} + \ep  + \frac{2u}{s}(w - \ep)$ and $\mcl B^{s, -\ep,w}_u = \mcl B_u - \frac{2u}{s} \mcl B_{s/2} - \ep  + \frac{2u}{s}(w + \ep)$ for $\mcl B$ a standard linear Brownian motion on $[0,s/2]$. 
Then
\eqbn
| \mcl B^{s,\ep,w}_u - \mcl B^{s,-\ep,w}_u | = 2\left( 1 - \frac{2 u}{s} \right) \ep \leq 2\ep .
\eqen
Let $E_{\ep}$ be the event that $B^{s,\ep,w}$ exits $B_{1/10}(\epsilon )$ but not $U$ before time $s/2$, and define $E_{-\ep}$ similarly with $-\ep$ in place of $\ep$. 
Then on $E_{\ep}\setminus E_{-\ep}$, either $\mcl B^{s,\ep,w}$ exits $B_{1/10}(\ep)$ without exiting $B_{1/10+4\ep}(\ep)$ or $\mcl B^{s,\ep,w}$ enters the $4\ep$-neighborhood of $\bdy U$ without exiting $U$.
The probability that it does so is of order $O_\ep(\ep)$, uniformly over $w \in U$ and $s > 0$. 
A similar statement holds with the roles of $\ep$ and $-\ep$ reversed. Therefore,
\eqbn
| \wt q_s(w,\ep) -  \wt q_s(w,-\ep) | = O_\ep(\ep) .
\eqen

Plugging this last bound into~\eqref{eqn-h-wn-int1} and recalling~\eqref{eqn-h-wn-int0} and the sentence just after, we get
\allb
& \op{Var}\left(  f (\ep) - f(-\ep) \right) \notag \\
&\leq \int_{\ep}^1  \int_{U} \frac{ e^{-\frac{2}{s} (|w|^2 + \ep^2)}}{ \pi  s^2}  \left(    \frac{ \wt q_s(w,\ep)^2 }{  e^{   \frac{\ep}{s}  \re w } } + e^{  \frac{\ep}{s}  \re w } \left(   \wt q_s(w, \ep)^2  + O_\ep(\ep) \right) - 2  \wt q_s(w,\ep)^2  \right) \,dw  \, ds   + O_\ep(\ep)  . \label{eqn-h-wn-int2}
\alle 
For $s\in [\ep,1]$, we have that $|(1/s) \ep \re w|  \leq |w|$ and the integral of $\frac{1}{\pi s^2} e^{-\frac{2}{s} |w|^2 + \frac{1}{s} |w|}$ over $(w,s) \in U\times [\ep,1]$ is finite. 
This allows us to move the $O_\ep(\ep)$ inside the integral in~\eqref{eqn-h-wn-int2} to outside the integral, so we get that the right side of~\eqref{eqn-h-wn-int2} is bounded above by
\alb
&\int_\ep^1  \int_{U} \frac{1}{ \pi  s^2} e^{-\frac{2}{s} (|w|^2 + \ep^2)} \wt q_s(w,\ep)^2 \left(   e^{ - \frac{\ep}{s}  \re w }   + e^{  \frac{\ep}{s}   \re w }  - 2 \right) \,dw  \, ds  + O_\ep(\ep) \notag  \\
&\leq \int_\ep^1  \int_{U} \frac{1}{\pi  s^2} e^{-\frac{2}{s} (|w|^2 + \ep^2)} \wt q_s(w,\ep)^2 \left(   e^{ - \frac{\ep}{2s} \re w }   - e^{  \frac{\ep}{2s}     \re w }  \right)^2 \,dw  \, ds  + O_\ep(\ep)  \notag \\
&\preceq \int_\ep^1  \int_{U} \frac{1}{s^3} e^{-\frac{2}{s} (|w|^2 + \ep^2)}  \ep^2 (\re w)^2   \,dw  \, ds  + O_\ep(\ep)   
 = O_\ep(\ep) , \label{eqn-h-wn-int3}
\ale
where in the second inequality we use that $\wt q_s(w,\ep) \leq 1$ and that $|e^x - e^{-x}| \preceq |x|$ for $|x|\preceq 1$. 
This gives~\eqref{eqn-h-wn-diff-kc} for $z_1=\ep$ and $z_2 = -\ep$, as desired.
\end{proof}

\begin{lem} \label{lem-wn-tr-diff}
If $\wh h$ and $\wh h^\tr$ are defined using the same white noise, then $\wh h- \wh h^\tr$ a.s.\ admits a continuous modification and for any compact set $K\subset \BB C$, there are constants $c_0 , c_1 > 0$ (depending only on $K$) such that for $A>1$,
\eqbn
\BB P\left[ \max_{z \in K} |(\wh h - \wh h^\tr)(z)| > A \right] \leq c_0 e^{-c_1 A^2} .
\eqen
\end{lem}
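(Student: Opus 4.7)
The plan is to establish the stated continuity and Gaussian tail bound directly from the white-noise representation of $\wh h - \wh h^\tr$, by combining a Kolmogorov-type modulus-of-continuity estimate with the Borell-TIS inequality, following the same template as the proof of Lemma~\ref{lem-h-wn-diff} but exploiting translation invariance.

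First, since $\wh h$ and $\wh h^\tr$ are defined against the same white noise, one has the integral representation
\eqbn
(\wh h - \wh h^\tr)(z) = \sqrt\pi \int_0^1 \int_{\BB C} r_s(z,w) \, W(dw,ds), \qquad r_s(z,w) := p(s/2;z,w) - p_{B_{1/10}(z)}(s/2;z,w) .
\eqen
By translation invariance of $p$ and of $B_{1/10}(z)$ in $z$, we have $r_s(z,w) = f_s(w-z)$ for a single function $f_s$. For $|w-z|\geq 1/10$ the killed kernel vanishes so $r_s(z,w) = p(s/2;z,w)$, while for $|w-z|<1/10$ the Brownian-bridge representation~\eqref{eqn-bridge-heat-kernel} gives $r_s(z,w) = p(s/2;z,w)\,\BB P[\mcl B^{s,z,w} \text{ exits } B_{1/10}(z) \text{ before time } s/2]$, and the bridge-exit probability is $\leq C e^{-c/s}$ uniformly in $w\in B_{1/10}(z)$ for some universal $c>0$. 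Together these bounds give $\|f_s\|_{L^2(\BB C)}^2 \preceq e^{-c/s}$ for $s$ small and $\|f_s\|_{L^2(\BB C)}^2\preceq 1$ for $s\in(0,1]$, so the stochastic integral is well-defined and $\op{Var}((\wh h-\wh h^\tr)(z))$ is uniformly bounded (indeed, independent of $z$).

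Next, I would prove the Kolmogorov continuity estimate
\eqbn
\op{Var}\left((\wh h - \wh h^\tr)(z_1) - (\wh h - \wh h^\tr)(z_2)\right) = \pi \int_0^1 \|f_s - f_s(\cdot + z_1-z_2)\|_{L^2}^2 \, ds \;\preceq\; |z_1-z_2|
\eqen
by splitting the integral at $s=|z_1-z_2|$. For $s\leq |z_1-z_2|$ the crude bound $\|f_s - f_s(\cdot+v)\|_{L^2}^2\leq 4\|f_s\|_{L^2}^2$ combined with the super-exponential smallness established above gives a contribution that is $O(e^{-c/|z_1-z_2|})$, which is harmless. For $s\geq |z_1-z_2|$ I would bound $|f_s(u) - f_s(u+v)| \leq |v| \sup_{t\in[0,1]} |\nabla f_s(u+tv)|$ and estimate $\|\nabla f_s\|_{L^2}^2$: on $|w-z|\geq 1/10$ the gradient of $p(s/2;z,w)$ in $z$ satisfies the standard Gaussian-kernel bound $|\nabla_z p(s/2;z,w)|\preceq (|w-z|/s)\,p(s/2;z,w)$, while on $|w-z|<1/10$ the gradient of the bridge-exit probability remains bounded by $C e^{-c/s}$. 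A direct computation analogous to the one at the end of the proof of Lemma~\ref{lem-h-wn-diff} (but cleaner, since translation invariance eliminates the boundary-regularity issues that complicated that proof) then gives $\|f_s-f_s(\cdot+v)\|_{L^2}^2 \preceq |v|^2/s$, whose integral over $s\in[|z_1-z_2|,1]$ is $O(|z_1-z_2|^2\log|z_1-z_2|^{-1}) \preceq |z_1-z_2|$. Since the increment is Gaussian, Kolmogorov's criterion yields an a.s.\ locally Hölder-continuous modification of $(\wh h-\wh h^\tr)|_K$.

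Finally, having both a continuous modification and a uniform bound on the pointwise variance, Fernique's criterion~\cite{fernique-criterion} together with the modulus estimate from the previous paragraph shows $\BB E[\max_{z\in K}|(\wh h-\wh h^\tr)(z)|]<\infty$, and the Borell-TIS inequality~\cite{borell-tis1,borell-tis2} (see~\cite[Theorem 2.1.1]{adler-taylor-fields}) then gives the desired Gaussian tail bound with constants $c_0,c_1>0$ depending only on $K$ (through its diameter and the uniform variance bound). The main technical obstacle is the variance estimate in the second paragraph, specifically controlling the small-$s$ part of $\|\nabla f_s\|_{L^2}^2$: one must verify that the exponential suppression from the bridge-exit probability is not offset by the $1/s$ blow-up of the Gaussian gradient, which is why it is important to split at $s=|z_1-z_2|$ and use the crude $L^2$ bound for $s$ below this threshold.
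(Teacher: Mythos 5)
Your overall route is the same as the paper's: the paper proves this lemma simply by asserting that it follows from the same argument as Lemma~\ref{lem-h-wn-diff}, i.e., write $\wh h - \wh h^\tr$ as a white-noise integral of the kernel difference $p(s/2;z,w)-p_{B_{1/10}(z)}(s/2;z,w)$, verify $\op{Var}\left((\wh h-\wh h^\tr)(z_1)-(\wh h-\wh h^\tr)(z_2)\right) \preceq |z_1-z_2|$ so that Kolmogorov's criterion yields a continuous Gaussian modification, and then apply Borell-TIS on the compact set $K$. Your exploitation of translation invariance, writing $r_s(z,w)=f_s(w-z)$ and bounding $\|f_s-f_s(\cdot+v)\|_{L^2}$ by a gradient estimate with a split at $s=|v|$, is a clean variant of the bridge-coupling computation carried out in the appendix and is perfectly acceptable.

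One intermediate claim is wrong as stated, although the $L^2$ estimates you actually need survive. The probability that the bridge $\mcl B^{s,z,w}$ exits $B_{1/10}(z)$ before time $s/2$ is \emph{not} uniformly $\leq Ce^{-c/s}$ over $w\in B_{1/10}(z)$: as $|w-z|\uparrow 1/10$ the bridge is pinned next to the boundary and this probability tends to $1$ (in one dimension the probability that a bridge from $0$ to $x$ in time $t$ exceeds the level $b$ is $\exp(-2b(b-x)/t)$, which is not small when $x$ is near $b$); similarly the gradient of this exit probability is of order $1/s$ near $\bdy B_{1/10}(z)$, not $e^{-c/s}$. What is true is that the \emph{product} with the Gaussian factor is exponentially small: wherever the exit probability or its gradient fails to be small, $|w-z|$ is close to $1/10$ and $p(s/2;z,w)\leq \frac{1}{\pi s}e^{-|w-z|^2/s}$ supplies the factor $e^{-c/s}$, so one still gets $\|f_s\|_{L^2}^2\preceq s^{-2}e^{-c/s}$ and the analogous bound for $\|\nabla f_s\|_{L^2}^2$. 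Even more simply, since $0\leq r_s(z,w)\leq p(s/2;z,w)\leq \frac{1}{\pi s}$, one has $\int_{\BB C} r_s(z,w)^2\,dw\leq \frac{1}{\pi s}\int_{\BB C} r_s(z,w)\,dw = \frac{1}{\pi s}\BB P_z\left[\tau_{B_{1/10}(z)}\leq s/2\right]\preceq s^{-1}e^{-c/s}$, because the \emph{unconditioned} exit probability is uniformly exponentially small. With this repair your splitting at $s=|z_1-z_2|$ gives the Kolmogorov bound, and the Fernique/Borell-TIS conclusion follows as you say, with constants depending only on $K$ (in fact, by translation invariance, only on its diameter).
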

\begin{proof}
This follows from exactly the same argument used to prove Lemma~\ref{lem-h-wn-diff}.
\end{proof}

\begin{proof}[Proof of Lemma~\ref{lem-gff-compare}]
Combine Lemmas~\ref{lem-whole-plane-compare},~\ref{lem-h-wn-diff}, and~\ref{lem-wn-tr-diff}.
\end{proof}

\def\cprime{$'$}

\end{document}